\tikzset{vert/.style={circle,fill,inner sep=0,
    minimum size=0.12cm,draw}}
\newtheorem{thm}{Theorem}[chapter]
\newtheorem{lem}[thm]{Lemma}
\newtheorem{cor}[thm]{Corollary}
\newtheorem{prop}[thm]{Proposition}
\theoremstyle{definition}
\newtheorem{definition}[thm]{Definition}
\newtheorem{claim}[thm]{Claim}
\newtheorem{rmk}[thm]{Remark}
\newcommand{\defeq}{\triangleq}
     \newcommand{\leb}{\text{Leb}}
\newcommand{\yulelaw}[1]{\widetilde{P}_{#1}}
  \newcommand{\tr}{\mathop{\mathrm{tr}}\nolimits}
\newcommand{\divides}{\mid}
  \newcommand{\floor}[1]{\lfloor #1 \rfloor}
  \newcommand{\limn}{\lim_{n\to\infty}}
  \newcommand{\eps}{\epsilon}
  \newcommand{\step}[2]{\par\medskip\par\noindent \textbf{Step #1}.
  \textit{#2}} 
\renewcommand{\subset}{\subseteq}
  \newcommand{\Poi}{\mathrm{Poi}}
  \newcommand{\Bin}{\mathrm{Bin}}
  \newcommand{\Exp}{\mathrm{Exp}}
  \newcommand{\Geo}{\mathrm{Geo}}
\newcommand{\1}{\mathbf{1}}
\newcommand{\eqd}{\sim}
  \newcommand{\toPr}{\,{\buildrel pr \over \longrightarrow}\,} 
  \newcommand{\toL}{\,{\buildrel \mathcal{L} \over \longrightarrow}\,} 
\newcommand{\NN}{\mathbb{N}}
\newcommand{\CC}{\mathbb{C}}
\newcommand{\RR}{\mathbb{R}}
\newcommand{\HH}{\mathbb{H}}
\newcommand{\ZZ}{\mathbb{Z}}
  \newcommand{\GGG}{\mathfrak{G}}
\newcommand{\abs}[1]{\lvert #1 \rvert}
\newcommand{\dpoch}[2]{[#1]_{#2}}
\newcommand{\Aa}{\mathcal{A}}
\newcommand{\Ss}{\mathcal{S}}
\newcommand{\Tt}{\mathcal{T}}
\newcommand{\Bb}{\mathcal{B}}
\newcommand{\Ii}{\mathcal{I}}
\newcommand{\Jj}{\mathcal{J}}
\newcommand{\Kk}{\mathcal{K}}
\newcommand{\Ff}{\mathcal{F}}
\newcommand{\Hh}{\mathcal{H}}
\newcommand{\Ww}{\mathcal{W}}
\newcommand{\E}{\mathbf{E}}
\newcommand{\var}{\mathbf{Var}}
\newcommand{\CNBW}[2][\infty]{\mathrm{CNBW}_{#2}^{(#1)}}
\renewcommand{\P}{\mathbf{P}}
\newcommand{\dtv}{d_{TV}}
  \newcommand{\cov}{\mathop{\mathbf{Cov}}\nolimits}
\newcommand{\Y}{\mathbf{Y}}
     \newcommand{\ip}[2]{\left\langle #1,#2 \right\rangle }
    \newcommand{\labeledarrow}[3]{\tikz[baseline]{
    \node[anchor=base west] 
    (n1) at (0,0)
    {\ensuremath{#1}}; \path (n1.base east)+(1.1,0) node[anchor=base west](n2) 
    {\ensuremath{#2}};
    \draw[->](n1.mid east)--node[auto,font=\scriptsize,yshift=-0.07cm]
      (lb) {\ensuremath{#3}}(n2.mid west);  
      \pgfresetboundingbox
      \useasboundingbox (0.08,0) rectangle ($(n2.north east |- lb.north)
         +(-0.08cm,-0.05cm)$);}}
\begin{document}
 
%
%

\prelimpages
 
%
%
\Title{Eigenvalue fluctuations for random regular graphs}
\Author{Tobias Lee Johnson}
\Year{2014}
\Program{Department of Mathematics}

\Chair{Ioana Dumitriu}{Associate Professor}{Department of Mathematics}
\Chair{Soumik Pal}{Associate Professor}{Department of Mathematics}
\Signature{Sara Billey}

\copyrightpage



{\Degreetext{A dissertation
  submitted in partial fulfillment of the\\ requirements for the degree of}
 \titlepage
 }
\setcounter{footnote}{0}

 
%
%

%
%

\setcounter{page}{-1}
\abstract{%
One of the major themes of random matrix theory is that many asymptotic properties
of traditionally studied distributions of random matrices are \emph{universal}.
We probe the edges of universality by studying the spectral properties of random regular graphs.
Specifically, we prove limit theorems for the fluctuations
of linear spectral statistics of random regular graphs. We find both universal and non-universal behavior.
Our most important tool is Stein's method for Poisson approximation, which we develop for use
on random regular graphs.
}
 
%
%
\tableofcontents

%
%
\acknowledgments{
  I owe a debt to many people for their help and friendship.
  I thank Ioana for introducing me to research and to random regular graphs.
  I'm grateful to her for supporting me and for not taking my curmudgeonliness
  too seriously. I'm grateful to Soumik for teaching me probability and shaping my view
  of it. His suggestion that I learn how to use Stein's method and try to apply it
  to cycle counts of random regular graphs has shaped my mathematical career more than I 
  would ever have  imagined. I also thank Chris Hoffman, who has been
  a third advisor to me this year, introducing me to new areas of math and collaborating with me.
  I can't thank Elliot Paquette and Matt Junge enough. It's been wonderful to have them as friends
  and collaborators. Most of all, 
  I thank my friends and my wife Lindsay.
}

%
%

%

%
%

\textpages
 
 
\chapter {Introduction}

\section{How universal is universality?}
Random matrix theory traditionally studies certain random matrices of interest to
physicists and statisticians. 
The central question of classical random matrix theory is to prove that the eigenvalues
of random matrices' show \emph{universal} behavior as the size of the random matrices grow.
Universality is not a precise concept. The classical central limit theorem gives
an example of it:  with only light conditions
on a collection of random variables (being i.i.d.\ with finite variances), their centered and
normalized sums converge in law to Gaussian. 

The most basic symmetric random matrix model
is the \emph{Gaussian Orthogonal Ensemble}, abbreviated GOE.
Let $G$ be an $n\times n$ matrix whose entries are independent and distributed as $N(0,2)$. 
Define $X$ as $(G+G^T)/2$, a random symmetric matrix with independent
entries on and above the diagonal. The random matrix $X$ has centered Gaussian entries with
variance~$1$ above the diagonal and variance~$2$ on the diagonal, and it 
is said to be drawn from the GOE. 
Any $n\times n$ random matrix with centered independent entries on and above the diagonal and variance~$1$
entries above the diagonal is called a \emph{Wigner matrix}.
(The word ``ensemble'' does not have any precise meaning,
but it is usually refers to a collection of probability distributions on $n\times n$ matrices,
as $n$ ranges from $1$ to infinity. Each distribution typically obeys some sort of invariance.
For instance, if $O$ is an arbitrary orthogonal matrix and $X$ is drawn from the GOE, then
$OX$ has the same distribution as $X$.)

An example of universality for random matrices is that the eigenvalues of $n\times n$ Wigner matrices
show the same limiting behavior as those of matrices from the GOE as $n\to\infty$.
Most results along these lines were confirmed only recently, in a series
    of papers including \cite{TV1,TV2,ESY1,ESY2,ESY3,ESY4,ESY5}.
    
  The adjacency matrix of a random regular graph is similar to a Wigner matrix, but its entries
  are uncentered and lightly dependent. How does this affect the adjacency matrix's spectral properties?
   To put it another way, how universal is universality
  of random matrices? This is our main motivation for investigating properties of eigenvalues
  of random regular graphs from the perspective of random matrix theory.
\section{Stein's method applied to random regular graphs}
Graph eigenvalues have a close connection to the graph's structural properties
(see \cite{Chung,Spielman}). We exploit this by determining spectral properties
of random regular graphs by looking at the distribution of their cycle counts.
The main novelty of our approach is the use of \emph{Stein's method}, which to our knowledge
had never been applied to random regular graphs before.
Stein's method is a collection of techniques for distributional approximation. Stein's method
naturally gives not just asymptotic results but also quantitative error bounds on the approximations.
This was essential for the eigenvalue fluctuation results described in this thesis.

Stein's method  was originally developed by Charles
Stein for normal approximation; its first published
use is \cite{St1}. Louis Chen adapted the method 
for Poisson approximation \cite{Chen1}.  
Because of this, Stein's method is sometimes called the
Stein-Chen or Chen-Stein method when applied to Poisson approximation.
Now that Stein's method is understood in a more general and applied
to a wide range of distributions, it is more typical to see it called
just Stein's method, regardless of the type of approximation.
The survey
paper \cite{Ross} gives a broad introduction to Stein's method, 
and \cite{CDM} and
\cite{BHJ} focus specifically on using it for Poisson approximation, as we do in
this thesis.

The classical scenario for Poisson approximation is for sums of increasingly many, increasingly
unlikely independent indicators: in other words, the convergence of $\Bin(n, \lambda/n)$
to $\Poi(\lambda)$ as $n\to\infty$.
There are several approaches to Stein's method for Poisson approximation, each allowing
this approximation to hold in the presence of some dependence.
The most straightforward is the local approach: each indicator is independent of all others
but a small ``neighborhood''. This was the original approach in \cite{Chen1}, and it is generalized
and put in a very usable form in \cite{AGG}. This approach does not seem to work in the context
of random regular graphs, where nearly everything is lightly dependent on everything else.
Another approach is size-bias coupling. This theory  is developed at length for Poisson approximation
in \cite{BHJ}, though it is not viewed through the lens of size-biasing there.
See\cite{Ross} and \cite{AGK} for how it fits into this framework. We use this method on the
\emph{permutation model} of random regular graph (see Section~\ref{sec:models} for its definition).
Another technique is the method of exchangeable pairs; see \cite{CDM} and \cite{Ross} for good
expositions. This technique is perhaps the most flexible and the most finicky of the three.
We use it for Poisson approximation in the \emph{uniform model} of random regular graph, defined
in Section~\ref{sec:models}. This technique has some clear similarities to a combinatorial
technique called  \emph{the method of switchings}, and we make some rigorous connections between the two.
\section{The results of this thesis}
  Consider an $n\times n$ Wigner random matrix $X_n$ (a symmetric matrix with independent, mean zero,
  variance one entries above the diagonal). Choose an interval in the real line, 
  and let $N_n$ denote the
  number of eigenvalues of $n^{-1/2}X_n$ lying in this interval. A fundamental result in random matrix
  theory is that $N_n /n$ converges in probability to a deterministic value as $n$ tends to infinity. This
  value is the measure of the interval
  under Wigner's semicircle law, the measure on $[-2,2]$ given by the density $\frac{1}{2\pi}
  \sqrt{4-x^2}\,dx$. This measure is a universal limit, in the sense
  that it does not depend on the distributions of the individual matrix entries, besides their
  means and variances.
  
  The analogue of this result for random regular graphs appears in \cite{McK}:
  Let $\lambda_1,\ldots,\lambda_n$ be the eigenvalues of a random $d$-regular graph on $n$ vertices. 
  If $f\colon \RR\to\RR$ is an indicator
    on an interval or is bounded and continuous, then as $n\to\infty$,
    \begin{align*}
      \frac{1}{n}\sum_{i=1}^n f(\lambda_i) \toPr \int_{-2\sqrt{d-1}}^{2\sqrt{d-1}} f(x)p_d(x)\,dx.
    \end{align*}
  The limiting measure $p_d(x)\,dx$ is not the semicircle law, but a different
  measure known now as the
  Kesten--McKay law. Its density is given on $\abs{x}\leq 2\sqrt{d-1}$ by
  \begin{align}
    p_d(x)=\frac{d\sqrt{4(d-1)-x^2}}{2\pi(d^2-x^2)}.\label{eq:kmlaw}
  \end{align}
  The expression $\sum f(\lambda_i)$ is called a \emph{linear eigenvalue statistic}.

  The topic of this thesis is the second-order behavior of these linear statistics.
  We will show that when the degree of the random graphs is held fixed,
  their fluctuations converge to compound
  Poisson distributions, in contrast to the Gaussian limit known for Wigner matrices.
  If the degree grows with the size of the graph, however, the limit of the fluctuations is
  Gaussian, in line with the universal behavior.
  We show that this holds in two models of random regular graphs,
  defined in Section~\ref{sec:models}.
  
  The path to these results is through an analysis of the distribution of cycle counts in these
  models by Stein's method. These results are interesting in their own right, and they make up
  Chapter~\ref{chap:poisson}. In Chapter~\ref{chap:fluctuations}, we apply them to prove
  the eigenvalue fluctuation results.
  
  In Chapter~\ref{chap:GFF}, we consider a process of growing random regular graphs.
  The eigenvalue fluctuations are then a stochastic process whose marginals are given
  by the results of Chapter~\ref{chap:fluctuations}. This is analogous to a \emph{corners
  process} in random matrix theory; see \cite{BG} for a good introduction. The idea
  is to think of a sequence of random matrices as the principal minors of an infinite random matrix.
  One can then consider not just the marginal distribution of the eigenvalues of each random matrix,
  but the joint distribution of eigenvalues of a matrix and its minors.
  The limiting fluctuations of some of these processes can be expressed in terms
  of the Gaussian free field \cite{Bor1,Bor2,BG}. We show that the same holds for the
  eigenvalues of the growing random regular graphs.
   
Most of this thesis is joint work.
Chapters~\ref{chap:poisson} and~\ref{chap:fluctuations} are a synthesis of \cite{DJPP},
\cite{JP},
and \cite{Joh}. The results on the permutation model are from \cite{DJPP}, which
is joint with Ioana Dumitriu, Elliot Paquette, and Soumik Pal, and from \cite{JP}, which
is joint with Pal. The results on the uniform
model are from \cite{Joh}. (See Section~\ref{sec:models} for the definitions of these two
models of random regular graphs). 
\thref{thm:bestpermutationpoiapprox}, a version
of \cite[Corollary~24i]{JP} with an improved rate, appears only in this thesis.

Chapter~\ref{chap:GFF} is mostly taken from \cite{Joh}, which is joint work with Pal.
Section~\ref{sec:GFFconvergence} is new to this thesis and was also done jointly
with Pal. (The exception is Section~\ref{sec:GFFbackground2}, an extended introduction
to the Gaussian free field. It and any errors contained in it are mine alone.)
The main result here is \thref{thm:GFFconvergence}, which shows
the convergence of eigenvalue fluctuations to the Gaussian free field
in a more explicit form than in \cite{JP}.

\section{Models of random regular graphs}
\label{sec:models}
In Chapters~\ref{chap:poisson} and~\ref{chap:fluctuations}, we
will present results on two models of random regular graphs, the \emph{permutation mode} and
the \emph{uniform model}. 
Traditionally, combinatorialists were most concerned with the uniform model of random regular
  graphs. The permutation model is typically easier to work with, however, and it is the setting
  for many spectral results on random regular graphs (for example,
  \cite{BrSh, Fri91, Fri}). There seems to have been a sense that that the two models
  had basically the same properties, besides the permutation model having loops
  and multiple edges. The contiguity result in \cite{GJKW} justifies this somewhat.

We now review the definitions of these two models and of our sequence of growing
graphs.
\subsection{The uniform model}
A random $d$-regular graph on $n$ vertices drawn from the uniform model
is just a graph chosen uniformly from the set of all $d$-regular graphs
  (i.e., graphs where every vertex has degree exactly $d$)
  on $n$ vertices
  without loops or multiple edges. Such graphs only exist when $nd$ is even.
\subsection{The permutation model}
\label{sec:permutationmodel}
  The permutation model is given by choosing
  $d/2$ independent, uniformly random permutations on $n$ vertices, making a graph from
  the cycle structure of each permutation, and overlaying them. It exists only
  for even values of $d$. For a more formal definition, let $\pi_1,\ldots,\pi_{d/2}$ be 
  independent, uniformly random permutations on $n$ vertices. Define a graph on vertices
  $\{1,\ldots,n\}$ by making an edge between vertices $x$ and $y$ for every $k$ 
  such that $\pi_k(x)=y$. This model allows loops and multiple edges.
  We consider a loop at vertex~$x$ as counting as two edges when computing the degree of $x$,
  so that the graph really is $d$-regular. We also count a loop at vertex~$i$ as increasing the
  graph's adjacency matrix by $2$ at position $(i,i)$. The adjacency matrix of a graph from
  this model is then a sum of independent permutation matrices.

\subsection{Growing random regular graphs}
\label{sec:growingrrg}
A \emph{tower of random permutations} is a sequence of random permutations $(\pi^{(n)}, n\in \NN)$ such that
\begin{enumerate}[(i)]
\item $\pi^{(n)}$ is a uniformly distributed random permutation of $\{1,\ldots,n\}$, and
\item for each $n$, if $\pi^{(n)}$ is written as a product of cycles then $\pi^{(n-1)}$ is derived from $\pi^{(n)}$ by deletion of the element $n$ from its cycle. 
\end{enumerate}
The stochastic process that grows $\pi^{(n)}$ from $\pi^{(n-1)}$ by sequentially inserting an element $n$ randomly is called the Chinese Restaurant Process.
 We give a further review of it in Section~\ref{sec:CRPprimer}.

Now suppose we construct towers of random permutations $(\pi_d^{(n)},\,n\geq 1)$, independent for each $d$.
For any $n$ and $d$, we can define a random $2d$-regular graph $G(n, 2d)$
from $\{ \pi^{(n)}_j,\, 1\le j \le d \}$ as in Section~\ref{sec:permutationmodel}.
Marginally, $G(n,2d)$ is then a random graph from the permutation model.
We will often keep $d$ fixed and consider $n$ as a growing parameter, referring to $G(n,2d)$
as $G_n$. Here and later, $G_0$ will represent the empty graph. 

We construct a continuous-time version of this by inserting new vertices
into $G_n$ with rate $n+1$.  Formally,
define independent \label{page:poissonization}
times $T_i\eqd\Exp(i)$, and let 
\begin{align*}
  M_t=\max\Big\{m\colon\ \sum_{i=1}^mT_i\leq t\Big\},
\end{align*}
and define the continuous-time Markov chain
$G(t)=G_{M_t}$.\label{page:chaindef}
When we vary $d$ as well as $n$, we will also refer to this as $G(t,2d)$.

\chapter{Poisson approximation for cycle counts in random regular graphs}
\label{chap:poisson}
\newcommand{\Cy}[1]{C_{#1}}

Let $\Cy{k}$ denote the number of cycles of length
$k$ in a random graph $G_n$.
The distribution of these random variables has been studied since 
\cite{Bol80,Wor81}, where it was proven that if $G$ is a uniform
$d$-random regular graph on $n$ vertices, then
 $(\Cy{3},\ldots,\Cy{r})$ converges in law
to a vector of independent Poisson random variables as $n$ tends to infinity,
with $r$ held fixed.

The strongest results on the cycle counts of a random regular graph came
in \cite{MWW}, where the Poisson approximation was shown to hold even
as $d=d(n)$ and $r=r(n)$ grow with $n$, so long as $(d-1)^{2r-1}=o(n)$.
This is a natural boundary: in this asymptotic regime, 
all cycles in $G_n$ of length $r$ or less have disjoint edges,
asymptotically almost surely. If $(d-1)^{2r-1}$ grows any
faster, this fails.  This led the authors in \cite{MWW} to speculate that the
Poisson approximation failed beyond this threshold.
Surprisingly, this is not the case. 
We will show that the Poisson approximation holds slightly beyond this threshold.
We also give a quantitative bound on the accuracy
of the approximation, which was our original motivation and is
the necessary ingredient for our results on linear eigenvalue statistics.

We will give results on both the permutation model and the uniform model
of random regular graphs. 
We use Stein's method in both cases, but we use different techniques for the two
models: size-biased couplings
for the permutation model and exchangeable pairs for the uniform model.
We provide background and references on these techniques in the following section.

Before we go any further, we present the main results of this section.
Rather than showing that the cycle counts are approximately Poisson, we will
make a more general statement about process made up of the cycles themselves.
To state our results, we must explain exactly what we mean by a cycle in a graph.

We start by discussing the permutation model.
  Let $G_n$ be a random $2d$-regular graph on $n$ vertices from the permutation
  model, formed from the independent permutations
  $\pi_1,\ldots,\pi_d$ as described in Section~\ref{sec:models}.
  This graph can be considered as a directed,\label{permmodeldiscussion} 
  edge-labeled
  graph in a natural way.
  If $\pi_l(i)=j$, then
  by definition $G_n$ contains an edge between $i$ to $j$.  When convenient,
  we consider this edge to be directed from $i$ to $j$ and to be
  labeled by $\pi_l$.
  
  Consider a walk on $G_n$, viewed in this way, and imagine writing
  down the label of each edge as it is traversed, putting
  $\pi_i$ or $\pi_i^{-1}$ according to the direction
  we walk over the edge.
  We call a walk \emph{closed} if it starts and ends at the same vertex,
  and we call a closed walk a cycle
  it never visits a vertex twice (besides the first and last one), 
  and it never traverses an
  edge more than once in either direction.  
  Thus the word $w=w_1\cdots w_k$ formed as a cycle is traversed
  is cyclically reduced, i.e., $w_i\neq w_{i+1}^{-1}$ for all $i$, 
  considering $i$ modulo
  $k$.  For example,
  following an edge and then immediately 
  backtracking does not form a $2$-cycle,
  and the word formed by this walk is 
  $\pi_i\pi_i^{-1}$ or $\pi_i^{-1}\pi_i$ for some
  $i$, which is not cyclically reduced.  We consider two cycles
  equivalent if they are both walks on an identical set of edges;
  that is, we ignore the starting vertex and the direction of the walk.
  We will often denote the length of a cycle $\alpha$ by $\abs{\alpha}$.
  
  \begin{definition}
    Let $\Ii_k$ be the set of all $k$-cycles in the complete graph on $n$ vertices with
    edges labeled by $\pi_1^{\pm 1},\ldots,\pi_d^{\pm 1}$, where the word formed as the
    cycle is traversed is cyclically reduced. Let $a(d,k)$ be number of cyclically
    reduced words of length~$k$ in this alphabet.
  \end{definition}
  Observe that $\abs{\Ii_k} = \dpoch{n}{k}a(d,k)/2k$, where 
  $\dpoch{n}{k}=n(n-1)\cdots (n-k+1)$. By an inclusion-exclusion argument 
  \cite[Lemma~41]{DJPP},
  \begin{align}
    a(d, k) = \begin{cases}
      (2d-1)^k - 1 + 2d & \text{if $k$ is even,}\\
      (2d-1)^k + 1 & \text{if $k$ is odd.}
    \end{cases}\label{eq:adk}
  \end{align}

We are now ready to state the main Poisson approximation result for the
permutation model.
\newcommand{\I}{\mathbf{I}}
\newcommand{\Z}{\mathbf{Z}}
\begin{thm}[Theorem~14 in \cite{JP}]\thlabel{thm:processapprox}
Let $G_n$ be a random $2d$-regular graph
on $n$ vertices from the permutation model. Let 
$\Ii=\bigcup_{k=1}^r\Ii_k$ for some integer $r$.
For any cycle $\alpha\in\Ii$, let 
$I_\alpha=1\{\text{$G_n$ contains $\alpha$}\}$, and let $\I=(I_\alpha,\,\alpha
\in\Ii)$.
Let $\Z=(Z_\alpha,\,\alpha\in\Ii)$ be a vector whose coordinates
are independent Poisson random variables with $\E Z_\alpha=1/[n]_k$
for $\alpha\in\Ii_k$.  Then for all $d\geq 2$ and $n,r\geq 1$,
\begin{align*}
  \dtv(\I, \Z)
  &\leq \frac{c(2d-1)^{2r-1}}{n} 
\end{align*}
for some absolute constant $c$.
\end{thm}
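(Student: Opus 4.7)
The plan is to apply Stein's method for Poisson process approximation using a size-bias coupling of the indicator vector $\I$. In the framework developed in \cite{BHJ,AGG}, for each $\alpha\in\Ii$ one constructs, on the same probability space as $\I$, a coupling $(I_\beta^{(\alpha)})_{\beta\in\Ii}$ whose law equals that of $\I$ conditioned on $I_\alpha=1$; the resulting Chen--Stein bound takes the schematic form
\begin{align*}
  \dtv(\I,\Z) \leq c_1\sum_{\alpha\in\Ii}\lambda_\alpha^2 + c_2\sum_{\alpha\in\Ii}\lambda_\alpha\sum_{\beta\neq\alpha}\E\lvert I_\beta^{(\alpha)}-I_\beta\rvert,
\end{align*}
with $\lambda_\alpha=\E I_\alpha=1/\dpoch{n}{k}$ for $\alpha\in\Ii_k$. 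Since all the $\lambda_\alpha$ are known explicitly, the task reduces to building the coupling and bounding the right-hand side.

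For the construction: given $\alpha\in\Ii_k$ whose edges spell a cyclically reduced word $w_1\cdots w_k$ in $\pi_1^{\pm 1},\ldots,\pi_d^{\pm 1}$, the event $\{\alpha\subseteq G_n\}$ prescribes one equation $\pi_\ell(x)=y$ per edge of $\alpha$. I would realize the conditioning by surgery on the independent permutations, one edge at a time: if the edge currently reads $\pi_\ell(x)=y'$ while some $x''$ has $\pi_\ell(x'')=y$, then transpose the two values. The effect on $G_n$ is to replace the two edges $\{x,y'\}$ and $\{x'',y\}$ (both labelled $\pi_\ell$) by $\{x,y\}$ and $\{x'',y'\}$, leaving every other value of every $\pi_{\ell'}$ intact. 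Processing the $k$ constraints in an order respecting the cycle structure, at most $4k$ permutation values are altered, so at most $4k$ edges of $G_n$ are added or removed, and the resulting graph $\tilde G_n^{(\alpha)}$ should carry exactly the law of $G_n$ conditioned on $\alpha\subseteq G_n$.

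The bookkeeping then runs as follows. The self-term is easy: from $|\Ii_k|=\dpoch{n}{k}a(d,k)/(2k)$ and $a(d,k)\leq 2(2d-1)^k$ in \eqref{eq:adk}, one gets $\sum_{\alpha\in\Ii_k}\lambda_\alpha^2\leq c(2d-1)^k/\dpoch{n}{k}$, which sums to $O((2d-1)^r/n)$ and is comfortably inside the target. For the cross-term, if $\beta$ shares no edge with any edge modified by the swaps then $I_\beta^{(\alpha)}=I_\beta$, so only the cycles $\beta$ touching one of the $\leq 4k$ altered edges contribute. The number of $j$-cycles through a fixed edge is at most $\dpoch{n-2}{j-2}(2d-1)^{j-1}$ (choose an ordered list of $j-2$ interior vertices and a cyclically reduced labelling of the remaining $j-1$ edges), and each contributes a mean $\leq c/n^j$. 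Summing the means of all affected cycles of length at most $r$ gives $O\!\bigl(k(2d-1)^{r-1}/n^2\bigr)$, and the same estimate should hold for $\sum_\beta\E I_\beta^{(\alpha)}$ once I argue that conditioning on $I_\alpha=1$ inflates the affected means by only a constant factor. Multiplying by $\lambda_\alpha$ and summing $\alpha$ over $\Ii_k$ and then over $k\leq r$ yields
\begin{align*}
  \sum_{\alpha\in\Ii}\lambda_\alpha\sum_{\beta\neq\alpha}\E\lvert I_\beta^{(\alpha)}-I_\beta\rvert \leq \frac{c(2d-1)^{2r-1}}{n},
\end{align*}
which dominates the self-term and gives the claimed bound.

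The hard part, I expect, is twofold. First, I need to verify that the swap construction genuinely produces the conditional law of $G_n$ given $\alpha\subseteq G_n$: the subtle cases are when several edges of $\alpha$ share a label, or when an early swap destroys a relation needed later, and a careful inductive description (or an alternative construction by rearranging cycles inside each $\pi_\ell$) is required to pin down the distribution. Second, in the bookkeeping I used that $\E I_\beta^{(\alpha)}$ is within a constant factor of $\lambda_\beta$; proving this uniformly in $\beta$ of length up to $r$ needs a short but non-trivial argument controlling how the forced edges of $\alpha$ interact with the edges of $\beta$.
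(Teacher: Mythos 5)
Your overall strategy — realize the size-biased law of $\I$ by surgery on the permutations and feed the coupling into the Barbour--Holst--Janson process bound — is exactly the paper's, and your transposition construction matches the paper's. The gap is in the bookkeeping of the cross-term, and it is not cosmetic: your claim that the affected cycles contribute $O\bigl(k(2d-1)^{r-1}/n^2\bigr)$ per $\alpha$ is off by a factor of $n$ (note also that your final line does not follow from it — it would put $n^2$ in the denominator), and the true contribution $O\bigl(k(2d-1)^{r-1}/n\bigr)$ is exactly what makes the theorem's bound $(2d-1)^{2r-1}/n$ with no room to spare. The error is in treating the altered edges as a fixed set of $O(k)$ edges and multiplying by the unconditional mean $c/n^j$ of each cycle through them. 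The altered edges are random: an edge $s_i\to v$ labelled $w_{i+1}$ with $v\neq s_{i+1}$ is destroyed by the surgery precisely when it is present in $G_n$, so each of the roughly $2k\dpoch{n-1}{j-1}(2d-1)^{j-1}$ cycles $\beta$ of length $j$ containing such a conflicting edge contributes its \emph{full} mean $\E I_\beta\approx n^{-j}$ to $\E\lvert I_\beta^{(\alpha)}-I_\beta\rvert$, giving $O(k(2d-1)^{j-1}/n)$ in total, not $/n^2$. Likewise, for $\beta$ through a created edge (an edge of $\alpha$ or a repair edge), the relevant probability is that the \emph{remaining} edges of $\beta$ are present, roughly $n^{-(j-1)}$ or larger; and your ``constant-factor inflation'' of $\E I_\beta^{(\alpha)}$ fails for $\beta$ sharing $l\geq 1$ edges with $\alpha$, where the inflation is of order $n^{l}$ and is only compensated by there being correspondingly fewer such $\beta$.

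The paper sidesteps these traps by making the coupling monotone: Lemma~\ref{lem:coupling} pins down exactly which edges can be destroyed, so $p_\alpha\E\lvert J_{\beta\alpha}-I_\beta\rvert=\lvert\cov(I_\alpha,I_\beta)\rvert$ and Proposition~\ref{prop:BHJprop} reduces everything to covariances. These are then evaluated by stratifying $\beta$ according to the number $l$ of labelled edges shared with $\alpha$, with the delicate $l=0$ case — cycles disjoint from $\alpha$ yet still correlated through the permutations, which surfaces in your coupling as the repair edges — handled by Lemma~\ref{lem:ipbounds} together with an averaging over words that recovers a factor of $d$; that contribution is itself of the dominant order $(2d-1)^{j+k-1}/n$ and is absent from your accounting. (A smaller issue: you take $\lambda_\alpha=1/\dpoch{n}{k}$, but $\E I_\alpha=\prod_i 1/\dpoch{n}{e_i}$, so a separate comparison of the two Poisson vectors, as in Lemma~\ref{lem:YZ}, is still needed.) The skeleton of your argument is right, but the central estimate must be redone along these lines before the theorem follows.
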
 
In the uniform model, there are no edge labels, and a cycle is simply
a closed walk repeating no vertices. Again, we consider two walks equivalent if they are 
walks on the same set of edges.
\begin{thm}[Corollary~8 in \cite{Joh}]\label{thm:uniformprocesspoiapprox}
  Let $G_n$ be a random $d$-regular graph on $n$ vertices from the uniform model,
  and let $\Ii$ be the collection of all cycles of length $r$
  or less in the complete graph $K_n$.
  For any cycle $\alpha\in\Ii$, let 
$I_\alpha=1\{\text{$G_n$ contains $\alpha$}\}$, and let $\I=(I_\alpha,\,\alpha
\in\Ii)$.
Let $\Z=(Z_\alpha,\,\alpha\in\Ii)$ be a vector whose coordinates
are independent Poisson random variables with $\E Z_\alpha=(d-1)^{\abs{\alpha}}/[n]_{\abs{\alpha}}$.
  For some absolute constant $c$, for all $n$
  and $d,r\geq 3$,
  \begin{align*}
    \dtv(\I,\,\Z) &\leq
      \frac{c(d-1)^{2r-1}}{n}.
  \end{align*}
\end{thm}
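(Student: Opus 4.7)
My plan is to prove this along the exchangeable-pairs path sketched in the introduction for the uniform model, combined with an abstract process-level Stein bound for Poisson approximation. I would first set up an exchangeable pair $(G_n, G_n')$ by a \emph{switching}: conditional on $G_n$, choose an ordered pair of disjoint edges $\{a,b\},\{c,d\}$ uniformly from among those for which the swap to $\{a,c\},\{b,d\}$ (or to $\{a,d\},\{b,c\}$, chosen symmetrically) again produces a simple $d$-regular graph, and perform that swap. After symmetrizing over the two possible rewirings of a $4$-tuple, the map $G_n\mapsto G_n'$ is an involution on a set of full asymptotic measure in the uniform distribution, so $(G_n, G_n')$ is exchangeable. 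The key structural fact is that $I_\alpha = I_\alpha'$ for any cycle $\alpha\in\Ii$ whose edge set avoids the (at most four) edges touched by the switching.

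Next I would feed this into an abstract Stein/Chen-type bound for process Poisson approximation tailored to exchangeable pairs; this should decompose $\dtv(\I,\Z)$ into two pieces. The first is the \emph{mean-matching error} $\sum_{\alpha\in\Ii}\abs{\E I_\alpha - (d-1)^{\abs{\alpha}}/\dpoch{n}{\abs{\alpha}}}$, which is a classical counting estimate: the number of simple $d$-regular graphs containing a fixed cycle of length $k\leq r$, divided by the total count, equals $(d-1)^k/\dpoch{n}{k}$ up to a multiplicative $1+O((d-1)^{2r}/n)$, and summed against the $O((d-1)^k \cdot n^k/k)$ cycles of length $k$ this produces a contribution well within the target. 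The second piece is a \emph{dependence term} counting pairs $(\alpha,\beta)\in\Ii\times\Ii$ whose edge sets share at least one edge, weighted by $\lambda_\alpha\lambda_\beta$; a direct enumeration shows the dominant pairs are those of length $r$ each, sharing exactly one edge, giving roughly $(d-1)^{2r-1}\cdot\dpoch{n}{2r-1}$ pairs and contributing $O((d-1)^{2r-1}/n)$ after normalization.

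The main obstacle, and the reason this is strictly harder than the classical local-approach or size-bias arguments, is tracking how a single switching can simultaneously create and destroy cycles in the exchangeable pair. A new edge introduced by the swap can lie on as many as $O((d-1)^{r-1})$ cycles of length $r$, so naively the contribution of "cycles created by rewiring" threatens to be larger than the claimed bound. Controlling this requires exploiting the fact that the switching is only supported on edges whose local neighborhoods look tree-like with high probability (else the switching would be disallowed for creating multi-edges), so the typical number of new or destroyed $r$-cycles is $O(1)$ rather than $O((d-1)^{r-1})$. This careful local analysis --- essentially the probabilistic reformulation of the McKay--Wormald method of switchings --- is the delicate technical core of the argument and is where the stated rate $(d-1)^{2r-1}/n$ is earned.
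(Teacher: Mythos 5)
There is a genuine gap, and it sits exactly where you locate the ``delicate technical core.'' Two problems. First, you conflate two different Stein frameworks. The decomposition you plan to feed your exchangeable pair into --- a mean-matching error plus a dependence term summing over pairs of cycles sharing an edge --- is the Barbour--Holst--Janson local/size-bias bound, which requires a monotone coupling of $\I$ with its conditioned versions, not an exchangeable pair; no abstract exchangeable-pairs theorem produces those two terms. (The unavailability of such a monotone coupling in the uniform model is precisely why an exchangeable-pairs argument is needed there at all.) The exchangeable-pairs bound that actually applies controls, for each index, quantities of the form $\E\bigl\lvert\lambda-c\,\P[\Delta^{+}\mid\Ff]\bigr\rvert$ and $\E\bigl\lvert W-c\,\P[\Delta^{-}\mid\Ff]\bigr\rvert$, so your real task is to show that the conditional creation and destruction probabilities under your randomization are close, in $L^1$, to the Poisson immigration--death rates. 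Second, with a single $2$-edge swap this task is not addressed and is genuinely hard: the conditional probability of creating a $k$-cycle is governed by the number of paths of length $k-1$ between the rewired endpoints (a near-cycle count, not a cycle count), so you would need sharp concentration for path counts in a uniform random regular graph, plus control of the event that one swap creates or destroys several short cycles at once. Your proposed fix --- that swaps are ``supported on tree-like neighborhoods'' so only $O(1)$ cycles change --- does not produce the required $L^1$ estimates and is not where the rate $(d-1)^{2r-1}/n$ comes from.

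The missing idea is to change the randomization itself: use whole-cycle $\alpha$-switchings in the style of McKay--Wormald, in which one move deletes or inserts an entire short cycle $\alpha$ (removing its $2\abs{\alpha}$ incident half-structure and rewiring), and restrict to \emph{valid} switchings, meaning $\alpha$ is the only short cycle created or destroyed. Building a reversible weighted chain out of these moves makes $\P[\Delta_\alpha^{+}\mid G]$ and $\P[\Delta_\alpha^{-}\mid G]$ exactly proportional to the numbers $B_\alpha$, $F_\alpha$ of valid backward and forward $\alpha$-switchings, and those are estimated by switching-counting arguments: an upper bound $F_\alpha\leq [n]_k d^k$, $B_\alpha\leq (d(d-1))^k$, and matching lower bounds (in expectation for $B_\alpha$) with relative error $O\bigl(k(d-1)^{r-1}/n\bigr)$. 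The remaining case, where $\alpha$ is present but shares an edge with another short cycle (so $F_\alpha=0$ while $I_\alpha=1$), is handled separately by subgraph-probability estimates of McKay--Wormald type for two overlapping cycles, summed over isomorphism types of the overlap. That combination --- valid cycle-switchings feeding the exchangeable-pairs bound, plus the overlap estimate --- is what earns the stated rate; your plan as written supplies neither the correct abstract bound nor the estimates needed to run it.
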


These theorems immediately imply that the vectors of cycle counts
of length $r$ or less in the permutation and uniform models
are also within $O((2d-1)^{2r-1}/n)$ and $O((d-1)^{2r-1}/n)$, respectively, of vectors
of independent Poissons. In fact, we can do slightly better:

\begin{thm}\thlabel{thm:bestpermutationpoiapprox}
  Let $G_n$ be a random $2d$-regular graph on $n$ vertices from the permutation model with
  cycle counts $(\Cy{k},\,k\geq 1)$. Let $Z_k,\,k\geq 1$ be independent Poisson random variables
  with $\E Z_k = a(d,k)/2k$. For any $d\geq 2$ and $n,r\geq 1$,
  \begin{align*}
    \dtv\bigl((\Cy{1},\ldots,\Cy{r}),\,(Z_1,\ldots,Z_r)\bigr)
      &\leq \frac{cr^2(2d-1)^r\log(2d-1)}{n},
  \end{align*}
  for some absolute constant~$c$.
\end{thm}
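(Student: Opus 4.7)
The plan is to apply Stein's method for Poisson approximation directly to the cycle-count vector $(\Cy{1},\ldots,\Cy{r})$: simply aggregating the process-level result \thref{thm:processapprox} preserves its bound $c(2d-1)^{2r-1}/n$, which is much too weak. The improvement must come from the coordinatewise Chen--Stein factor $1/\lambda_k$ that appears when Stein's method is applied to a sum of indicators but is absent at the level of the individual indicator process, where each $I_\alpha$ has vanishing mean $1/[n]_k$ and so provides no savings.

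I would build on the size-bias coupling developed in \cite{DJPP,JP}: for each $k$ and each $\alpha\in\Ii_k$, rewire a few values of the permutations $\pi_1,\ldots,\pi_d$ to produce a coupled $2d$-regular graph $G_n^{(\alpha)}$ whose law is that of $G_n$ conditioned on $I_\alpha=1$. Choosing $\alpha$ uniformly in $\Ii_k$ then yields the size-biased version of $\Cy{k}$ inside the joint distribution. A multivariate Chen--Stein bound of Barbour--Holst--Janson type then reduces the theorem to estimating
\begin{align*}
\dtv\bigl((\Cy{1},\ldots,\Cy{r}),(Z_1,\ldots,Z_r)\bigr)
 \;\leq\; c\sum_{k=1}^r \frac{1}{\lambda_k}
  \sum_{j=1}^r \E\bigl|\Cy{j}^{(\alpha)}-\Cy{j}-\mathbf{1}_{j=k}\bigr|,
\end{align*}
with $\lambda_k=a(d,k)/(2k)$ and $\alpha$ uniform in $\Ii_k$. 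Since $\lambda_k\geq d$ for all $k\geq 1$, the Stein factor obeys $1/\lambda_k\asymp k(2d-1)^{-k}$.

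The remaining work is to control $\E|\Cy{j}^{(\alpha)}-\Cy{j}-\mathbf{1}_{j=k}|$, the change in the number of $j$-cycles under the rewiring. The perturbation touches $O(k)$ edges, and its effect decomposes into cycles destroyed (each modified edge lying in $O((2d-1)^{j-1}/n)$ $j$-cycles of $G_n$ in expectation) and cycles newly forced by the planted $\alpha$, dominated by $j$-cycles that complete $\alpha$ by a path of $j-k$ additional edges and contribute $O((2d-1)^{j-k}/n)$. Combining these with weight $1/\lambda_k$ and summing over $j$ and $k$ yields a bound of the claimed order. The main obstacle is to make the accounting sharp: one must sum over all overlap patterns between $\alpha$ and the generic $j$-cycle, not only the two extremes above, while tracking the overcounting introduced by the symmetries of the cycles. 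The $r^2$ in the final bound is then the price of the double sum over $(j,k)$, while the $\log(2d-1)$ factor would come from absorbing a harmonic-like sum that arises because the overlap between $\alpha$ and an affected $j$-cycle can begin at any of $O(k)$ positions along $\alpha$.
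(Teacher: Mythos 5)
The load-bearing step in your plan—the multivariate Chen--Stein inequality with coordinatewise factors $1/\lambda_k$ and an absolute constant—is not an available result, and it is exactly where the improvement has to come from. The binned Barbour--Holst--Janson bound that actually exists (Proposition~\ref{prop:BHJbin}, i.e.\ Theorem~10.K of \cite{BHJ}) gives only the symmetric factors $(\lambda_j\lambda_k)^{-1/2}$ on the cross terms, and it comes with a multiplicative prefactor $2\bigl(1+e^{-1}\log^+\max_j\lambda_j\bigr)$; since $\max_j\lambda_j\asymp(2d-1)^r/2r$, that prefactor is of order $r\log(2d-1)$, and it—not a ``harmonic-like sum over overlap positions''—is the true source of the $\log(2d-1)$ and of one of the two factors of $r$ in the statement. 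A telltale sign that your postulated inequality is too strong: feeding your own estimates into it (destroyed and created $j$-cycles both of order $k(2d-1)^{j-1}/n$, and $1/\lambda_k\asymp k(2d-1)^{-k}$) yields a bound of order $(2d-1)^{r-1}/n$ with no $r^2$ and no logarithm at all, strictly better than the theorem; nothing in the Poisson-approximation literature delivers per-coordinate $1/\lambda_k$ savings in the multivariate setting without a logarithmic penalty, so the reduction in your displayed inequality is unjustified and your account of where the $r^2\log(2d-1)$ comes from is inconsistent with it.

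Two secondary problems. First, the claim that the newly created cycles are dominated by the maximal-overlap configuration ($j$-cycles completing $\alpha$ with $j-k$ fresh edges, contributing $(2d-1)^{j-k}/n$) is backwards: in expectation the dominant contribution comes from minimal overlap, a single shared edge, of order $k(2d-1)^{j-1}/n$. This happens to be the same order as the destroyed-cycle term, so the arithmetic survives, but it signals that the overlap accounting must be organized as in the proof of \thref{thm:processapprox} (the partition of $\Ii_\alpha^+$ into the sets $\Ii_\alpha^l$ and the bound on $\abs{\Ii_\alpha^l\cap\Ii_j}$). Second, your target means are $a(d,k)/2k$, whereas the coupling naturally compares the cycle counts to Poissons with means $\sum_{\alpha\in\Ii_k}p_\alpha$ (and the planted index must be chosen with probability proportional to $p_\alpha$, not uniformly, since the $p_\alpha$ differ within $\Ii_k$); the mean discrepancy costs an extra $O\bigl(r(2d-1)^r/n\bigr)$ and is handled in the thesis by Lemma~\ref{lem:YZ} and \eqref{eq:YZdist}, a step your sketch omits. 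The intended proof is close to your plan in spirit but uses the correct tool: keep the monotone size-bias coupling and the covariance bounds $A(j,k)=O\bigl((2d-1)^{j+k-1}/n\bigr)$ already established for \thref{thm:processapprox}, bin the indicators by cycle length, and apply Proposition~\ref{prop:BHJbin}; the $(\lambda_j\lambda_k)^{-1/2}$ factors upgrade $(2d-1)^{2r-1}$ to $r(2d-1)^{r-1}$, at the price of the $r\log(2d-1)$ prefactor.
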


\begin{thm}[Theorem~11 in \cite{DJPP}]\thlabel{thm:bestpoiapprox}
  Let $G_n$ be a random $d$-regular graph on $n$ vertices from the uniform model
  with cycle counts $(\Cy{k},\,k\geq 3)$.
  Let $(Z_k,\,k\geq 3)$ be independent Poisson random variables
  with $\E Z_k=(d-1)^k/2k$.  For any $n\geq 1$ and $r,d\geq 3$,
  \begin{align*}
    \dtv\big((\Cy{3},\ldots,\Cy{r}),\,(Z_3,\ldots,Z_r)\big)
      &\leq\frac{c\sqrt{r}(d-1)^{3r/2-1}}{n}
   \end{align*}
   for some absolute constant~$c$.
\end{thm}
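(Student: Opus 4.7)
The plan is to apply Stein's method for Poisson approximation via exchangeable pairs directly to the joint vector $\mathbf{C}=(\Cy{3},\ldots,\Cy{r})$ of cycle counts. Note that \thref{thm:uniformprocesspoiapprox} already yields, by the data-processing inequality applied to the map $(I_\alpha)\mapsto\bigl(\sum_{\alpha\in\Ii_k}I_\alpha\bigr)_{k=3}^r$ and the identity $\sum_{\alpha\in\Ii_k}(d-1)^k/[n]_k=(d-1)^k/(2k)$, the bound $\dtv(\mathbf{C},\mathbf{Z})\leq c(d-1)^{2r-1}/n$. This is weaker than the target by a factor of roughly $(d-1)^{r/2}/\sqrt{r}$, so a sharper argument that works directly with the sums $\Cy{k}$ — rather than through all $O(n^r)$ individual indicators $I_\alpha$ — is needed.

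First I would construct an exchangeable pair $(G,G')$ adapted to the uniform model via a single uniformly chosen \emph{switching}: select two vertex-disjoint edges $\{x,y\},\{u,v\}$ of $G$ uniformly at random and, if it creates neither a loop nor a multi-edge, replace them with $\{x,u\},\{y,v\}$ (rejecting otherwise). The uniform measure on $d$-regular simple graphs is reversible under this move, so $(G,G')$ is exchangeable. This is the rigorous embodiment of the classical method of switchings alluded to in the introduction.

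Next I would invoke the multivariate exchangeable-pair Poisson approximation bound (as in \cite{CDM}). Writing $D_k\defeq \Cy{k}(G')-\Cy{k}(G)$, the bound reduces to controlling (i) linear drift terms $\E\bigl|\E[D_k^\pm\mid G]-\text{(Stein drift)}\bigr|$, and (ii) second-order terms $\E[D_k^2]$ and cross terms $\E[D_jD_k]$. A single switching can create or destroy a $k$-cycle only if that cycle passes through one of the at most four affected edges, and the expected number of $k$-cycles through a prescribed edge of $G$ is $O\bigl(k(d-1)^{k-1}/n\bigr)$. Chaining these estimates gives drift errors of order $(d-1)^{2k-1}/n^2$ per length and second-order moments of order $(d-1)^{3k-2}/n^2$. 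After taking square roots as dictated by the Stein identity and summing over $k=3,\ldots,r$ using Cauchy--Schwarz to combine the per-coordinate bounds into a joint total variation estimate, the dominant term at $k=r$ is weighted by $\sqrt{r}$, producing the stated bound $c\sqrt{r}(d-1)^{3r/2-1}/n$.

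The main obstacle I foresee is the combinatorial accounting behind the second-order estimates. One must count configurations of two cycles (one present before, one created after the switching) that both pass through the four switched edges, and then bound their joint occupation probability in $G$. The improved exponent $3k/2$ — as opposed to the $2k$ that one gets from the process-level analysis leading to \thref{thm:uniformprocesspoiapprox} — relies on the geometric observation that a cycle created by a switching is forced to traverse the switching site, so it must share a subpath of roughly half its length with some pre-existing cycle, cutting the effective number of independent edge choices from $2k$ down to $3k/2$. This enumeration, together with verifying the rejection probability of the switching is negligible at the required scale, is the technically delicate portion of the argument.
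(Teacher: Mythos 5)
There is a genuine gap here, in two places.

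First, and most importantly, you have misidentified the mechanism that produces the exponent $3r/2$ in place of $2r$. In the paper the improvement has nothing to do with geometry of created cycles: it comes entirely from the factors $\xi_k=\min\bigl(1,1.4\lambda_k^{-1/2}\bigr)$ in the multivariate exchangeable-pairs bound (Proposition~\ref{prop:steinsmethod}). One first proves the process-level bound of Theorem~\ref{thm:genpoiapprox}, whose per-cycle error $O\bigl(\abs{\alpha}(d-1)^{\abs{\alpha}+r-1}/n^{\abs{\alpha}+1}\bigr)$ sums over $\Ii_k$ to $O\bigl((d-1)^{k+r-1}/n\bigr)$ — exactly the $2r-1$ exponent. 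Then Proposition~\ref{prop:genpoiapprox} reruns the same argument with the cycles binned by length, so that each bin's error gets multiplied by $\xi_k\asymp\sqrt{2k}\,(d-1)^{-k/2}$ since $\lambda_k=(d-1)^k/2k$; summing $\sqrt{k}(d-1)^{k/2+r-1}/n$ over $k\leq r$ is dominated by $k=r$ and gives $\sqrt{r}(d-1)^{3r/2-1}/n$. Your proposed substitute — that a cycle created by a switching ``must share a subpath of roughly half its length with some pre-existing cycle'' — is not true and is not how the savings arise; without the $\lambda_k^{-1/2}$ factors your argument would stall at $(d-1)^{2r-1}/n$. Relatedly, the CDM Poisson bound has no second-moment terms $\E[D_k^2]$ and no square roots of such terms; the only square root in sight is $\lambda_k^{-1/2}$.

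Second, your exchangeable pair is the wrong one for this machinery. A single uniformly chosen $2$-edge switch can create or destroy several short cycles simultaneously, so the events $\Delta_k^{\pm}=\{W_k'=W_k\pm1,\ W_j'=W_j \text{ for } j>k\}$ are not cleanly expressible, and the conditional drift $\P[\Delta_k^{\pm}\mid G]$ is not computable in closed form. The paper instead builds the reversible chain from whole-cycle $\alpha$-switchings (delete all $\abs{\alpha}$ edges of a prescribed cycle $\alpha$ together with $\abs{\alpha}$ auxiliary oriented edges, as in Figure~\ref{fig:switchings}), weighted by $1/[n]_{\abs{\alpha}}d^{\abs{\alpha}}$, and restricts to \emph{valid} switchings, i.e.\ those that create or destroy $\alpha$ and no other short cycle. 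This is precisely what makes $\P[\Delta_\alpha^{\pm}\mid G]$ equal to $F_\alpha$ or $B_\alpha$ (the valid forward/backward switching counts) up to normalization, and reduces the whole problem to the switching estimates of Lemmas~\ref{lem:forwardswitchings} and~\ref{lem:backwardsswitchings}. If you want to salvage your route, you would need to replace the single-transposition switch by these cycle switchings and then import the binning-plus-$\xi_k$ step; as written, the proposal does not reach the stated rate.
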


\section{Background on Stein's method}

\subsection{Size-bias couplings}
To give some intuition behind size-bias couplings, let us go to the archetypal setting
for Poisson approximation. Let $I_1,\ldots,I_n$ be independent Bernoulli random variables,
equal to $1$ with probability $1/n$ and $0$ with probability $(n-1)/n$.
Let $X$ be the sum of these, which makes its distribution $\Bin(n,1/n)$.
Define $X'$ to be $X-I_N+1$, where $N$ is uniformly chosen from $\{1,\ldots,n\}$,
independently of everything else. In other words, $X'$ is given by taking one of the
indicators at random and forcing it to be $1$. It is not hard to show that $X'$ is a size-biased
version of $X$, meaning that
\begin{align*}
  \P[X'=k] &= \frac{k}{\E X}\P[X=k].
\end{align*}
For large~$n$, we have $X'\approx 1+X$.

This definition of a size-biased version of $X$ defined on the same probability space
is an example of a more general construction; see \cite[Section~3.4.1]{Ross}.
In general, if a random variable $X$ can be coupled with $X'$,
a size-biased version of itself, and $X'$ is close to  $X+1$ in $L^1$, then $X$ is approximately
Poisson. A precise verison of this statement is \cite[Theorem~4.13]{Ross}.

We will use a formulation of this idea from \cite{BHJ}. This formulation never
explicitly make a size-biased version of the random variable to be approximated, but
its idea is exactly the same.
Recall the definition of $(I_\beta,\,\beta\in\Ii)$ from Theorem~\ref{thm:processapprox}.
For each $\alpha\in\Ii$, let $(J_{\beta\alpha},\,\beta\in\Ii)$ 
be distributed
as $(I_\beta,\,\beta\in\Ii)$ conditioned on $I_\alpha=1$.
The goal is to construct a coupling of $(I_\beta,\,\beta\in\Ii)$
and 
$(J_{\beta\alpha},\,\beta\in\Ii)$ so that the two random vectors
are ``close together''.  We hope that for each $\alpha\in\Ii$,
the cycles in  $\Ii\setminus\{\alpha\}$ can be partitioned
into two sets $\Ii_\alpha^{-}$ and $\Ii_\alpha^+$ such that
\begin{align}
  J_{\beta\alpha}&\leq I_\beta\quad\text{if $\beta\in\Ii_\alpha^-$,}
  \label{eq:monotone-}\\
  J_{\beta\alpha}&\geq I_\beta\quad\text{if $\beta\in\Ii_\alpha^+$.}
  \label{eq:monotone+}
\end{align}
If this is the case, then one can approximate $(I_\beta,\,\beta\in\Ii)$ by a 
Poisson process
by calculating $\cov(I_\alpha, I_\beta)$ for every $\alpha,\beta\in\Ii$,
according to the following proposition.
\begin{prop}[Corollary~10.J.1 in \cite{BHJ}]\label{prop:BHJprop}
  Suppose that $\I=(I_\alpha,\,\alpha\in\Ii)$ is a vector
  of 0-1 random variables with $\E I_\alpha=p_\alpha$.
  Suppose that $(J_{\beta\alpha},\,\beta\in\Ii)$ is distributed
  as described above, and that for each $\alpha$ there exists
  a partition and a coupling of $(J_{\beta\alpha},\,\beta\in\Ii)$
  with $(I_\beta,\,\beta\in\Ii)$ such that \eqref{eq:monotone-}
  and \eqref{eq:monotone+}
  are satisfied.
  
  Let $\Y=(Y_\alpha,\,\alpha\in\Ii)$ be a vector of independent
  Poisson random variables with $\E Y_\alpha=p_\alpha$.  Then
  \begin{align}
    \dtv(\I, \Y)&\leq \sum_{\alpha\in\Ii}p_\alpha^2
      +\sum_{\alpha\in\Ii}\sum_{\smash{\beta\in\Ii_\alpha^-}}
      \abs{\cov(I_\alpha,I_\beta)}
      +\sum_{\alpha\in\Ii}\sum_{\smash{\beta\in\Ii_\alpha^+}}\cov(I_\alpha,I_\beta).
      \label{eq:BHJ}
  \end{align}
\end{prop}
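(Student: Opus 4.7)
The plan is to apply Stein's method for Poisson process approximation directly to the product-Poisson law of $\Y$. I would first set up the Stein equation via the generator
\[
  \Aa g(\xi) = \sum_{\alpha \in \Ii} p_\alpha\bigl[g(\xi + e_\alpha) - g(\xi)\bigr] + \sum_{\alpha \in \Ii} \xi_\alpha\bigl[g(\xi - e_\alpha) - g(\xi)\bigr],
\]
whose unique stationary distribution is $\Y$, so that $\E\Aa g(\Y) = 0$. For each bounded test function $h$ on $\NN^\Ii$ one then solves $\Aa g = h - \E h(\Y)$; the key analytic input I would invoke is the product-Poisson Stein-factor estimate (Chapter~10 of \cite{BHJ}) bounding the second differences $g(\xi + e_\alpha + e_\beta) - g(\xi + e_\alpha) - g(\xi + e_\beta) + g(\xi)$ by an absolute constant when $h$ is an indicator. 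Taking a supremum over indicator test functions reduces the problem to estimating $\bigl|\E \Aa g(\I)\bigr|$ uniformly.

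Next I would exploit the size-bias identity implicit in the definition of the $J$'s: since $J^\alpha \defeq (J_{\beta\alpha})_{\beta \in \Ii}$ has the law of $\I$ conditioned on $I_\alpha = 1$, for every bounded function $f$ one has $\E[I_\alpha f(\I)] = p_\alpha \E f(J^\alpha)$. Applying this identity to the $\xi_\alpha$-weighted sum in $\Aa g$ and using $J_{\alpha\alpha} = 1$ to line up the shifts, I would rewrite
\[
  \E \Aa g(\I) = \sum_{\alpha \in \Ii} p_\alpha \,\E\Bigl[\bigl(g(\I + e_\alpha) - g(J^\alpha)\bigr) - \bigl(g(\I) - g(J^\alpha - e_\alpha)\bigr)\Bigr].
\]
Each summand is a second-order difference of $g$ evaluated at two pairs of configurations whose $\alpha$-coordinates differ by the same amount $I_\alpha$ and whose coordinates on $\Ii \setminus \{\alpha\}$ differ only by $I_\beta - J_{\beta\alpha}$.

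With the coupling between $\I$ and $J^\alpha$ in hand, I would interpolate one coordinate at a time. The $\alpha$-coordinate contribution, where $\I$ disagrees with $J^\alpha$ exactly when $I_\alpha = 1$, produces the diagonal term $p_\alpha \cdot p_\alpha = p_\alpha^2$. For each $\beta \neq \alpha$, the second-order Stein-factor bound controls the corresponding increment by $\E\bigl|I_\beta - J_{\beta\alpha}\bigr|$. The monotonicity hypothesis then converts absolute differences into covariances: for $\beta \in \Ii_\alpha^-$, $J_{\beta\alpha} \le I_\beta$ and $p_\alpha \E J_{\beta\alpha} = \E[I_\alpha I_\beta]$ yield
\[
  p_\alpha \,\E\bigl|I_\beta - J_{\beta\alpha}\bigr| = p_\alpha p_\beta - \E[I_\alpha I_\beta] = \bigl|\cov(I_\alpha, I_\beta)\bigr|,
\]
while for $\beta \in \Ii_\alpha^+$ the same computation produces $+\cov(I_\alpha, I_\beta)$ without absolute values, since now $\cov \geq 0$. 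Summing over $\alpha$ and $\beta$ collects exactly the three terms in \eqref{eq:BHJ}.

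The main obstacle is the product-Poisson Stein-factor estimate on the second differences of $g$: the whole argument hinges on being able to telescope coordinate-by-coordinate with an absolute constant (ultimately $1$) in front of each $\E|I_\beta - J_{\beta\alpha}|$, independent of the size of $\Ii$. Establishing this uniform control is the real work; once it is in place, the use of the size-bias identity and the partition $\Ii_\alpha^- \cup \Ii_\alpha^+$ is essentially bookkeeping, and I would defer the Stein-factor bound to the generator-based analysis in Chapter~10 of \cite{BHJ}.
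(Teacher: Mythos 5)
This proposition is not proved in the thesis at all: it is imported verbatim as Corollary~10.J.1 of \cite{BHJ}, so there is no in-paper argument to compare against. Your proposal is a correct reconstruction of the standard Stein--Chen coupling proof from that source: the generator for the product-Poisson law, the identity $\E[I_\alpha f(\I)]=p_\alpha\E f(J^\alpha)$ to handle the death terms, the coordinate-by-coordinate telescoping of the resulting second-order differences along the coupling, and the monotonicity hypotheses to convert $p_\alpha\E\lvert I_\beta-J_{\beta\alpha}\rvert$ into $\lvert\cov(I_\alpha,I_\beta)\rvert$ (for $\beta\in\Ii_\alpha^-$) and $\cov(I_\alpha,I_\beta)$ (for $\beta\in\Ii_\alpha^+$) all go through exactly as you describe, and the diagonal coordinate indeed contributes $p_\alpha^2$. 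The one ingredient you defer, the bound by $1$ on first and second differences of the solution to the Stein equation for the product-Poisson law (with no ``magic factor'', since this is the full process metric), is precisely what Chapter~10 of \cite{BHJ} supplies, and it is the same bound the thesis itself invokes in its proof of Lemma~\ref{lem:YZ}. One small imprecision: at the $\alpha$-coordinate the comparison is between $\I$ and $J^\alpha-e_\alpha$ (which disagree exactly when $I_\alpha=1$), not between $\I$ and $J^\alpha$; your earlier display has the pairing right, so this does not affect the argument.
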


By bunching together indicators into bins, we can slightly improve the rates:
\begin{prop}[Theorem~10.K in \cite{BHJ}]\label{prop:BHJbin}
  Assume all the conditions of the previous proposition.
  Suppose that we partition the index set as $\Ii = \bigcup_{k=1}^r\Ii_k$,
  and define
  \begin{align*}
    W_k = \sum_{\alpha\in\Ii_k} I_k,\qquad\qquad Y_k = \sum_{\alpha\in\Ii_k}Y_\alpha.
  \end{align*}
  Let $\lambda_k=\E Y_k$.
  \begin{equation}
    \begin{split}
    \dtv\bigl((W_1,&\ldots,W_r),\;(Y_1,\ldots,Y_r)\bigr)\\ &\leq
      2(1+e^{-1}\log^+\max\lambda_j)\Biggl(
        \sum_{k=1}^r\sum_{\alpha\in\Ii_k}\frac{p_\alpha^2}{\lambda_k} 
        +\sum_{j,k=1}^r\frac{A(j,k)}{\sqrt{\lambda_j\lambda_k}}\Biggl),
  \end{split}\label{eq:BHJbin}
  \end{equation}
  where
  \begin{align*}
    A(j,k)&=
        \sum_{\alpha\in\Ii_k}\Biggl(
          \sum_{\beta\in\Ii_\alpha^-\cap\Ii_j}\abs{\cov(I_\alpha,I_\beta)}
          +\sum_{\beta\in\Ii_\beta^+\cap\Ii_j}\cov(I_\alpha,I_\beta)\Biggl)
  \end{align*}
  
\end{prop}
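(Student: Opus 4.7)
The plan is to apply Stein's method for multivariate Poisson approximation, with the size-bias coupling $(J_{\beta\alpha})$ supplied by Proposition~\ref{prop:BHJprop} providing the probabilistic input and the analytic work concentrated in a smoothness estimate for the Stein solution. Let $\mathcal{L}$ be the generator of $r$ independent $M/M/\infty$ queues with immigration rates $\lambda_k$ and unit per-particle death rate,
\begin{align*}
\mathcal{L}g(w)=\sum_{k=1}^r\lambda_k\bigl(g(w+e_k)-g(w)\bigr)+\sum_{k=1}^r w_k\bigl(g(w-e_k)-g(w)\bigr),
\end{align*}
whose stationary law is that of $Y=(Y_1,\ldots,Y_r)$. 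For each $A\subset\ZZ_+^r$ the Stein equation $\mathcal{L}g_A=\1_A-\P[Y\in A]$ admits the solution $g_A(w)=-\int_0^\infty\bigl(\P[Y^w(t)\in A]-\P[Y\in A]\bigr)\,dt$, where $Y^w(t)$ denotes the vector process started from $w$, and consequently $\dtv\bigl((W_1,\ldots,W_r),(Y_1,\ldots,Y_r)\bigr)=\sup_A\abs{\E\mathcal{L}g_A(W)}$.

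The key analytic input, and the main obstacle of the proof, is the multivariate Stein magic factor estimate
\begin{align*}
\bigl\|\Delta_j\Delta_k g_A\bigr\|_\infty\leq\frac{2\bigl(1+e^{-1}\log^+\max_i\lambda_i\bigr)}{\sqrt{\lambda_j\lambda_k}},
\end{align*}
where $\Delta_jg(w):=g(w+e_j)-g(w)$; this is the sole source of the $\log^+\max\lambda_i$ factor in the theorem. I would prove it by coupling two realizations of the $M/M/\infty$ vector process started from states differing by $e_j$, so that the meeting time of the $j$-th coordinate is $\Exp(1)$, and then splitting the time integral defining $g_A$ at a threshold $T\asymp\log\max_i\lambda_i$: for $t<T$ the integrand is controlled by the survival probability $e^{-t}$ of the uncoupled particles, producing the $\log^+$ term, while for $t>T$ a local-limit-theorem bound for the stationary Poisson marginals produces the $1/\sqrt{\lambda_j\lambda_k}$ improvement over the trivial estimate.

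The remainder is bookkeeping with the size-bias coupling. Because the conditional law of $(I_\beta)$ given $I_\alpha=1$ equals that of $(J_{\beta\alpha})$, the random vector $V^\alpha:=\sum_\beta J_{\beta\alpha}e_{k(\beta)}$ (with $k(\beta)$ the block containing $\beta$) satisfies $\E[W_k h(W-e_k)]=\sum_{\alpha\in\Ii_k}p_\alpha\E h(V^\alpha-e_k)$ for every bounded $h$, so expanding $\mathcal{L}$ yields
\begin{align*}
\E\mathcal{L}g_A(W)=\sum_{k=1}^r\sum_{\alpha\in\Ii_k}p_\alpha\bigl(\E\Delta_k g_A(W)-\E\Delta_k g_A(V^\alpha-e_k)\bigr).
\end{align*}
Decomposing $V^\alpha-e_k-W=-I_\alpha e_k+\sum_{\beta\neq\alpha}(J_{\beta\alpha}-I_\beta)e_{k(\beta)}$ and using \eqref{eq:monotone-}--\eqref{eq:monotone+}, I would walk from $W$ to $V^\alpha-e_k$ in signed unit steps, bounding each increment of $\Delta_k g_A$ by the relevant $\|\Delta_{k(\beta)}\Delta_k g_A\|_\infty$ and using $\E\abs{J_{\beta\alpha}-I_\beta}=\abs{\cov(I_\alpha,I_\beta)}/p_\alpha$ with the correct sign in each partition block; the $\beta=\alpha$ step contributes $p_\alpha\|\Delta_k^2 g_A\|_\infty$. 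Summing over $\alpha\in\Ii_k$ and $k$ groups the covariance contributions into $A(j,k)$ and the diagonal contributions into $\sum_{\alpha\in\Ii_k}p_\alpha^2$; inserting the magic factor bound then pulls the common prefactor $2(1+e^{-1}\log^+\max_i\lambda_i)$ outside and, after taking the supremum over $A$, recovers \eqref{eq:BHJbin}.
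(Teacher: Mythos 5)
You should first note that the thesis never proves this proposition: it is imported verbatim as Theorem~10.K of \cite{BHJ}, so there is no internal proof to match, and what you are attempting is a reconstruction of the Barbour--Holst--Janson argument. Your reconstruction has the right architecture, and the coupling half is essentially correct: the generator/semigroup representation of the Stein solution, the identity $\E[W_k h(W-e_k)]=\sum_{\alpha\in\Ii_k}p_\alpha\,\E h(V^\alpha-e_k)$, the telescoping walk from $W$ to $V^\alpha-e_k$, and the use of the monotone coupling \eqref{eq:monotone-}--\eqref{eq:monotone+} to write $\E\abs{J_{\beta\alpha}-I_\beta}=\abs{\cov(I_\alpha,I_\beta)}/p_\alpha$ do assemble into the terms $A(j,k)$ and $\sum_\alpha p_\alpha^2$, exactly as in BHJ.

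The genuine gap is that the entire difficulty of the theorem sits in the second-difference ("magic factor") estimate, which you only assert, and the mechanism you sketch for it does not produce the stated bound. For $\Delta_j\Delta_k g_A$ you must compare four copies of the immigration--death process (one extra particle in coordinate $j$ and one in coordinate $k$); the integrand in the time integral vanishes unless both extra particles are alive, which has probability $e^{-2t}$, and on that event the relevant quantity is a Poisson shift/total-variation (local CLT) bound of order $1/\bigl((1-e^{-t})\sqrt{\lambda_j\lambda_k}\bigr)$, which degrades like $1/t$ as $t\to 0$. The logarithm comes from integrating this degradation over the intermediate window from $t\approx 1/\sqrt{\lambda_j\lambda_k}$ (where the bound crosses $1$) up to $t=O(1)$. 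Your proposed split at $T\asymp\log\max_i\lambda_i$ cannot generate it: bounding the integrand by the survival probability on $[0,T]$ yields only an absolute constant, and beyond such a $T$ the survival factor is already polynomially small in $\max_i\lambda_i$, so neither regime produces a $\log^+\max_i\lambda_i$ divided by $\sqrt{\lambda_j\lambda_k}$. Also, obtaining the specific constant $2\bigl(1+e^{-1}\log^+\max_j\lambda_j\bigr)$ (and its diagonal analogue with $\lambda_k$ in place of $\sqrt{\lambda_j\lambda_k}$) requires carrying this computation out carefully, as in Barbour's estimate reproduced in Section~10.2 of \cite{BHJ}. Until that lemma is either proved correctly or explicitly cited, the proof is incomplete at its central step, even though the surrounding bookkeeping is sound.
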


\subsection{Exchangeable pairs and switchings}
For our Poisson approximation of cycle counts in the uniform model, we will use
a different form of Stein's known as the method of exchangeable pairs.
 As we lay out the background necessary to apply
Stein's method by exchangeable pairs, we will also explain a connection between this method
and  a combinatorial technique for asymptotic enumeration
called the method of switchings.

The method of switchings,  pioneered
by Brendan McKay and Nicholas Wormald,
has been
applied to asymptotically enumerate combinatorial structures
that defy exact counts, including
Latin rectangles \cite{GM} and matrices with prescribed row and column sums
\cite{Mc84,MW,GMW}.  It has seen its biggest use in analyzing regular graphs;
see \cite{KSVW}, \cite{MWW}, \cite{KSV}, and \cite{BSK} for some examples.
A good summary of switchings in random regular graphs
can be found in Section~2.4 of \cite{Wor99}.
    
    The basic idea of the method is to choose
    two families of objects, $A$ and $B$, and investigate only their
    relative sizes.  To do this, one defines a set
    of switchings that
    connect elements of $A$ to elements of $B$.
    If every element of $A$ is connected to
    roughly $p$ objects in $B$,
    and every element in $B$ is connected to roughly
    $q$ objects in $A$, then 
    by a double-counting argument, $|A|/|B|$
    is approximately $q/p$.  When the objects in question
    are elements of a probability space, this gives an estimate of
    the relative probabilities of two events.

Stein's method (sometimes
called the Stein-Chen method when used for Poisson approximation)
is a powerful and elegant tool to compare two probability
distributions.  It was originally developed by Charles
Stein for normal approximation; its first published
use is \cite{St1}. Louis Chen adapted the method 
for Poisson approximation \cite{Chen1}.  Since then,
Stein, Chen, and a score of others have adapted Stein's method to
a wide variety of circumstances.  The survey
paper \cite{Ross} gives a broad introduction to Stein's method, 
and \cite{CDM} and
\cite{BHJ} focus specifically on using it for Poisson approximation.

We will use the technique of exchangeable pairs,
following the treatment in \cite{CDM}.
Suppose we want to bound
the distance of the law of $X$ from the Poisson distribution.
The technique is to introduce an auxiliary randomization to $X$ to
get a new random variable $X'$ so that $X$ and $X'$ are exchangeable
(that is, $(X,X')$ and $(X', X)$ have the same law).
If $X$ and $X'$ have the right relationship---specifically,
if they behave
like two steps in an immigration-death 
process whose stationary distribution
is Poisson---then Stein's method
gives an easy proof that $X$ is approximately Poisson.

    Switchings and Stein's method have bumped into each other
    several times.
For instance,
both techniques have been used to study Latin rectangles \cite{Stein,GM},
and the analysis of random contingency tables in \cite{DS} is similar
to combinatorial work like \cite{GrM}.
Nevertheless, we believe that this is the first explicit connection
between the two techniques.
The essential idea is to use a random switching as the auxiliary randomization
in constructing an exchangeable pair.  

We believe the connection between switchings and Stein's method
may prove profitable to users of both techniques.  
Using Stein's method in conjunction with a switchings argument
allows for a quantitative bound  on the accuracy of the approximation.
Stein's method
can also be used for approximation
by other distributions besides Poisson, and for proving
concentration bounds  (see \cite{Cha}).
On the other hand, Stein's method cannot prove results as sharp
as \cite[Theorem~2]{MWW}, which gives an extremely accurate bound
on the probability that a random graph
has no cycles of length $r$ or less.
The bare-hands switching arguments used there might
be useful to anyone who needs a particularly sharp bound on
a Poisson approximation at a single point (see \cite[Proposition~1.7]{JohPaq}).

Now, we give the background we need on Stein's method of exchangeable pairs.
Recall that the main idea of Stein's method of exchangeable pairs
is to perturb a random variable $X$ to get a new random variable $X'$,
and then to examine
the relationship between the two.
The basic heuristic is that
if $(X,X')$ is exchangeable and
\begin{align*}
  \P[X'=X+1\mid X]&\approx \frac{\lambda}{c},\\
  \P[X'=X-1\mid X]&\approx \frac{X}{c},
\end{align*}
for some constant $c$, then 
$X$ is approximately Poisson with mean $\lambda$.
(When $X$ and $X'$ are exactly Poisson with mean $\lambda$ and are
two steps in an immigration-death
chain whose stationary distribution is that, these
equations hold exactly.)  The following proposition
gives a precise, multivariate version of this heuristic.
\begin{prop}[{\cite[Proposition~10]{CDM}}]\label{prop:steinsmethod}
  Let $W=(W_1,\ldots,W_r)$ be a random vector taking values
  in $\NN^r$, and let the coordinates of
  $Z=(Z_1,\ldots,Z_r)$ be independent Poisson random variables with
  $\E Z_k=\lambda_k$.  Let $W'=(W_1',\ldots,W_r')$ be defined
  on the same space as $W$, with $(W,W')$ an exchangeable pair.
  
  For any choice of $\sigma$-algebra $\Ff$ with
  respect to which $W$ is measurable and any
  choice of constants $c_k$,
  \begin{align*}
    \dtv(W, Z)\leq
      \sum_{k=1}^r\xi_k\Big(\E\big|\lambda_k-c_k\P[\Delta^+_k\mid\Ff]\big|
      +\E\big| W_k-c_k\P[\Delta^-_k\mid\Ff]  \big|
      \Big),
  \end{align*}
  with $\xi_k=\min(1,1.4\lambda_k^{-1/2})$ and
  \begin{align*}
    \Delta^+_k &= \{W_k'=W_k+1,\ \text{$W_j=W'_j$ for $k<j\leq r$}\},\\
    \Delta^-_k &= \{W_k'=W_k-1,\ \text{$W_j=W'_j$ for $k<j\leq r$}\}.
  \end{align*}
\end{prop}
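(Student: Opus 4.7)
The plan is to apply Stein's method for multivariate Poisson approximation in three standard movements: solve the Stein equation for the product Poisson, bound the first differences of the solution (the Stein factors), and then use exchangeability of $(W,W')$ to match the Stein generator against the conditional probabilities of $\Delta_k^\pm$ appearing in the bound.

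First I would introduce the generator
$$(\Aa g)(w) = \sum_{k=1}^{r}\lambda_k\bigl(g(w+e_k)-g(w)\bigr) + w_k\bigl(g(w-e_k)-g(w)\bigr)$$
of the product immigration-death chain whose stationary measure is $\bigotimes_{k}\Poi(\lambda_k)$, and for each bounded $h\colon\NN^r\to\RR$ solve $\Aa g = h - \E h(Z)$. A standard semigroup computation shows that the solution's first differences satisfy $\|g(\cdot+e_k)-g(\cdot)\|_\infty \le \xi_k\|h\|_\infty$, with $\xi_k = \min(1,1.4\lambda_k^{-1/2})$; this is the classical multivariate Stein factor, and after specializing to $h=\mathbf{1}_A$ the problem reduces to bounding $|\E[(\Aa g)(W)]|$ uniformly in $A$.

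Next I would use the exchangeability identity $\E[g(W)-g(W')]=0$, telescoping through the interpolating points $W^{(k)}=(W_1,\dots,W_k,W'_{k+1},\dots,W'_r)$ so that $W^{(0)}=W'$ and $W^{(r)}=W$. Each increment $g(W^{(k)})-g(W^{(k-1)})$ moves only the $k$-th coordinate, from $W'_k$ to $W_k$. The events $\Delta_k^\pm$ are designed so that on $\Delta_k^+$ the higher-coordinate portions of $W^{(k-1)}$ and $W^{(k)}$ agree with those of $W$ itself and the $k$-th coordinate jumps from $W_k+1$ to $W_k$; the increment therefore collapses cleanly to $-\bigl(g(W+e_k)-g(W)\bigr)$, with the symmetric simplification on $\Delta_k^-$. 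Multiplying the telescoped identity by the scalars $c_k$ and conditioning on $\Ff$ (under which $W$ is measurable) produces an $\Ff$-measurable identity in which $c_k\P[\Delta_k^+\mid\Ff]$ sits exactly where $\lambda_k$ sits in the Stein operator, and $c_k\P[\Delta_k^-\mid\Ff]$ sits exactly where $W_k$ sits.

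Subtracting this identity from $\E[(\Aa g)(W)] = \E h(W)-\E h(Z)$ rewrites the discrepancy as
$$\sum_{k=1}^{r}\E\bigl[(\lambda_k - c_k\P[\Delta_k^+\!\mid\Ff])\bigl(g(W+e_k)-g(W)\bigr)\bigr] + \sum_{k=1}^{r}\E\bigl[(W_k - c_k\P[\Delta_k^-\!\mid\Ff])\bigl(g(W-e_k)-g(W)\bigr)\bigr],$$
after which the first-difference bound from the first step, the triangle inequality, and supremum over $A$ deliver the stated estimate. The main obstacle is the telescope's bookkeeping: the events $\Delta_k^\pm$ are hierarchical, pinned at the highest index where $W$ and $W'$ disagree by a unit jump, and one must verify that every increment arising outside $\bigcup_k(\Delta_k^+\cup\Delta_k^-)$ either vanishes (because $W_k=W'_k$) or is absorbed when combined with its exchangeable partner, so that no extra error term appears beyond the two displayed above. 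This is precisely the calculation that forces the particular form of $\Delta_k^\pm$ in the statement and is the heart of the argument in \cite{CDM}.
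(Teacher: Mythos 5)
You are attempting to prove a statement that the paper itself does not prove: Proposition~\ref{prop:steinsmethod} is imported from \cite[Proposition~10]{CDM}, and the thesis only remarks that conditioning on $\Ff$ instead of $W$ and dropping $\E W_k=\lambda_k$ do not disturb that proof. Measured against the argument in \cite{CDM}, your sketch has a genuine gap at exactly the step you yourself flag as ``the heart of the argument.'' The single telescoped identity $0=\E[g(W)-g(W')]=\sum_k\E\bigl[g(W^{(k-1)})-g(W^{(k)})\bigr]$ cannot deliver what you need, for two reasons. First, it is one scalar identity, while the bound requires a \emph{separate} identity for each $k$ (each weighted by its own constant $c_k$); you cannot multiply different summands of one identity by different constants. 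Second, the per-$k$ identity that exchangeability actually provides, $\E[\1_{\Delta_k^+}\psi(W,W')]=\E[\1_{\Delta_k^-}\psi(W',W)]$, does not close up with a fully multivariate Stein solution $g$: taking $\psi(W,W')=g(W+e_k)-g(W)$, the right-hand side becomes $\E[\1_{\Delta_k^-}(g(W'+e_k)-g(W'))]$, and on $\Delta_k^-$ the coordinates of $W'$ \emph{below} $k$ are unconstrained, so this cannot be identified with $\E[\1_{\Delta_k^-}(g(W)-g(W-e_k))]$. Likewise the increments of your telescope on the complement of $\Delta_k^+\cup\Delta_k^-$ (e.g.\ $|W_k'-W_k|\geq 2$, or a unit jump in coordinate $k$ with some higher coordinate also changed) neither vanish nor pair off; no mechanism in your sketch absorbs them. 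This is precisely why the events $\Delta_k^\pm$ only require agreement of coordinates $j>k$: the proof in \cite{CDM} is a coordinate-by-coordinate hybrid argument, bounding $\dtv(W,Z)$ by $\sum_k$ of the distance between $(Z_1,\ldots,Z_{k-1},W_k,\ldots,W_r)$ and $(Z_1,\ldots,Z_k,W_{k+1},\ldots,W_r)$, solving at stage $k$ a \emph{univariate} Stein equation whose test function (the indicator averaged over the independent Poisson coordinates $1,\ldots,k-1$) depends only on coordinates $k,\ldots,r$; with that $g_k$, the swap identity on $\Delta_k^\pm$ does close up, and conditioning on $\Ff$ inserts $c_k\P[\Delta_k^\pm\mid\Ff]$ in the right places.

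A secondary issue is your Stein factor. The constant $\xi_k=\min(1,1.4\lambda_k^{-1/2})$ in \cite{CDM} is the sup-norm bound on the \emph{univariate} Stein solution (applied at each stage of the hybrid), not a first-difference bound for the solution of the product-Poisson generator equation. The standard first-difference bound in the multivariate setting is $1$ — this is exactly what the thesis quotes from \cite{BHJ} in Lemma~\ref{lem:YZ}, and Proposition~\ref{prop:BHJbin} shows that sharpened multivariate estimates come with extra logarithmic factors. A coupling computation does suggest a bound of order $\lambda_k^{-1/2}$ for the $k$-th first difference of the multivariate solution, but it is not the ``classical'' fact you assert, the constant $1.4$ is not obviously attainable that way, and in any case repairing this would still leave the exchangeability bookkeeping above unrepaired. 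If you want a complete proof, follow the coordinate-wise reduction: it is both where the hierarchical form of $\Delta_k^\pm$ comes from and where the $1.4\lambda_k^{-1/2}$ enters.
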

\begin{rmk}
  We have changed the statement of the proposition from
\cite{CDM} in two small ways:
we condition our probabilities on $\Ff$, rather than on $W$, and
we do not require that $\E W_k =\lambda_k$ (though
the approximation will fail if this is far from true).  Neither
change invalidates the proof of the proposition.
\end{rmk}

\begin{rmk}
There is a direct connection between switchings and a certain bare-hands
version of Stein's method.  Though this is not
what we use in this paper,
it is helpful in understanding why Stein's method
and the method of switchings are so similar.
If $(X,X')$ is exchangeable, then as explained in
\cite[Section~2]{St92},
one can  directly investigate
ratios of probabilities of different values of $X$ using the equation
\begin{align*}
  \frac{\P[X=x_1]}{\P[X=x_2]} &= \frac{\P[X'=x_1\mid X=x_2]}{\P[X'=x_2\mid
  X=x_1]}.
\end{align*}
This technique
bears a strong resemblance to the method of switchings: if we think
of $X$ as some property of a random graph (for example, number of cycles)
and $X'$ as that property after a random switching has been applied, then
this formula instructs us to count how many switchings change $X$
from $x_1$ to $x_2$ and vice versa, just as one does
when using switchings for asymptotic enumeration. 
\end{rmk}

\section{Poisson approximation in the permutation model}

We introduce two lemmas.  The first gives a bound on the distance between
Poisson random variables with almost the same means, and 
the second provides a technical bound that we need.
\begin{lem}\label{lem:YZ}
  Let $\Y=(Y_\alpha,\,\alpha\in\Ii)$ and $\Z=(Z_\alpha,\,\alpha\in\Ii)$
  be vectors of independent Poisson random variables.
  Then
  \begin{align*}
    \dtv(\Y,\,\Z)\leq \sum_{\alpha\in\Ii}|\E Y_\alpha-\E Z_\alpha|.
  \end{align*}
\end{lem}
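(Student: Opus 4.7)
The plan is to establish the bound coordinate-by-coordinate via an explicit coupling, then combine the couplings using independence.

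First I would recall the coupling characterization: for any two random objects $U,V$ defined on a common probability space, $\dtv(\mathcal{L}(U),\mathcal{L}(V)) \leq \P[U \neq V]$. Thus it suffices to construct a coupling of $\Y$ and $\Z$ under which equality holds except on a set whose probability is bounded by $\sum_\alpha |\E Y_\alpha - \E Z_\alpha|$.

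Next I would handle a single coordinate $\alpha \in \Ii$. Set $\lambda = \E Y_\alpha$ and $\mu = \E Z_\alpha$, and assume without loss of generality $\lambda \leq \mu$. Using the fact that a sum of independent Poissons is Poisson, I would let $Y_\alpha \sim \Poi(\lambda)$ and $W_\alpha \sim \Poi(\mu - \lambda)$ be independent, and set $Z_\alpha = Y_\alpha + W_\alpha$, which is $\Poi(\mu)$-distributed. Then
\begin{align*}
  \P[Y_\alpha \neq Z_\alpha] = \P[W_\alpha \geq 1] = 1 - e^{-(\mu - \lambda)} \leq \mu - \lambda = |\E Y_\alpha - \E Z_\alpha|.
\end{align*}

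Finally I would build the joint coupling by performing the above construction independently across $\alpha \in \Ii$. Since independence is preserved on both sides, the resulting marginals match $\Y$ and $\Z$. A union bound gives
\begin{align*}
  \dtv(\Y,\Z) \leq \P[\Y \neq \Z] \leq \sum_{\alpha \in \Ii} \P[Y_\alpha \neq Z_\alpha] \leq \sum_{\alpha \in \Ii} |\E Y_\alpha - \E Z_\alpha|,
\end{align*}
which is the desired inequality. There is no serious obstacle here; the only mild subtlety is ensuring the inequality $1 - e^{-x} \leq x$ for $x \geq 0$, which is immediate from the Taylor series or convexity. The argument is essentially the standard subadditivity of total variation distance under independent products combined with the classical one-dimensional Poisson coupling bound.
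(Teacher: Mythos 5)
Your proof is correct, but it takes a genuinely different route from the paper. You argue by an explicit coupling: for each coordinate you write the larger-mean Poisson as the smaller one plus an independent $\Poi(|\E Y_\alpha - \E Z_\alpha|)$ increment, bound the probability of disagreement by $1-e^{-x}\leq x$, couple the coordinates independently, and finish with the coupling inequality and a union bound. The paper instead applies the Stein--Chen method directly: it writes down the Stein operator $\Aa$ for the law of $\Z$, invokes the standard bound $\sup|h(x+e_\alpha)-h(x)|\leq 1$ on solutions of the multivariate Stein equation (Proposition~10.1.2 and Lemma~10.1.3 of \cite{BHJ}), and observes that $\E\Aa h(\Y)$ reduces to a sum of terms $(\E Z_\alpha-\E Y_\alpha)\E[h(\Y+e_\alpha)-h(\Y)]$, each bounded in absolute value by $|\E Z_\alpha-\E Y_\alpha|$. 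Your argument is more elementary and entirely self-contained (it needs nothing beyond infinite divisibility of the Poisson law and the coupling characterization of total variation distance), whereas the paper's argument reuses the Stein machinery that is already central to the chapter and illustrates how perturbations of the Stein operator translate directly into total variation bounds. Both yield exactly the same inequality, and neither has an advantage in sharpness here.
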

\begin{proof}
  We will apply the Stein-Chen method directly.
  Define the operator $\Aa$ by
    \begin{align*}
      \Aa h(x) = \sum_{\alpha\in\Ii}\E[Z_\alpha]\big(h(x+e_\alpha)-h(x)\big)
        +\sum_{\alpha\in\Ii}x_\alpha\big(h(x-e_\alpha)-h(x)\big)
    \end{align*}
    for any $h\colon \ZZ_+^{\abs{\Ii}}\to \RR$ and $x\in\ZZ_+^{\abs{\Ii}}$.
    This is the Stein operator for the law of $\Z$, and
    $\E\Aa h(\Z)=0$ for any bounded function $h$.
    By Proposition~10.1.2 and Lemma~10.1.3 in \cite{BHJ},
    for any set $A\subset \ZZ_+^{\abs{\Ii}}$, there is a function
    $h$ such that
    \begin{align*}
        \Aa h(x)=1\{x\in A\}-
        \P[\Z\in A],
      \end{align*}
    and this function has the property that
    \begin{align}
        \sup_{\substack{x\in\ZZ_+^{\abs{\Ii}}\\ \alpha\in\Ii}}|h(x+e_\alpha)-h(x)|\leq 1.
          \label{eq:mvsteinbound1}
      \end{align}
    Thus we can bound the total variation distance between
    the laws of $\Y$ and $\Z$ by bounding $\abs{\E\Aa h(\Y)}$
    over all such functions $h$.
    
    We write $\Aa h(\Y)$ as
    \begin{align*}
      \Aa h(\Y) &= \sum_{\alpha\in\Ii}\E[Y_\alpha]\big(h(\Y+e_\alpha)-h(\Y)\big)
        +\sum_{\alpha\in\Ii}Y_\alpha\big(h(\Y-e_\alpha)-h(x)\big)\\
        &\phantom{=}\quad +\sum_{\alpha\in\Ii}\big(\E Z_\alpha - \E Y_\alpha\big)
          \big(h(\Y+e_\alpha)-h(\Y)\big).
    \end{align*}
    The first two of these sums have expectation zero, so
    \begin{align*}
      \abs{\E\Aa h(\Y)} &\leq \sum_{\alpha\in\Ii}\abs{\E Z_\alpha - \E Y_\alpha}
      \E\abs{h(\Y+e_\alpha)-h(\Y)}.
    \end{align*}
    By \eqref{eq:mvsteinbound1},
      $\abs{h(\Y+e_\alpha)-h(\Y)}\leq 1$,
    which proves the lemma.
\end{proof}

\begin{lem}\label{lem:ipbounds}
  Let $a$ and $b$ be $d$-dimensional vectors with nonnegative integer
  components,
  and let $\ip{a}{b}$ denote the standard Euclidean inner product.
  \begin{align*}
      \prod_{i=1}^d\frac{1}{[n]_{a_i+b_i}} - 
        \prod_{i=1}^d\frac{1}{[n]_{a_i}[n]_{b_i}}
        \leq \frac{\ip{a}{b}}{n}\prod_{i=1}^d\frac{1}{[n]_{a_i+b_i}}
    \end{align*}
\end{lem}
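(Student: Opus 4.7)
The plan is to reduce the inequality to a one-dimensional statement. First, observe the key factorization $[n]_{a_i+b_i} = [n]_{a_i}\,[n-a_i]_{b_i}$, so that the quotient $\tfrac{[n]_{a_i+b_i}}{[n]_{a_i}[n]_{b_i}}$ equals $\tfrac{[n-a_i]_{b_i}}{[n]_{b_i}}$, which lies in $(0,1]$. Dividing both sides of the claim by the positive quantity $\prod_{i=1}^d 1/[n]_{a_i+b_i}$, the inequality is equivalent to
\begin{align*}
1 - \prod_{i=1}^d \frac{[n-a_i]_{b_i}}{[n]_{b_i}} \leq \frac{\ip{a}{b}}{n}.
\end{align*}

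Next, I would set $\alpha_i := 1 - [n-a_i]_{b_i}/[n]_{b_i}$, so each $\alpha_i \in [0,1]$. The Weierstrass product inequality $\prod_{i=1}^d(1-\alpha_i) \geq 1 - \sum_{i=1}^d \alpha_i$, proved by a one-line induction on $d$, reduces the task to the single-coordinate bound $\alpha_i \leq a_i b_i/n$. Summing these bounds then gives $\sum_i \alpha_i \leq \ip{a}{b}/n$, which is exactly what is needed.

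For the one-coordinate bound I would argue probabilistically. Fix a subset $A \subset \{1,\ldots,n\}$ of size $a$. Then $[n-a]_b/[n]_b$ is the probability that a uniformly random ordered sequence of $b$ distinct elements of $\{1,\ldots,n\}$ avoids $A$. The complementary event is the union, over positions $j \in \{1,\ldots,b\}$, of the events that the $j$th entry lies in $A$; since each entry is marginally uniform on $\{1,\ldots,n\}$, the union bound gives $\alpha = 1 - [n-a]_b/[n]_b \leq b \cdot (a/n) = ab/n$, as required.

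The real work is in spotting the right rearrangement. Once the factorization $[n]_{a+b} = [n]_a[n-a]_b$ is used to convert the additive difference on the left-hand side into the multiplicative form above, the Weierstrass step and the union bound are both routine; I do not anticipate any additional technical obstacle.
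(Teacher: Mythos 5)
Your proof is correct, and its essential content matches the paper's: interpret the ratio $[n]_{a_i+b_i}/([n]_{a_i}[n]_{b_i})$ (equivalently, after your factorization, $[n-a_i]_{b_i}/[n]_{b_i}$) as an avoidance probability and control the complement by a union bound of size $a_ib_i/n$. The only structural difference is how the product over the $d$ coordinates is handled. The paper builds a single probability space with independent injections $\sigma_i\colon[a_i]\to[n]$ and $\tau_i\colon[b_i]\to[n]$ for all $i$ at once; by independence, the full product $\prod_i [n]_{a_i+b_i}/([n]_{a_i}[n]_{b_i})$ is exactly the probability that no coordinate ``clashes,'' so one union bound over all triples $(i,j,k)$ gives the bound $\langle a,b\rangle/n$ in a single stroke, with no need for a product inequality. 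You instead decouple the coordinates deterministically via the Weierstrass inequality $\prod_i(1-\alpha_i)\ge 1-\sum_i\alpha_i$ and then prove the one-coordinate estimate $\alpha_i\le a_ib_i/n$ probabilistically; that estimate is precisely the paper's clash bound for a single coordinate (condition on $\sigma_i$ and note each entry of the random $b_i$-sequence is marginally uniform). Both routes are equally short: the paper trades your Weierstrass step for independence across coordinates, while yours needs only a one-dimensional model and is, if anything, slightly more elementary. I see no gaps; the implicit requirement $a_i+b_i\le n$ (so that the displayed quantities are finite and nonzero) is needed by the statement itself and by both arguments alike.
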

\begin{proof}
    We define a family of independent
    random maps $\sigma_i$ and $\tau_i$
    for $1\leq i\leq d$.
    Choose $\sigma_i$ uniformly from all injective maps from $[a_i]$
    to $[n]$, and choose $\tau_i$ uniformly from all injective maps
    from $[b_i]$ to $[n]$.  Effectively, $\sigma_i$ and $\tau_i$
    are random ordered subsets of $[n]$.
    We say that $\sigma_i$ and $\tau_i$ \textit{clash} if their images
    overlap.
    \begin{align*}
      \P[\text{$\sigma_i$ and $\tau_i$ clash for some $i$}] = 
        1-\prod_{i=1}^d\frac{[n]_{a_i+b_i}}{[n]_{a_i}[n]_{b_i}}.
    \end{align*}
  For any $1\leq i\leq d$, $1\leq j\leq a_i$, and $1\leq k\leq b_i$,
  the probability that $\sigma_i(j)=\tau_i(k)$ is $1/n$.
  By a union bound,
  \begin{align*}
    \P[\text{$\sigma_i$ and $\tau_i$ clash for some $i$}]
      &\leq \sum_{i=1}^d\frac{a_ib_i}{n}=\frac{\ip{a}{b}}{n}.
  \end{align*}
  We finish the proof by dividing
  both sides of this inequality by $\prod_{i=1}^d[n]_{a_i+b_i}$.
\end{proof}

\begin{proof}[Proof of Theorem~\ref{thm:processapprox}]
  We will give the proof in three sections: First, we make the coupling
  and show that it satisfies \eqref{eq:monotone-} and \eqref{eq:monotone+}.
  Next, we apply Proposition~\ref{prop:BHJprop} to approximate $\I$
  by $\Y$, a vector of independent Poissons with $\E Y_\alpha=\E I_\alpha$.
  Last, we approximate $\Y$ by $\Z$ to prove the theorem.
  
  If $d>n^{1/2}$ or $r>n^{1/10}$, then  $c(2d-1)^{2r-1}/n>1$ 
  for a sufficiently large choice of $c$, and the theorem holds
  trivially.  Thus we will assume throughout that $d\leq n^{1/2}$
  and $r\leq n^{1/10}$ (the choice of $1/10$ here is completely arbitrary).
  The expression $O(f(d, r, n))$ should be interpreted as a function
  of $d$, $r$, and $n$
  whose absolute value is bounded by $Cf(d,r,n)$ for some absolute constant
  $C$, for all $d$, $r$, and $n$ satisfying $2\leq d\leq n^{1/2}$
  and $r\leq n^{1/10}$.
  
  \step{1}{Constructing the coupling.}
  
  Fix some $\alpha\in\Ii$.  We will construct a random vector
  $(J_{\beta\alpha},\,\beta\in\Ii)$ distributed as
  $(I_\beta,\,\beta\in\Ii)$ conditioned on $I_\alpha=1$.
  We do this by constructing a random graph $G_n'$
  distributed as $G_n$ conditioned to contain the cycle $\alpha$.
  Once this is done, we will define
  $J_{\beta\alpha}=1\{\text{$G_n'$ contains cycle $\beta$}\}$.

Let $\pi_1,\ldots,\pi_d$ be the random permutations that give rise to $G_n$.
We will alter them to form permutations $\pi_1',\ldots,\pi_d'$, and we will
construct $G_n'$ from these.  Let us first consider what distributions
$\pi_1',\ldots,\pi_d'$ should have.  For example, suppose that
$\alpha$ is the cycle
\begin{center}
  \begin{tikzpicture}[scale=1.4,auto]
    \path (0,0) node (s1) {$1$}
          (1,0) node (s2) {$2$}
          (2,0) node (s3) {$3$}
          (3,0) node (s4) {$4$}
          (4,0) node (s5) {$1.$};
    \draw[->] (s1)-- node {$\pi_3$} (s2);
    \draw[<-] (s2)-- node {$\pi_1$} (s3);
    \draw[->] (s3)-- node {$\pi_3$} (s4);
    \draw[->] (s4)-- node {$\pi_1$} (s5);
  \end{tikzpicture}
\end{center}
Then $\pi_1'$ should be distributed as a uniform random $n$-permutation
conditioned to make $\pi_1'(3)=2$ and $\pi_1'(4)=1$, and
$\pi_3'$ should be distributed as a uniform random $n$-permutation
conditioned to make $\pi_3'(1)=2$ and $\pi_3'(3)=4$, while
$\pi_2'$ should just be a uniform random $n$-permutation.
A random graph constructed from $\pi_1'$, $\pi_2'$, and $\pi_3'$
will be distributed as $G_n$ conditioned to contain $\alpha$.

We now describe the construction of $\pi_1',\ldots,\pi_d'$.
Suppose $\alpha$ is the cycle
  \begin{align}\label{eq:alpha}
  \begin{tikzpicture}[baseline, auto]
    \begin{scope}[anchor=base west]
      \path (0,0) node (s0) {$s_0$}
            (s0.base east)+(1.1,0) node (s1) {$s_1$}
            (s1.base east)+(1.1,0) node (s2) {$s_2$}
            (s2.base east)+(1.1,0) node (s3) {$\cdots$}
            (s3.base east)+(1.1,0) node (s4) {$s_k=s_0,$};
    \end{scope}      
    \foreach\i [remember=\i as \lasti (initially 0)] in {1, ..., 3}
      \draw (s\lasti.mid east)-- node {$w_\i$} (s\i.mid west);
    \draw (s3.mid east)--node {$w_k$} (s4.mid west);
  \end{tikzpicture}
  \end{align}
  with each edge directed according to whether $w_i(s_{i-1})=s_{i}$
  or $w_i(s_{i})=s_{i-1}$.
    Fix some $1\leq l\leq d$, and suppose that the edge-label $\pi_l$
    appears $M$ times in the cycle $\alpha$. Let
    $(a_m,b_m)$ for $1\leq m\leq M$ be these directed edges.
    We must construct $\pi_l'$ to have the uniform
    distribution conditioned on $\pi_l'(a_m)=b_m$
    for  $1\leq m\leq M$.
    
    We define a sequence of random transpositions by the following
    algorithm: Let $\tau_1$ swap $\pi_l(a_1)$ with $b_1$.
    Let $\tau_2$ swap $\tau_1\pi_l(a_2)$ with $b_2$, and so on.
    We then define $\pi_l'=\tau_M\cdots\tau_1\pi_l$.
    This permutation satisfies $\pi_l'(a_m)=b_m$ for $1\leq m\leq M$,
    and it is distributed uniformly, subject to the
    given constraints, which can be 
    proven by induction on each swap.
      We now define $G_n'$ from the permutations $\pi_1',\ldots,\pi_d'$
      in the usual way.  It is defined on the same probability space as
      $G_n$, and it is distributed as $G_n$ conditioned to contain $\alpha$,
      giving us a random vector $(J_{\beta\alpha},\,\beta\in\Ii)$
      coupled with $(I_\beta,\,\beta\in\Ii)$.

      Now, we will give a partition $\Ii^-\cup\Ii^+=\Ii\setminus\{\alpha\}$
    satisfying \eqref{eq:monotone-} and \eqref{eq:monotone+}.
    Suppose that
    $G_n$ contains an edge $\labeledarrow{s_i}{v}{w_{i+1}}$
    with $v\neq s_{i+1}$,
    or an edge $\labeledarrow{v}{s_{i+1}}{w_{i+1}}$ with $v\neq s_i$.
    The graph $G_n'$ cannot contain this edge, since it contains
    $\alpha$.  In fact,
    edges of this form are the \emph{only} ones found in $G_n$ but not
    $G_n'$:
      \begin{lem}\label{lem:coupling}
      Suppose there is an edge 
      \labeledarrow{i}{j}{\pi_l}
      contained in $G_n$ but not in $G_n'$.  Then $\alpha$
      contains either an edge \labeledarrow{i}{v}{\pi_l}
      with $v\neq j$,
      or $\alpha$ contains an edge \labeledarrow{v}{j}{\pi_l} with $v\neq i$.
      \end{lem}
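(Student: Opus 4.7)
The plan is to trace carefully how the image of $i$ under the permutation evolves as the transpositions $\tau_1,\ldots,\tau_M$ are successively applied. Write $\sigma_0 = \pi_l$ and $\sigma_m = \tau_m\sigma_{m-1}$, so that $\sigma_M = \pi_l'$. By construction $\tau_m$ swaps the two elements $c_m := \sigma_{m-1}(a_m)$ and $b_m$. For any vertex $i$, one therefore has $\sigma_m(i)\neq\sigma_{m-1}(i)$ if and only if one of two things holds: either $i = a_m$ (in which case $\sigma_m(i) = b_m$), or $\sigma_{m-1}(i) = b_m$ and $i\neq a_m$ (in which case $\sigma_m(i) = c_m$). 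This is the only structural observation needed.

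Now assume $\pi_l(i)=j$ but $\pi_l'(i)\neq j$, and let $m_0$ be the smallest index $m$ for which $\sigma_m(i)\neq j$; such an $m_0$ exists because the image of $i$ starts at $j$ and ends at something different. By minimality, $\sigma_{m_0-1}(i)=j$ but $\sigma_{m_0}(i)\neq j$, so we land in exactly one of the two cases above applied at step $m_0$.

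In the first case ($i = a_{m_0}$), the cycle $\alpha$ contains the directed edge $\labeledarrow{i}{b_{m_0}}{\pi_l}$ by construction of the list $(a_m,b_m)$. If we had $b_{m_0}=j$ then $\sigma_{m_0}(i) = b_{m_0} = j$, contradicting the choice of $m_0$; hence $b_{m_0}\neq j$, giving the first conclusion of the lemma with $v = b_{m_0}$. In the second case ($\sigma_{m_0-1}(i) = b_{m_0} = j$ and $i\neq a_{m_0}$), the cycle $\alpha$ contains the edge $\labeledarrow{a_{m_0}}{j}{\pi_l}$ with $a_{m_0}\neq i$, giving the second conclusion with $v = a_{m_0}$.

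I do not expect a real obstacle here; the content of the lemma is essentially the observation that each swap $\tau_m$ can only disturb the images of two specific preimages, both traceable to an edge of $\alpha$ labeled by $\pi_l$. The only place where care is needed is verifying in each case that the resulting edge of $\alpha$ is genuinely different from $\labeledarrow{i}{j}{\pi_l}$, but this is forced by the definition of $m_0$ and the case distinction, as shown above.
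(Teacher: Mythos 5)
Your proof is correct and follows essentially the same route as the paper's: the image of $i$ can only be disturbed by a swap, and each swap traces back to a $\pi_l$-labeled edge $(a_m,b_m)$ of $\alpha$, yielding the two cases. Your version is in fact slightly more careful than the paper's (which speaks of $\pi_l(a_m)=j$ rather than the intermediate image $\sigma_{m-1}(a_m)$), since you track the intermediate permutations and pick the first index at which the image of $i$ changes, but the underlying argument is identical.
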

      \begin{proof}
        Suppose
        $\pi_l(i)=j$, but $\pi_l'(i)\neq j$.  Then $j$ must
        have been swapped 
        when making $\pi'_l$, which can happen only if
        $\pi_l(a_m)=j$ or $b_m=j$ for some $m$.  In the first case,
        $a_m=i$ and
        $\alpha$ contains the edge $\labeledarrow{i}{b_m}{\pi_l}$ with
        $b_m\neq j$,
        and in the second $\alpha$ contains the edge
        $\labeledarrow{a_m}{j}{\pi_l}$ with $a_m\neq i$.
    \end{proof}
    Define
      $\Ii_\alpha^-$ as all cycles in $\Ii$ that contain an edge
          $\labeledarrow{s_i}{v}{w_{i+1}}$ with $v\neq s_{i+1}$
          or an edge $\labeledarrow{v}{s_{i+1}}{w_{i+1}}$
          with $v\neq s_i$, and define $\Ii_\alpha^+$
          to be the rest of $\Ii\setminus\{\alpha\}$.
    Since $G_n'$ cannot contain any cycle in $\Ii_\alpha^{-}$,
    we have $J_{\beta\alpha}=0$ for all $\beta\in\Ii_\alpha^-$,
    satisfying \eqref{eq:monotone-}.
    For any $\beta\in\Ii_\alpha^+$, Lemma~\ref{lem:coupling}
    shows that if $\beta$ appears in $G_n$, it must also
    appear in $G_n'$.  Hence $J_{\beta\alpha}\geq I_\beta$, and
    \eqref{eq:monotone+} is satisfied.

      \step{2}{Approximation of $\I$ by $\Y$.}
        
    The conditions of Proposition~\ref{prop:BHJprop} are satisfied, and
    we need only bound the sums in \eqref{eq:BHJ}.
    Let $p_\alpha=\E I_\alpha$, the probability that cycle $\alpha$
    appears in $G_n$.  Recall that this equals
    $\prod_{i=1}^d 1/[n]_{e_i}$, where
    $e_i$ is the number of times $\pi_i$ and $\pi_i^{-1}$ appear in the word
    of $\alpha$.  This means that
    \begin{align}
      \frac{1}{n^k}\leq p_\alpha\leq\frac{1}{[n]_k},\label{eq:pbound}
    \end{align}
    where $k=\abs{\alpha}$, the length of cycle $\alpha$.
    
    We bound the first sum in \eqref{eq:BHJ} by
    \begin{align}
      \sum_{\alpha\in\Ii}p_\alpha^2
      =\sum_{k=1}^r\sum_{\alpha\in\Ii_k}p_\alpha^2
      &\leq \sum_{k=1}^r\sum_{\alpha\in\Ii_k}\frac{1}{[n]_k^2}\nonumber\\
      &= \sum_{k=1}^r \left(\frac{[n]_ka(d,k)}{2k}\right)\left(
        \frac{1}{[n]_k^2}\right)\nonumber\\
      &\leq \sum_{k=1}^r \frac{2d(2d-1)^{k-1}}{2k[n]_k}=
      O\left(\frac{d}{n}\right).\label{eq:psquaredcov} 
    \end{align}

    To bound the second sum in \eqref{eq:BHJ}, 
    we investigate the size of $\Ii_\alpha^-$.
    Suppose that $\alpha\in\Ii_k$, and $\alpha$ has the form
    given in \eqref{eq:alpha}.  Any $\beta\in\Ii_\alpha^-$ must
    contain an edge
    \labeledarrow{s_i}{v}{w_{i+1}} with $v\neq s_{i+1}$,
    or an edge \labeledarrow{v}{s_{i+1}}{w_{i+1}} with $v\neq s_{i}$, and
    there are at most $2k(n-1)$ edges of this form.
    For any given edge, there are at most $[n-2]_{j-2}(2d-1)^{j-1}$
    cycles in $\Ii_j$ that contain that edge, for any $j\geq 2$.
    Thus for any $\alpha\in\Ii_k$, the number of cycles of
    length $j\geq 2$ in $\Ii_\alpha^-$ is at most
    $2k[n-1]_{j-1}(2d-1)^{j-1}$, and this bound
    also holds for $j=1$.
    
    For any $\beta\in\Ii_\alpha^-$, it holds that $\E[I_\alpha I_\beta]=0$,
    so that $\cov(I_\alpha,I_\beta)=-p_\alpha p_\beta$.  Putting this all
    together and applying \eqref{eq:pbound}, we have
    \begin{align}
      \sum_{\alpha\in\Ii}\sum_{\smash{\beta\in\Ii_\alpha^-}}
      \abs{\cov(I_\alpha,I_\beta)}
      &= \sum_{k=1}^r\sum_{\alpha\in\Ii_k}\sum_{j=1}^r\sum_{\beta\in\Ii_\alpha^-\cap\Ii_j}
      p_\alpha p_\beta\nonumber\\
      &\leq \sum_{k=1}^r\abs{\Ii_k}\frac{1}{[n]_k}
        \sum_{j=1}^r\abs{\Ii_\alpha^-\cap\Ii_j}\frac{1}{[n]_j}
        \nonumber\\
      &\leq\sum_{k=1}^r\frac{a(d,k)}{2k}\sum_{j=1}^r\frac{2k(2d-1)^{j-1}}{n}
      \nonumber\\
      &=\sum_{k=1}^ra(d,k)O\left(\frac{(2d-1)^{r-1}}{n}\right)
      =O\left(\frac{(2d-1)^{2r-1}}{n}\right).\label{eq:-cov}
    \end{align}

    The final sum in \eqref{eq:BHJ} is the most difficult
    to bound. We partition\label{pluspartition}
    $\Ii_\alpha^+$  into sets
    $\Ii_\alpha^+=\Ii_\alpha^0\cup\cdots\cup\Ii_\alpha^{\abs{\alpha}-1}$,
    where $\Ii_\alpha^l$ is all cycles in $\Ii_\alpha^+$
    that share exactly $l$ labeled edges with $\alpha$.
    For any $\beta\in\Ii_\alpha^+$,
    \begin{align*}
      \E[I_\alpha I_\beta] = \P[\text{$G$ contains $\alpha$ and $\beta$}]
        =\prod_{i=1}^d\frac{1}{[n]_{e_i}},
    \end{align*}
    where $e_i$ is the number of $\pi_i$-labeled edges
    in $\alpha\cup\beta$.
    Thus for $\beta\in\Ii_\alpha^l$,
    \begin{align}
      \frac{1}{n^{\abs{\alpha}+\abs{\beta}-l}}\leq
      \E[I_\alpha I_\beta]\leq \frac{1}{[n]_{\abs{\alpha}+\abs{\beta}-l}}.
      \label{eq:covbound}
    \end{align}
    
    We start by seeking estimates on the size
    of $\Ii_\alpha^l$ for $l\geq 1$.
    Fix some choice of $l$ edges
    of $\alpha$.  We start by counting the cycles in $\Ii_\alpha^l$
    that share exactly these edges with $\alpha$.
    We illustrate this in Figure~\ref{fig:graphassembly}.
    Call the graph consisting of these edges
    $H$, and suppose that $H$ has $p$ components.
    Since it is a forest, $H$ has $l+p$ vertices.

           \begin{figure}
         \begin{center}
        \begin{tikzpicture}[scale=1.75,vert/.style={circle,fill,inner sep=0,
              minimum size=0.15cm,draw}, H/.style={dashed},>=stealth]
          \begin{scope}
          \node[vert,label=90:{$1$}] (s1) at (270:1cm) {};
            \node[vert,label=57:{$2$}] (s2) at (237:1) {};
            \node[vert,label=18:{$3$},H] (s3) at (205:1) {};
            \node[vert,label=358:{$4$},H] (s4) at (171:1) {};
            \node[vert,label=319:{$5$},H] (s5) at (139:1) {};
            \node[vert,label=278:{$6$}] (s6) at (106:1) {};
            \node[vert,label=260:{$7$},H] (s7) at (74:1) {};
            \node[vert,label=221:{$8$},H] (s8) at (41:1) {};
            \node[vert,label=182:{$9$},H] (s9) at (8:1) {};
            \node[vert,label=158:{$10$},H] (s10) at (335:1) {};
            \node[vert,label=118:{$11$}] (s11) at (302:1) {};
            \draw[thick,->] (s1) to node[auto] {$\pi_1$} (s2);
            \draw[thick,->] (s2) to node[auto] {$\pi_1$} (s3);
            \draw[thick,<-,H] (s3) to node[auto,black] {$\pi_2$} (s4);
            \draw[thick,->,H] (s4) to node[auto,black] {$\pi_3$} (s5);
            \draw[thick,->] (s5) to node[auto] {$\pi_1$} (s6);
            \draw[thick,<-] (s6) to node[auto,black] {$\pi_2$} (s7);
            \draw[thick,<-,H] (s7) to node[auto,black] {$\pi_1$} (s8);
            \draw[thick,<-] (s8) to node[auto,black] {$\pi_2$} (s9);
            \draw[thick,->,H] (s9) to node[auto,black] {$\pi_1$} (s10);
            \draw[thick,->] (s10) to node[auto,black] {$\pi_3$} (s11);
            \draw[thick,->] (s11) to node[auto,black] {$\pi_3$} (s1);
            \draw (0,-1.4) node[text width=5.8cm,anchor=base,execute at begin node=\setlength{\baselineskip}{13pt}]
            {
              The cycle $\alpha$, with $H$ dashed.  The subgraph
              $H$ has components $A_1,\ldots,A_p$.  In this example,
              the number of components of $H$ is $p=3$, 
              the size of $\alpha$ is $k=11$, and the number
              of edges in $H$ is $l=4$.\\[0.1cm]
              In this example, we will construct a cycle $\beta$ of length
              $j=10$
              that overlaps with $\alpha$ at $H$.
            };
          \end{scope}
          \begin{scope}[xshift=3.7cm]
            \path (-1.476,0) node[vert,label=below:{$3$}] (s3) {}
                  -- ++(0.563,0) node[vert,label=below:$4$] (s4) {}
                  -- ++(0.563,0) node[vert,label=below:$5$] (s5) {}
                  -- ++(0.35,0) node[vert,label=below:$10$] (s10) {}
                  -- ++(0.563,0) node[vert,label=below:$9$] (s9){}
                  -- ++(0.35,0) node[vert,label=below:$7$] (s7) {}
                  -- ++(0.563,0) node[vert,label=below:$8$] (s8){};
            \draw[thick,<-] (s3) to node[auto] {$\pi_2$} (s4);
            \draw[thick,->] (s4) to node[auto] {$\pi_3$}(s5);
            \draw[thick,->] (s9) to node[auto,swap] {$\pi_1$}(s10);
            \draw[thick,<-] (s7) to node[auto] {$\pi_1$}(s8);
            \draw (0,-1.4) node[text width=5.8cm,anchor=base,execute at begin node=\setlength{\baselineskip}{13pt}]
            {
              \textbf{Step 1.} We lay out the components $A_1,\ldots,A_p$.
              We can order and orient $A_2,\ldots,A_p$ however we would like,
              for a total of $(p-1)!2^{p-1}$ choices.
              Here, we have ordered the components $A_1, A_3, A_2$, 
              and we have reversed the orientation of $A_3$.
            };
          \end{scope}
          \begin{scope}[yshift=-5cm,new/.style={}]
            \path (-1.476,0) node[vert,label=below:{$3$}] (s3) {}
                  -- ++(0.563,0) node[vert,label=below:$4$] (s4) {}
                  -- ++(0.563,0) node[vert,label=below:$5$] (s5) {}
                  -- ++(0.35,0) node[vert,label=below:$10$] (s10) {}
                  -- ++(0.563,0) node[vert,label=below:$9$] (s9){}
                  -- +(-0.1065,0.553) node[vert,new] (a1){}
                  -- +(0.4565,0.553) node[vert,new] (a2) {}
                  -- ++(0.35,0) node[vert,label=below:$7$] (s7) {}
                  -- ++(0.563,0) node[vert,label=below:$8$] (s8){}
                  -- (0,-0.5) node[vert,new] (a3) {};
            \draw[thick,<-] (s3) to node[auto] {$\pi_2$} (s4);
            \draw[thick,->] (s4) to node[auto] {$\pi_3$}(s5);
            \draw[thick,new] (s5) to [out=90,in=90] (s10);
            \draw[thick,->] (s9) to node[auto,swap] {$\pi_1$}(s10);
            \draw[thick,new] (s9) to (a1);
            \draw[thick,new] (a1) to (a2);
            \draw[thick,new] (a2) to (s7);
            \draw[thick,<-] (s7) to node[auto] {$\pi_1$}(s8);
            \draw[thick,new] (s8) to[out=225,in=0] (a3);
            \draw[thick,new] (a3) to[out=180,in=305] (s3);
            
            \draw (0,-0.95) node[anchor=base,text width=5.8cm,execute at begin node=\setlength{\baselineskip}{13pt}]
            {
              \textbf{Step 2.} Next, we choose how many edges will go in each
              gap between components.  Each gap must contain at least
              one edge, and we must add a total of $j-l$ edges, giving
              us $\binom{j-l-1}{p-1}$ choices.
              In this example, we have added one edge after $A_1$,
              three after $A_3$, and two after $A_2$.
            };
          \end{scope}
          \begin{scope}[yshift=-5cm,xshift=3.7cm,new/.style={}]
            \path (-1.476,0) node[vert,label=below:{$3$}] (s3) {}
                  -- ++(0.563,0) node[vert,label=below:$4$] (s4) {}
                  -- ++(0.563,0) node[vert,label=below:$5$] (s5) {}
                  -- ++(0.35,0) node[vert,label=below:$10$] (s10) {}
                  -- ++(0.563,0) node[vert,label=below:$9$] (s9){}
                  -- +(-0.1065,0.553) node[vert,label={[new]
                                       above left:$23$}] (a1){}
                  -- +(0.4565,0.553) node[vert,
                            label={[new]above right:$1$}] (a2) {}
                  -- ++(0.35,0) node[vert,label=below:$7$] (s7) {}
                  -- ++(0.563,0) node[vert,label=below:$8$] (s8){}
                  -- (0,-0.5) node[vert,label={[new]below:$15$}] (a3) {};
            \draw[thick,<-] (s3) to node[auto] {$\pi_2$} (s4);
            \draw[thick,->] (s4) to node[auto] {$\pi_3$}(s5);
            \draw[thick] (s5) to [out=90,in=90] node[auto,new]
            {$\pi_1$} (s10);
            \draw[thick,->] (s9) to node[auto,swap] {$\pi_1$}(s10);
              \draw[thick,->] (s9) to (a1);
            
            \draw (s9) +(-0.21,0.32) node[new] {$\pi_2$};
            \draw[thick,->] 
                (a1) to node[auto,new] {$\pi_3$} (a2);
            \draw[thick,<-] (a2) to (s7);
            \draw (s7) +(0.21,0.32) node[new] {$\pi_2$};
            \draw[thick,<-] (s7) to node[auto] {$\pi_1$}(s8);
            \draw[thick,<-] (s8) to[out=225,in=0] node[auto,new] {$\pi_2$}(a3);
            \draw[thick,<-] (a3) to[out=180,in=305] node[auto,new]{$\pi_1$} (s3);
            
            \draw (0,-0.95) node[anchor=base, text width=5.8cm,execute at begin node=\setlength{\baselineskip}{13pt}]
            {
              \textbf{Step 3.} We can choose the new vertices in
              $[n-p-l]_{j-p-l}$ ways, and we can direct and give labels
              to the new edges in at most $(2d-1)^{j-l}$ ways.
            };
          \end{scope}
        \end{tikzpicture}          
         \end{center}
         \caption{Assembling an element $\beta\in\Ii_\alpha^l$ that
         overlaps with $\alpha$ at a given subgraph $H$.}
         \label{fig:graphassembly}
       \end{figure}   
       Let $A_1,\ldots, A_p$ be the components of $H$.  We can
       assemble any  element $\beta\in\Ii_\alpha^l$ 
       that overlaps with $\alpha$ in $H$
       by stringing together these components in some order, with
       other edges in between.
       Each component can appear in
       $\beta$ in one of two orientations.  Since the vertices in $\beta$
       have no fixed ordering, we can
       assume without
       loss of generality that
       $\beta$ begins with component $A_1$ with a fixed orientation.
       This leaves $(p-1)!2^{p-1}$ choices for the order
       and orientation of $A_2,\ldots,A_p$ in $\beta$.
       
       Imagine now the components laid out in a line,
       with gaps between them, and count the number
       of ways to fill the gaps.  Suppose that $\beta$
       is to have length $j$.
       Each of the
       $p$ gaps
       must contain at least one edge, and the total number
       of edges in all the gaps is $j-l$.
       Thus the total number of possible gap sizes
       is the number of compositions of $j-l$ into $p$ parts,
       or $\binom{j-l-1}{p-1}$.
       
       Now that we have chosen the number of edges
       to appear in each gap, we choose the edges themselves.
       We can do this by giving
       an ordered list $j-p-l$ vertices to go in the gaps,
       along with a label and an orientation
       for each of the $j-l$ edges this gives.
       There are $[n-p-l]_{j-p-l}$ ways to choose the vertices.
       We can give each new edge any orientation and label subject
       to the constraint that the word of the cycle we
       construct must be reduced.
       This means we have at most $2d-1$ choices for the orientation
       and label of each new edge, for a total of at most $(2d-1)^{j-i}$.

       All together, there are at most
       $(p-1)!2^{p-1}\binom{j-l-1}{p-1}[n-p-l]_{j-p-l}(2d-1)^{j-l}$
       elements of $\Ii_j$ that overlap with the cycle $\alpha$ at the
       subgraph $H$.  We now calculate the number of different ways to choose
       a subgraph $H$ of $\alpha$ with $l$ edges and $p$ components.
       Suppose $\alpha$ is given as in \eqref{eq:alpha}.
       We first choose a vertex $s_{i_0}$.  Then, we can
       specify which edges to include in $H$ by giving a sequence
       $a_1,b_1,\ldots,a_p,b_p$ instructing us
       to include in $H$ the first $a_1$ edges after $s_{i_0}$,
       then to exclude the next $b_1$, then to include the next $a_2$,
       and so on.  Any sequence for which $a_i$ and $b_i$
       are positive integers, $a_1+\cdots+ a_p=l$,
       and $b_1+\cdots+b_p=k-l$ gives us a valid choice of $l$ edges of $\alpha$
       making up $p$ components.  This counts each subgraph $H$ a total of
       $p$ times, since we could begin with any component of $H$.
       Hence the number of subgraphs $H$ with $l$ edges
       and $p$ components is $(k/p)\binom{l-1}{p-1}\binom{k-l-1}{p-1}$.
       This gives us the bound
       \begin{align*}
         |\Ii_\alpha^l\cap\Ii_j|&\leq \sum_{p=1}^{l\wedge (j-l)}
              (k/p)\binom{l-1}{p-1}\binom{k-l-1}{p-1}
           (p-1)! \;\times \\
           &\qquad\qquad 2^{p-1}\binom{j-l-1}{p-1}[n-p-l]_{j-p-l}(2d-1)^{j-l}.
       \end{align*}
       We apply the bounds
       \begin{align*}
         \binom{l-1}{p-1}&\leq \frac{r^{p-1}}{(p-1)!},\\
         \binom{k-l-1}{p-1},\,\binom{j-l-1}{p-1}&\leq (er/(p-1))^{p-1},
       \end{align*}
       to get
       \begin{align*}
         |\Ii_\alpha^l\cap\Ii_j|
           &\leq k(2d-1)^{j-l}[n-1-l]_{j-1-l}\left(1+
              \sum_{p=2}^{i\wedge (k-i)}\frac{1}{p}
              \left(\frac{2e^2r^3}{(p-1)^2}\right)^{p-1}
              \frac{1}{[n-1-l]_{p-1}}\right).
       \end{align*}
       Since $r\leq n^{1/10}$,
       the sum in the above equation is bounded by an absolute constant.
       Applying this bound and \eqref{eq:covbound}, 
       for any $\alpha\in\Ii_k$ and $l\geq 1$,
       \begin{align}
         \sum_{\beta\in\Ii_\alpha^l}
         \cov(I_\alpha,I_\beta)
           &\leq \sum_{j=l+1\vphantom{\Ii_\alpha^l}}^r
             \sum_{\beta\in\Ii_\alpha^l\cap\Ii_j}
             \frac{1}{[n]_{k+j-l}}\label{eq:covstartsum}\\
           &\leq \sum_{j=l+1}^rO\left(\frac{k(2d-1)^{j-l}}{n^{k+1}}\right)\nonumber\\
           &=O\left(\frac{k(2d-1)^{r-l}}{n^{k+1}}\right).\nonumber
       \end{align}
       Therefore
       \begin{align}
         \sum_{\alpha\in\Ii}\sum_{l\geq 1}
         \sum_{\beta\in\Ii_\alpha^l}\cov(I_\alpha,I_\beta)
         &= \sum_{k=1}^r\sum_{\alpha\in\Ii_k}\sum_{l=1}^{k-1}
         \sum_{\beta\in\Ii_\alpha^l}\cov(I_\alpha,I_\beta)\nonumber\\
         &\leq  \sum_{k=1}^r\sum_{\alpha\in\Ii_k}\sum_{l=1}^{k-1}
          O\left(\frac{k(2d-1)^{r-l}}{n^{k+1}}\right)\nonumber\\
         &= \sum_{k=1}^r\frac{[n]_ka(d,k)}{2k}
         O\left(\frac{k(2d-1)^{r-1}}{n^{k+1}}\right)\nonumber\\
         &= \sum_{k=1}^r O\left(\frac{(2d-1)^{r+k-1}}{n}\right)\nonumber\\
         &= O\left(\frac{(2d-1)^{2r-1}}{n}\right).\label{eq:+cov}
       \end{align}
         
       Last, we must bound $\sum_{\alpha\in\Ii}\sum_{\beta\in\Ii_\alpha^0}\cov(I_\alpha,
       I_\beta)$.
       For any word $w$, let 
       $e^w_i$ be the number
       of appearances of $\pi_i$ and $\pi_i^{-1}$ in $w$.
       Let $\alpha$ and $\beta$ be cycles with words $w$
       and $u$ respectively, and let $k=\abs{\alpha}$ and $j=\abs{\beta}$.
       Suppose that $\beta\in \Ii^0_\alpha$.  Then
    \begin{align*}
      \cov(I_\alpha,I_\beta)&= \prod_{i=1}^d\frac{1}{[n]_{e^w_i+e^u_i}} - 
        \prod_{i=1}^d\frac{1}{[n]_{e^w_i}[n]_{e^u_i}}\\
        &\leq \frac{\ip{e^w}{e^u}}{n}\prod_{i=1}^d\frac{1}{[n]_{e^w_i+e^u_i}}
        \leq \frac{\ip{e^w}{e^u}}{n[n]_{k+j}}
    \end{align*} 
    by Lemma~\ref{lem:ipbounds}.
    For any pair of words $w\in\Ww_k$ and $u\in\Ww_j$, there are
    at most
    $[n]_{k}[n]_j$ pairs of cycles $\alpha,\beta\in\Ii$ with words
    $w$ and $u$, respectively.  Enumerating over all
    $w\in\Ww_k$ and $u\in\Ww_j$, we count each pair
    of cycles $\alpha,\beta$ exactly $4kj$ times.  Thus
    \begin{align*}
      \sum_{\alpha\in\Ii_k}\sum_{\beta\in\Ii_\alpha^0\cap\Ii_j}\cov(I_\alpha,I_\beta)
      &\leq \frac{[n]_{k}[n]_j}{4kjn[n]_{k+j}}\sum_{w\in\Ww_k}\sum_{u\in\Ww_j}\ip{e^w}
        {e^u}\\
      &\leq \frac{1+O(r^2/n)}
        {4kjn}\ip{\sum_{w\in\Ww_k}e^w}{\sum_{u\in\Ww_j}e^u}.
    \end{align*}
    The vector $\sum_{w\in\Ww_k}e^w$ has every entry equal by symmetry,
    as does $\sum_{u\in\Ww_j}e^u$.  Thus each entry of
    $\sum_{w\in\Ww_k}e^w$ is $ka(d,k)/d$, and each entry
    of $\sum_{u\in\Ww_j}e^u$
    is $ja(d,j)/d$.  The inner product in the above equation
    comes to $kja(d,k)a(d,j)/d$, giving us
    \begin{align}
      \sum_{\alpha\in\Ii_k}\sum_{\beta\in\Ii_\alpha^0\cap\Ii_j}\cov(I_\alpha,I_\beta)
      &\leq \frac{a(d,k)a(d,j)(1+O(r^2/n))}{4dn}\nonumber\\
      &= O\left(\frac{(2d-1)^{j+k-1}}{n}\right).\label{eq:jk0cov}
    \end{align}
    Summing over all $1\leq k,j\leq r$,
    \begin{align}
      \sum_{\alpha\in\Ii}\sum_{\beta\in\Ii^0_\alpha}\cov(I_\alpha,I_\beta)
      &= \left(\frac{(2d-1)^{2r-1}}{n}\right).\label{eq:0cov}
    \end{align}
  
       We can now combine equations \eqref{eq:psquaredcov}, \eqref{eq:-cov},
       \eqref{eq:+cov}, and \eqref{eq:0cov} with 
       Proposition~\ref{prop:BHJprop} to show that
       \begin{align}
         \dtv(\I,\,\Y)=O\left(\frac{(2d-1)^{2r-1}}{n}\right).\label{eq:IY}
       \end{align}
       \step{3}{Approximation of $\Y$ by $\Z$.}
               
       By Lemma~\ref{lem:YZ} and \eqref{eq:pbound},
       \begin{align*}
         \dtv(\Y,\,\Z)\leq\sum_{\alpha\in\Ii}\abs{\E Y_\alpha-\E Z_\alpha}
         &\leq \sum_{k=1}^r\sum_{\alpha\in\Ii_k}\left(
         \frac{1}{[n]_k}-\frac{1}{n^k}\right)\\
         &=\sum_{k=1}^r\frac{a(d,k)}{2k}\left(1-\frac{[n]_k}{n^k}\right).
       \end{align*}
       Since $[n]_k\geq n^k(1-k^2/2n)$,
       \begin{align}
         \dtv(\Y,\,\Z)&\leq\sum_{k=1}^r\frac{a(d,k)k}{4n}
         =O\left(\frac{r(2d-1)^r}{n}\right).\label{eq:YZdist}
       \end{align}
       Together with \eqref{eq:IY}, this bounds the total variation
       distance between the laws of $\I$ and $\Z$ and proves the theorem.
\end{proof}

\begin{proof}[Proof of]
  Consider the partition $\Ii=\bigcup_{k=1}^r\Ii_k$, and define $W_k$ and $Y_k$
  as in the statement of Proposition~\ref{prop:BHJbin}.
  As in the proof of Theorem~\ref{thm:processapprox}, we may assume that
  $d\leq n^{1/2}$ and $n\leq n^{1/10}$. With these restrictions, we have
  \begin{align*}
    \log^+\max\lambda_j &= O\bigl(r\log(2d-1)\bigr),\\
    \lambda_k^{-1} &= O\biggl(\frac{k}{(2d-1)^k}\biggr),\\
    (\lambda_j\lambda_k)^{-1/2} &= O\biggl(\frac{\sqrt{jk}}{(2d-1)^{(j+k)/2}}\biggr).
  \end{align*}
  
  We have already bounded all the terms in \eqref{eq:BHJbin} in the previous proof.
  From \eqref{eq:psquaredcov},
  \begin{align*}
    \sum_{k=1}^r\sum_{\alpha\in\Ii_k}\frac{p_\alpha^2}{\lambda_k} &= O\Bigl(\frac{d}{n}\Bigr).
  \end{align*}
  From \eqref{eq:-cov},
  \begin{align}
    \sum_{\alpha\in\Ii_k}\sum_{\beta\in\Ii_\alpha^-\cap\Ii_j}\abs{\cov(I_\alpha,I_\beta)}
      &= O\biggl(\frac{(2d-1)^{j+k-1}}{n}\biggr).\label{eq:jk-cov}
  \end{align}
  Recalling the partition of $\Ii_\alpha^+$ on p.~\pageref{pluspartition},
  and following \eqref{eq:covstartsum}, for any $\alpha\in\Ii_k$ and $l\geq 1$,
  \begin{align*}
    \sum_{\beta\in\Ii_\alpha^l\cap\Ii_j} \cov(I_\alpha,I_\beta)&= O\biggl(\frac{k(2d-1)^{j-l}}{n^{k+1}}\biggr),
  \end{align*}
  and
  \begin{align*}
    \sum_{\alpha\in\Ii_k}\sum_{l\geq 1}\sum_{\beta\in \Ii_\alpha^l\cap\Ii_j}\cov(I_\alpha, I_\beta)
    &=\sum_{\alpha\in\Ii_k}O\biggl(\frac{k(2d-1)^{j-1}}{n^{k+1}}\biggr)=O\biggl(\frac{(2d-1)^{j+k-1}}{n}
    \biggr).
  \end{align*}
  Together with \eqref{eq:jk0cov}, this shows that
  \begin{align*}
    \sum_{\alpha\in\Ii_k}\sum_{\beta\in\Ii_\alpha^+\cap\Ii_j}\cov(I_\alpha,I_\beta)&= O\biggl(\frac{(2d-1)^{j+k-1}}{n}\biggr).
  \end{align*}
  This and \eqref{eq:jk-cov} prove that
  \begin{align*}
    A(j,k)&=O\biggl(\frac{(2d-1)^{j+k-1}}{n}\biggr).
  \end{align*}
  Now, we apply Proposition~\ref{prop:BHJbin}:
  \begin{align*}
    \dtv\bigl((W_1,&\ldots,W_r),\;(Y_1,\ldots,Y_r)\bigr) = O\biggl(
    \frac{r^2(2d-1)^{r-1}\log(2d-1)}{n}\biggr).
  \end{align*}
  Last, we apply \eqref{eq:YZdist} to bound the distance between
  $(Y_1,\ldots,Y_r)$ and $(Z_1,\ldots,Z_r)$ and complete the proof.
\end{proof}

\section{Poisson approximation in the uniform model}

\subsection{Preliminaries}
For vertices $u$ and
$v$ in a graph, we will use the notation
$u\sim v$ to denote that the edge $uv$ exists.  The distance between
two vertices is the length of the shortest path between them,
and the distance between two edges or sets of vertices is the shortest
distance between a vertex in one set and a vertex in the other.

Here and throughout, we will use $\Cr{50},\Cr{51},\ldots$
to denote absolute constants whose values are unimportant to us.
\begin{prop}\label{prop:McKayestimate}
  Let $G$ be a random $d$-regular graph on $n$ vertices,
  with $d\leq n^{1/3}$.
  \begin{enumerate}[(a)]
    \item\label{item:onecycle}
      Let $\alpha$ be a cycle of length $k\leq n^{1/10}$ in the complete graph
      $K_n$.  Then
      \begin{align*}
        \P[\alpha\subset G]&\leq \frac{\Cl{50}(d-1)^k}{n^k}.
      \end{align*}
    \item \label{item:twocycles}
      Let $\beta$ be another cycle in $K_n$ of length $j\leq n^{1/10}$,
      and suppose that $\alpha$ and $\beta$ share $f$ edges.
      Then
      \begin{align*}
        \P[\alpha\cup\beta\subset G]&\leq \frac{\Cl{51}(d-1)^{j+k-f}}{n^{j+k-f}}.
      \end{align*}
    \item \label{item:attachedcycles}
      Let $H$ be a subgraph of $K_n$ consisting of a $j$-cycle
      and a $k$-cycle joined by path of length $l$, as in
      Figure~\ref{fig:attachedcycles}.  Suppose that
      $j,k,l\leq n^{1/10}$.  Then
      \begin{align*}
        \P[H\subset G]&\leq\frac{\Cl{60}(d-1)^{j+k+l}}{n^{j+k+l}}.
      \end{align*}
  \end{enumerate}
\end{prop}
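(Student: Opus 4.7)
The plan is to reduce all three parts to a single general bound: for any subgraph $H$ of $K_n$ with $h = |E(H)|$ edges and satisfying the structural constraint $|V(H)| \leq h$, one has $\P[H \subset G] \leq C(d-1)^h/n^h$ in the regime $d \leq n^{1/3}$, $h \leq n^{1/10}$. Once this is proved, each of (a), (b), (c) will follow by verifying $|V(H)| \leq h$: for the $k$-cycle in (a), $|V(H)| = k = h$; for the two cycles joined by a path in (c), a direct count gives $|V(H)| = j + k + l - 1 = h - 1$; for the union of two cycles sharing $f$ edges in (b), the shared edges form a disjoint union of $p \geq 1$ paths (being subgraphs of cycles) whenever $f\geq 1$, hence $|V(H)| = j + k - f - p = h - p \leq h$, with the case $f=0$ being analogous.

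The general bound would follow from a McKay-style estimate
\begin{equation*}
\P[H \subset G] \leq C \prod_{v \in V(H)} (d)_{\deg_H(v)} \cdot (dn)^{-h},
\end{equation*}
where $(d)_\delta = d(d-1)\cdots(d-\delta+1)$. I would establish this either by invoking existing subgraph-containment bounds from \cite{MWW}, or by a direct switching argument: for each edge $uv \in H$, the ratio of the number of $d$-regular graphs containing $H$ to the number containing $H \setminus \{uv\}$ is asymptotically $(d - \deg_{H \setminus \{uv\}}(u))(d - \deg_{H \setminus \{uv\}}(v))/(dn)$, up to a multiplicative $1 + o(1)$ error uniform over our parameter ranges, and iterating over the edges of $H$ gives the displayed estimate.

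Given the McKay-style estimate, the claimed bound follows from the elementary inequality $(d)_\delta \leq d(d-1)^{\delta-1}$. Multiplying over $v \in V(H)$ and using $\sum_v \deg_H(v) = 2h$ gives $\prod_v (d)_{\deg_H(v)} \leq d^{|V(H)|}(d-1)^{2h - |V(H)|}$, whence
\begin{equation*}
\P[H \subset G] \leq \frac{C(d-1)^h}{n^h} \left(\frac{d-1}{d}\right)^{\!h - |V(H)|} \leq \frac{C(d-1)^h}{n^h}
\end{equation*}
by $|V(H)| \leq h$ and $(d-1)/d \leq 1$. The constants in (a), (b), (c) absorb the (bounded) $C$ from this step together with the $1+o(1)$ error from the enumeration.

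The main obstacle is justifying the McKay-style estimate with a uniform constant when $d$ is allowed to grow with $n$. For $d$ bounded one can reduce to the configuration model via the fact that the probability of simplicity is bounded below by a positive constant, but for $d$ up to $n^{1/3}$ this simplicity probability decays and the naive reduction loses too much. I expect the cleanest route is to invoke directly the switching-based enumeration bounds of \cite{MWW} (or a close variant), which were developed for exactly this kind of subgraph-containment question in random regular graphs in the relevant parameter range.
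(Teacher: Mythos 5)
Your proposal is correct and takes essentially the same route as the paper, which disposes of all three parts by citing the subgraph-containment bound of Theorem~3a in \cite{MWW}; your reduction to the condition $|V(H)|\leq |E(H)|$ and the computation $\prod_v (d)_{\deg_H(v)}\leq d^{|V(H)|}(d-1)^{2h-|V(H)|}$ is just the arithmetic the paper leaves implicit in saying the statements "follow directly." (The only nitpick is that in part (b) the vertex count should be $|V(H)|\leq j+k-f-p$ rather than equality, since $\alpha$ and $\beta$ may share vertices not lying on shared edges, but this only strengthens the inequality you need.)
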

\begin{figure}
  \begin{center}
  \begin{tikzpicture}
    \foreach \x in {0,90,180,270}
      \node[vert] (a\x) at (\x:1) {};
    \draw[thick] (a0)--(a90)--(a180)--(a270)--(a0);
    \foreach \x in {0, 72, ..., 288}
      \node[vert,shift={(4,0)}] (b\x) at (\x:1) {};
    \draw[thick] (b0)--(b72)--(b144)--(b216)--(b288)--(b0);
    \path (1.72,0) node[vert] (c1){} (2.44,0) node[vert] (c2){};
    \draw[thick] (a0)--(c1)--(c2)--(b144);
  \end{tikzpicture}
  \end{center}
  \caption{A $4$-cycle and a $5$-cycle, connected by a path
  of length $3$.}\label{fig:attachedcycles}
\end{figure}
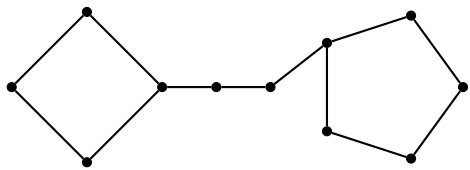

\begin{proof}
  These statements all follow directly from Theorem~3a in \cite{MWW}.
\end{proof}

\subsection{Counting switchings}
We will follow \cite{MWW}, defining and counting switchings.
After this, we will break with that paper by using the switchings to apply Stein's method.
Besides some small notational differences,
the definitions will be the same as those in \cite{MWW}.
To avoid repetition
 of the phrase ``cycles of length $r$ or less,'' we
 will refer to such cycles as \emph{short}.

 \begin{figure}
  \begin{center}
    \begin{tikzpicture}[scale=1.2]
      \begin{scope}[bend angle=20]
      \fill[black!10, rounded corners] (-0.6,-0.5) rectangle (3.6, 1.7);
      \draw
            (0,1.2) node[vert,label=above:$v_0$] (v0) {}
            ++(1,0) node[vert,label=above:$v_1$] (v1) {}
            ++(1,0) node[vert,label=above:$v_2$] (v2) {}
            ++(1,0) node[vert,label=above:$v_3$] (v3) {}
            (-0.15,0) node[vert,label={[xshift=-0.09cm]below:$u_0$}] (u0) {}
            (0.15,0) node[vert,label={[xshift=0.09cm]below:$w_0$}] (w0) {}
            (0.85,0) node[vert,label={[xshift=-0.09cm]below:$u_1$}] (u1) {}
            (1.15,0) node[vert,label={[xshift=0.09cm]below:$w_1$}] (w1) {}
            (1.85,0) node[vert,label={[xshift=-0.09cm]below:$u_2$}] (u2) {}
            (2.15,0) node[vert,label={[xshift=0.09cm]below:$w_2$}] (w2) {}
            (2.85,0) node[vert,label={[xshift=-0.09cm]below:$u_3$}] (u3) {}
            (3.15,0) node[vert,label={[xshift=0.09cm]below:$w_3$}] (w3) {};
        \draw[thick] (v0)--(v1)--(v2)--(v3);
        \draw[thick, bend right] (v0) to (v3);   
        \draw[thick] (w0)--(u1) (w1)--(u2) (w2)--(u3) (u0) to[bend left] (w3);  
      \end{scope}
      \draw[<->, decorate, decoration={snake,amplitude=.4mm,
            segment length=2mm,post length=1.5mm,pre length=1.5mm}] 
            (3.7, 0.6)--(5.5,0.6);
      \begin{scope}[xshift=6.2cm]
        \fill[black!10, rounded corners] (-0.6,-0.5) rectangle (3.6, 1.7);
      \draw (0,1.2) node[vert,label=above:$v_0$] (v0p) {}
            ++(1,0) node[vert,label=above:$v_1$] (v1p) {}
            ++(1,0) node[vert,label=above:$v_2$] (v2p) {}
            ++(1,0) node[vert,label=above:$v_3$] (v3p) {}
            (-0.15,0) node[vert,label={[xshift=-0.09cm]below:$u_0$}] (u0p) {}
            (0.15,0) node[vert,label={[xshift=0.09cm]below:$w_0$}] (w0p) {}
            (0.85,0) node[vert,label={[xshift=-0.09cm]below:$u_1$}] (u1p) {}
            (1.15,0) node[vert,label={[xshift=0.09cm]below:$w_1$}] (w1p) {}
            (1.85,0) node[vert,label={[xshift=-0.09cm]below:$u_2$}] (u2p) {}
            (2.15,0) node[vert,label={[xshift=0.09cm]below:$w_2$}] (w2p) {}
            (2.85,0) node[vert,label={[xshift=-0.09cm]below:$u_3$}] (u3p) {}
            (3.15,0) node[vert,label={[xshift=0.09cm]below:$w_3$}] (w3p) {};
        \draw[thick] (u0p)--(v0p)--(w0p)
                     (u1p)--(v1p)--(w1p)
                     (u2p)--(v2p)--(w2p)
                     (u3p)--(v3p)--(w3p);
      \end{scope}
    \end{tikzpicture}
  \end{center}
  \caption{The change from left to right
  is a \emph{forward switching}, and from right to left is
  a \emph{backward switching}.}
  \label{fig:switchings}
\end{figure}
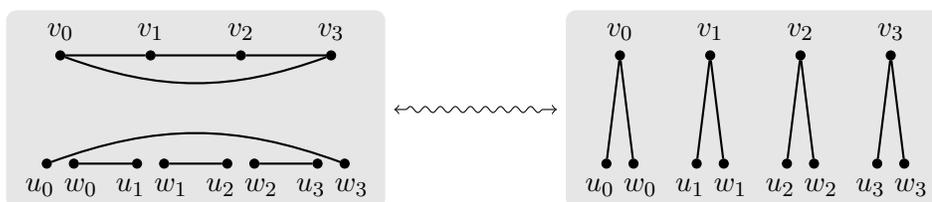

Let $G$ be a $d$-regular graph.
Suppose that $\alpha=v_0\cdots v_{k-1}$ is a cycle in $G$, and
let $e_i=v_iv_{i+1}$, interpreting
all indices modulo $k$ from now on.  Let $e_i'=w_iu_{i+1}$ for $0\leq i\leq k-1$
be oriented edges such that neither $u_i$ nor $w_i$ is adjacent to
$v_i$.  Consider the act of deleting these $2k$ edges and replacing them
with
the edges $v_iu_i$ and $v_iw_i$ for $0\leq i\leq k-1$
to obtain a new $d$-regular graph $G'$ with the cycle $\alpha$
deleted
(see Figure~\ref{fig:switchings}).  We call this action
induced given by the sequences $(v_i)$, $(u_i)$, and $(w_i)$
a \emph{forward $\alpha$-switching}.   We will consider forward 
$\alpha$-switchings
only up to cyclic rotation of indices; that is, we identify the $2k$
different $\alpha$-switchings obtained by cyclically rotating
all sequences $v_i$, $u_i$, and $w_i$.

To go the opposite direction, suppose $G$ contains oriented paths 
$u_iv_iw_i$ for $0\leq i\leq k-1$
such that $v_i\not\sim v_{i+1}$ and $w_i\not\sim u_{i+1}$.
Consider the act of deleting all edges $u_iv_i$ and $v_iw_i$
and replacing them with $v_iv_{i+1}$ and $w_iu_{i+1}$ for all
$0\leq i\leq k-1$ to create a new graph $G'$ that contains
the cycle $\alpha=v_0\cdots v_{k-1}$.  We call this a \emph{backwards
$\alpha$-switching}.
  Again, we consider switchings
only up to cyclic rotation of all indices.

We call an $\alpha$-switching \emph{valid} if  $\alpha$
 is the
only short cycle created or destroyed by the switching.
For each valid forward $\alpha$-switching taking $G$ to
$G'$, there is a corresponding valid backwards $\alpha$-switching
taking $G'$  to $G$.
Let $F_\alpha$ and $B_\alpha$
be the number of valid forward and backwards $\alpha$-switchings,
respectively, on some graph $G$.  
Using arguments drawn from \cite[Lemma~3]{MWW}, we give
some estimates on them.
\begin{lem}\label{lem:forwardswitchings}
  Let $G$ be a deterministic $d$-regular graph on $n$ vertices
  with cycle counts $\{\Cy{k},\,k\geq 3\}$.
  For any short cycle $\alpha\subset G$ of length $k$,  
  \begin{align}
    F_\alpha\leq [n]_kd^k.\label{eq:fub}
  \end{align} 
  If $\alpha$ does not
  share an edge with another short cycle,
  \begin{align}
    F_\alpha&\geq
      [n]_kd^k\left(1- \frac{2k\sum_{j=3}^rj\Cy{j}+\Cl{4}k(d-1)^r}{nd}\right).
      \label{eq:flb}
   \end{align}
\end{lem}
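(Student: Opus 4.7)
The plan is as follows. I would first tackle the upper bound \eqref{eq:fub}. Fix a labeling $v_0 v_1 \cdots v_{k-1}$ of $\alpha$. A forward $\alpha$-switching is then specified by the two sequences $(w_i)_{i=0}^{k-1}$ and $(u_i)_{i=0}^{k-1}$, and for the resulting graph to be simple and $d$-regular the $w_i$'s must be pairwise distinct (otherwise two added edges $v_i w_i$ and $v_j w_j$ would coincide and $v_i$ would lose a unit of degree). Picking $(w_0,\ldots,w_{k-1})$ as an ordered sequence of distinct vertices gives at most $[n]_k$ choices, and once the $w_i$'s are fixed each $u_{i+1}$ must be one of the $d$ neighbors of $w_i$ in $G$. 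Every valid switching is therefore encoded by one such tuple, giving $F_\alpha \leq [n]_k d^k$.

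For the lower bound \eqref{eq:flb}, the strategy is to start from these $[n]_k d^k$ tuples and subtract those that fail to give a \emph{valid} switching. Invalidity can arise in three ways: (a) a local clash -- some $u_i$ or $w_i$ equals another $u_j$, $w_j$, or $v_j$, or is $G$-adjacent to $v_i$ (so an added edge would be a multi-edge or loop); (b) some $e_i' = w_i u_{i+1}$ lies on a short cycle of $G$ other than $\alpha$, which the switching would then destroy; or (c) the new graph $G'$ contains a short cycle other than those in $G \setminus \alpha$. Under the hypothesis that $\alpha$ shares no edge with any other short cycle, case (b) is easy: the total number of oriented edges lying on short cycles of $G$ is at most $2\sum_{j=3}^r j\Cy{j}$, so fixing one slot $i$ with $e_i'$ among these edges leaves at most $[n-1]_{k-1} d^{k-1}$ completions; summing over $i$ and dividing by $[n]_k d^k$ produces exactly the $2k \sum_j j\Cy{j}/(nd)$ term. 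Case (a) excludes $O(k+d)$ forbidden values per slot and therefore contributes a relative error of order $k(k+d)/n$, which is absorbed into the $\Cr{4} k(d-1)^r/(nd)$ term since $d \geq 2$ and $r \geq 3$.

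The main obstacle is case (c). A new short cycle in $G'$ must use at least one of the newly added edges $v_i u_i$ or $v_i w_i$, and the rest is a walk of length at most $r-1$ in $G$ (possibly mixing in other new edges). I would enumerate over the length $\ell \leq r$ of the new cycle, over how many of the new edges it uses, and over the positions of those uses. In the simplest case of a new $\ell$-cycle through a single new edge $v_i u_i$, bounding walks from $v_i$ to $u_i$ of length $\ell-1$ in $G$ by $d(d-1)^{\ell-2}$ shows that the bad values of $u_i$ number at most $O((d-1)^{r-1})$; since $u_i$ is determined by its pair $(w_{i-1}, u_i)$ and $w_{i-1}$ ranges over the $d$ neighbors of $u_i$, the bad pairs per slot number at most $O((d-1)^r)$. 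Multiplying by the $[n-1]_{k-1} d^{k-1}$ completions on the remaining slots and summing over the $k$ choices of $i$ yields, after dividing by $[n]_k d^k$, the $\Cr{4} k(d-1)^r/(nd)$ contribution. The delicate bookkeeping required when the new cycle uses several new edges at once -- where one must enumerate which indices are involved and avoid overcounting a single bad cycle -- is where the proof's care concentrates; fortunately these multi-new-edge contributions are of lower order in $1/n$ and fit inside the same $(d-1)^r$ budget.
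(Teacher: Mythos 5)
Your treatment of the lower bound is sound in outline but takes a genuinely different route from the paper. The paper never directly enumerates the ways a switching can be invalid. Instead it lists four \emph{sufficient} conditions for validity, all phrased as distance conditions in $G$: each $e_i'$ lies on no short cycle, each $e_i'$ is at distance at least $r$ from $e_i$, the $e_i'$ are pairwise at distance at least $r/2$, and each $w_i$ is at distance at least $r$ from $u_i$. It then shows these conditions force any putative new short cycle in $G'$ to contain a path in $G\cap G'$ of length at least $r/2$ between two of the special vertices, and that there is no room in a cycle of length at most $r$ for the rest of such a cycle; after that, one only union-bounds the tuples violating each distance condition, which is routine. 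This device exists precisely to avoid your case (c), which you correctly identify as the crux and then defer. Your direct enumeration can be pushed through, but note a sub-case you have not flagged: a new cycle using new edges at two distinct slots $i\neq j$ whose connecting path in $G$ runs between $v_i$ and $v_j$. The hypothesis only excludes short cycles sharing an \emph{edge} with $\alpha$, so such a path of length $p\le r-2$ may exist whenever $p$ plus the length of the $\alpha$-arc from $v_j$ to $v_i$ exceeds $r$; you must then count the pairs $(i,j)$ admitting such a path (at most $k\cdot O((d-1)^p)$) and trade this against the budget $(d-1)^{r-2-p}$ for the remaining connecting path. The arithmetic closes, but this is exactly the bookkeeping the paper's conditions sidestep, so either carry it out or adopt the sufficient-conditions formulation.

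Two smaller points. Your reason for the distinctness of the $w_i$ in the upper bound \eqref{eq:fub} is incorrect: if $w_i=w_j$ but $v_i\neq v_j$, the added edges $v_iw_i$ and $v_jw_j$ do not coincide, and the degree of that vertex is still preserved because two edges at it are also deleted. The bound $[n]_kd^k$ really comes from the (implicit, MWW-style) requirement that the vertices appearing in a switching be distinct, which the paper also leaves unstated. Your case (b) reproduces the paper's first error term exactly, and the local-clash count in your case (a) is indeed absorbable into the constant, so those parts are fine.
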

\begin{proof}
  The question is, with $\alpha=v_0\cdots v_{k-1}$
  and $e_i=v_iv_{i+1}$ given, how many ways are there to choose
  $e_0',\ldots,e_{k-1}'$ that give
  a valid switching?
  There are at most $[n]_kd^k$ choices of oriented
  edges $e_0',\ldots,e_{k-1}'$, which proves the upper bound \eqref{eq:fub}.
  For the lower bound, we demonstrate a procedure to choose
  these edges that is guaranteed to give us a valid forward
  $\alpha$-switching.  
  Suppose that
  $e_0',\ldots,e_{k-1}'$ satisfy
  \begin{enumerate}[(a)]
    \item $e_i'$ is not contained in any short cycle;
      \label{li:a}
    \item the distance from $e_i$ to $e_i'$ is at least $r$;
      \label{li:b}
    \item the distance from $e_i'$ to $e_{i'}'$ is at least $r/2$;
      \label{li:c}
    \item the distance from $w_i$ to $u_i$ is at least $r$.
      \label{li:d}
  \end{enumerate}
  Then the switching is valid by an argument identical to the one in \cite{MWW},
  which we will reproduce for convenience.
  By (\ref{li:b}), for all $i$, neither $u_i$ nor $w_i$ is adjacent
  to $v_i$ (or to $v_{i'}$ for any $i'$), as required in the definition
  of a switching.  Let $G'$
  be the graph obtained by applying the switching.
  We need to check now that the switching is valid; that is,
  the only short cycle it creates or destroys is $\alpha$.
  
  Since $\alpha$ shares no edges with other short cycles, its deletion
  does not destroy any other short cycles.  Condition (\ref{li:a}) ensures
  that no short cycles are destroyed by removing $e_0',\ldots,e_{k-1}'$.
  The switching does not create any short cycles either:
  Suppose otherwise, and let $\beta$
  be the new cycle in $G'$.  It consists of paths in $G\cap G'$,
  separated by new edges in $G'$.
  Any such path in $G\cap G'$ must have length at least $r/2$, because
  \begin{itemize}
    \item if it starts and ends in $\alpha$ and has length less than $r/2$,
      then combining this path with a path in $\alpha$
      gives an short cycle in $G$ that overlaps with $\alpha$;
    \item if it starts in $\alpha$ and finishes in  $W=\{u_0,w_0,\ldots,u_{k-1},
      w_{k-1}\}$ and has length less than $r/2$, then combining this path
      with a path in $\alpha$ gives a path violating condition (\ref{li:b});
    \item if it starts at some $e_i'$ and ends at $e_{i'}'$ then
      it must have length $r/2$ by (\ref{li:c}) if $i'\neq i$, and
      by (\ref{li:a}) if $i'=i$.
  \end{itemize}
  Thus $\beta$ contains exactly one path in $G\cap G'$.  The remainder of
  $\beta$ must be an edge $u_iv_i$ or $w_iv_i$, impossible by (\ref{li:b}),
  or a path $u_iv_iw_i$, impossible by (\ref{li:d}).
  
  Now, we find the number of switchings that satisfy conditions
  (\ref{li:a})--(\ref{li:d}) to get a lower bound on $F_\alpha$.
  We will do this by bounding from above the number of switchings
  out of the $[n]_kd^k$ counted in \eqref{eq:fub}
  that fail each condition (\ref{li:a})--(\ref{li:d}).
  \begin{itemize}
    \item
      There are a total of $\sum_{j=3}^rj\Cy{j}$ edges in short
      cycles in $G$.  Choosing one of the edges $e_0',\ldots,e_{k-1}'$
      from these and the rest arbitrarily, there are at most
      $[n-1]_{k-1}d^{k-1}k\sum_{j=3}^r2j\Cy{j}$ switchings 
      that fail condition (\ref{li:a}).
    \item
      The number of edges of distance less than $r$ from some edge is at most
      $2\sum_{j=0}^r(d-1)^j-1=O((d-1)^r)$.  
      At most
      $[n-1]_{k-1}d^{k-1}kO\big((d-1)^r\big)$ switchings then fail
      condition (\ref{li:b}).
    \item By a similar argument,
      at most $[n]_{k-1}d^{k-1}k^2O\big((d-1)^{r/2}\big)$ switchings fail
      condition (\ref{li:c}).
    \item By a similar argument, 
      at most $[n]_{k-1}d^{k-1}kO\big((d-1)^r\big)$ switchings fail
      condition (\ref{li:d}).
  \end{itemize}
  Adding these up and combining $O(\cdot)$ terms, we find that at most
  \begin{align*}
    [n-1]_{k-1}d^{k-1}k\left(\sum_{j=3}^r2j\Cy{j}+O\big((d-1)^r\big)\right)
  \end{align*}
  switchings out of the original $[n]_kd^k$ fail conditions
  by (\ref{li:a})--(\ref{li:d}), establishing \eqref{eq:flb}.
\end{proof}
  
For backwards switchings, we give a similar upper bound, but we 
only give our lower bound in expectation.
\begin{lem}\label{lem:backwardsswitchings}
  Let $G$ be a random $d$-regular graph on $n$ vertices, and let
  $\alpha$ be a cycle of length $k\leq r$
  in the complete graph $K_n$.  Then
  \begin{align}
    B_\alpha &\leq \bigl(d(d-1)\bigr)^k\label{eq:bup}\\
    \intertext{and}
    \E B_\alpha &\geq \bigl(d(d-1)\bigr)^k\left(
      1 - \frac{\Cl{bs}k(d-1)^{r-1}}{n}\right).\label{eq:blp}
  \end{align}
\end{lem}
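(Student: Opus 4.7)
The upper bound is immediate from counting. A backward $\alpha$-switching with $\alpha = v_0 \cdots v_{k-1}$ is determined by choosing, for each $v_i$, an ordered pair $(u_i, w_i)$ of distinct neighbors of $v_i$ in $G$. Since $G$ is $d$-regular, each $v_i$ has exactly $d$ neighbors, giving $d(d-1)$ ordered pairs per index and $\bigl(d(d-1)\bigr)^k$ configurations in total. This bounds $B_\alpha$ deterministically, regardless of validity.

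For the expectation lower bound, I plan to mirror the strategy of Lemma~\ref{lem:forwardswitchings}: identify sufficient geometric conditions on a configuration $((u_i,w_i))_{i=0}^{k-1}$ that guarantee the switching is valid, then bound in expectation the number of configurations that fail any of them. The natural conditions, analogous to (a)--(d) of Lemma~\ref{lem:forwardswitchings}, are: (a$'$) none of the edges $u_iv_i$ or $v_iw_i$ lies in any short cycle of $G$; (b$'$) $v_i$ and $v_{i+1}$ are at $G$-distance at least $r$; (c$'$) $w_i$ and $u_{i+1}$ are at $G$-distance at least $r$; (d$'$) the $k$ paths $u_iv_iw_i$ are pairwise at distance at least $r/2$ and contain no repeated vertex. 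The argument that (a$'$)--(d$'$) imply the only short cycle created or destroyed is $\alpha$ is a reversed version of the forward case: any hypothetical short cycle of $G'$ other than $\alpha$ decomposes into paths of $G\cap G'$ separated by the added edges $v_iv_{i+1}$ or $w_iu_{i+1}$, and (b$'$)--(d$'$) forbid the needed short paths; condition (a$'$) takes care of cycles that could be destroyed by removing the $u_iv_i,v_iw_i$.

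The expected count of configurations failing each condition is controlled via Proposition~\ref{prop:McKayestimate}(a). Summing the estimates for cycles through a vertex, the expected number of edges incident to a fixed $v_i$ that lie in a short cycle of $G$ is $O((d-1)^r/n)$, so the expected number of configurations violating (a$'$) at some fixed $i$ is at most $O((d-1)^r/n)\cdot(d-1)\cdot(d(d-1))^{k-1} = O((d-1)^{r-1}/n)\cdot(d(d-1))^k$. For (b$'$), a first-moment estimate on short paths in $G$ between two fixed vertices (again following from Proposition~\ref{prop:McKayestimate}(a)) shows $\P[\mathrm{dist}_G(v_i,v_{i+1})<r] = O((d-1)^{r-1}/n)$, each failure contributing at most $(d(d-1))^k$ configurations; averaging over the random pair $(w_i,u_{i+1})$ yields the same order for (c$'$), and (d$'$) is similar. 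Summing over the $k$ indices gives an expected $O(k(d-1)^{r-1}/n)\cdot(d(d-1))^k$ bad configurations, and subtracting from $(d(d-1))^k$ yields the claimed bound on $\E B_\alpha$.

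The main obstacle is the careful verification that conditions (a$'$)--(d$'$) truly suffice for validity: one must track both the $2k$ removed edges and the $2k$ added edges (of two different types) and rule out every way they could interact with an unintended short cycle. The counting of bad configurations is then routine bookkeeping using Proposition~\ref{prop:McKayestimate}, closely parallel to the forward case.
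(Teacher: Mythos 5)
Your upper bound \eqref{eq:bup} is the same as the paper's. For the lower bound \eqref{eq:blp} you take a genuinely different route: you try to bound $\E B_\alpha$ for the fixed cycle $\alpha$ directly, by estimating the expected number of the $(d(d-1))^k$ configurations that violate a list of sufficient conditions, which forces you to use probabilistic inputs such as $\P[\mathrm{dist}_G(v_i,v_{i+1})<r]=O((d-1)^{r-1}/n)$ for \emph{fixed} vertices. (Those estimates are true but are not Proposition~\ref{prop:McKayestimate}(a), which concerns cycles; they need a subgraph/path bound in the spirit of MWW's Theorem~3a.) The paper instead never estimates anything for a fixed $\alpha$: it bounds the aggregate $B=\sum_\beta B_\beta$ over all $k$-cycles $\beta$ in $K_n$ deterministically in terms of the cycle counts $C_j$, takes expectations using only $\E C_j$, and then deduces $\E B_\alpha = (2k/[n]_k)\E B$ from exchangeability of the vertex labels. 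Your route could in principle work, but as written it has a real gap.

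The gap is in the sufficiency of your conditions (a$'$)--(d$'$). Consider a cycle of $G'$ consisting of $j$ consecutive new edges $v_av_{a+1},\dots,v_{a+j-1}v_{a+j}$ of $\alpha$, with $2\le j\le r/2$, closed up by a path of $G\cap G'$ from $v_{a+j}$ back to $v_a$ of length $\ell$ with $r/2\le \ell\le r-j$. This is a short cycle of $G'$ other than $\alpha$, so the switching is invalid, yet no condition excludes it: (b$'$) only constrains consecutive pairs ($j=1$), (c$'$) only the chords $w_iu_{i+1}$, and (d$'$) only forces $\ell\ge r/2$. This is precisely why the paper's condition (b) in its proof is \emph{graded}: it requires the paths at $v_i$ and $v_{i+j}$ to be at distance at least $r-j+1$ for every $1\le j\le r/2$, so a run of $j$ $\alpha$-edges plus a connecting old path always has length exceeding $r$, and, just as importantly, the corresponding failure rates $O((d-1)^{r-j}/n)$ sum geometrically in $j$ to $O((d-1)^{r-1}/n)$ per index. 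If you instead repair (d$'$) by demanding distance at least $r$ between \emph{all} pairs of paths, validity is restored, but you now pay the full rate for each of the $\binom{k}{2}$ pairs and only get $\E B_\alpha\ge (d(d-1))^k\bigl(1-O(k^2(d-1)^{r-1}/n)\bigr)$, which is weaker than \eqref{eq:blp} and would propagate an extra factor of $r$ into Theorem~\ref{thm:genpoiapprox} and Theorem~\ref{thm:uniformprocesspoiapprox}. So to recover the stated bound you must either adopt the graded distance condition or fall back on the paper's aggregation-plus-exchangeability argument; the rest of your bookkeeping (the savings from the constrained neighbor choices $w_i,u_{i+1}$, and the condition (a$'$) count) is of the right order.
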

\begin{proof}
  The question this time is given $\alpha$,
  how many choices of oriented paths yield a valid
  switching?  For any fixed $\alpha$,
  there are at most $(d(d-1))^k$ choices of oriented paths, proving
  \eqref{eq:bup}.
  For the lower bound, let $B=\sum_{\beta}B_\beta$, where $\beta$
  runs over all cycles of length $k$ in the complete graph.  
  We will first show
  that
  \begin{align}
    B\geq \frac{[n]_k\bigl(d(d-1)\bigr)^k}{2k}\left(
      1 - \frac{4k\sum_{j=3}^rj\Cy{j} + O\bigl(
        k(d-1)^{r}\bigr)}{nd}\right).\label{eq:bigb}
  \end{align}
  As in Lemma~\ref{lem:forwardswitchings}, we give
  conditions that ensure
  a valid switching.  Let  $\beta=v_0\cdots v_{k-1}$, and suppose
  that the paths $u_iv_iw_i$ in $G$
  for $0\leq i\leq k-1$
  satisfy
  \begin{enumerate}[(a)]
    \item the edges $v_iu_i$ and $v_iw_i$ are not contained in any short cycles;
      \label{li:2a}
    \item for all $1\leq j\leq r/2$,
      the distance between the paths $u_iv_iw_i$ and $u_{i+j}v_{i+j}w_{i+j}$
      is at least $r-j+1$.\label{li:2b}
  \end{enumerate}
  Any choice of edges satisfying these conditions gives a valid
  backwards switching:
  Condition (\ref{li:2b}) ensures that $v_i\not\sim v_{i+1}$ and
  $w_i\not\sim u_{i+1}$, as required in the definition of a switching.
  Let $G'$ be the graph obtained by applying the switching.
  We need to check that no short cycles besides $\beta$ 
  are created or destroyed 
  by the switching.  By (\ref{li:2a}), none are destroyed.
  Suppose a short cycle $\beta'$ other than $\beta$ is created in $G'$.
  It consists of paths in $G\cap G'$, portions of $\beta$, 
  and edges $w_iu_{i+1}$.
  Any such path in $G\cap G'$ must have length at least $r/2$
  because
  \begin{itemize}
    \item if it starts at $u_i$, $v_i$, or $w_i$ and ends at
      $u_{i+j}$, $v_{i+j}$, or $w_{i+j}$ for $1\leq j\leq r/2$, then
      (\ref{li:2b}) implies this;
    \item if it starts and ends at one of $u_i$, $v_i$, and $w_i$, then
      (\ref{li:2a}) implies this.
  \end{itemize}
  Hence $\beta'$ must contain exactly one such path.
  The remainder of $\beta'$ must either be an edge $w_iu_{i+1}$,
  or a portion of $\beta'$, both of which are impossible by (\ref{li:2b}).
  
  There are $[n]_kd^k/2k$ choices for $\beta$, and at most
  $(d(d-1))^k$ choices for $u_i, w_i$, $0\leq i<k$.
  As before, we count how many of these potential switchings
  satisfy conditions (\ref{li:2a}) and (\ref{li:2b}) to get a lower
  bound on $B$.
  By similar arguments as in the proof of Lemma~\ref{lem:forwardswitchings}, 
  we find
  that at most
  \begin{align*}
    2[n-1]_{k-1}\big(d(d-1)\big)^{k-1}(d-1)\sum_{j=3}^rjC_j
  \end{align*}
  of the switchings violate condition (\ref{li:2a}), and
  at most $[n]_{k-1}\big(d(d-1)\big)^{k-1}
  O\big((d-1)^{r+1}\big)$ violate condition (\ref{li:2b}), which
  proves \eqref{eq:bigb}.
  
  By Proposition~\ref{prop:McKayestimate}\ref{item:onecycle}
  (or by \cite[eq.~2.2]{MWW}),
  \begin{align*}
    \E \Cy{k} &\leq \frac{\Cr{50}(d-1)^k}{2k}.
  \end{align*}
  Applying this to \eqref{eq:bigb} gives
  \begin{align*}
    \E B \geq \frac{[n]_k\bigl(d(d-1)\bigr)^k}{2k}
    \left( 1-O\left(\frac{k(d-1)^{r-1}}{n}\right)\right)
  \end{align*}
  
    By the exchangeability of the vertex labels of $G$, the law of
  $B_\beta$ is the same for all $k$-cycles $\beta$.
  It follows that $\E B = ([n]_k/2k)\E B_\alpha$, 
  proving \eqref{eq:blp}.
\end{proof}

\subsection{Applying Stein's method}
We will prove a generalization of Theorem~\ref{thm:uniformprocesspoiapprox},
allowing the process of cycles to be indexed by any collection of cycles,
rather than just all cycles of length $r$ or less.
\begin{thm}\label{thm:genpoiapprox}
  Let $G$ be a random $d$-regular graph on $n$ vertices.
  For some collection $\Ii$ of cycles in the complete graph $K_n$
  of maximum length $r$,
  we define $\I=(I_\alpha,\,\alpha\in\Ii)$, with $I_\alpha=\1\{\text{$G$
  contains $\alpha$}\}$.  Let $\Z=(Z_\alpha,\,\alpha\in\Ii)$
  be a vector of independent Poisson random variables, with $\E Z_\alpha
  =(d-1)^{\abs{\alpha}}/[n]_{\abs{\alpha}}$, where $\abs{\alpha}$ denotes
  the length of the cycle $\alpha$.
  
  For some absolute constant $\Cr{52}$, for all $n$
  and $d,r\geq 3$ satisfying $r\leq n^{1/10}$ and $d\leq n^{1/3}$,
  \begin{align*}
    \dtv(\I,\,\Z) &\leq\sum_{\alpha\in\Ii}\frac{\Cl{52}\abs{\alpha}
    (d-1)^{\abs{\alpha}+r-1}}{n^{\abs{\alpha}+1}}.
  \end{align*}
\end{thm}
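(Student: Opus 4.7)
The plan is to apply Proposition~\ref{prop:steinsmethod} with $\Ff = \sigma(G)$ to the indicator vector $\I = (I_\alpha)_{\alpha \in \Ii}$ under some arbitrary ordering of $\Ii$, using an exchangeable pair $(\I, \I')$ obtained by performing a single random switching on $G$. To build $(G, G')$, I would pick $\alpha \in \Ii$ uniformly and, independently, a tuple $x \in [n]^{2k}$ with $k = |\alpha|$, interpreting $x$ as a forward $\alpha$-switching pattern if $I_\alpha = 1$ and as a backward $\alpha$-switching pattern if $I_\alpha = 0$. If the resulting switching is valid I apply it to produce $G'$; otherwise $G' = G$. Exchangeability of $(G, G')$ follows from detailed balance: $G$ is uniform over $d$-regular graphs, and the forward $\alpha$-switching on $G$ encoded by $x$ is inverse to the backward $\alpha$-switching on $G'$ encoded by the same $x$, so the transition probabilities $\P[G\to G']$ and $\P[G' \to G]$ are both equal to $1/(|\Ii|\, n^{2k})$.

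A valid switching modifies no short cycle except $\alpha$, so no coordinate $I_\beta$ with $\beta \neq \alpha$ is affected. Writing $F_\alpha(G)$ and $B_\alpha(G)$ for the counts of valid forward and backward $\alpha$-switchings from Lemmas~\ref{lem:forwardswitchings} and~\ref{lem:backwardsswitchings}, and accounting for the $2k$ tuples encoding each switching (cyclic rotation and reversal),
\begin{align*}
  \P[\Delta_\alpha^- \mid \Ff] = \frac{2k\, I_\alpha\, F_\alpha(G)}{|\Ii|\, n^{2k}}, \qquad \P[\Delta_\alpha^+ \mid \Ff] = \frac{2k\,(1-I_\alpha)\, B_\alpha(G)}{|\Ii|\, n^{2k}}.
\end{align*}
I would then set $c_\alpha = |\Ii|\, n^{2k}/(2k\, [n]_k d^k)$, so that $c_\alpha \P[\Delta_\alpha^- \mid \Ff] = I_\alpha F_\alpha(G)/([n]_k d^k)$ is to be compared with $I_\alpha$, and $c_\alpha \P[\Delta_\alpha^+ \mid \Ff] = (1 - I_\alpha) B_\alpha(G)/([n]_k d^k)$ with $\lambda_\alpha = (d-1)^k/[n]_k$; the latter comparison is natural since $(d(d-1))^k/([n]_k d^k) = \lambda_\alpha$.

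For each $\alpha$, I would estimate $\E|I_\alpha - c_\alpha \P[\Delta_\alpha^- \mid \Ff]| = \E\bigl[I_\alpha\bigl(1 - F_\alpha(G)/([n]_k d^k)\bigr)\bigr]$ using the lower bound of Lemma~\ref{lem:forwardswitchings}, together with Proposition~\ref{prop:McKayestimate}\ref{item:attachedcycles} to bound the probability that $\alpha \subset G$ and $\alpha$ shares an edge with another short cycle (the event on which Lemma~\ref{lem:forwardswitchings}'s lower bound fails to apply). Similarly, $\E|\lambda_\alpha - c_\alpha \P[\Delta_\alpha^+ \mid \Ff]|$ splits into a bias term controlled by Lemma~\ref{lem:backwardsswitchings} and a remainder $\E[I_\alpha B_\alpha(G)]/([n]_k d^k)$, which is bounded by combining the uniform estimate $B_\alpha \leq (d(d-1))^k$ with Proposition~\ref{prop:McKayestimate}\ref{item:onecycle}. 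Each per-$\alpha$ contribution comes out to order $\lambda_\alpha \cdot k (d-1)^{r-1}/n$, and summing over $\alpha \in \Ii$ with $\xi_\alpha \leq 1$ (and replacing $[n]_k$ by $n^k$) yields the claimed bound.

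The main obstacle is arranging the exchangeable pair so that detailed balance is transparent while the conditional probabilities $\P[\Delta_\alpha^\pm \mid \Ff]$ factor cleanly through $F_\alpha$ and $B_\alpha$; the symmetric role played by forward and backward switching patterns drawn from the same parameter space $[n]^{2k}$ is what makes this work. Once the construction is set up, the rest is a careful but routine combination of the two switching-count lemmas with the short-cycle estimates of Proposition~\ref{prop:McKayestimate}.
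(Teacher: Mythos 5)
Your overall route is the paper's own: an exchangeable pair produced by one uniformly random switching, Proposition~\ref{prop:steinsmethod} applied to the indicator vector with $\Ff=\sigma(G)$, and the switching counts of Lemmas~\ref{lem:forwardswitchings} and~\ref{lem:backwardsswitchings}. Your construction of the pair (choose $\alpha\in\Ii$ and a pattern $x\in[n]^{2k}$, apply the encoded switching when valid) is a legitimate repackaging of the paper's reversible weighted random walk on the set of $d$-regular graphs: symmetry of your kernel follows from the bijection between valid forward $\alpha$-switchings $G_0\to G_1$ and valid backward $\alpha$-switchings $G_1\to G_0$, which are encoded by the same tuples, and the factor $2k$ is a bookkeeping convention that your choice of $c_\alpha$ absorbs. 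After multiplying by $c_\alpha$ you recover exactly the quantities $F_\alpha/([n]_kd^k)$ and $B_\alpha/([n]_kd^k)$ that the paper compares with $I_\alpha$ and $(d-1)^k/[n]_k$. (One harmless slip: your ``remainder'' $\E[I_\alpha B_\alpha]$ is identically zero, because a backward $\alpha$-switching requires $v_i\not\sim v_{i+1}$, so no graph containing $\alpha$ admits one.)

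The gap is in the second sum, $\E\bigl[I_\alpha\bigl(1-F_\alpha/([n]_kd^k)\bigr)\bigr]$, which you declare routine. Two substantive steps are missing. First, on the event that $G$ contains $\alpha$ and $\alpha$ shares no edge with another short cycle, the lower bound \eqref{eq:flb} contains the random quantity $\sum_{j=3}^r jC_j$; bounding its expectation on that event requires the pair correlations $\E I_\alpha I_\beta$ for cycles $\beta$ edge-disjoint from $\alpha$, i.e.\ Proposition~\ref{prop:McKayestimate}\ref{item:twocycles} with $f=0$, summed over all such $\beta$, and this produces a term of the same order $k(d-1)^{k+r-1}/n^{k+1}$ as the final bound, so it cannot be waved away. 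Second, for the event that $\alpha\subset G$ and $\alpha$ shares an edge with another short cycle (where $F_\alpha=0$ and the contribution is the probability of the event itself), you cite Proposition~\ref{prop:McKayestimate}\ref{item:attachedcycles}; that item concerns two vertex-disjoint cycles joined by a path and does not apply to cycles sharing edges. What is needed is item~\ref{item:twocycles} combined with a union bound over all short cycles $\beta$ overlapping $\alpha$, and that union bound requires enumerating the possible unions $\alpha\cup\beta$ according to the number $p$ of components and the number $f$ of edges of the intersection, with the count $(2r^3)^{p-1}/(p-1)!^2\cdot 2kn^{j-p-f}$ taken from \cite{MWW}; the gain of $n^{-p-f}$ from this enumeration is exactly what makes the sum over overlapping $\beta$ come out to $O\bigl(k(d-1)^{k+r-1}/n^{k+1}\bigr)$. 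This enumeration is the genuinely delicate part of the proof and is absent from your plan.
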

\begin{proof}
  We will construct an exchangeable pair by taking a step in a reversible
  Markov chain.  To make this chain, define a graph $\GGG$
  whose vertices consist of all $d$-regular graphs on $n$
  vertices.
  For every valid forward $\alpha$-switching with $\alpha\in\Ii$ 
  from a graph
  $G_0$ to $G_1$, make an undirected edge in $\GGG$
  between $G_0$ and $G_1$. Place a weight of 
  $1/[n]_{\abs{\alpha}}d^{\abs{\alpha}}$ on each of these edges.
  The essential fact that will make our
  arguments work is that valid forward $\alpha$-switchings from
  $G_0$ to $G_1$
  are in bijective correspondence with valid backwards $\alpha$-switchings from
  $G_1$ to $G_0$.  Thus, we could have equivalently defined $\GGG$ 
  by forming an edge for every valid backwards switching.

  Define the degree of a vertex in a graph with weighted edges to be the
  sum of the adjacent edge weights.
  Let $d_0$ be the maximum degree of $\GGG$ as defined so far.
  To make $\GGG$ regular, add a weighted loop to each vertex that brings
  its degree up to $d_0$.
  Now, consider a random walk on $\GGG$ that moves with probability
  proportional to the edge weights.  This random walk is a Markov chain
  reversible with respect to the uniform distribution on $d$-regular
  graphs on $n$ vertices.  Thus, if $G$ has this distribution, and we
  obtain $G'$ by advancing one step in the random walk, 
  the pair of graphs $(G,G')$ is exchangeable.
  
  Let $I'_\alpha$ be an indicator on $G'$ containing
  the cycle $\alpha$, and define $\I'=(I'_\alpha,\,\alpha\in\Ii)$.
  It follows from the exchangeability of $G$ and $G'$ that $\I$ and
  $\I'$ are exchangeable, and we can apply
  Proposition~\ref{prop:steinsmethod} on this pair.  Define the events 
  $\Delta_\alpha^+$ and $\Delta_\alpha^-$
  as in that proposition.  By our construction,
  \begin{align*}
    \P[\Delta_\alpha^+\mid G]=\frac{B_\alpha}{d_0[n]_{\abs{\alpha}}
    d^{\abs{\alpha}}},\qquad
    \P[\Delta_\alpha^-\mid G]=\frac{F_\alpha}{d_0[n]_{\abs{\alpha}}
    d^{\abs{\alpha}}}.
  \end{align*}
  Thus by Proposition~\ref{prop:steinsmethod} with 
  all constants set to $d_0$,
  \begin{align}
    \dtv(\I,\,\Z) &\leq \sum_{\alpha\in\Ii}
       \E\left\lvert\frac{(d-1)^{\abs{\alpha}}}{[n]_{\abs{\alpha}}}
         -\frac{B_\alpha}{[n]_{\abs{\alpha}}d^{\abs{\alpha}}}\right\rvert
         +\sum_{\alpha\in\Ii}  
        \E\left\lvert I_\alpha
         -\frac{F_\alpha}{[n]_{\abs{\alpha}}d^{\abs{\alpha}}}\right\rvert.
       \label{eq:dtvbound}
  \end{align}
  We will bound these two sums.  Fix some $\alpha\in\Ii$, and let 
  $\abs{\alpha}=k$.  By Lemma~\ref{lem:backwardsswitchings},
  \begin{align*}
    \frac{B_\alpha}{[n]_{k}d^{k}}
    &\leq \frac{(d-1)^{k}}{[n]_{k}}.
  \end{align*}
  Thus
  \begin{align*}
    \E\left\lvert\frac{(d-1)^{k}}{[n]_{k}}
         -\frac{B_\alpha}{[n]_{k}d^{k}}\right\rvert
       &=\E\left[\frac{(d-1)^{k}}{[n]_{k}}
         -\frac{B_\alpha}{[n]_{k}d^{k}}\right].
  \end{align*}
  Applying the lower bound on $\E B_\alpha$
  from Lemma~\ref{lem:backwardsswitchings} then
  gives
  \begin{align}
    \E\left\lvert\frac{(d-1)^{k}}{[n]_{k}}
         -\frac{B_\alpha}{[n]_{k}d^{k}}\right\rvert
       &\leq \frac{\Cr{bs}k(d-1)^{k+r-1}}{n[n]_{k}}.\label{eq:E1}
  \end{align}
          
  In bounding the other sum, we partition our state space of random regular
  graphs into three events:
  \begin{align*}
    A_1&=\{\text{$G$ does not contain $\alpha$}\},\\
    A_2&=\{\text{$G$ contains $\alpha$, which does not share
      an edge with another short cycle in $G$}\},\\
    A_3&=\{\text{$G$ contains $\alpha$, which shares an edge
    with another short cycle in $G$}\}.
  \end{align*}
  On $A_1$, we have $I_\alpha=F_\alpha=0$.  On $A_2$, both bounds
  from Lemma~\ref{lem:forwardswitchings} apply,
  giving us
  \begin{align*}
    \left\lvert I_\alpha
         -\frac{F_\alpha}{[n]_{k}d^{k}}\right\rvert
      &\leq \frac{2k\sum_{j=3}^rjC_j+\Cr{4}k(d-1)^r}{nd}.
  \end{align*}
  On $A_3$, we have $I_\alpha=1$ and $F_\alpha=0$.
  In all,
  \begin{align*}
    \E\left\lvert I_\alpha
         -\frac{F_\alpha}{[n]_{k}d^{k}}\right\rvert
       &\leq \E\left[\1_{A_2}\frac{2k\sum_{j=3}^rjC_j+\Cr{4}k(d-1)^r}{nd}
       +\1_{A_3}\right]\\
       &=\frac{2k}{nd}\E\biggl[\1_{A_2}\sum_{j=3}^rjC_j\biggr] 
          + \frac{\Cr{4}k(d-1)^r}{nd}
       \P[A_2] + \P[A_3].
  \end{align*}
    Let $\Jj$ be the set of all cycles of length $r$ or less in $K_n$
  that share no edges with $\alpha$.  On the set $A_2$, the graph $G$
  contains no cycles outside of this set (except for $\alpha$), and
  $\sum_{j=3}^r jC_j=k+\sum_{\beta\in\Jj}\abs{\beta}I_\beta$.
  Thus
  \begin{align}
    \E\left\lvert I_\alpha
         -\frac{F_\alpha}{[n]_{k}d^{k}}\right\rvert&\leq
         \frac{2k^2}{nd}\E\1_{A_2}+\frac{2k}{nd}\sum_{\beta\in\Jj}
         \abs{\beta}\E\1_{A_2}I_\beta+\frac{\Cr{4}k(d-1)^r}{nd}
       \P[A_2] + \P[A_3]\nonumber\\
       &\leq \frac{2k^2}{nd}\E{I_{\alpha}}+\frac{2k}{nd}\sum_{\beta\in\Jj}
         \abs{\beta}\E I_{\alpha}I_\beta+\frac{\Cr{4}k(d-1)^r}{nd}
       \E I_\alpha + \P[A_3].\label{eq:sums}
  \end{align}
  By Proposition~\ref{prop:McKayestimate}\ref{item:onecycle},
  \begin{align}
    \frac{2k^2}{nd}\E I_\alpha&=O\Bigl(\frac{k^2(d-1)^k}{n^{k+1}}\Bigr)
      \label{eq:part1}\\
    \intertext{and}
    \frac{\Cr{4}k(d-1)^r}{nd}
       \E I_\alpha&=O\Bigl(\frac{k(d-1)^{k+r-1}}{n^{k+1}}\Bigr)\label{eq:part3}.
  \end{align}
  By Proposition~\ref{prop:McKayestimate}\ref{item:twocycles},
  for any $\beta\in\Jj$, we have $\E I_\alpha I_\beta\leq \Cr{51}(d-1)^{j+k}
    /n^{j+k}$.  For each $3\leq j\leq r$, there are at most $[n]_j/2j$ cycles
  in $\Jj$ of length $j$.  Therefore
  \begin{align}
    \frac{2k}{nd}\sum_{\beta\in\Jj}
         \abs{\beta}\E I_{\alpha}I_\beta
      &\leq \frac{2k}{nd}\sum_{j=3}^r\frac{[n]_j}{2j} \Bigl(\frac{j
      \Cr{51}(d-1)^{j+k}}{n^{j+k}}\Bigr)\nonumber\\
      &\leq \frac{k}{nd}\sum_{j=3}^r \frac{
      \Cr{51}(d-1)^{j+k}}{n^{k}}
      = O\Bigl(\frac{k(d-1)^{k+r-1}}{n^{k+1}}\Bigr).\label{eq:part2}
  \end{align}

  The last term of \eqref{eq:sums} is the most difficult to bound.
  Let $\Kk$ be the set of short cycles in $K_n$  that
  share an edge with $\alpha$, not including $\alpha$ itself.  
  By a union bound,
  \begin{align}
    \P[A_3]&\leq \sum_{\beta\in\Kk}\E I_\alpha I_\beta.\label{eq:A3ub}
  \end{align}
  Now, we classify and count the cycles $\beta\in\Kk$ according to 
  the structure of $\alpha\cup\beta$.
  Suppose that $\beta$ has length $j$, and consider
  the intersection of $\alpha$ and $\beta$ (the graph consisting
  of all vertices and edges contained in both $\alpha$ and $\beta$).
  Suppose this intersection graph has $p$ components and $f$ edges.
  As computed on \cite[p.~5]{MWW}, the number of possible isomorphism
  types of $\alpha\cup\beta$ given $p$ and $f$ is at most 
  $(2r^3)^{p-1}/(p-1)!^2$.  
  For each possible isomorphism type of $\alpha\cup\beta$, 
  there are no more than $2kn^{j-p-f}$ possible choices of $\beta$
  such that $\alpha\cup\beta$ falls into this isomorphism class.
  This is because $\alpha\cup\beta$ has $j+k-p-f$
  vertices, $k$ of which are determined by $\alpha$. In defining $\beta$,
  the remaining $j-p-f$ vertices can be chosen to be anything, and the
  intersection of $\alpha$ and $\beta$ can be rotated around $\alpha$ in
  $2k$ ways, all without changing the isomorphism class of $\alpha\cup\beta$.
  In all, we have shown that the number of $j$-cycles whose overlap with
  $\alpha$ has $p$ components and $f$ edges is at most
  \begin{align*}
    \frac{(2r^3)^{p-1}}{(p-1)!^2}
      2kn^{j-p-f}.
  \end{align*}
  
  For any such choice of $\beta$, we have
  $\E I_\alpha I_\beta
  \leq \Cr{51}(d-1)^{j+k-f}/n^{j+k-f}$ 
  by 
  Proposition~\ref{prop:McKayestimate}\ref{item:twocycles}.
  Applying this to \eqref{eq:A3ub},
  \begin{align}
    \P[A_3]&\leq\sum_{j=3}^r\sum_{p,f\geq 1} \frac{(2r^3)^{p-1}}{(p-1)!^2}
      2kn^{j-p-f}\frac{\Cr{51}(d-1)^{j+k-f}}{n^{j+k-f}}\nonumber\\
    &=\sum_{j=3}^r\sum_{p,f\geq 1} \frac{(2r^3)^{p-1}}{(p-1)!^2}
      \frac{2k\Cr{51}(d-1)^{j+k-f}}{n^{k+p}}\nonumber\\
    &=\sum_{j=3}^rO\Bigl(\frac{k(d-1)^{j+k-1}}{n^{k+1}}\Bigr)
    =O\Bigl(\frac{k(d-1)^{k+r-1}}{n^{k+1}}\Bigr).\label{eq:part4}
  \end{align}
  Combining \eqref{eq:part1}, \eqref{eq:part3}, \eqref{eq:part2}, and
  \eqref{eq:part4}, we have
  \begin{align*}
    \E\left\lvert I_\alpha
         -\frac{F_\alpha}{[n]_{k}d^{k}}\right\rvert&=
       O\Bigl(\frac{k(d-1)^{k+r-1}}{n^{k+1}}\Bigr).
  \end{align*}
  Applying this and \eqref{eq:E1} to \eqref{eq:dtvbound} establishes
  the theorem.
\end{proof}

\begin{proof}[Proof of Theorem~\ref{thm:uniformprocesspoiapprox}]
  If $r>n^{1/10}$ or $d>n^{1/3}$, then 
  $c(d-1)^{2r-1}/n>1$ for a sufficiently large choice of
  $c$, and the total variation bound is trivial.
  Thus we can assume that this is not the case and apply
  the previous theorem:
  \begin{align*}
    \dtv(\I,\,\Z) &\leq \sum_{\alpha\in\Ii}\frac{\Cr{52}\abs{\alpha}
    (d-1)^{\abs{\alpha}+r-1}}{n^{\abs{\alpha}+1}}\\
      &= \sum_{k=3}^r\frac{[n]_k}{2k}\Bigl(\frac{\Cr{52}k
    (d-1)^{k+r-1}}{n^{k+1}}\Bigr)\\
    &=O\Bigl(\frac{(d-1)^{2r-1}}{n}\Bigr).\qedhere
  \end{align*}
\end{proof}
Since the cycle counts $(\Cy{3},\ldots,\Cy{r})$ are a functional
of $\I$, this corollary implies that
\begin{align*}
  \dtv\big((\Cy{3},\ldots,\Cy{r}),\,(Z_3,\ldots,Z_r)\big)
    &\leq \frac{c(d-1)^{2r-1}}{n},
\end{align*}
where $(Z_3,\ldots,Z_r)$ is a vector of independent Poisson random
variables with $\E Z_k = (d-1)^k/2k$.  
This bound is often less than optimal, since this theorem
fails to exploit the $\lambda_k^{-1/2}$ factors in 
Proposition~\ref{prop:steinsmethod}.  We will take advantage
of these factors in the following proposition, and then apply this to
prove Theorem~\ref{thm:bestpoiapprox}.
\begin{prop}\label{prop:genpoiapprox}
  With the set-up of Theorem~\ref{thm:genpoiapprox}, divide up the
  collection of cycles $\Ii$ into bins $\Bb_1,\ldots,\Bb_s$. Let
  \begin{align*}
    I_k=\sum_{\alpha\in\Bb_k}I_\alpha, \qquad Z_k=\sum_{\alpha\in\Bb_k}Z_\alpha,
  \end{align*}
  and let $\lambda_k=\E Z_k$.
  Then
  \begin{align*}
    \dtv\big((I_1,\ldots,I_s),\,(Z_1,\ldots,Z_s)\big))
      &\leq \Cr{52}\sum_{k=1}^s\xi_k\sum_{\alpha\in\Bb_k}
      \frac{\abs{\alpha}
    (d-1)^{\abs{\alpha}+r-1}}{n^{\abs{\alpha}+1}},
  \end{align*}
  where $\xi_k=\min\big(1, 1.4\lambda_k^{-1/2}\big)$.
\end{prop}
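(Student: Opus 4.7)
The plan is to run the proof of Theorem~\ref{thm:genpoiapprox} with essentially no changes, except that we feed the binned vector $(W_1,\ldots,W_s)$ into Proposition~\ref{prop:steinsmethod} instead of the full indicator process. The exchangeable pair will be exactly the same: take $G$ uniform on $d$-regular graphs, and let $G'$ be obtained from $G$ by a step of the weighted random walk on the switching graph $\GGG$ constructed in the proof of Theorem~\ref{thm:genpoiapprox}. Since this chain is reversible with respect to the uniform distribution, $(G,G')$ is exchangeable, and so is the pair of induced vectors $(W,W')$ where $W_k=\sum_{\alpha\in\Bb_k}I_\alpha$.

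The crucial structural point is that a valid $\alpha$-switching (in either direction) flips only the indicator $I_\alpha$ and leaves every other $I_\beta$ unchanged. Hence, for any ordering of the bins, the event $\Delta_k^+$ in Proposition~\ref{prop:steinsmethod} is realized precisely when the chain takes a backwards $\alpha$-switching step for some $\alpha\in\Bb_k$, and similarly $\Delta_k^-$ corresponds to a forward $\alpha$-switching step for some $\alpha\in\Bb_k$. With the edge weights from the proof of Theorem~\ref{thm:genpoiapprox} and the constant $c_k=d_0$, this gives
\begin{align*}
  c_k\P[\Delta_k^+\mid G] = \sum_{\alpha\in\Bb_k}\frac{B_\alpha}{[n]_{\abs{\alpha}}d^{\abs{\alpha}}},
  \qquad
  c_k\P[\Delta_k^-\mid G] = \sum_{\alpha\in\Bb_k}\frac{F_\alpha}{[n]_{\abs{\alpha}}d^{\abs{\alpha}}}.
\end{align*}
Also $\lambda_k=\sum_{\alpha\in\Bb_k}(d-1)^{\abs{\alpha}}/[n]_{\abs{\alpha}}$ and $W_k=\sum_{\alpha\in\Bb_k}I_\alpha$, so Proposition~\ref{prop:steinsmethod} produces differences of sums indexed by $\alpha\in\Bb_k$ inside each absolute value.

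Next I would pull the sum over $\alpha$ outside the absolute value using the triangle inequality. This is lossless in the sense that each term is still weighted by the same $\xi_k$, giving
\begin{align*}
  \dtv\bigl((W_1,\ldots,W_s),\,(Z_1,\ldots,Z_s)\bigr)
  &\leq \sum_{k=1}^s\xi_k\sum_{\alpha\in\Bb_k}\Biggl(
    \E\Bigl|\tfrac{(d-1)^{\abs{\alpha}}}{[n]_{\abs{\alpha}}}
      -\tfrac{B_\alpha}{[n]_{\abs{\alpha}}d^{\abs{\alpha}}}\Bigr|
    +\E\Bigl|I_\alpha-\tfrac{F_\alpha}{[n]_{\abs{\alpha}}d^{\abs{\alpha}}}\Bigr|
  \Biggr).
\end{align*}
Now each per-$\alpha$ expectation is exactly the quantity estimated in the proof of Theorem~\ref{thm:genpoiapprox}: bounding the first using Lemma~\ref{lem:backwardsswitchings} yields \eqref{eq:E1}, and bounding the second by splitting on $A_1, A_2, A_3$ and invoking Lemma~\ref{lem:forwardswitchings} together with Proposition~\ref{prop:McKayestimate} yields the estimate $O\bigl(\abs{\alpha}(d-1)^{\abs{\alpha}+r-1}/n^{\abs{\alpha}+1}\bigr)$. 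Plugging in, with a common absolute constant $\Cr{52}$, gives the claimed bound.

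There is no real obstacle: the entire per-cycle work has already been done inside Theorem~\ref{thm:genpoiapprox}, so the only thing to verify is that applying Proposition~\ref{prop:steinsmethod} to the binned vector truly introduces the $\xi_k$ factor at the outer level and that the triangle inequality does not interfere with this factor. The mildly delicate point is checking that $\Delta_k^+$ and $\Delta_k^-$ are described \emph{exactly} by single $\alpha$-switchings with $\alpha\in\Bb_k$: this uses the validity of the switchings, which by construction ensures that no other short cycle indicator toggles. Once this is confirmed, the rest is a direct rerun of the estimates in the proof of Theorem~\ref{thm:genpoiapprox}.
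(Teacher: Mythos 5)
Your proposal is correct and is essentially the paper's own argument: the same exchangeable pair built from the weighted switching chain, the same identification of $\Delta_k^{\pm}$ with valid backwards/forward $\alpha$-switchings for $\alpha\in\Bb_k$, and then Proposition~\ref{prop:steinsmethod} with $c_k=d_0$ followed by the triangle inequality and the per-cycle bounds already established in the proof of Theorem~\ref{thm:genpoiapprox}. Nothing further is needed.
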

\begin{proof}
  Define the exchangeable pair $(G,G')$ as in
  Theorem~\ref{thm:genpoiapprox}, and define
  $I'_1,\ldots,I'_s$ as the analogous quantities in
  $G'$.
  Define $\Delta_k^+$ and $\Delta_k^-$
  as in Proposition~\ref{prop:steinsmethod}, noting that
  \begin{align*}
    \P[\Delta_k^+\mid G] = \sum_{\alpha\in\Bb_k}\frac{B_\alpha}{d_0[n]_{\abs{\alpha}}
    d^{\abs{\alpha}}},\qquad\qquad
    \P[\Delta_k^-\mid G] = \sum_{\alpha\in\Bb_k}\frac{F_\alpha}{d_0[n]_{\abs{\alpha}}
    d^{\abs{\alpha}}}.
  \end{align*}
  By Proposition~\ref{prop:steinsmethod},
  \begin{align*}
    \dtv\big((I_1,\ldots&, I_s),\,(Z_1,\ldots,Z_s)\big)\\
    &\leq \sum_{k=1}^s
       \xi_k\left(\E\left\lvert\lambda_k-d_0\P[\Delta_k^+\mid G]\right\rvert+
         \E\left\lvert I_k-d_0\P[\Delta_k^-\mid G]\right\rvert\right)\\
    &=\sum_{k=1}^s
       \xi_k\E\Biggl\lvert\sum_{\alpha\in\Bb_k}\left(
       \frac{(d-1)^{\abs{\alpha}}}{[n]_{\abs{\alpha}}}-\frac{B_\alpha}
       {[n]_{\abs{\alpha}}d^{\abs{\alpha}}}\right)
       \Biggr\rvert\\
       &\qquad\quad\phantom{}+\sum_{k=1}^s\xi_k
         \E\left\lvert\sum_{\alpha\in\Bb_k}\left(I_\alpha-\frac{F_\alpha}
       {[n]_{\abs{\alpha}}d^{\abs{\alpha}}}\right)
       \right\rvert.
  \end{align*}
  These summands were already bounded in expectation in 
  Theorem~\ref{thm:genpoiapprox}, and applying these bounds proves
  the proposition.
\end{proof}
\begin{proof}[Proof of \thref{thm:bestpoiapprox}]
  If $d> n^{1/3}$ or $r>n^{1/10}$, then
  $c\sqrt{r}(d-1)^{3r/2-1}/n>1$ for a 
  sufficiently large choice of $c$,
  and the theorem holds trivially.  Thus we can assume that
  $d\leq n^{1/3}$ and $r\leq n^{1/10}$.
  
  Let $\lambda_k=(d-1)^k/2k$.
  With $\Ii_k$ defined as the set of all cycles in $K_n$ of length $k$,
  we apply the previous proposition with bins $\Ii_3,\ldots,\Ii_r$ to get
  \begin{align*}
    \dtv\big((\Cy{3},\ldots,\Cy{r}),\,(Z_3,\ldots,Z_r)\big)
      &\leq \Cr{52}\sum_{k=3}^r1.4\lambda_k^{-1/2}\sum_{\alpha\in\Ii_k}
      \frac{k
    (d-1)^{k+r-1}}{n^{k+1}}\\
    &=\sum_{k=3}^rO\Bigl(
      \frac{\sqrt{k}
    (d-1)^{k/2+r-1}}{n}\Bigr)\\
    &=O\Bigl(\frac{\sqrt{r}(d-1)^{3r/2-1}}{n}\Bigr).\qedhere
  \end{align*}  
\end{proof}

\chapter{Fluctuations of linear eigenvalue statistics}
\label{chap:fluctuations}
\section{Fluctuations for random regular graphs: main results}
\label{sec:flucresults}
Let $\lambda_1\geq\cdots\geq \lambda_n$ be the eigenvalues of $(d-1)^{-1/2}A_n$,
where $A_n$ is the adjacency matrix of a random $d$-regular graph.
The main result is that the fluctuations of $\sum f(\lambda_i)$ for a sufficiently
smooth function~$f$ converge
either in law either to compound Poisson or to Gaussian, depending on whether
$d$ is held fixed or grows. The exact limiting distribution depends on $f$;
it can be written in terms of the decomposition of $f$ as a sum of modified Chebyshev
polynomials, which we define now:
\begin{align*}
  \Gamma_0(x) & = 1, \\
  \Gamma_{2k}(x) &= 2 T_{2k}\left (\frac{x}{2} \right ) + \frac{d-2}{(d-1)^k}
  &\text{for $k \geq 1$,}\\
  \Gamma_{2k+1}(x) &= 2 T_{2k+1}\left(\frac{x}{2}\right)&\text{for $k \geq 0$,}
\end{align*}
with
$\{T_n(x)\}_{n \in \mathbb{N}}$ the Chebyshev polynomials of the first kind 
on the interval $[-1,1]$.

Let $\rho>1$, and consider the image of the circle 
of radius $\rho$,
centered at the origin, under the map $f(z) = \frac{z+ z^{-1}}{2}$. We
call this the Bernstein ellipse of radius $\rho$. 
The ellipse has foci at $\pm 1$, and the sum
of the major semiaxis and the minor semiaxis is exactly~$\rho$. 
Analyticity on a Bernstein ellipse implies a decomposition as a sum of Chebyshev
polynomials. We can now give our main result on eigenvalue fluctuations:

\begin{thm}\thlabel{thm:dfixedlimit}
  Fix $d\geq 3$, and let $G_n$ be a random $d$-regular graph on
  $n$ vertices from the permutation or uniform model, with adjacency matrix $A_n$.  
  Let $\lambda_1\geq\cdots\geq\lambda_n$ be the eigenvalues
  of $(d-1)^{-1/2}A_n$.
  
  Let $\alpha_0=1$ in the case of the permutation model and $\alpha_0=3/2$ for the permutation
  model.
  Suppose that $f$ is a function defined on $\CC$, analytic
  inside a Bernstein ellipse of radius $2\rho$, where $\rho=(d-1)^{\alpha}$
  for some $\alpha>\alpha_0$, and such that $|f(z)|$ is bounded inside this ellipse.
  Then $f(x)$ can be expanded on $[-2,2]$ as
  \begin{align*}
    f(x) = \sum_{k=0}^{\infty}a_k\Gamma_k(x),
  \end{align*}
  and $Y_f^{(n)}\defeq \sum_{i=1}^nf(\lambda_i)-na_0$ converges
  in law as $n\to\infty$ to the infinitely divisible random variable
  \begin{align*}
    Y_f\defeq \sum_{k=1}^{\infty}\frac{a_k}{(d-1)^{k/2}}\CNBW{k},
  \end{align*}
  with $\CNBW{k}$ as defined on p.~\pageref{def:CNBW} for the permutation
  or uniform model of random graph.
\end{thm}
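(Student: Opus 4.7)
The plan is to reduce the theorem, via the trace identity $\sum_{i=1}^n \Gamma_k(\lambda_i)= (d-1)^{-k/2}\CNBW[n]{k}$ for $k\ge 1$, to the convergence of the random series
\[
Y_f^{(n)} \;=\; \sum_{k\ge 1} \frac{a_k}{(d-1)^{k/2}}\,\CNBW[n]{k},
\]
and then to prove this convergence using the Poisson approximations \thref{thm:bestpermutationpoiapprox} and \thref{thm:bestpoiapprox} on a truncation at some $K=K(n)\to\infty$, together with a uniform tail bound that exploits the analyticity of~$f$. The trace identity is a standard consequence of the spectral decomposition of the non-backtracking operator on a $d$-regular graph; the correction $(d-2)/(d-1)^k$ in $\Gamma_{2k}$ accounts precisely for the closed non-backtracking walks that traverse a single edge back and forth. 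Analyticity of~$f$ inside the Bernstein ellipse of radius $2\rho$ gives, via a Joukowski contour estimate, $|a_k|\le C\rho^{-k}$ for the coefficients in the modified Chebyshev expansion $f=\sum a_k\Gamma_k$; substituting this into $\sum_i f(\lambda_i)-na_0$ produces the displayed series.

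For any fixed $K$ I would show that $Y_f^{(n,K)}:=\sum_{k=1}^K\frac{a_k}{(d-1)^{k/2}}\CNBW[n]{k}$ converges in law. By \thref{thm:bestpermutationpoiapprox} or \thref{thm:bestpoiapprox}, the cycle counts $(\Cy{1}^{(n)},\ldots,\Cy{K}^{(n)})$ converge in total variation to a vector of independent Poissons. A direct combinatorial decomposition writes $\CNBW[n]{k}= \sum_{j\mid k}2j\,\Cy{j}^{(n)}+R_k^{(n)}$, where $R_k^{(n)}$ counts closed non-backtracking walks of length~$k$ that are not pure traversals of a single short cycle. Any such walk must contain two distinct short cycles sharing at least one vertex, and a first-moment union bound together with the estimates of the previous chapter yields $R_k^{(n)}\to 0$ in probability. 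Consequently $Y_f^{(n,K)}\toL Y_f^{(K)}:=\sum_{k=1}^K\frac{a_k}{(d-1)^{k/2}}\CNBW{k}$, where the limiting $\CNBW{k}$ is defined as $\sum_{j\mid k}2jZ_j$ from the limiting independent Poissons $Z_j$.

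The main obstacle is a uniform tail bound: I must show that $Y_f^{(n)}-Y_f^{(n,K)}$ is small in probability uniformly in~$n$ as $K\to\infty$. The strategy is to combine the decay $|a_k|\le C\rho^{-k}$ with the moment estimates $\E \CNBW[n]{k}=O\bigl((d-1)^k\bigr)$ and $\var \CNBW[n]{k}=O\bigl(k(d-1)^k\bigr)$, which follow from the cycle-count approximation for $k\le r(n)$ and from a direct enumeration of closed non-backtracking walks for larger~$k$. The two thresholds $\alpha_0=1$ and $\alpha_0=3/2$ match the effective ranges of \thref{thm:bestpermutationpoiapprox} and \thref{thm:bestpoiapprox}: the error bounds $r^2(2d-1)^{r}\log(2d-1)/n$ and $\sqrt r\,(d-1)^{3r/2-1}/n$ respectively dictate how far $K=K(n)$ can be pushed before the Poisson approximation fails, and then the coefficient decay $|a_k|(d-1)^{k/2}=O\bigl((d-1)^{k(1/2-\alpha)}\bigr)$ must outpace the worst-case growth of $\E\CNBW[n]{k}$ for the remaining $k$; this matching forces $\alpha>1$ in the permutation case and $\alpha>3/2$ in the uniform case.

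Once the uniform tail bound is in place, a standard triangular argument combined with the almost-sure convergence $Y_f^{(K)}\to Y_f$ (obtained from the same tail estimate applied to the independent Poisson limits) and the finite-$K$ convergence of the previous paragraph yields $Y_f^{(n)}\toL Y_f$. Infinite divisibility of $Y_f$ is then immediate from its representation as a convergent series whose summands are measurable functions of the independent Poisson variables $Z_j$. The most delicate step is the uniform truncation bound, since the Poisson approximations of Chapter~\ref{chap:poisson} control the joint law of the low-frequency $\CNBW[n]{k}$ only up to $r(n)\sim c\log n$, and the contribution of $\CNBW[n]{k}$ for larger $k$ must be dominated using moment estimates tailored to the particular model.
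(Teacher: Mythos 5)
Your reduction of the head of the series is essentially the paper's: truncate at $K(n)\sim\beta\log n/\log(d-1)$, use \thref{thm:bestpermutationpoiapprox} or \thref{thm:bestpoiapprox} to get the joint law of the short cycle counts, and kill the ``bad'' (non-cycle) cyclically non-backtracking walks by a first-moment bound; the paper packages exactly this as \thref{thm:CNBWbound}. The genuine gap is in your treatment of the tail $k>K(n)$. You propose to control $\sum_{k>K}\frac{a_k}{(d-1)^{k/2}}\CNBW[n]{k}$ by the moment estimates $\E\,\CNBW[n]{k}=O((d-1)^k)$ and $\var\,\CNBW[n]{k}=O(k(d-1)^k)$ ``from a direct enumeration.'' These bounds are not available — and the variance bound is false — once $k$ exceeds order $\log n/\log(d-1)$. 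The reason is spectral: $\CNBW[n]{k}=(d-1)^{k/2}\sum_i\Gamma_k(\lambda_i)$ grows like $e^{k\theta_2}$ in the second eigenvalue, so any event of merely polynomially small probability on which $\lambda_2$ is near the Perron value (e.g.\ near-disconnection) contributes a term of order $n^{-c}(d-1)^{2k(1-o(1))}$ to the second moment, which overwhelms $k(d-1)^k$ for $k\gg\log n$. This is the same phenomenon behind the $(d-1)^{2r-1}=o(n)$ threshold discussed in Chapter~\ref{chap:poisson}: beyond it, pairwise overlaps dominate and moment methods for walk counts break down. Moreover the series runs over all $k\in\NN$, far past any range where combinatorial enumeration of walks is controlled.

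The ingredient your proposal is missing is the almost-sure spectral bound. The paper handles the tail by switching back to the eigenvalue side: $Y_f^{(n)}-X_f^{(n)}=\sum_i\bigl(f(\lambda_i)-f_{r_n}(\lambda_i)\bigr)$, the Perron eigenvalue $d/\sqrt{d-1}$ is dealt with by pointwise convergence of the Chebyshev expansion there (this is where the ellipse radius $2\rho$ with $\rho>\sqrt{d-1}$ is used), and for the remaining $n-1$ eigenvalues one invokes Proposition~\ref{prop:ebound}(\ref{item:dfixed}) (Friedman's theorem) to place them a.a.s.\ in $[-2-\eps,2+\eps]$, where the truncation error is uniformly at most $M(d-1)^{-\alpha' r_n}$; summing over $n-1$ eigenvalues gives $Mn^{1-\alpha'\beta}\to0$. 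This also explains the thresholds $\alpha_0$ correctly: $\beta$ is capped at $1$ (permutation) or $2/3$ (uniform) by the total-variation error of the Poisson approximation, and one needs $\alpha'\beta>1$ to beat the factor of $n$ coming from the number of eigenvalues — not, as you suggest, to beat the growth of $\E\,\CNBW[n]{k}$. Without invoking the a.a.s.\ spectral gap (or some substitute for it), the tail of the series cannot be controlled, so as written the argument does not close.
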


We can also prove that the limiting distribution of linear eigenvalue
functionals is normal when the degree of $G_n$ grows with $n$.
The conditions of the theorem are messy, and more needs to be defined
before we can even state it. The result is found in
\thref{thm:dgrowslimit}.

\section{Proof of eigenvalue fluctuation results}
\label{sec:eigenfluc}
We will use \thref{thm:bestpermutationpoiapprox,thm:bestpoiapprox} to estimate the distribution
of \emph{cyclically non-backtracking walks} in a random regular graph. 
As we will see in \thref{prop:cnbweigenvalues}, counts
of these walks can be written in terms of the graph's eigenvalues, which allows us to compute the limiting
fluctuations of linear eigenvalue statistics.

If a walk on a graph begins and ends at the same vertex, we
call it \emph{closed}.
We call a walk on a graph \emph{non-backtracking} if it never follows
an edge and immediately follows that same edge backwards.
Non-backtracking walks are also known as irreducible.

     \begin{figure}
      \begin{center}
        \begin{tikzpicture}[scale=1.8,vert/.style={circle,fill,inner sep=0,
              minimum size=0.15cm,draw},>=stealth]
            \draw[thick] (0,0) node[vert,label=right:1](s0) {}
                  -- ++(0,1cm) node[vert,label=below right:2](s1) {}
                  -- ++(0.707cm,0.707cm) node[vert,label=right:3](s2) {}
                  -- ++(-0.707cm,0.707cm) node[vert,label=right:4](s3) {}
                  -- ++(-0.707cm,-0.707cm) node[vert,label=left:5](s4) {}
                  --(s1);
        \end{tikzpicture}
      \end{center}
      \caption{The walk $1\to 2\to 3\to 4\to 5\to 2\to 1$ is non-backtracking,
      but not cyclically non-backtracking. Such walks have a
    ``lollipop'' shape.}
      \label{fig:cnbw}
    \end{figure}
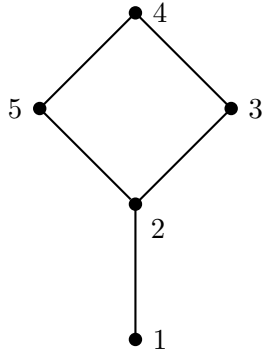  
 Consider a closed non-backtracking walk, and suppose that its last
 step is anything other than the reverse of its first step (i.e., the
 walk does not look like the one given  in
 Figure~\ref{fig:cnbw}).  Then we call it a \emph{cyclically non-backtracking
 walk}.  These walks occasionally go by the name
 strongly irreducible.

\renewcommand{\Cy}[2][\infty]{C_{#2}^{(#1)}}

Let $G_n$ be a random $d$-regular graph on $n$ vertices, with the exact model
to be specified later. To allow for more consistent statements between the permutation
and uniform models, we talk about $d$-regular graphs rather than $2d$-regular
graphs from the permutation model, with the
understanding that $d$ is even.
Let $\Cy[n]{k}$ be the number of cycles of length $k$ in $G_n$.
  We define the random variable $\CNBW[n]{k}$ to be the number
 of cyclically non-backtracking walks of length $k$ in $G_n$.
Define $(\Cy{k},\, k\geq 1)$ to be
independent
Poisson random variables.
When we discuss the permutation model, take $\E \Cy{k}=a(d,k)/2k$.
When we work with the uniform model, take $\E \Cy{k}=(d-1)^k/2k$ for
$k\geq 3$, and define $\Cy{1}$, $\Cy{2}$, $\Cy[n]{1}$,
and $\Cy[n]{2}$ as zero.

  Define \label{def:CNBW}
  \begin{align*}
    \CNBW{k} = \sum_{j\mid k} 2j\Cy{j}.
  \end{align*}
  For any cycle in $G_n$ of length $j$, where $j$ divides $k$, we obtain $2j$ 
  cyclically non-backtracking walks
  of length~$k$ by choosing a starting point and direction and then
  walking around the cycle repeatedly.
  In fact, if $d$ and $k$ are small compared to $n$, 
  then these are likely
  to be the only cyclically non-backtracking walks of length~$k$ in $G_n$, as we will prove
  in the course of the following theorems.
  \begin{thm}\thlabel{thm:CNBWbound}
    For some absolute constant~$c$, it holds in the permutation model
    of random $d$-regular graph that
    \begin{align*}
      \dtv\left( \big( \CNBW[n]{k},\, 1\leq k\leq r  \big), 
        \big( \CNBW[\infty]{k},\, 1\leq k\leq r   \big)\right)
        &\leq \frac{cr^4(d-1)^{r}}{n},\\
        \intertext{and in the uniform model of random $d$-regular graph that}
      \dtv\left( \big( \CNBW[n]{k},\, 1\leq k\leq r  \big), 
        \big( \CNBW[\infty]{k},\, 1\leq k\leq r   \big)\right)
        &\leq \frac{c\sqrt{r}(d-1)^{3r/2}}{n}.
    \end{align*}
  \end{thm}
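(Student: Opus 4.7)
The plan is to reduce the statement to the cycle-count approximations \thref{thm:bestpermutationpoiapprox} and \thref{thm:bestpoiapprox}. Define $F\colon\ZZ_+^r\to\ZZ_+^r$ by $F(c_1,\ldots,c_r)=\bigl(\sum_{j\mid k}2jc_j\bigr)_{k=1}^r$, so that by definition $(\CNBW[\infty]{k})_{k=1}^r=F(\Cy{1},\ldots,\Cy{r})$. Let $A$ be the event that every cyclically non-backtracking walk in $G_n$ of length at most $r$ traces a simple cycle an integer number of times; on $A$ we have $(\CNBW[n]{k})_{k=1}^r=F(\Cy[n]{1},\ldots,\Cy[n]{r})$. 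The triangle inequality and the contractivity of $\dtv$ under pushforwards then yield
\begin{align*}
  \dtv\bigl((\CNBW[n]{k})_{k=1}^r,\,(\CNBW[\infty]{k})_{k=1}^r\bigr) \leq \P(A^c) + \dtv\bigl((\Cy[n]{k})_{k=1}^r,\,(\Cy{k})_{k=1}^r\bigr).
\end{align*}
The second term on the right is bounded directly by \thref{thm:bestpermutationpoiapprox} in the permutation model and by \thref{thm:bestpoiapprox} in the uniform model, and in both cases already fits inside the claimed bound.

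It remains to control $\P(A^c)$. Because a CNBW is closed and non-backtracking, its support subgraph has minimum degree at least two, hence is either a simple cycle (around which the walk winds $k/j$ times) or a connected graph with cyclomatic number at least two. Let $X_k$ count CNBWs of length $k$ in $G_n$ whose support falls in the latter ``bad'' class; by a union bound and Markov,
\begin{align*}
  \P(A^c) \leq \sum_{k=1}^r \E X_k.
\end{align*}
To bound $\E X_k$, I would enumerate the possible isomorphism types of bad supports $H$ by vertex count $v$, edge count $e\geq v+1$, and connectivity structure; for each, count (i) the number of embeddings of $H$ into $K_n$, (ii) the number of CNBWs of length $k$ with $H$ as support, and (iii) the probability that $H\subset G_n$. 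In the uniform model, (iii) follows from Proposition~\ref{prop:McKayestimate} (extended beyond the two shapes listed there to general $v,e$), giving $O((d-1)^e/n^e)$; in the permutation model, the same bound is the inverse-falling-factorial probability that the labeled edges of $H$ are prescribed by the permutations.

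The main obstacle is the bookkeeping in step~(ii): on a high-excess support, a CNBW may traverse some edges multiple times, so care is needed to avoid overcounting. The key mechanism, however, is that a bad support has excess $m=e-v+1\geq 2$, which after combining (i) and (iii) contributes an extra factor of $n^{-m+1}$ relative to the pure-cycle case; combined with the $(d-1)^e$ growth in (iii) and summed over all bad shapes and all $k\leq r$, this should yield $\P(A^c)=O(r^4(d-1)^r/n)$ in the permutation model and $O((d-1)^{3r/2}/n)$ in the uniform model, completing the proof.
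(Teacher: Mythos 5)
Your reduction is the same as the paper's: on the good event $A$ one has $\mathrm{CNBW}^{(n)}_k=\sum_{j\mid k}2jC^{(n)}_j$, the total variation bound for cycle counts is pushed through the map $F$, and the difference costs $\P(A^c)$. The place where you depart from the paper---bounding $\P(A^c)$ by $\sum_k\E X_k$, a first moment over \emph{bad walks} organized by their supports---is exactly the step you leave as a sketch, and as written the claimed bounds do not follow from the mechanism you describe. The obstacle is the multiplicity you yourself flag: the number of cyclically non-backtracking walks of length $k$ supported on a fixed small excess-two subgraph is exponential in $k$ (for two triangles sharing a vertex there are of order $3^{k/3}$ such walks), so the factor $n^{-(m-1)}$ together with $(d-1)^{e}$ does not by itself give $O(\mathrm{poly}(r)(d-1)^{r}/n)$. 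The estimate can be rescued, but only by exploiting the trade-off that the walk multiplicity is large only when the support has few edges while $(d-1)^{e}$ is large only when $e$ is close to $k$ (in the permutation model one can instead work word-by-word and truncate at the first two coincidences, since for a fixed word the walk is determined by its starting vertex). Without some such argument, ``should yield'' is carrying the entire quantitative content of the second half of the proof.

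The paper avoids this issue by never counting walks at all: the two vectors can differ only if $G_n$ contains one of two minimal configurations---two cycles sharing a vertex with total length at most $r$, or two vertex-disjoint cycles joined by a path with $j+k+2l\le r$---and the first moment over these \emph{configurations} carries no walk multiplicity. In the uniform model this is bounded directly by parts (b) and (c) of Proposition~\ref{prop:McKayestimate} (with the shared-edge count allowed to be zero), giving $O((d-1)^r/n)$, which is smaller than what you target; in the permutation model the paper does not redo the count but cites \cite[Corollary~16]{DJPP}, which is precisely the $O(r^4(d-1)^r/n)$ statement you propose to reprove. So either reduce your event $A^c$ to these minimal configurations (your minimum-degree-two dichotomy already shows a bad support contains such a configuration of total size at most $r$), which also removes the need to extend Proposition~\ref{prop:McKayestimate} to general supports, or carry out the walk count with the truncation/trade-off argument; as it stands the central estimate is asserted rather than proved.
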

\begin{proof}
  For any measurable function $f$ and random variables $X$ and $Y$,
  we have $\dtv(f(X),\,f(Y))\leq \dtv(X,Y)$.  
  It follows by \thref{thm:bestpermutationpoiapprox} that in the permutation model,
  \begin{align}
    \dtv \left( \bigg( \sum_{j|k}2j\Cy[n]{j},\, 1\leq k\leq r  \bigg), 
        \big( \CNBW[\infty]{k},\, 1\leq k\leq r   \big)\right)
        &\leq O\biggl(\frac{r^2(d-1)^{r}\log(d-1)}{n}\biggr),\label{eq:almosttvperm}\\
        \intertext{and it follows by Theorem~\ref{thm:bestpoiapprox} that in the uniform model,}
    \dtv \left( \bigg( \sum_{j|k}2j\Cy[n]{j},\, 1\leq k\leq r  \bigg), 
        \big( \CNBW[\infty]{k},\, 1\leq k\leq r   \big)\right)
        &\leq O\biggl(\frac{\sqrt{r}(d-1)^{3r/2-1}}{n}\biggr).\label{eq:almosttv}
  \end{align}

  To finish the proof, we will show that
  \begin{align}
    \bigg( \sum_{j|k}2j\Cy[n]{j},\,1\leq k\leq r  \bigg)=
    \big( \CNBW[n]{k},\,1\leq k\leq r  \big)\label{eq:vecequality}
  \end{align}
  with high probability, in both models. We go out of order and consider the uniform
  model first.
  These two vectors differ exactly when either of the following
  occur:
  \begin{enumerate}
    \item[Event $E_1$:] \label{li:overlap}$G_n$ contains a $j$-cycle and a
      $k$-cycle with a vertex in common, with $j+k\leq r$.
    \item[Event $E_2$:]\label{li:close}$G_n$ contains 
      a $j$-cycle and a $k$-cycle whose distance
      is $l$, with $l\geq 1$ and $j+k+2l\leq r$ (see 
      Figure~\ref{fig:attachedcycles}).
  \end{enumerate}
  We have already done most of the work in bounding the probability
  of event $E_1$.  Let $\alpha$ be some arbitrary $k$-cycle.
  In \eqref{eq:part4}, we bounded the probability that $G_n$ contained
  $\alpha$ and another cycle sharing an \emph{edge} with $\alpha$.
  With the same notation and nearly the same analysis (the only
  real change
  is allowing $f$ to be zero),
  \begin{align*}
    \P[E_1] &\leq \sum_{k=3}^{r-3} \frac{[n]_k}{2k}
      \sum_{j=3}^{r-3-k}\sum_{\substack{p\geq 1,\\f\geq 0}}
        \frac{(2r^3)^{p-1}}{(p-1)!^2}2kn^{j-p-f}
      \frac{\Cr{51}(d-1)^{j+k-f}}{n^{j+k-f}}\\
      &=O\left(\frac{(d-1)^r}{n}\right).
  \end{align*}

  To bound the probability of $E_2$, first observe that the number
  of subgraphs of $K_n$ consisting of a $j$-cycle and a $k$-cycle (which
  do not overlap) connected by a path of length $l$ is $[n]_{j+k+l-1}/4$.
  By Proposition~\ref{prop:McKayestimate}\ref{item:attachedcycles},
  each of these is contained in $G_n$ with probability
  at $O\bigl((d-1)^{j+k+l}/n^{j+k+l}\bigr)$.  By a union bound,
  \begin{align*}
    \P[E_2] &\leq \sum_{j+k+2l\leq r}\frac{[n]_{j+k+l-1}}{4}
      O\left(\frac{(d-1)^{j+k+l}}{n^{j+k+l}}\right)\\
      &= O\left(\frac{(d-1)^{r-1}}{n}\right).
  \end{align*}
  Thus \eqref{eq:vecequality} holds with probability
  $1-O\bigl((d-1)^r/n\bigr)$.  If two random variables
  are equal with probability $1-\eps$, then the total variation distance between their laws is at most $\eps$.  Thus
  the two random vectors in \eqref{eq:vecequality} have total variation
  distance $O\bigl((d-1)^r/n\bigr)$.  This fact and
  \eqref{eq:almosttv} prove the theorem for the uniform model.
  
  A similar argument in the permutation model would work.
  Instead, we will just cite \cite[Corollary~16]{DJPP}, which says that
  \eqref{eq:vecequality} holds with probability $O(r^4(d-1)^r/n)$ in the permutation model,
  using an argument based on \cite{LP}. This together with \eqref{eq:almosttvperm} completes
  the proof.
\end{proof}

Next, we will relate
Theorem~\ref{thm:CNBWbound}
to the eigenvalues of the adjacency matrix of $G_n$.
Recall the modified Chebyshev polynomials $\Gamma_k(x)$ defined
in Section~\ref{sec:flucresults}.
The following proposition is folkloric, following a long tradition
of linking up counts of walks on graphs with polynomial traces of their adjacency matrices.
\begin{prop}[{\cite[Proposition~32]{DJPP}}]\thlabel{prop:cnbweigenvalues}
  Let $A_n$ be the adjacency matrix of $G_n$, and
  let $\lambda_1\ge\cdots\ge\lambda_n$ be the eigenvalues
  of $(d-1)^{-1/2}A_n$.  Then
  \begin{align*}
    \sum_{i=1}^n\Gamma_k(\lambda_i)&=(d-1)^{-k/2}\CNBW[n]{k}.
  \end{align*}
\end{prop}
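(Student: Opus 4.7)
The plan is to relate $\CNBW[n]{k}$ to graph eigenvalues via the non-backtracking (Hashimoto) matrix $B$ and the Ihara--Bass determinantal formula. Let $B$ be the $2m\times 2m$ matrix indexed by the directed edges of $G_n$, with $B_{ef}=1$ exactly when the head of $e$ equals the tail of $f$ and $f$ is not the reverse of $e$. A direct bijection between length-$k$ closed walks on $B$ and cyclically non-backtracking walks of length $k$ in $G_n$ gives $\tr(B^k)=\CNBW[n]{k}$.

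Next I would invoke the Ihara--Bass formula for a $d$-regular graph with $n$ vertices and $m=nd/2$ edges,
\[
\det(I_{2m}-uB) = (1-u^2)^{m-n}\det\bigl((1+(d-1)u^2)I_n - uA_n\bigr),
\]
and take $-\log$ of both sides, expanding as formal power series in $u$. Using $\tr(B^k)=k\cdot[u^k](-\log\det(I_{2m}-uB))$, this yields
\[
\sum_{k\geq 1}\frac{\CNBW[n]{k}}{k}u^k = -(m-n)\log(1-u^2) - \sum_{i=1}^n\log\bigl(1 - u\nu_i + (d-1)u^2\bigr),
\]
where $\nu_1,\ldots,\nu_n$ are the eigenvalues of $A_n$. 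Matching coefficients of $u^k/k$ and using $m-n=n(d-2)/2$ gives
\[
\CNBW[n]{k} = n(d-2)\mathbf{1}\{k\text{ even}\} + \sum_{i=1}^n\bigl(\alpha_i^k + \beta_i^k\bigr),
\]
where $\alpha_i,\beta_i$ are the roots of $1-u\nu_i+(d-1)u^2$, so $\alpha_i\beta_i=d-1$ and $\alpha_i+\beta_i=\nu_i$.

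To bring Chebyshev polynomials in, write $\lambda_i=\nu_i/\sqrt{d-1}$ and parametrize $\alpha_i=\sqrt{d-1}\,e^{i\theta_i}$, $\beta_i=\sqrt{d-1}\,e^{-i\theta_i}$ with $\lambda_i=2\cos\theta_i$; then
\[
\alpha_i^k+\beta_i^k = (d-1)^{k/2}\cdot 2\cos(k\theta_i) = (d-1)^{k/2}\cdot 2T_k(\lambda_i/2).
\]
This is a polynomial identity in $\lambda_i$, so it persists for $|\lambda_i|>2$ by analytic continuation. Dividing through by $(d-1)^{k/2}$, the odd-$k$ case gives exactly $\sum_i 2T_k(\lambda_i/2)=\sum_i\Gamma_k(\lambda_i)$. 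For even $k=2j$, the extra $n(d-2)$ becomes $n(d-2)/(d-1)^j$, which is absorbed precisely by the $(d-2)/(d-1)^j$ correction built into the definition of $\Gamma_{2j}$. In both cases this yields $\sum_i\Gamma_k(\lambda_i) = (d-1)^{-k/2}\CNBW[n]{k}$.

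The main obstacles are bookkeeping rather than conceptual: checking $\tr(B^k)=\CNBW[n]{k}$ via a careful correspondence between walks on the directed-edge graph and cyclically non-backtracking walks on $G_n$, and correctly tracking the even/odd parity arising from the $(1-u^2)^{m-n}$ prefactor in Ihara--Bass. For the permutation model, which permits loops and multiple edges, one must also confirm that the Ihara formula and the walk-counting identity extend under appropriate conventions (treating each loop as contributing two directed edges); this is standard but warrants a sentence of care.
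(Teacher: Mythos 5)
Your argument is correct, and it reaches the identity by a genuinely different route from the one the thesis relies on. The thesis does not reprove this statement; it quotes it as folklore from \cite{DJPP}, where the proof is a linear-algebra recursion: one introduces the matrices whose $(u,v)$ entries count non-backtracking walks of length $k$ from $u$ to $v$, verifies the three-term recursion $A^{(k+1)}=A\,A^{(k)}-(d-1)A^{(k-1)}$ (with the modified first step $A^{(2)}=A^2-dI$), identifies the resulting polynomials in $A$ with the Chebyshev-type polynomials $\Gamma_k$ via the orthogonal polynomials of \cite{Sodin}, and then corrects from closed non-backtracking walks to cyclically non-backtracking ones, which is exactly where the $(d-2)/(d-1)^{k/2}$ shift in $\Gamma_{2k}$ enters. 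You instead work on the directed-edge space: $\tr(B^k)=\CNBW[n]{k}$ (your counting convention, walks carrying a marked starting edge and orientation, matches the thesis's, since each $j$-cycle with $j\mid k$ contributes $2j$ cyclically non-backtracking $k$-walks), then Ihara--Bass plus the $-\log\det$ expansion; the $(1-u^2)^{m-n}$ prefactor hands you the even-$k$ correction $n(d-2)(d-1)^{-k/2}$ automatically (indeed $2(m-n)=n(d-2)$), while each quadratic factor gives $\alpha_i^k+\beta_i^k=(d-1)^{k/2}\,2T_k(\lambda_i/2)$, and the parity split matches the definition of $\Gamma_k$ exactly. What each approach buys: the recursion proof is elementary, self-contained, and insensitive to loops and parallel edges by construction; your route is shorter once Ihara--Bass is granted, but it requires knowing that Bass's determinant formula holds for multigraphs with loops under the conventions used here (a loop contributes $2$ to the diagonal of $A_n$ and to the degree, and its two orientations are mutual reverses). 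You flag this correctly, and the formula does hold in that generality, so there is no gap.
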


By Theorem~\ref{thm:CNBWbound}, we know the limiting distribution
of $\sum_{i=1}^nf(\lambda_i)$ when $f(x)=\Gamma_k(x)$.
The plan now is to extend this to a more general class of functions
by approximating by this
polynomial basis. We will need the following bounds on the eigenvalues
of random regular graphs.
\begin{prop}\label{prop:ebound}
  Let $G_n$ be a random $d$-regular graph on $n$ vertices, in either the permutation
  or uniform models.
  \begin{enumerate}[(a)]
    \item Suppose that $d\geq 3$ is fixed.  For any $\eps>0$, asymptotically
      almost surely, all but the highest eigenvalue of $G_n$
      is bounded by $2\sqrt{d-1}+\eps$.\label{item:dfixed}
    \item Suppose that $d=d(n)$ satisfies $d=o(n^{1/2})$.
      Then for some absolute constant $\Cr{ks}$, asymptotically
      almost surely, all but the highest eigenvalue
      of $G_n$ is bounded by $\Cl{ks}\sqrt{d}$.
      \label{item:dgrows}
  \end{enumerate}
\end{prop}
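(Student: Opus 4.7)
The plan is to invoke known spectral results for random regular graphs; both bounds are classical.

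Part~(a), the bound $\lambda_2 \leq 2\sqrt{d-1}+\epsilon$ a.a.s.\ for fixed $d\geq 3$, is Friedman's theorem resolving Alon's second eigenvalue conjecture. The permutation-model case was settled by Friedman in \cite{Fri91}; the much harder uniform-model case is the main result of his memoir \cite{Fri}. In both papers the statement is exactly what is needed here, so no further work is required.

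Part~(b), the bound that all but the top eigenvalue of $A_n$ is $O(\sqrt{d})$ for $d=o(n^{1/2})$, goes back to Broder and Shamir \cite{BrSh} in the permutation model, via a trace-method argument on closed non-backtracking walks. The corresponding bound for the uniform model can be obtained by similar trace-method computations or by transfer arguments. Alternatively, a self-contained proof is available using the results of this chapter: by \thref{prop:cnbweigenvalues}, $\sum_i \Gamma_k(\lambda_i) = (d-1)^{-k/2}\CNBW[n]{k}$, and for even $k$, $\Gamma_k(\lambda) \sim (|\lambda|/2+\sqrt{(\lambda/2)^2-1})^k$ when $|\lambda|>2$. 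Taking $k=k(n)$ to grow like $c\log_{d-1} n$ with $c>0$ small, \thref{thm:bestpermutationpoiapprox} (resp.\ \thref{thm:bestpoiapprox}) gives that $\CNBW[n]{k}$ is within total variation $o(1)$ of its Poisson limit $\CNBW{k}$, which concentrates near its mean of order $(d-1)^k$. Subtracting off the contribution $\Gamma_k(\lambda_1)\asymp(d-1)^{k/2}$ of the Perron eigenvalue then leaves enough room, via Markov's inequality, to rule out any other eigenvalue of order larger than $\sqrt{d}$.

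The main obstacle in the direct approach is calibrating $k$: one needs $k$ large enough that $\Gamma_k$ separates a hypothetical spurious eigenvalue sharply from the Perron eigenvalue, but small enough that the Poisson approximation remains effective. The permitted window $k\lesssim (2/3)\log_{d-1}n$ from \thref{thm:bestpoiapprox} is the tighter of the two and is what forces the restriction $d=o(n^{1/2})$; within this regime, the window is comfortably sufficient for the target bound $C\sqrt{d}$, though the bookkeeping requires some care.
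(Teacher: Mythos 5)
Your proposal has two genuine gaps, one in each part. For part (a), the uniform model is \emph{not} covered directly by Friedman's memoir: \cite{Fri} proves the Alon conjecture for the permutation-type and matching models, not for graphs chosen uniformly from simple $d$-regular graphs. The paper's proof of (a) consists precisely of the transfer you wave away: for even $d$ one conditions the permutation model on simplicity (possible since simplicity has probability bounded away from zero for fixed $d$) and invokes the contiguity result \cite[Corollary~1.1]{GJKW}; for odd $d$ the permutation model does not even exist, so one must instead use \cite[Theorem~1.3]{Fri} for superimposed perfect matchings together with \cite[Corollary~4.17]{Wor99}. Asserting that ``no further work is required'' skips exactly the step the paper says it could not find written down anywhere. (Also, \cite{Fri91} gives $2\sqrt{d-1}+O(\log d)$, not $2\sqrt{d-1}+\eps$; the full conjecture for the permutation model is \cite[Theorem~1.1]{Fri}.)

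For part (b), Broder--Shamir is a fixed-$d$ result with a bound of order $d^{3/4}$, so it neither covers the regime $d=d(n)=o(n^{1/2})$ nor gives $O(\sqrt d)$; the paper instead cites \cite[Theorem~24]{DJPP} for the permutation model and \cite[Lemma~18]{BFSU} for the uniform model. More seriously, your ``self-contained'' fallback via Proposition~\ref{prop:cnbweigenvalues} and Theorems~\ref{thm:bestpermutationpoiapprox} and~\ref{thm:bestpoiapprox} cannot be made to work. Writing $\lambda$ for eigenvalues of $(d-1)^{-1/2}A_n$, the identity gives $(d-1)^{-k/2}\CNBW[n]{k}=\sum_i\Gamma_k(\lambda_i)$, and since $\Gamma_{2k}(x)\geq -2$ for all real $x$, the $n-2$ bulk eigenvalues leave a slack of order $n$; hence a spurious eigenvalue $\lambda$ is only detectable if $\Gamma_k(\lambda)\gtrsim n$, i.e.\ $\lambda\gtrsim n^{1/k}$. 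To conclude a constant-order bound on the normalized eigenvalues (which is what ``$C\sqrt d$ for $A_n$'' means) you need $k\gtrsim\log n$, whereas the Poisson approximation theorems require $(d-1)^{ck}=o(n)$, i.e.\ $k=O(\log n/\log(d-1))=o(\log n)$ once $d\to\infty$. In that window the method only excludes normalized eigenvalues above roughly $(d-1)^{3/2}$ (uniform model; $(d-1)^{2}$ for $A_n$), and in the regime where $d$ is a power of $n$ the admissible $k$ is bounded, so nothing below a power of $n$ can be ruled out. So the window is not ``comfortably sufficient''; it is the reason this chapter's machinery cannot replace the cited spectral-gap results.
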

\begin{proof}
  In the permutation model,
  \cite[Theorem~1.1]{Fri} proves (\ref{item:dfixed}) and \cite[Theorem~24]{DJPP}
  proves (\ref{item:dgrows}).
  
  In the uniform model, it is well known that (\ref{item:dfixed}) follows 
  from the results in \cite{Fri} by various contiguity
  results, but we cannot find an
  argument written down anywhere and will give one here.
  When $d$ is even, it follows from \cite[Theorem~1.1]{Fri} and
  the fact that for fixed $d$, permutation random graphs have no loops
  or multiple edges with probability bounded away from zero.
  This implies that the eigenvalue bound holds for permutation random
  graphs conditioned to be simple, and
  \cite[Corollary~1.1]{GJKW} transfers the result to the uniform model.
  When $d$ is odd (and $n$ even, as it has to be), 
  we apply \cite[Theorem~1.3]{Fri}, which gives
  the eigenvalue bound for graphs formed by superimposing $d$ random
  perfect matchings of the $n$ vertices. These are simple with probability
  bounded away from zero, and \cite[Corollary~4.17]{Wor99} transfers
  the result to the uniform model.
  
  Fact~(\ref{item:dgrows}) in the uniform model is proven in a more general context in
  \cite[Lemma~18]{BFSU}.
\end{proof}

\begin{proof}[Proof of \thref{thm:dfixedlimit}]
  The following facts about the Chebyshev approximation follow
  exactly as in Lemma~34 of \cite{DJPP}:
  \begin{enumerate}[(i)]
    \item \label{li:pointwise}The Chebyshev series approximation for
      $f(x)$ converges pointwise on the interval $\bigl[2, 
      d/\sqrt{d-1}\bigr]$.
    \item \label{li:uniform} The series converges uniformly
      on $[-2-\eps,2+\eps]$, for some $\eps>0$.  
      In fact, defining the partial sum
      $f_m(x)=\sum_{k=0}^{m}a_k\Gamma_k(x)$,
      \begin{align*}
        \sup_{|x|\leq 2+\eps}\abs{f(x)-f_m(x)}&\leq M(d-1)^{-\alpha' m},
      \end{align*}
      where $M$ is a constant depending on $f$ and $d$,
      and $\alpha_0<\alpha'<\alpha$.
    \item \label{li:coefficients}The coefficients obey the bound
      \begin{align*}
        \abs{a_k}&\leq M(d-1)^{-\alpha k}.
      \end{align*}
  \end{enumerate}

  The sum defining $Y_f$ converges almost
  surely, since it can be rewritten as
  \begin{align*}
    Y_f=\sum_{j=1}^{\infty}\sum_{i=1}^{\infty}\frac{a_{ij}}{(d-1)^{ij/2}}
    2j\Cy{j},
  \end{align*}
  and this is a sum of independent random variables, bounded in
  $L^2$ by fact~(\ref{li:coefficients}).
  For some $\beta<1/\alpha$, define
  \begin{align*}
    r_n&=\biggl\lfloor\frac{\beta\log n}{\log(d-1)}\biggr\rfloor,\\
    X_f^{(n)} &= \sum_{k=1}^{r_n}\frac{a_k}{(d-1)^{k/2}}\CNBW[n]{k}.
  \end{align*}
  We will use $X_f^{(n)}$ to approximate $Y_f^{(n)}$, noting that
  $X_f^{(n)}=\sum_{i=1}^nf_{r_n}(\lambda_i)-na_0$.
  By Theorem~\ref{thm:CNBWbound} and our choice of $r_n$,
  \begin{align*}
    \limn\dtv\biggl(X_f^{(n)},\; \sum_{k=1}^{r_n}\frac{a_k}{(d-1)^{k/2}}\CNBW{k}\biggr)=0.
  \end{align*}
  This sum converges
  almost surely to $Y_f$ as $n$ tends to infinity, 
  so $X_f^{(n)}$ converges in law
  to $Y_f$.  By Slutsky's Theorem, we need only show that
  $Y_f^{(n)}-X_f^{(n)}$ converges to zero in probability.
  
  Fix $\delta>0$.  We need to show that
  \begin{align*}
    \limn \P\left[\bigl\lvert Y_f^{(n)}-X_f^{(n)}\bigr\rvert > \delta\right]=0.
  \end{align*}
  We have
  \begin{align*}
    \bigl\lvert Y_f^{(n)}-X_f^{(n)}\bigr\rvert&\leq \sum_{i=1}^n \abs{f(\lambda_i)
      -f_{r_n}(\lambda_i)}.
  \end{align*}
  The top eigenvalue $\lambda_1$ is always equal to $d/2\sqrt{d-1}$,
  and by fact~(\ref{li:pointwise}), we have the deterministic
  limit $f_{r_n}(\lambda_1)\to f(\lambda_1)$.  Thus
  $f(\lambda_i)
      -f_{r_n}(\lambda_i) < \delta/2$ for all sufficiently
  large $n$.
  
  Suppose that the remaining eigenvalues are contained in
  $[-2-\eps,2+\eps]$.  By fact~(\ref{li:uniform}),
  \begin{align}
    \sum_{i=2}^n \abs{f(\lambda_i)-f_{r_n}(\lambda_i)}
      &\leq  M(n-1)(d-1)^{-\alpha' r_n}
      \leq M n^{-\alpha'\beta+1},\label{eq:eigsum}
  \end{align}
  and this tends to zero since $\alpha'\beta<1$.
  For sufficiently large $n$, this sum is thus
  bounded by $\delta/2$.  We can conclude that
  for all large enough $n$,
  \begin{align*}
    \P\Bigl[\bigl\lvert Y_f^{(n)}-X_f^{(n)}\bigr\rvert > \delta\Bigr]
      &\leq \P\biggl[\sup_{2\leq i\leq n}\abs{\lambda_i}\leq 2+\eps\biggr],
  \end{align*}
  and this tends to zero by Proposition~\ref{prop:ebound}\ref{item:dfixed}.
\end{proof}

  The following theorem
can be applied only when the degree of the graph
grows more slowly than any positive power of $n$. 
This does not appear explicitly in the statement
of the theorem, but
its conditions cannot be satisfied otherwise.

To remove dependence on $d$ from our polynomial basis, define
\begin{align*}
  \Phi_0(x)&=1\\
  \Phi_k(x)&=2T_k\left(\frac{x}{2}\right)\quad\text{for $k\geq 1$.}
\end{align*}
\begin{thm}\thlabel{thm:dgrowslimit}
  Let $G_n$ be a  random $d_n$-regular graph 
  on $n$ vertices from the permutation
  or uniform models, with $d_n\to\infty$ as $n\to\infty$.
  Let $\lambda_1\geq\cdots\geq\lambda_n$ be the eigenvalues
  of $(d_n-1)^{-1/2}A_n$.
  Suppose $f$ is an entire function on $\CC$,
  and recall $\Cr{ks}$ from 
  Proposition~\ref{prop:ebound}.  The function $f$ admits the
  absolutely convergent expansion $f(x)=\sum_{i=0}^{\infty}a_i\Phi_i(x)$
  on $[-\Cr{ks},\Cr{ks}]$.  
  Denote the $k$th truncation of
  this series by $f_k\defeq \sum_{i=0}^ka_i\Phi_i$.
  Let
  \begin{align*}
    r_n=\Bigl\lfloor{\frac{\beta\log n}{\log(d_n-1)}}\Bigr\rfloor,
  \end{align*}
  with $\beta$ to be specified later.
  Suppose that the following conditions on $f$ hold:
  \begin{enumerate}[(i)]
    \item Let $\alpha_0=1$ in the case of the permutation model and $\alpha_0=3/2$
      for the uniform model. For some $\alpha>\alpha_0$ and $M> 0$,\label{item:asump1}
      \begin{align*}
        \sup_{|x|\leq\Cr{ks}}|f(x)-f_k(x)|\leq M\exp(-\alpha kh(k)),
      \end{align*}
      where $h$ is some function such that
      $h(r_n)\geq \log(d_n-1)$ for some choice
      of $\beta<1/\alpha$, for sufficiently large $n$.
    \item \label{item:asump2}
      \begin{align*}
        \limn\left|f_{r_n}\left(d_n(d_n-1)^{-1/2}\right) - 
        f\left(d_n(d_n-1)^{-1/2}\right)\right|=0.
      \end{align*}
  \end{enumerate}
  Let $\mu_k(d)=\E \CNBW{k}$, noting that $\CNBW{k}$
  depends on $d$.
  We define the following array of constants, which we will use to recenter
  the random variable $\sum_{i=1}^n f(\lambda_i)$:
  \begin{align*}
    m_f(n)\defeq na_0+\sum_{k=1}^{r_n}\frac{a_k}{(d_n-1)^{k/2}}
      \big(\mu_k(d_n)-(d_n-2)n\mathbf{1}\{\text{$k$ is even}\}
      \big)
  \end{align*}
   Then, as $n\to\infty$, the random variable
   \begin{align*}
    Y_f^{(n)}\defeq  \sum_{i=1}^nf(\lambda_i) - m_f(n)
   \end{align*}
   converges in law to a normal random variable with mean zero
   and variance $\sigma_f=\sum_{k=1}^{\infty}2ka_k^2$ for the permutation model
   case and  $\sigma_f=\sum_{k=3}^{\infty}2ka_k^2$ for the uniform model case.
\end{thm}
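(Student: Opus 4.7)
The plan is to parallel the proof of \thref{thm:dfixedlimit}: decompose $Y_f^{(n)}$ as the centered truncated polynomial statistic $X_f^{(n)}$ plus a remainder, show the remainder is negligible via the bulk eigenvalue estimate, and identify the Gaussian limit of $X_f^{(n)}$. The new feature relative to the fixed-$d$ setting is that $X_f^{(n)}$ becomes a linear combination of centered Poisson variables whose means diverge with $d_n$, forcing a Gaussian rather than compound Poisson limit. Combining \thref{prop:cnbweigenvalues} with the identity $\Gamma_{2k}(x) - \Phi_{2k}(x) = (d_n-2)/(d_n-1)^k$ for $k\geq 1$ (and $\Gamma_k=\Phi_k$ for odd $k$), one has
\begin{align*}
  \sum_{i=1}^n\Phi_k(\lambda_i) = (d_n-1)^{-k/2}\bigl(\CNBW[n]{k} - (d_n-2)n\mathbf{1}\{k \text{ even}\}\bigr),
\end{align*}
so that
\begin{align*}
  X_f^{(n)} \defeq \sum_{i=1}^n f_{r_n}(\lambda_i) - m_f(n) = \sum_{k=1}^{r_n}\frac{a_k}{(d_n-1)^{k/2}}\bigl(\CNBW[n]{k} - \mu_k(d_n)\bigr).
\end{align*}
By Slutsky's theorem it suffices to prove $X_f^{(n)} \toL N(0,\sigma_f)$ and that the remainder $Y_f^{(n)} - X_f^{(n)} = \sum_{i=1}^n[f(\lambda_i) - f_{r_n}(\lambda_i)]$ tends to zero in probability.

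For the remainder I would isolate the Perron eigenvalue $\lambda_1 = d_n/\sqrt{d_n-1}$, whose contribution vanishes by assumption~(ii). For the bulk, Proposition~\ref{prop:ebound}\ref{item:dgrows} confines $|\lambda_i|$ for $i\geq 2$ to $[-C,C]$ for some absolute $C$ asymptotically almost surely (after absorbing the factor $\sqrt{d_n/(d_n-1)}$). On that event, assumption~(i) gives the uniform bound $|f(\lambda_i) - f_{r_n}(\lambda_i)|\leq M e^{-\alpha r_n h(r_n)}$, and the growth condition $h(r_n)\geq \log(d_n-1)$ coupled with $\beta < 1/\alpha$ causes the cumulative bound $nMe^{-\alpha r_n h(r_n)}$ to vanish.

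For the Gaussian limit, I would first apply \thref{thm:CNBWbound} to couple $(\CNBW[n]{k})_{k\leq r_n}$ with the Poisson surrogates $(\CNBW[\infty]{k})_{k\leq r_n}$; the choice $\beta<1/\alpha_0$ with $\alpha_0\in\{1,3/2\}$ makes the total-variation error tend to zero, reducing the problem to studying
\begin{align*}
  \sum_{j}b_j^{(n)}\bigl(\Cy{j}-\E\Cy{j}\bigr),\qquad b_j^{(n)} = 2j\sum_{\substack{j\mid k\\ 1\leq k\leq r_n}}\frac{a_k}{(d_n-1)^{k/2}},
\end{align*}
a sum of independent centered Poissons with means diverging in $d_n$. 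The dominant contribution to $b_j^{(n)}$ comes from $k=j$, while $\E\Cy{j}$ grows like $(d_n-1)^j/(2j)$ (or the permutation-model analogue), so $(b_j^{(n)})^2\E\Cy{j} = 2ja_j^2(1+o(1))$ termwise and the total variance converges to $\sigma_f$. A Lindeberg verification then yields the Gaussian limit: each $\E\Cy{j}\to\infty$ forces per-term asymptotic normality via the Poisson CLT, and the tail $\sum_{j>J}2ja_j^2$, finite by the decay of the $a_j$ implicit in assumption~(i), is uniformly small. The main obstacle I anticipate is executing the Lindeberg step uniformly as the summation length $r_n$ itself diverges; since $b_j^{(n)}$ decays like $(d_n-1)^{-j/2}$ while $\sqrt{\E\Cy{j}}$ grows like $(d_n-1)^{j/2}/\sqrt{2j}$, individual standardized terms remain bounded, but one must carefully balance the growing range of $j$ against the decay of the Chebyshev coefficients to control the truncation error in the variance and the uniform integrability in the CLT.
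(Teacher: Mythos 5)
Your proposal is correct, and its skeleton is the same as the paper's: the same identity $X_f^{(n)}=\sum_i f_{r_n}(\lambda_i)-m_f(n)=\sum_{k\le r_n}a_k(d_n-1)^{-k/2}\bigl(\CNBW[n]{k}-\mu_k(d_n)\bigr)$ via \thref{prop:cnbweigenvalues}, the same reduction to the Poisson surrogates through \thref{thm:CNBWbound} (with $\beta<1/\alpha_0$ making the total-variation error vanish), and the same treatment of the remainder $Y_f^{(n)}-X_f^{(n)}$, exactly as in \thref{thm:dfixedlimit}, using assumption~(ii) for the Perron eigenvalue and Proposition~\ref{prop:ebound}(b) together with assumption~(i) for the bulk. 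Where you genuinely diverge is the final CLT step. The paper sets $N_k^{(n)}=(d_n-1)^{-k/2}\bigl(\CNBW{k}-\E\CNBW{k}\bigr)$, proves that $(N_k^{(n)},\,k\ge k_0)$ converges to independent $N(0,2k)$ variables by splitting each coordinate into its dominant divisor term (Gaussian by the normal approximation to a Poisson with diverging mean) and the smaller divisor terms (killed by a Chebyshev variance bound), and then invokes the continuous mapping theorem to get $\widetilde{X}_f^{(n)}\toL N(0,\sigma_f)$. You instead regroup the weighted sum by cycle length $j$, obtaining a triangular array of independent centered Poisson summands with weights $b_j^{(n)}$, compute the limiting variance termwise ($(b_j^{(n)})^2\E C_j^{(\infty)}\to 2ja_j^2$), and close with a Lindeberg verification. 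The two routes buy slightly different things: the paper's coordinatewise statement is conceptually cleaner, but its continuous-mapping step silently absorbs the fact that the number of summands $r_n$ grows and the limit is an infinite series, which requires the same tail control $\sum_{k>K}2ka_k^2\to 0$ that you make explicit; your Lindeberg route keeps the variance and tail bookkeeping in the open (and your observation that the standardized summands stay bounded because $b_j^{(n)}\sqrt{\E C_j^{(\infty)}}\asymp a_j\sqrt{2j}$ is exactly what makes the Lindeberg condition checkable uniformly in $j\le r_n$, given the rapid decay of the $a_k$ furnished by assumption~(i)). Both arguments are of comparable rigor and rest on the same three inputs, so I would count yours as a valid, mildly different finish rather than a different proof.
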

\begin{proof}
  As $n$ tends tends to infinity, so does $r_n$, 
  since assumption~(\ref{item:asump1}) could not
  be satisfied otherwise.
  Let $k_0=1$ in the permutation model case and $k_0=3$ in the uniform model
  case.
  Define
  \begin{align*}
    X_f^{(n)}=\sum_{k=k_0}^{r_n}\frac{a_k}{(d_n-1)^{k/2}}\CNBW[n]{k}
      -\E \sum_{k=k_0}^{r_n}\frac{a_k}{(d_n-1)^{k/2}}\CNBW{k}  \\\intertext{and}
    \widetilde{X}_f^{(n)}=\sum_{k=k_0}^{r_n}\frac{a_k}{(d_n-1)^{k/2}}\CNBW{k}
      -\E \sum_{k=k_0}^{r_n}\frac{a_k}{(d_n-1)^{k/2}}\CNBW{k},
  \end{align*}
  noting that $X_f^{(n)}=\sum_{i=1}^n f_{r_n}(\lambda_i)-m_f(n)$.
  Also, note that $\CNBW{k}$ depends on $d_n$.
  
  Let 
  \begin{align*}
    N_k^{(n)}=\begin{cases}
      (d_n-1)^{-k/2}\bigl(\CNBW{k}-\E\CNBW{k}\bigr)&\text{if $k\leq r_n$,}\\
      0&\text{otherwise,}
    \end{cases}
  \end{align*}
  and let
  $Z_1,Z_2,\ldots$ be independent normals with $\E Z_k=0$ and
  $\E Z_k^2=2k$.  We will show that $(N_k^{(n)},\,k\geq k_0)$
  converges in law to $(Z_k,\,k\geq k_0)$ as $n\to\infty$.
  Rewrite $N_k^{(n)}$ as
  \begin{align*}
    N_k^{(n)} = \frac{1}{(d_n-1)^{k/2}}\big(2k\Cy{k}-(d_n-1)^k\big)
      +\frac{1}{(d_n-1)^{k/2}}\sum_{\substack{j\mid k\\j<k}}\big(2j\Cy{j}
       - (d_n-1)^j\big).
  \end{align*}
  The first term converges to a centered normal with variance
  $2k$ as $n\to\infty$, by the normal approximation of the Poisson
  distribution.  The random variables $\Cy{1},\Cy{2},\ldots$
  are independent, so the convergence of $(N_k^{(n)},\,k\geq k_0)$
  follows if we show that the remaining terms converge to zero
  in probability.  This holds by Chebyshev's inequality, since
  \begin{align*}
    \var \bigg[\frac{1}{(d_n-1)^{k/2}}\sum_{\substack{j\mid k\\j<k}}\big(2j\Cy{j}
       - (d_n-1)^j\big)\bigg]
       = \sum_{\substack{j\mid k\\j<k}}O\bigl(2j(d_n-1)^{j-k}\bigr),
  \end{align*}
  and this vanishes as $d_n$ grows.
  
  It follows by the continuous mapping theorem
  that $\widetilde{X}_f^{(n)}$ converges to normal with
  variance $\sigma_f$.  By Theorem~\ref{thm:CNBWbound},
  the total variation distance between $X_f^{(n)}$ and
  $\widetilde{X}_f^{(n)}$ approaches zero as $n\to\infty$,
  so $X_f^{(n)}$ converges in law to the same limit.
  
  All that remains is to show that $Y_f^{(n)}-X_f^{(n)}$
  converges to zero in probability.  This follows exactly
  as in Theorem~\ref{thm:dfixedlimit}, using
  assumptions (i) and (ii) and
  Proposition~\ref{prop:ebound}\ref{item:dgrows}.
\end{proof}

\chapter{Minor processes and the Gaussian free field}
\label{chap:GFF}
The typical approach to random matrices is to consider a sequence of random matrices $X_n$ of increasing
size. Each matrix $X_n$ is considered in isolation; the different matrices are not considered on a common
probability space, so they have no joint distribution. Some recent work has instead looked at the
matrices together on a single probability space. 
For example, suppose that $X$ is an infinite random Hermitian matrix with
independent real standard Gaussians along the diagonal and independent complex
standard Gaussians above the diagonal. Let $X_n$ be the first $n$ rows and columns of $X$.
Then $X_n$ is drawn from the \emph{Gaussian Unitary Ensemble}, and the joint distribution
of the eigenvalues of these matrices is called the \emph{GUE-corners process} or \emph{GUE-minors process}.
This process was studied in \cite{Bar} and \cite{JN}.
One can also form general $\beta$-Hermite corners processes \cite[Definition~1.1]{GS}
and $\beta$-Jacobi corners processes \cite{BG}.
These processes are closely related to interacting particle systems; see \cite{FerSurvey}
for a survey. There are also many connections with the KPZ universality class of random surfaces
\cite{BorFer}. Minors of Dyson's Brownian motion have also been studied \cite{ANvM} and can
be put into a common framework with corners processes \cite{Warren,GS}.

The connection to the Gaussian free field (to be called the GFF from now on) comes from
\cite{Bor1}.
We describe a particular but important case of that paper's main result, given by considering
only the single sequence $\{1,2,\ldots\}$.
Let $W$ be an infinite symmetric matrix whose entries have all moments finite.
Suppose the the entries above the diagonal are i.i.d.\ and match
the standard Gaussian to four moments, and the diagonal entries have variance~$2$.
Let $W_n$ be the matrix consisting of the first $n$ rows and columns of $W$.
Borodin then considered the joint eigenvalue fluctuations of these random matrices.

Let $z$ be a complex number in the upper half plane $\mathbb{H}$. Define $y=\abs{z}^2$ and $x=2 \Re(z)$.  
Consider the minor $W(\lfloor ny \rfloor)$, and
let $N(z)$ be the number of its eigenvalues that are
greater than or equal to $\sqrt{n}x$. 
Define the \textit{height function} 
\begin{align}
  \label{eq:heightfn}
H_n(z)\defeq  \sqrt{\frac{\pi}{2}} N(z). 
\end{align}
Then Borodin shows that $\{ H_n(z) - E H_n(z),\; z\in \mathbb{H} \}$, 
converges in a certain sense to the GFF on $\HH$,
a random generalized function that we describe in more detail in Sections~\ref{sec:GFFbackground1}
and~\ref{sec:GFFbackground2}.

We will prove a similar result for the eigenvalue fluctuations of the growing
random regular graphs described in Section~\ref{sec:growingrrg}.
Our first result is about the process of \textit{short cycles} in the graph process $G(t)$. 
By a cycle of length $k$ in a graph, we mean what is sometimes called
a simple cycle: a walk in the graph that begins
and ends at the same vertex, and that otherwise repeats no vertices.
We will give a more formal definition in Section~\ref{sec:wordcombinatorics}.
Let $(C_k^{(s)}(t), \; k \in \NN)$ denote the number of cycles of various lengths $k$ that are present in $G(s+t)$. This process is not Markov, but nonetheless  it converges to a Markov process (indexed by $t$) as $s$ tends to infinity. 

To describe the limit, recall the value of $a(d,k)$, given in \eqref{eq:adk}.
Consider the set of natural numbers $\NN=\{1,2,\ldots\}$ with the measure
\[
\mu(k)= \frac{1}{2}\left[ a(d,k) - a(d,k-1)\right], \quad k\in \NN, \quad a(d,0)\defeq 0. 
\]
Consider a Poisson point process $\chi$ on $\NN \times [0, \infty)$ with an intensity measure given on $\NN \times (0, \infty)$
by the product measure $\mu \otimes \leb$, where $\leb$ is the Lebesgue measure,  and with additional masses of $a(d,k)/2k$ on $ (k,0)$ for $k\in \NN$.

Let $\yulelaw{x}$ denote the law of an one-dimensional pure-birth process on $\NN$ given by the generator:
\[
L f(k) = k\left( f(k+1) - f(k)\right), \quad k \in \NN,
\] 
starting from $x\in \NN$. This is also known as the \textit{Yule process}. 

\newcommand{\limitN}{{N}}
Suppose we are given a realization of $\chi$. For any atom $(k,y)$ of the countably many atoms of $\chi$, we start an independent process $(X_{k,y}(t),\; t\ge 0)$ with law $\yulelaw{k}$. Define the random sequence
\[
\limitN_k(t)\defeq  \sum_{(j,y)\in \chi \cap\{ [k] \times [0, t] \} } 1\left\{  X_{j,y}(t-y)=k  \right\}.
\]
In other words, at time $t$, for every site $k$, 
we count how many of the processes that started at time $y\le t$ at site $j \le k$ are currently at $k$. 
Note that both $(\limitN_k(\cdot), \; k\in \NN)$ and $(\limitN_k(\cdot), \; k\in [K])$, for some $K\in \NN$, are Markov processes, while $N_k(\cdot)$ for
fixed $k$ is not.

\begin{thm}\thlabel{mainthm:cycles}
As $s\to\infty$, the process $(C_k^{(s)}(t),\; k\in \NN,\; 0\le t < \infty)$ converges in law
 in the Skorokhod space $D_{\RR^{\infty}}[0, \infty)$ to the Markov process $(\limitN_k(t),\; k\in \NN,\; 0\le t < \infty)$. The limiting process is stationary. 
\end{thm}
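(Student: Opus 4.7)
The plan is to establish convergence of the finite-dimensional distributions of $(C_k^{(s)}(t), k \in \NN)$ and then tightness in $D_{\RR^\infty}[0,\infty)$. The key idea is to decompose $C_k^{(s)}(t)$ into contributions from cycles already present at time $s$ and cycles born during $[s,s+t]$: the first contribution should evolve under Yule dynamics (each cycle of length $k$ lengthening to $k+1$ at rate $k$), while the second should form a Poisson point process with intensity $\mu \otimes \leb$. The time-$t$ marginal of each $C_k^{(s)}$ is already controlled by \thref{thm:bestpermutationpoiapprox} applied to $G(s+t)$, which at $t=0$ supplies the atoms of $\chi$ at $y=0$.

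To identify the dynamics in $t$, I would use the continuous-time Poissonization directly. When $G(s+t)$ has $n$ vertices the insertion rate is $n+1$, and each insertion places the new vertex independently into each permutation at a uniformly random slot; consequently every directed edge of every permutation is split (broken by the new intermediate vertex) at rate exactly $1$, with distinct edges split asymptotically independently. A cycle of length $k$ uses $k$ labeled edges and grows to length $k+1$ whenever any of them is split, giving it Yule dynamics started at $k$. The birth rate of a new cycle of length $k$ could be computed by a direct local enumeration of cyclically reduced words of length $k$ returning to the freshly inserted vertex, but the cleanest identification is a stationarity balance: since the time-$t$ marginal of $C_k^{(s)}$ converges to $\Poi(a(d,k)/2k)$ for every $t$, this law must be invariant under the resulting Yule-plus-birth semigroup, which forces the birth rate at length $k$ to equal the net Yule flux $k\lambda_k - (k-1)\lambda_{k-1} = \mu(k)$.

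For the joint laws and asymptotic independence, I would appeal to the size-bias coupling behind \thref{thm:processapprox}: conditional on the presence of any finite collection of short cycles, the rest of the graph is close in total variation to the unconditioned graph, so typical short cycles occupy disjoint edge sets and their Yule evolutions are genuinely independent of each other and of the birth process for new cycles. This reduces convergence of finite-dimensional distributions to convergence of the initial distribution (already given) together with convergence of the one-step transition kernels (following from the Yule-plus-birth description above). Tightness in $D_{\RR^\infty}[0,\infty)$ is then handled coordinate-by-coordinate via Aldous's criterion, since each $C_k^{(s)}(\cdot)$ is an integer-valued pure-jump process whose expected number of jumps on any finite interval is bounded uniformly in $s$ by first-moment estimates on cycle counts. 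Stationarity of the limit is immediate from the time-homogeneity of the construction of $\chi$ and of the Yule processes built on it.

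The main obstacle will be decoupling the birth of new cycles from the concurrent extension of existing ones: at finite $n$, a single insertion event both lengthens old cycles and can create new ones, so these two mechanisms are not independent before passing to the limit. I would handle this by showing that the probability of a single insertion creating more than one short cycle, or simultaneously lengthening one short cycle and creating another, is $O(1/n)$, using the same local-structure arguments that underlie the proof of \thref{thm:processapprox}; this suffices to upgrade the marginal convergence to a process-level statement.
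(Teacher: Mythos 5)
Your limiting mechanism (Yule growth at rate $k$ plus spontaneous births at rate $\mu(k)$, with the birth rate pinned down by a stationarity balance) is the same heuristic the paper starts from, and your marginal input from \thref{thm:bestpermutationpoiapprox} is fine. But the core of your argument has a genuine gap: you reduce finite-dimensional convergence to ``convergence of the initial distribution together with convergence of the one-step transition kernels,'' and this presupposes that the prelimit cycle-count process is (approximately) Markov in the cycle-count state space. It is not, and the paper flags exactly this as the main difficulty: whether an existing short cycle grows, splits, or is joined by a new cycle at the next insertion depends on the global graph, not on the current counts, so the prelimit process has no transition kernels to converge. The statement you borrow from the size-bias coupling --- that conditionally on a finite collection of short cycles the rest of the graph is close in total variation to the unconditioned graph --- is a fixed-time statement about the joint law of cycle indicators; what you actually need is control of the conditional \emph{forward evolution} over a whole time window, uniformly in the conditioning and in $s$ (the specified cycles must grow at the right rates, approximately independently of each other and of the spontaneous births, with quantitative errors). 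Nothing in the proposal supplies that, and the ``main obstacle'' you do address (a single insertion simultaneously lengthening one cycle and creating another, probability $O(1/n)$) is a comparatively minor point.

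The paper's resolution is a time-reversal device that manufactures the missing Markov structure: running time backwards from a fixed horizon $T$, the graph viewed up to vertex relabeling is a Markov chain in which each vertex is deleted at rate one; restricting attention to the subgraph of cycles of length at most $L$, the word-indexed cycle counts of that subgraph form a genuine Markov chain whenever the initial configuration is a disjoint union of cycles (which holds with probability tending to one by the marginal results). One then matches this chain's rates and initial law with the time reversal of the limiting Poisson-plus-growth process, shows that the truncation at level $L$ changes both the prelimit and the limit only on events of probability $\eps$ uniformly in $s$, and transfers the convergence back to forward time; \thref{mainthm:cycles} then follows by summing over words of a given length and verifying $\sum_{w}\mu(w)=(a(d,k)-a(d,k-1))/2$ via Lemma~\ref{lem:wordcounts} (your balance argument is a legitimate shortcut for that last identity, but only once the process-level limit is already established). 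To rescue your forward-in-time plan you would need an analogue of this step --- e.g.\ a quantitative statement that, conditionally on the configuration of short cycles at a given time, their joint evolution over $[0,T]$ is within $o(1)$ in total variation of independent Yule processes plus an independent Poisson birth stream --- and that is precisely the content your sketch asserts rather than proves.
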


\begin{rmk}
In fact, the same argument used to prove Theorem~\ref{mainthm:cycles} shows that the process $(C_k^{(s)}(t),\; -\infty < t < \infty)$ converges in law to the Markov process $(\limitN_k(t),\; -\infty < t < \infty)$ running in stationarity. The same conclusion holds for all the following theorems in this section.
\end{rmk}

We now focus on eigenvalues of $G(t)$. Note that there is no easy exact relationship between the eigenvalues of $G_n$ for various $n$ since the eigenvectors play a role in determining any such identity. In fact, the eigenvalues of $G_n$ and $G_{n+1}$ need not be interlaced.
We will follow the approach of the previous sections and consider
linear eigenvalue statistics.
For any $2d$-regular graph on $n$ vertices $G$
and function $f\colon\RR \rightarrow \RR$, we will define the random variable 
\[
\tr f(G)\defeq \sum_{i=1}^n \hat{f}(\lambda_i)
\]
where $\lambda_1\ge \ldots \ge \lambda_n$ are the eigenvalues of adjacency matrix of $G$
divided by $2\sqrt{2d-1}$, and $\hat{f}$
is $f$ with its constant term adjusted (see \thref{def:tr,rmk:tr} for an explanation). 
The scaling is necessary to take a limit with respect to $d$.    
Let $[n]=\{1,\ldots,n\}$, and let $[\infty]=\NN$.
\begin{thm}\label{mainthm:eigenvalues}
For each $d$, there exists a set of polynomials $f_1,f_2,\ldots$ with $f_i$ of degree $i$
such that for any $K\in \NN \cup \{ \infty\}$, the process 
$( \tr f_k(G(s+t)), \; k \in [K], \; t \ge 0 )$ converges in law, as $s$ tends to infinity, to the Markov process $(N_k(t),\; k\in [K],\; t \ge 0 )$ of Theorem~\ref{mainthm:cycles}.
(The polynomials are given explicitly in \eqref{eq:fbasis}.)
For any polynomial $f$, the process $\big(\tr f(G(s+t))\big)$
converges to a linear combination of the coordinate processes
of $(N_k(t),\,k\in \NN)$.
\end{thm}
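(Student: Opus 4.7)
The plan is to define the polynomial basis $f_k$ by Möbius-inverting the trace/CNBW identity of \thref{prop:cnbweigenvalues} so that $\tr f_k(G)$ equals, up to a vanishing correction, the count of simple $k$-cycles in $G$; the conclusion then follows by transferring the cycle-process convergence of \thref{mainthm:cycles} to the eigenvalue side. Rescaling \thref{prop:cnbweigenvalues} to the $2d$-regular, $2\sqrt{2d-1}$-normalized setting, the modified Chebyshev polynomials $\Gamma_k$ satisfy $\sum_i\Gamma_k(2\lambda_i)=(2d-1)^{-k/2}\CNBW[n]{k}$. Since the idealized identity $\CNBW{k}=\sum_{j\mid k}2j\,\Cy{j}$ inverts to $2k\,\Cy{k}=\sum_{j\mid k}\mu(k/j)\CNBW{j}$, I would set
\begin{equation*}
  f_k(x)\defeq \frac{1}{2k}\sum_{j\mid k}\mu(k/j)\,(2d-1)^{j/2}\Gamma_j(2x),
\end{equation*}
which is a polynomial of degree exactly $k$. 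By construction $\sum_i f_k(\lambda_i) = \frac{1}{2k}\sum_{j\mid k}\mu(k/j)\CNBW[n]{j}$, and the constant-term adjustment defining $\hat{f}_k$ is chosen to absorb the $n$-linear contributions arising from the additive pieces $(d-2)/(d-1)^{k/2}$ in $\Gamma_{2k}$ and from the Perron eigenvalue $\lambda_1=d/\sqrt{2d-1}$.

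The next step replaces the CNBW count by the simple-cycle count. By \thref{thm:CNBWbound}, the vectors $(\CNBW[n]{k})_{k\leq r}$ and $(\sum_{j\mid k}2j\,\Cy[n]{j})_{k\leq r}$ differ in total variation by $O(r^4(2d-1)^r/n)$ in the permutation model, and after applying the Möbius inversion this yields $\tr f_k(G_n)-\Cy[n]{k}\toPr 0$ coordinatewise in $k$. Because the continuous-time chain $G(t)$ is stationary under the Poissonization of Section~\ref{sec:growingrrg}, this approximation is uniform in $t$ on any compact interval, so $\tr f_k(G(s+\cdot))-\Cy[s+\cdot]{k}\to 0$ in probability in the Skorokhod space $D_{\RR^K}[0,\infty)$ for any finite $K$, with the $K=\infty$ case following from convergence of each finite coordinate.

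Combining this with \thref{mainthm:cycles}, which already establishes $(\Cy[s+t]{k})_{k\in[K],\,t\geq 0}\toL(\limitN_k(t))_{k\in[K],\,t\geq 0}$ in Skorokhod space, a converging-together argument gives the advertised convergence of $(\tr f_k(G(s+t)))_{k\in[K],\,t\geq 0}$ to $(\limitN_k(t))_{k\in[K],\,t\geq 0}$. The extension to arbitrary polynomials $f$ is immediate: $\{1\}\cup\{f_k\}_{k\geq 1}$ is a graded basis of $\RR[x]$, so writing $f=c_0+\sum_{k=1}^{\deg f}c_kf_k$ and using linearity, $\tr f(G(s+t))$ converges to $\sum_{k=1}^{\deg f}c_k\limitN_k(t)$.

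The main obstacle is promoting the coordinatewise $\toPr 0$ approximation of $\tr f_k$ by $\Cy[n]{k}$ to a bona fide approximation in Skorokhod space; the total-variation control of \thref{thm:CNBWbound} gives this at each fixed time, and stationarity of $G(t)$ allows uniformization on compact time intervals, but some care is needed to ensure the approximation does not degrade over the full process trajectory and to handle the passage to the infinite-coordinate case $K=\infty$.
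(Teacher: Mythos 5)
Your choice of basis via M\"obius inversion and the overall strategy (invert the CNBW/trace identity of \thref{prop:cnbweigenvalues}, then transfer \thref{mainthm:cycles} to the eigenvalue side) is exactly the paper's route, and the final step for arbitrary polynomials is also the same. The gap is in the middle step, and you have correctly identified where it is but not how to close it. You propose to pass from CNBW counts to cycle counts using \thref{thm:CNBWbound} together with ``stationarity of $G(t)$'' to uniformize over compact time intervals. This does not work: $G(t)$ is not stationary (it is a growing graph; only the limiting cycle process is stationary), and \thref{thm:CNBWbound} is a fixed-time, marginal total-variation statement comparing to the limiting Poisson vector. A bound of order $O(r^4(d-1)^r/n)$ at each fixed time cannot be union-bounded over the trajectory: on a time window $[s,s+T]$ the graph undergoes on the order of $e^{s+T}-e^{s}$ vertex insertions, and summing $O(1/n)$ over those $n$ diverges. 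So the marginal bounds you cite do not control $\sup_{t\in[0,T]}\bigl|\CNBW[s]{k}(t)-\sum_{j\mid k}2j\Cy[s]{j}(t)\bigr|$.

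What is actually needed is a pathwise statement: \thref{prop:nooverlaps} shows via Borel--Cantelli (using that the expected number of bad walks of length $\le K$ starting at the newly inserted vertex $n$ is $O(1/n^2)$, which is summable, plus the observation that any surviving bad walk eventually spawns one rooted at a new vertex) that there is an almost surely finite random time after which $G(t)$ contains \emph{no} bad cyclically non-backtracking walks of length $\le K$ at all. This gives $\lim_{s\to\infty}\P[\BadW[s]{k}(t)=0 \text{ for all } k\le K,\ t\ge 0]=1$, i.e.\ the identity $\CNBW[s]{k}(t)=\sum_{j\mid k}2j\Cy[s]{j}(t)$ holds simultaneously for the entire trajectory with probability tending to one, which is what licenses the process-level converging-together argument. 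Without this (or an equivalent a.s.\ eventual-absence statement), your Skorokhod-space approximation step is unjustified. The rest of your argument --- the definition \eqref{eq:fbasis}, the constant-term adjustment, \thref{lem:EKcont} and the continuous mapping theorem, and the passage from finite $K$ to $K=\infty$ --- is fine once this is fixed.
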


Next, we take $d$ to infinity. We will make the following notational convention: for any polynomial $f$, we will denote the limiting
process of $( \tr f(G(s+t)), \; t \ge 0 )$ 
by $(\tr f\left(G(\infty + t) \right),\; t\ge 0 )$.
Recall that this process is a linear combination of $(N_k(t),\, k \in \NN,\, t\ge 0)$. 

\begin{thm}\thlabel{mainthm:chebycov}
Let $\{T_k, \; k \in \NN\}$ denote the Chebyshev orthogonal polynomials of the first kind on $[-1,1]$. 
As $d$ tends to infinity, the collection of processes
\[
\left( \tr T_{k} \left(G(\infty + t) \right) - \E \tr T_{k} \left(G(\infty + t) \right),\; t\ge 0, \; k\in \NN \right)
\]
converges weakly in $D_{\RR^{\infty}}[0,\infty)$ to a collection of independent Ornstein-Uhlenbeck processes $\left( U_k(t),\;t\ge 0,\; k\in \NN \right)$, running in equilibrium. Here the equilibrium distribution of $U_k$ is $N(0, k/2)$ and $U_k$ satisfies the stochastic differential equation
\[
d U_k(t)= - k U_k(t)\,d t + k\, d W_k(t), \quad t\ge 0,
\]
and $(W_k,\; k \in \NN )$ are i.i.d.\ standard one-dimensional Brownian motions. 

Thus, the collection of random variables
$\big(\tr T_{k} \left(G(\infty + t) \right) - \E \tr T_{k} \left(G(\infty + t) \right)\big)$,
indexed by $k$ and $t$, converges as $d$ tends to infinity
to a centered Gaussian process with covariance kernel given by
\begin{align}\thlabel{eq:covcheby}
\lim_{d\rightarrow \infty} \cov\left( \tr T_i \left(G(\infty + t) \right), \tr T_k \left(G(\infty + s) \right)  \right)= \delta_{ik} \frac{k}{2}e^{k(s-t)}.
\end{align}
for $s\leq t$.
\end{thm}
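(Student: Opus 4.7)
The plan is to reduce the claim to the cycle-count limit process $(N_k(t))$ of Theorem~\ref{mainthm:cycles} via the Chebyshev--CNBW correspondence in Proposition~\ref{prop:cnbweigenvalues}. Specializing that proposition to the $2d$-regular setting gives, for odd $k$, the identity $\tr T_k(G) = \tfrac{1}{2}(2d-1)^{-k/2}\CNBW[n]{k}$, and for even $k$ the same identity up to a deterministic correction that disappears upon centering. Combined with Theorem~\ref{mainthm:eigenvalues} and $\CNBW{k}=\sum_{j\mid k}2jC_j$, this yields the in-law identity of stationary processes
\begin{align*}
  \tr T_k(G(\infty+t)) - \E\tr T_k(G(\infty+t)) = (2d-1)^{-k/2}\sum_{j\mid k}j\bigl(N_j(t)-\E N_j(t)\bigr).
\end{align*}

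Next I would compute the stationary covariance of $(N_j(t))$ from its Poisson point process--Yule description. For $s\le t$, the second-moment formula for Poisson functionals gives
\begin{align*}
  \cov(N_{j_1}(s), N_{j_2}(t)) = \E N_{j_1}(s)\cdot p_{j_1,j_2}(t-s),
\end{align*}
where $p_{j_1,j_2}(u)$ is the Yule transition probability from $j_1$ to $j_2$ in time $u$; in particular $p_{j,j}(u)=e^{-ju}$ since the pure-birth process has exit rate $j$ at state $j$. Because $\E N_j = a(d,j)/(2j)\sim(2d-1)^j/(2j)$, the contribution of a pair $(j_1,j_2)$ to $\var\bigl(\tr T_k(\infty+t)-\E\tr T_k(\infty+t)\bigr)$ scales like $(2d-1)^{j_1-k}$, so only $j_1=j_2=k$ survives as $d\to\infty$, producing the promised $(k/2)e^{-k(t-s)}$. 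For $i\ne k$, every cross term has $\min(j_1,j_2)\le\min(i,k)<(i+k)/2$ (with $j_1\mid i$, $j_2\mid k$), so the cross covariance vanishes.

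To upgrade this to weak convergence in $D_{\RR^\infty}[0,\infty)$, I would prove joint Gaussianity of the finite-dimensional distributions together with coordinatewise tightness. For any finite collection of times and indices, each centered $(2d-1)^{-k/2}(N_k(t)-\E N_k(t))$ is a linear functional of the Poisson random measure $\chi$ whose total mass is of order $(2d-1)^k$ while each summand is bounded; the multivariate Poisson CLT (or Cram\'er--Wold plus a univariate CLT applied to the superposed atom counts) then gives joint Gaussian limits with the covariance computed above. Tightness reduces via the product topology on $D_{\RR^\infty}$ to tightness of each coordinate, which follows from standard fourth-moment bounds on stationary Yule-driven increments. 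The resulting mean-zero stationary Gaussian process with covariance $\delta_{ik}(k/2)e^{-k(t-s)}$ is uniquely the stationary OU system $dU_k=-kU_k\,dt+k\,dW_k$ with independent coordinates, yielding the theorem and formula~\eqref{eq:covcheby}. The chief technical obstacle is cleanly handling the iterated limit ($s\to\infty$ before $d\to\infty$) while identifying the dominant term among the many $(j_1,j_2)$ pairs; the ultimate decoupling for $i\ne k$ is forced by the vanishing of cross covariances.
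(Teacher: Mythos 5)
Your proposal is correct and follows essentially the same route as the paper: reduce via \thref{prop:cnbweigenvalues} to the identity $2\tr T_k(G(\infty+t))=(2d-1)^{-k/2}\sum_{j\mid k}2jN_j(t)$, compute $\cov(N_{j_1},N_{j_2})$ from the Poisson--Yule structure (this is exactly \thref{cor:cyclecov}), observe that only the $j_1=j_2=k$ term survives the $(2d-1)^{-(i+k)/2}$ normalization, and combine Gaussian fdd-convergence with coordinatewise tightness. The only differences are in execution: the paper makes the multi-time independence explicit via a decomposition into Poisson variables $X_S$ indexed by cycle ``history sequences'' rather than invoking a marked-Poisson CLT abstractly, and it proves tightness by writing $N_k(t)-N_k(0)$ as a difference of two rate-$a(d,k)/2$ Poisson counting processes (\thref{lem:tightness}) rather than by fourth-moment increment bounds, which would require some care because increments of $N_k$ over disjoint intervals are not independent.
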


This covariance structure is intimately linked to the GFF; we will make this
more apparent in
\thref{thm:GFFconvergence}.
For the moment, this is best illustrated by a comparison to Borodin's result.
We specialize \cite[Proposition 3]{Bor1} for the case of GOE ($\beta=1$). 
 Fix $m$ positive real numbers $t_1 < t_2 < \ldots < t_m$. In the notation of \cite{Bor1}, we take $L=n$ and $B_i(n)=[\lfloor t_i n \rfloor]$. The matrix $W(n)$ is defined as the first $n$ rows and columns 
 of an infinite Wigner matrix.
 Then, for any positive integers $j_1, j_2, \ldots, j_m$, the random vector 
 \begin{align*}
\left( \tr T_{j_i}\left( W(\lfloor t_i n \rfloor)/2\sqrt{t_i n} \right) - \E  \tr T_{j_i}\left( W(\lfloor t_i n \rfloor) /2\sqrt{t_i n} \right),\; i\in [m] \right)
 \end{align*}
 converges in law as $n$ tends to infinity to a centered Gaussian vector.
 For $s\leq t$, 
\begin{align*}
\lim_{n\rightarrow \infty} \cov\left(  \tr T_i\left( W( \lfloor t n \rfloor)/ 2\sqrt{tn} \right), \; \tr T_k \left( W (\lfloor s n \rfloor ) /2\sqrt{sn}\right)  \right)=\delta_{ik} \frac{k}{2}\left(\frac{s}{t} \right)^{k/2}, 
\end{align*}
nearly the same as \eqref{eq:covcheby}. The appearance of the exponential in \eqref{eq:covcheby}
comes from the time-change we introduced when we made our graph process $G(t)$ run in continuous time.

Here, we have taken a limit in $t$ followed by a limit in $d$. When we take the limit in $t$, we get
an abstract limiting object. In order to give a direct connection between the eigenvalue fluctuations
and the GFF, we need to take the two limits simultaneously. As we now vary both $t$ and $d$,
recall the notation $G(t, d)$ from Section~\ref{sec:growingrrg}. Let $N(t)$
be the number of vertices in $G(t,d)$, which does not depend on $d$.

\begin{prop}\thlabel{thm:diagonallimit}
  There exists an increasing, right-continuous $d(t)$ taking integer values
  and growing to infinity such that as $s\to\infty$, the process
  \begin{align*}
    \Bigl(\tr T_k\bigl(G(s+t,2d(s+t))\bigr) - 
    \E\bigl[\tr T_k\bigl(G(s+t,2d(s+t))\bigr)\,\big\vert\,N(t)\bigr],\;k\in\NN,\,t\geq 0\Bigr)
  \end{align*}
  converges weakly in $D_{\RR^{\infty}}[0,\infty)$ to the same limit
  of Ornstein-Uhlenbeck processes
  $(U_k(\cdot),\,k\geq 1)$ as in \thref{mainthm:chebycov}.
\end{prop}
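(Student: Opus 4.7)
The plan is to realize this statement as a diagonalization of two weak limits that we already have in hand. Combining \thref{mainthm:eigenvalues} with \thref{mainthm:chebycov}, we have the following iterated convergence. For each fixed $d\in\NN$, as $s\to\infty$, the process
\[
\Bigl(\tr T_k\bigl(G(s+t,2d)\bigr)-\E\bigl[\tr T_k\bigl(G(s+t,2d)\bigr)\,\big|\,N(t)\bigr],\;k\in\NN,\,t\geq 0\Bigr)
\]
converges weakly in $D_{\RR^{\infty}}[0,\infty)$ to a process $Y^{d}$, which is a fixed linear combination of the coordinates of the process $(\limitN_k(t))$ of \thref{mainthm:cycles}. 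Then, as $d\to\infty$, the process $Y^{d}$ converges weakly in $D_{\RR^{\infty}}[0,\infty)$ to the independent Ornstein--Uhlenbeck processes $(U_k(t))$ of \thref{mainthm:chebycov}.

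First, I would metrize weak convergence on the Polish space $D_{\RR^{\infty}}[0,\infty)$ by a metric~$\rho$; concretely, pick an increasing sequence $T_1<T_2<\cdots$ with $T_k\to\infty$ and form $\rho$ as a weighted sum of L\'evy--Prokhorov metrics on $D_{\RR^{K}}[0,T_K]$ for $K\in\NN$. For each $k$, using the two displayed limits above, I would select a threshold time $s_k$ so large that the restriction to $[0,T_k]$ of the $k$-projection of the centered process with fixed degree $d=k$, started at graph-time $s\geq s_k$, is within $\rho$-distance $1/k$ of the corresponding restriction of $Y^{k}$, and simultaneously $Y^{k}$ is within $\rho$-distance $1/k$ of the OU limit. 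A triangle-inequality bound then yields the desired diagonal closeness once we can arrange that $d(s+t)$ is effectively constant in $t$ on each compact window.

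To make $d$ a function of a single variable with the required regularity, I would define $d(\cdot)$ as the step function that jumps from $k-1$ to $k$ at times $a_1<a_2<\cdots$, where $a_k$ is chosen to grow so fast that both (i) $a_k\geq s_k$, and (ii) $a_{k+1}-a_k\geq T_{k+1}$. Property (ii) guarantees that for any fixed $T>0$, once $s$ is large enough the function $d$ has no jumps in $[s,s+T]$, so $d(s+t)=d(s)$ throughout the window; in particular, on any compact time-interval the process of interest coincides (for large $s$) with the fixed-degree process with $d=d(s)\to\infty$. Property (i) then forces this fixed-degree process to be within $1/d(s)$ of $Y^{d(s)}$, which is within $1/d(s)$ of the OU limit. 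Convergence in $D_{\RR^{\infty}}[0,\infty)$ is determined by its restriction to compact time intervals and finite coordinate projections, so this delivers the claim.

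The main obstacle will be making the conditional centering $\E[\cdot\mid N(t)]$ compatible with the diagonalization. Since $N(t)$ is a pure-birth process independent of the permutation labels driving $G(\cdot,2d)$, the conditional expectation for fixed $d$ and conditioned on $N(t)=n$ is a deterministic function of $n$ and $d$, namely the quantity $m_{T_k}(n)$ appearing in \thref{thm:dgrowslimit} adapted to Chebyshev polynomials; I would need to verify that this function depends on $d$ sufficiently smoothly that the chosen $d(\cdot)$ introduces no spurious fluctuation, and that the difference from the unconditional recentering used implicitly in \thref{mainthm:chebycov} is negligible in the scaling. Once that quantitative comparison is in place, the diagonal argument outlined above completes the proof.
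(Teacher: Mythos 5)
Your overall strategy (diagonalize the two iterated limits by choosing the jump times of $d(\cdot)$ to grow fast enough) is the same as the paper's, but there is a genuine gap in the step where you claim the degree can be frozen on compact windows. You assert that if the jump times satisfy $a_{k+1}-a_k\geq T_{k+1}$, then ``for any fixed $T>0$, once $s$ is large enough the function $d$ has no jumps in $[s,s+T]$, so $d(s+t)=d(s)$ throughout the window.'' This is false: since the $a_k$ tend to infinity, for every large $k$ there is a whole interval of starting times $s\in(a_k-T,a_k)$ for which the window $[s,s+T]$ straddles the jump at $a_k$. Wide spacing only guarantees \emph{at most one} jump per window, never zero, and weak convergence as $s\to\infty$ must hold along all $s$, not just those avoiding the jumps. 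Handling the straddling windows is precisely the technical core of the paper's argument: it compares the centered processes at degrees $d(s)$ and $d(s)+1$ on the same window and shows their difference vanishes in probability, which requires the \emph{joint} convergence of the cycle processes across consecutive degrees (\thref{mainthm:intertwining}) and the observation from \eqref{eq:d+1-d} that $N_{d+1,k}(t)-N_{d,k}(t)$ is an independent Poisson variable whose contribution is killed by the $(2d-1)^{-k/2}$ scaling. None of this machinery appears in your proposal, and without it the diagonal argument does not close.

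A second, smaller issue is that you flag the comparison between the conditional centering $\E[\,\cdot\mid N(t)]$ and the centering used in \thref{mainthm:chebycov} as ``the main obstacle'' but leave it unresolved. The paper settles it by imposing the extra requirement \eqref{eq:sd4} on the thresholds $s_d$ (closeness of $\E\tr T_k(G(n,2d))$ to the limiting expectation for $n\geq e^{s/2}$) together with the fact that $N(t)\geq e^{t/2}$ with probability tending to one, which follows from $(N(t)+1)e^{-t}$ converging almost surely to an exponential random variable. You would need to supply an argument of this kind; as written, your proof is conditional on a verification you have not carried out.
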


Now, we define a height function $F_t(x)$ as the number of eigenvalues of the adjacency matrix
of $G(t, 2d(t))$ that
are less than or equal to $2\sqrt{2d(t)-1}x$, taking $d(t)$ from the previous proposition.
\newcommand{\F}{\overline{F}}
Let $\F_t(x)$ denote the centered height function
\begin{align*}
  \F_t(x) = F_t(x) - \E[F_t(x)\mid N(t)].
\end{align*}
(We need to subtract off this conditional expectation, not just the expectation, because
otherwise the fluctuations of $N(t)$ swamp the eigenvalue fluctuations that we are interested in.)
Define
\begin{align*}
  H_s(x,t) = \sqrt{\frac{\pi}{2}} \F_{s+t}(x).
\end{align*}
As $s\to\infty$, these functions converge to the GFF in the following sense:
\begin{thm}\thlabel{thm:GFFconvergence}
  Let $\Omega(x,t) = e^t\bigl(x + i\sqrt{1-x^2}\bigr)$ for $-1\leq x\leq 1$.
  Let $h$ denote the GFF on $\HH$ with vanishing Dirichlet boundary conditions.
  For any polynomials $p_1(x),\ldots,p_n(x)\in\CC[x]$ and times $t_1,\ldots,t_n$,
  \begin{align*}
    \biggl( \int_{-\infty}^{\infty} p_i(x)H_s(x,t_i))\,dx,\;1\leq i\leq n\biggr)
    \toL
    \biggl( \int_{-1}^{1} p_i(x)h(\Omega(x,t_i))\,dx,\;1\leq i\leq n\biggr)
  \end{align*}
  as $s\to\infty$.
\end{thm}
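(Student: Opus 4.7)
The plan is to reduce the theorem to a statement about centered polynomial eigenvalue statistics via integration by parts, and then apply \thref{thm:diagonallimit} to obtain a jointly Gaussian limit whose covariance structure I can match to that of the GFF. First I would integrate by parts: $\F_{s+t}$ vanishes for $x$ beyond the top scaled eigenvalue and for $x$ sufficiently negative, so for any polynomial $p_i$ with antiderivative $P_i$,
\begin{align*}
  \int_{-\infty}^{\infty} p_i(x) H_s(x,t_i)\,dx = -\sqrt{\tfrac{\pi}{2}}\Bigl( \tr P_i\bigl(G(s+t_i, 2d(s+t_i))\bigr) - \E\bigl[\tr P_i\bigl(G(s+t_i, 2d(s+t_i))\bigr) \,\big|\, N(t_i)\bigr] \Bigr),
\end{align*}
with the boundary term $P_i\F_{s+t_i}$ vanishing at $\pm\infty$.

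Next I would expand $P_i = \sum_k a_k^{(i)} T_k$ in Chebyshev polynomials and invoke \thref{thm:diagonallimit} coordinatewise. By linearity of the trace, the joint law of the centered statistics above across the indices $i$ converges to the law of $\bigl(-\sqrt{\pi/2}\sum_k a_k^{(i)} U_k(t_i)\bigr)_i$, where $(U_k,\,k\ge 1)$ are independent stationary Ornstein--Uhlenbeck processes with $\E[U_k(t)U_k(s)] = \tfrac{k}{2}e^{-k|t-s|}$. The limit is therefore a centered Gaussian vector with covariance
\begin{align*}
  \Sigma_{ij} = \frac{\pi}{4}\sum_{k\ge 1} k\, a_k^{(i)} a_k^{(j)}\, e^{-k|t_i-t_j|}.
\end{align*}

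On the GFF side, I would use the polar Fourier expansion of the Dirichlet GFF on $\HH$ along concentric semicircles. Writing $h(e^t e^{i\theta}) = \sum_{k\ge 1} B_k(t)\sin(k\theta)$ as a distributional series, the polar decomposition of the Green's function $G_\HH(z,w) = \frac{1}{2\pi}\log\bigl|\tfrac{z-\bar w}{z-w}\bigr|$ shows that the modes $B_k$ are independent Gaussian processes with $\E[B_k(t)B_k(s)] = \tfrac{1}{\pi k}e^{-k|t-s|}$. Under the change of variables $x=\cos\theta$, we have $\Omega(x,t) = e^t e^{i\theta}$ and $T_k(x) = \cos(k\theta)$, so using $\sin\theta\sin(j\theta) = \tfrac{1}{2}[\cos((j-1)\theta)-\cos((j+1)\theta)]$,
\begin{align*}
  \int_{-1}^{1} T_l(x)\, h(\Omega(x,t))\,dx = \int_0^\pi \cos(l\theta)\sin\theta\, h(e^t e^{i\theta})\,d\theta = \tfrac{\pi}{4}\bigl(B_{l+1}(t) - B_{l-1}(t)\bigr)
\end{align*}
for $l\ge 1$ (with the convention $B_0\equiv 0$), and $\tfrac{\pi}{2}B_1(t)$ for $l=0$. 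Matching the two covariance expressions then reduces, via the Chebyshev antiderivative identity $\int T_l(x)\,dx = \tfrac{T_{l+1}(x)}{2(l+1)} - \tfrac{T_{l-1}(x)}{2(l-1)}$ (for $l\ge 2$, with analogues for $l\le 1$), to a term-by-term verification that both computations produce the same $\Sigma_{ij}$.

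The main obstacle will be the careful combinatorial bookkeeping in this term-by-term verification, particularly reconciling the anomalous boundary cases $l=0,1$ and checking that the shifts $l\pm 1$ on the GFF side line up precisely with the Chebyshev coefficients of $P_i$ produced by the antiderivative identity. Once the covariances are shown to agree, the theorem follows: both joint limits are centered Gaussians, so convergence of finite-dimensional distributions is equivalent to convergence of covariances, and no separate tightness argument is required because the theorem only claims convergence in law for finitely many $p_i$ and $t_i$.
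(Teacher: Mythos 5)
Your proposal is correct, and its skeleton --- integrate by parts to turn $\int p_i(x)H_s(x,t_i)\,dx$ into a centered trace statistic of an antiderivative $P_i$, invoke \thref{thm:diagonallimit} for joint Gaussian convergence, then match covariances with the GFF --- coincides with the paper's: your first two steps are \thref{lem:heighttrace} (stated there on the basis $p=U_{k-1}$, where the antiderivative is the single term $T_k/k$) and the opening paragraph of the paper's proof. Where you genuinely depart is the covariance computation on the GFF side. The paper evaluates the double integral of the Green's function against $U_{j-1}\otimes U_{k-1}$ directly from \thref{prop:BG2}, via $x=\cos u$, a choice of branches of the logarithm, two integrations by parts, and residue calculus. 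You instead diagonalize the Green's function in polar coordinates using the classical expansion
\begin{align*}
  -\frac{1}{2\pi}\log\Bigl\lvert\frac{z-w}{z-\overline{w}}\Bigr\rvert
  = \frac{1}{\pi}\sum_{k\geq 1}\frac{1}{k}\Bigl(\frac{r\wedge\rho}{r\vee\rho}\Bigr)^{k}\sin(k\theta)\sin(k\phi),
  \qquad z=re^{i\theta},\ w=\rho e^{i\phi},
\end{align*}
which is exactly what makes your modes $B_k$ independent with $\E[B_k(t)B_k(s)]=\tfrac{1}{\pi k}e^{-k\abs{t-s}}$, and the matching reduces to trigonometric orthogonality. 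The bookkeeping does close up: each delta term on the eigenvalue side (e.g.\ $\tfrac{\pi}{2}\cdot\tfrac{1}{4(l+1)^2}\cdot\tfrac{l+1}{2}e^{-(l+1)\abs{t-s}}$) equals the corresponding GFF term $\tfrac{\pi^2}{16}\cdot\tfrac{1}{\pi(l+1)}e^{-(l+1)\abs{t-s}}$, and the boundary cases $l=0,1$ are handled by $B_0\equiv 0$ on one side and the vanishing of $\tr T_0-\E[\tr T_0\mid N]$ on the other. Your route trades the paper's contour integration for the mode expansion and is more transparent about why the limiting covariance is diagonal in $k$; the paper's is self-contained given only \thref{prop:BG2}. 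Two small points of care: the identity $h(e^te^{i\theta})=\sum_k B_k(t)\sin(k\theta)$ should be read as a covariance statement about the trace random variables $\int_0^{\pi}\sin(k\theta)h(e^te^{i\theta})\,d\theta$, justified by \thref{prop:BG2} together with the series above rather than asserted pointwise (the sum--integral interchange is harmless since a polynomial meets only finitely many modes); and the global minus sign from integration by parts is immaterial only because every component of the limit carries it and the limit is jointly centered Gaussian, which is worth saying explicitly. You are right that no tightness argument is needed for this finite-dimensional statement.
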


\begin{rmk}
  A common model for random regular graphs is the \emph{configuration model} or
  \emph{pairing model} (see \cite{W} for more information). The model is
  defined as follows: Start with $n$ buckets, each containing $d$ prevertices.
  Then, separate these $dn$ prevertices into pairs, choosing uniformly
  from every possible pairing. Finally, collapse each bucket into a single vertex,
  making an edge between one vertex and another if a prevertex in one bucket is paired
  with a prevertex in the other bucket. This model has the advantage that choosing
  a graph from it conditional on it containing no loops or parallel edges is the same
  as choosing a graph uniformly from the set of graphs without loops and parallel edges.
  The model also allows for graphs of odd degrees, unlike the permutation model.
  
  It is possible to construct a process of growing random regular graphs similar
  to the one in this paper using
  a dynamic version of this model. Given some initial pairing of prevertices
  labeled $\{1,\ldots,dn\}$, 
  extend it to a random pairing of $\{1,\ldots,dn+2\}$  by the following procedure:
  Choose $X$ uniformly from $\{1,\ldots,dn+1\}$. Pair $dn+2$ with $X$.
  If $X=dn+1$, leave the other pairs unchanged; if not, pair the previous partner of
  $X$ with $dn+1$. This is an analogue of the Chinese Restaurant Process
   in the setting of random pairings,
  in that if the initial pairing is uniformly chosen, then so is the extended one.
  
  If $d$ is odd, we repeat this procedure a total of $d$ times to extend a random
  $d$-regular graph on $n$ vertices to have
  $n+2$ vertices (when $d$ is odd, the number of vertices in the graph must be even).
  When $d$ is even, repeat $d/2$ times to add one new vertex to a random graph.
  In this way, we can construct a sequence of growing random regular graphs.
  We believe that all the results of this paper hold in this model with minor changes,
  with similar proofs.
\end{rmk}

  \newcommand{\orb}{\mathop{\mathrm{Orb}}\nolimits}
\newcommand{\wchain}[1]{\widetilde{P}_{#1}}

\section{Preliminaries}

\subsection{A primer on the Chinese Restaurant Process}
\label{sec:CRPprimer}
The Chinese Restaurant Process, 
introduced by Dubins and Pitman, is a particular example of a two parameter family of stochastic processes that constructs sequentially random exchangeable partitions of the positive integers via the cyclic decomposition of a random permutation. Our short description is taken from \cite[Section 3.1]{Pit}. 

An initially empty restaurant has an unlimited number of circular tables numbered $1,2,\ldots$, each capable of seating an unlimited number of customers. Customers numbered $1,2,\ldots$ arrive one by one and are seated at the tables according to the following plan. Person $1$ sits at table $1$. For $n \ge 1$ suppose that $n$ customers have already entered the restaurant, and are seated in some arrangement, with at least one customer at each of the tables $j$ for $1\le j \le k$ (say), where $k$ is the number of tables occupied by the first $n$ customers to arrive. Let customer $n + 1$ choose with equal probability to sit at any of the following
$n + 1$ places: to the left of customer $j$ for some $1\le j \le n$, or alone at table $k+1$. 
Define $\pi^{(n)}\colon[n]\to[n]$ as the permutation 
whose cyclic decomposition is given by the tables; that is, if
after $n$ customers have entered the restaurant, customers $i$ and $j$ are 
seated at the same table, with $i$ to the left of
$j$, then $\pi^{(n)}(i)=j$, and if customer $i$ is seated alone at some table 
then $\pi^{(n)}(i)=i$. The sequence $(\pi^{(n)})$ is then a tower of random permutations
as defined in Section~\ref{sec:growingrrg}.

\subsection{Combinatorics on words}\label{sec:wordcombinatorics}
  Recall the discussion on p.~\pageref{permmodeldiscussion} on viewing
  the graph formed from independent permutations
  $\pi^{(n)}_1,\ldots,\pi^{(n)}_d$ as a directed, 
  edge-labeled
  graph. As we did there, we drop the subscripts
  and let $\pi_l=\pi^{(n)}_l$.
  We previously discussed the word formed as we walked around a cycle
  by writing
  down the label of each edge as it is traversed, putting
  $\pi_i$ or $\pi_i^{-1}$ according to the direction
  we walk over the edge.
  Now, we will treat this more rigorously.
  
    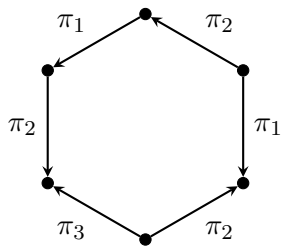
\begin{figure}
      \begin{center}
        \begin{tikzpicture}[scale=1.5,vert/.style={circle,fill,inner sep=0,
              minimum size=0.15cm,draw},>=stealth]
          \node[vert] (s0) at (270:1) {};
            \node[vert] (s1) at (330:1) {};
            \node[vert] (s2) at (30:1) {};
            \node[vert] (s3) at (90:1) {};
            \node[vert] (s4) at (150:1) {};
            \node[vert] (s5) at (210:1) {};
            \draw[thick,->] (s0) to node[auto,swap] {$\pi_2$} (s1);
            \draw[thick,<-] (s1) to node[auto,swap] {$\pi_1$} (s2);
            \draw[thick,->] (s2) to node[auto,swap] {$\pi_2$} (s3);
            \draw[thick,->] (s3) to node[auto,swap] {$\pi_1$} (s4);
            \draw[thick,->] (s4) to node[auto,swap] {$\pi_2$} (s5);
            \draw[thick,<-] (s5) to node[auto,swap] {$\pi_3$} (s0);
        \end{tikzpicture}
      \end{center}
    \caption{A cycle whose word
    is the equivalence class of
    $\pi_2\pi_1^{-1}\pi_2\pi_1\pi_2\pi_3^{-1}$ in $\Ww_6/D_{12}$.}
    \label{fig:cycleword}
  \end{figure}
    Let $\Ww_k$ denote the set
  of cyclically reduced words of length $k$.
  We would like to associate each $k$-cycle in $G_n$ 
  with the word in $\Ww_k$ formed
  by the above procedure, but since we can start the walk
  at any point in the cycle and walk in either of two directions, there
  are actually up to $2k$ different words that could be
  formed by it.  Thus we identify
  elements of $\Ww_k$ that differ only by rotation and inversion
  (for example, $\pi_1\pi_2^{-1}\pi_1\pi_2$ and 
  $\pi_1^{-1}\pi_2\pi_1^{-1}\pi_2^{-1}$) and denote the resulting
  set by $\Ww_k/D_{2k}$, where $D_{2k}$ is the dihedral group acting
  on the set $\Ww_k$ in the natural way.
  \begin{definition}[Properties of words]
     For any $k$-cycle
     in $G_n$, the element of $\Ww_k/D_{2k}$ given by walking
     around the cycle is called the \emph{word} of the cycle  (see 
     Figure~\ref{fig:cycleword}).
  For any word $w$, let $\abs{w}$ denote the length of $w$.
  Let $h(w)$ be the largest number $m$ such that
  $w=u^m$ for some word $u$.  If $h(w)=1$, we call $w$ \emph{primitive}.
  For any $w\in\Ww_k$, the orbit of $w$ under the action 
  of $D_{2k}$ contains $2k/h(w)$ elements, a fact which we will frequently use.
  Let $c(w)$ denote the number of pairs of double letters in $w$, i.e.,
  the number of integers $i$ modulo $|w|$ such that $w_i=w_{i+1}$.
  For example, $c(\pi_1\pi_1\pi_2^{-1}\pi_2^{-1}\pi_1)=3$.
  If $\abs{w}=1$, we take $c(w)=0$.
  We will also consider $|\cdot|$, $h(\cdot)$, and $c(\cdot)$ as
  functions on $\Ww_k/D_{2k}$, since they are invariant
  under cyclic rotation and inversion.
  \end{definition}
  
  To more easily refer to words in $\Ww_k/D_{2k}$, choose some
  canonical representative $w_1\cdots w_k\in\Ww_k$ for
  every $w\in\Ww_k/D_{2k}$.  Based on this, we will often
  think of elements of $\Ww_k/D_{2k}$ as words instead of equivalence
  classes, and we will make statements about the $i$th letter
  of a word in $\Ww_k/D_{2k}$.  For $w=w_1\cdots w_k\in\Ww_k/D_{2k}$,
  let $w^{(i)}$ refer
  to the word in $\Ww_{k+1}/D_{2k+2}$ given by
  $w_1\cdots w_i w_i w_{i+1}\cdots w_k$.  We refer to this
  operation as \emph{doubling} the $i$th letter of $w$.
  A related operation is to 
  \emph{halve} a pair of double letters, for example producing
  $\pi_1\pi_2\pi_3\pi_4$ from $\pi_1\pi_2\pi_3\pi_4\pi_1$. 
  (Since we apply these operations
  to words identified with their rotations, we do not need
  to be specific about which letter of the pair is deleted.)
  The following technical lemma underpins most
  of our combinatorial calculations.
  \begin{lem}\label{lem:doublehalve}
    Let $u\in\Ww_k/D_{2k}$ and $w\in\Ww_{k+1}/D_{2k+2}$.
    Suppose that $a$ letters in $u$ can be doubled to form $w$,
    and $b$ pairs of double letters in $w$ can be halved to form $u$.
    Then
    \begin{align*}
      \frac{a}{h(u)}=\frac{b}{h(w)}.
    \end{align*}
  \end{lem}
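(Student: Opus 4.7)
The plan is to double count the set
\[
X = \{(\tilde u, i) \;:\; \tilde u \in \mathrm{orbit}(u) \subset \Ww_k,\; i \in [k],\; \tilde u^{(i)} \in \mathrm{orbit}(w) \subset \Ww_{k+1}\},
\]
where $\mathrm{orbit}(u)$ and $\mathrm{orbit}(w)$ denote the $D_{2k}$- and $D_{2(k+1)}$-orbits of fixed representatives $\tilde u$ and $\tilde w$.

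For the first count, fix $\tilde u \in \mathrm{orbit}(u)$. The number of $i \in [k]$ with $\tilde u^{(i)} \in \mathrm{orbit}(w)$ equals $a$, and it is independent of the chosen representative because doubling is equivariant for the dihedral actions: for $\sigma \in D_{2k}$ one checks directly that $(\sigma \tilde u)^{(\sigma(i))}$ is related to $\tilde u^{(i)}$ by an element of $D_{2(k+1)}$. Since $|\mathrm{orbit}(u)| = 2k/h(u)$, summing over representatives gives $|X| = 2ka/h(u)$.

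For the second count, fix $\tilde w = \tilde u^{(i)}$. The valid $(\tilde u, i)$ mapping to $\tilde w$ correspond bijectively to the positions $i \in [k]$ with $\tilde w_i = \tilde w_{i+1}$ for which halving at $i$ lies in $\mathrm{orbit}(u)$. These exhaust the valid pair positions of $\tilde w$ \emph{except possibly} for the wrap-around position $j = k+1$. Setting
\[
T = \{(\tilde w', j) \;:\; \tilde w' \in \mathrm{orbit}(w),\; j \in [k+1],\; \tilde w'_j = \tilde w'_{j+1},\; \text{halving gives orbit}(u)\},
\]
and $W = |\{(\tilde w', k+1) \in T\}|$, we have $|T| = 2(k+1)b/h(w)$ and $|X| = |T| - W$.

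The crux is to show $W = 2b/h(w)$, which I will do by verifying that the diagonal action $\sigma \cdot (\tilde w', j) = (\sigma \tilde w', \sigma(j))$ of $D_{2(k+1)}$ on $T$ is free. By orbit-stabilizer, the stabilizer of $\tilde w'$ in $D_{2(k+1)}$ has order $h(w)$ and contains the cyclic subgroup of rotations by multiples of $(k+1)/h(w)$, which already has order $h(w)$; thus the stabilizer coincides with this cyclic subgroup and contains no reflections, while no non-trivial such rotation can fix any position. Hence every $D_{2(k+1)}$-orbit of $T$ has the full size $2(k+1)$, and since $D_{2(k+1)}$ acts transitively on $[k+1]$ with point-stabilizers of order $2$, each position value occurs exactly twice in any such orbit. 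Therefore $W = 2 \cdot |T|/(2(k+1)) = 2b/h(w)$, so $|X| = 2kb/h(w)$; equating the two counts of $|X|$ yields $a/h(u) = b/h(w)$. The main obstacle is the dihedral bookkeeping required to justify the equivariance of doubling and the freeness of the $D_{2(k+1)}$-action, but both reduce to the elementary fact that non-trivial rotations in the relevant cyclic groups shift every position strictly.
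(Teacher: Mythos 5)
Your proof is correct and takes essentially the same approach as the paper: the identical double count of doubling operations over the orbit of $u$ against halving operations over the orbit of $w$, with the wrap-around (``straddling'') halvings as the correction term. Your free dihedral-action argument that $W = 2b/h(w)$ merely supplies the symmetry justification for the count the paper asserts without detail.
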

  \begin{rmk}
    At first glance, one might expect that $a=b$.  The example
    $u=\pi_1\pi_2\pi_1\pi_1\pi_2$ and $w=\pi_1\pi_1\pi_2\pi_1\pi_1\pi_2$
    shows that this is wrong, since only one letter in $u$ can
    be doubled to give $w$, but two different pairs in $w$ can
    be halved to give $u$.
  \end{rmk}
  \begin{proof}
    Let $\orb(u)$ and $\orb(w)$ denote the orbits of $u$ and $w$
    under the action of the dihedral group
    in $\Ww_k$ and $\Ww_{k+1}$, 
    respectively.  
    When we speak of halving
    a pair of letters in a word in $\orb(w)$, always delete the second of
    the two letters (for example, $\pi_1\pi_2\pi_1$ becomes $\pi_1\pi_2$, not
    $\pi_2\pi_1$).  When we double a letter in a word in $\orb(u)$,
    put the new letter after the doubled letter (for example, doubling
    the second letter of $\pi_1\pi_2^{-1}$ gives $\pi_1\pi_2^{-1}\pi_2^{-1}$,
    not $\pi_2^{-1}\pi_1\pi_2^{-1}$.)
    
    For each of the $2k/h(u)$ words in $\orb(u)$, there are
    $a$ doubling operations yielding a word in $\orb(w)$.
    For each of the $(2k+2)/h(w)$ words in $\orb(w)$, there
    are $b$ halving operations yielding a word in $\orb(u)$.
    For every halving operation on a word in $\orb(w)$,
    there is a corresponding doubling operation on a word in $\orb(u)$
    and vice versa, except for halving operations that
    straddle the ends of the word, as in $\pi_1\pi_2\pi_1$.
    There are $2b/h(w)$ of these, giving us
    \begin{align*}
      \frac{2ka}{h(u)} &= \frac{(2k+2)b}{h(w)}-\frac{2b}{h(w)}\\
        &=\frac{2kb}{h(w)},
    \end{align*}
    and the lemma follows from this.
  \end{proof}
  
  Let $\Ww'=\bigcup_{k=1}^{\infty}\Ww_k/D_{2k}$, and let
  $\Ww'_K=\bigcup_{k=1}^K\Ww_k/D_{2k}$.
  We will use the previous lemma to prove the following
  technical property of the $c(\cdot)$ statistic.
\begin{lem}\label{lem:wordcounts}
  In the vector space with basis $\{q_w\}_{w\in\Ww_K'}$,
  \begin{align*}
    \sum_{w\in\Ww_{K-1}'}\sum_{i=1}^{|w|}\frac{1}{h(w)}q_{w^{(i)}}
      &= \sum_{w\in\Ww_K'}\frac{c(w)}{h(w)}q_w.
  \end{align*}
\end{lem}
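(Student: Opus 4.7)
The plan is to compare the coefficients of $q_w$ on each side, for each $w\in\Ww_K'$, and identify the common value via \thref{lem:doublehalve}.

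Fix $w\in\Ww_K'$ and, for each $u\in\Ww_{K-1}'$, let $a(u,w)$ denote the number of positions $i$ in (the canonical representative of) $u$ for which $u^{(i)}=w$ in $\Ww_K/D_{2k}$. Then the coefficient of $q_w$ on the left-hand side is
\begin{align*}
  \sum_{u\in\Ww_{K-1}'} \frac{a(u,w)}{h(u)}.
\end{align*}
Similarly, let $b(w,u)$ be the number of double-letter pairs in $w$ whose halving produces $u$; then $a(u,w)$ and $b(w,u)$ are exactly the quantities $a$ and $b$ appearing in \thref{lem:doublehalve}, so
\begin{align*}
  \frac{a(u,w)}{h(u)} = \frac{b(w,u)}{h(w)}.
\end{align*}

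The next step is to sum over $u$. First I would check that every halving of a double-letter pair in the cyclically reduced word $w$ produces another cyclically reduced word: if $w=w_1\cdots w_{i-1}w_iw_iw_{i+1}\cdots w_K$ is cyclically reduced, then $w_{i-1}\neq w_i^{-1}$ and $w_i\neq w_{i+1}^{-1}$, which are precisely the conditions needed for the halved word to remain cyclically reduced (no other adjacencies are affected). Hence every one of the $c(w)$ halving operations produces some $u\in\Ww_{K-1}'$, and
\begin{align*}
  \sum_{u\in\Ww_{K-1}'} b(w,u) = c(w).
\end{align*}
Combining this with the displayed equation above gives
\begin{align*}
  \sum_{u\in\Ww_{K-1}'} \frac{a(u,w)}{h(u)} = \frac{1}{h(w)}\sum_{u\in\Ww_{K-1}'} b(w,u) = \frac{c(w)}{h(w)},
\end{align*}
matching the coefficient of $q_w$ on the right-hand side.

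There is really no hard step here: \thref{lem:doublehalve} does all the combinatorial work of correctly accounting for the overcounting that arises from $h(u)\neq h(w)$, and the only subtlety is the short verification that halving preserves cyclic reducedness, which ensures $\sum_u b(w,u)=c(w)$ rather than an inequality. Words of length $1$ need no special treatment since $c(w)=0$ in that case and no $u\in\Ww_{K-1}'$ of length $0$ doubles to them.
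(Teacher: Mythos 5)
Your proof is correct and follows essentially the same route as the paper's: fix $w$, use \thref{lem:doublehalve} to convert the doubling counts $a(u,w)/h(u)$ into halving counts $b(w,u)/h(w)$, and sum to get $c(w)/h(w)$. The only addition is your explicit check that halving preserves cyclic reducedness (and the remark on length-one words), which the paper leaves implicit.
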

\begin{proof}
  Fix some $w\in\Ww_k/D_{2k}$, and 
  let $a(u)$ denote the number of letters of $u$ that can be doubled to give
  $w$, for any $u\in\Ww_{k-1}/D_{2k-2}$.  We need to prove that
  \begin{align*}
    \sum_{u\in\Ww_{k-1}/D_{2k-2}}\frac{a(u)}{h(u)}=\frac{c(w)}{h(w)}.
  \end{align*}
  Let $b(u)$ be the number of pairs in $w$ that can be halved to give $u$.
  By Lemma~\ref{lem:doublehalve},
  \begin{align*}
    \sum_{u\in\Ww_{k-1}/D_{2k-2}}\frac{a(u)}{h(u)} &= \sum_{u\in\Ww_{k-1}/
    D_{2k-2}}
    \frac{b(u)}{h(w)},
  \end{align*}
  and $\sum_{u\in\Ww_{k-1}/D_{2k-2}}b(u)=c(w)$.
\end{proof}

 \section{The process limit of the cycle structure}\label{sec:lim}
As the graph $G(t)$ grows, new cycles form, which we can classify into two
types.  Suppose a new vertex numbered $n$ is inserted at time
$t$, and this insertion creates a new cycle.
If the edges entering and leaving vertex
$n$ in the new cycle have the same edge label, then the new cycle has ``grown'' from a cycle with one fewer vertex, as in
Figure~\ref{fig:growth}.  If the edges entering and leaving $n$ in the cycle
have different labels, then the cycle has formed ``spontaneously'' as in
Figure~\ref{fig:spontaneous}, rather
than growing from a smaller cycle.
This classification will prove essential in understanding
the evolution of cycles in $G(t)$.
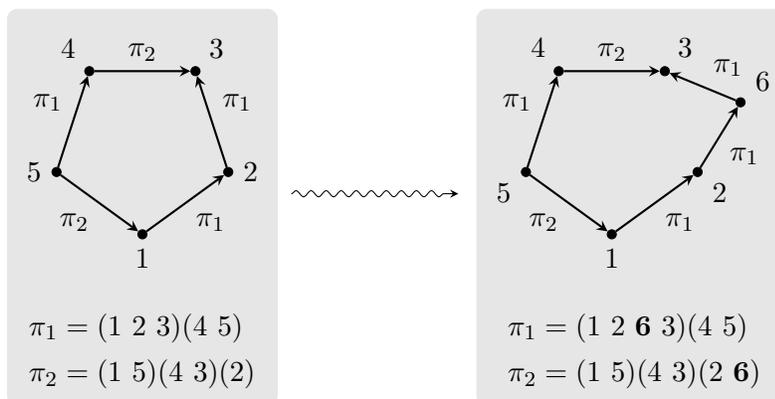
\begin{figure}
  \begin{center}
    \begin{tikzpicture}[scale=1.2,>=stealth,auto=right]
      \begin{scope}
      \fill[black!10,rounded corners] (-1.5,-2.9) rectangle (1.5,1.5);
      \path  (270:1) node[vert,label=270:$1$] (s0){}
             (342:1) node[vert,label=right:$2$] (s1){}
             (54:1) node[vert,label=above right:$3$] (s2){}
             (126:1) node[vert,label=above left:$4$] (s3){}
             (198:1) node[vert,label=left:$5$] (s4){};
      \draw[thick,->] (s0) -- node{$\pi_1$} (s1);
      \draw[thick,->] (s1) -- node{$\pi_1$} (s2);
      \draw[thick,<-] (s2) -- node{$\pi_2$} (s3);
      \draw[thick,<-] (s3) -- node{$\pi_1$} (s4);
      \draw[thick,->] (s4) -- node{$\pi_2$} (s0);
      \path (0,-2) node[text width=4cm] {
        \begin{align*}
          \pi_1&=(1\ 2\ 3)(4\ 5)\\[-6pt]
          \pi_2&=(1\ 5)(4\ 3)(2)
        \end{align*}
      };
      \end{scope}
      \draw[->, decorate, decoration={snake,amplitude=.4mm,
          segment length=2mm,post length=1.5mm}] 
          (1.65, -.55)--+(1.85,0);

      \begin{scope}[xshift=5.2cm]
      \fill[black!10,rounded corners] (-1.5,-2.9) rectangle (1.9,1.5);
      \path  (270:1) node[vert,label=270:$1$] (s0){}
             (342:1) node[vert,label=342:$2$] (s1){}
             (54:1) node[vert,label=54:$3$] (s2){}
             (126:1) node[vert,label=126:$4$] (s3){}
             (198:1) node[vert,label=198:$5$] (s4){}
             (18:1.5) node[vert,label=18:$6$] (new) {};
      \draw[thick,->] (s0) -- node{$\pi_1$} (s1);
      \draw[thick,->] (s1) -- node{$\pi_1$} (new);
      \draw[thick,->] (new) -- node{$\pi_1$} (s2);
      \draw[thick,<-] (s2) -- node{$\pi_2$} (s3);
      \draw[thick,<-] (s3) -- node{$\pi_1$} (s4);
      \draw[thick,->] (s4) -- node{$\pi_2$} (s0);
      \path (0.25,-2) node[text width=4cm] {
        \begin{align*}
          \pi_1&=(1\ 2\ \mathbf{6}\ 3)(4\ 5)\\[-6pt]
          \pi_2&=(1\ 5)(4\ 3)(2\ \mathbf{6})
        \end{align*}
      };
      \end{scope}
    \end{tikzpicture}
  \end{center}
  \caption{The vertex $6$ is inserted between
  vertices $2$ and $3$ in $\pi_1$, causing the above 
  cycle to grow.}\label{fig:growth}
\end{figure}
\begin{figure}
  \begin{center}
    \begin{tikzpicture}[scale=1.15,>=stealth,auto=right]
      \begin{scope}
        \fill[black!10,rounded corners] (-0.35,-1.9) rectangle (4.35,1.6);
        \path (0,0) node[vert,label=below:$1$] (s1) {}
              (1,0) node[vert,label=below:$2$] (s2) {}
              (2,0) node[vert,label=below:$3$] (s3) {}
              (3,0) node[vert,label=below:$4$] (s4) {}
              (4,0) node[vert,label=below:$5$] (s5) {};
        \draw[<-,thick] (s1)--node {$\pi_2$} (s2);
        \draw[->,thick] (s2)--node {$\pi_1$} (s3);
        \draw[->,thick] (s3)--node {$\pi_2$} (s4);
        \draw[<-,thick] (s4)--node {$\pi_1$} (s5);
        \path (2,-1) node[text width=4cm] {
          \begin{align*}
            \pi_1&=(2\ 3\ 1)(4\ 5)\\[-6pt]
            \pi_2&=(2\ 1\ 3\ 4\ 5)
          \end{align*}
        };
      \end{scope}
      \draw[->, decorate, decoration={snake,amplitude=.4mm,
          segment length=2mm,post length=1.5mm}] 
          (4.4, -.1)--+(1.2,0);
      \begin{scope}[xshift=6cm]
        \fill[black!10,rounded corners] (-0.35,-1.9) rectangle (4.35,1.6);
        \path (0,0) node[vert,label=below:$1$] (s1) {}
              (1,0) node[vert,label=below:$2$] (s2) {}
              (2,0) node[vert,label=below:$3$] (s3) {}
              (3,0) node[vert,label=below:$4$] (s4) {}
              (4,0) node[vert,label=below:$5$] (s5) {}
              (2,1) node[vert,label=above:$6$] (s6) {};
        \draw[<-,thick] (s1)--node {$\pi_2$} (s2);
        \draw[->,thick] (s2)--node {$\pi_3$} (s3);
        \draw[->,thick] (s3)--node {$\pi_2$} (s4);
        \draw[<-,thick] (s4)--node {$\pi_1$} (s5);
        \draw[->,thick,bend left] (s1) to node[swap] {$\pi_1$} (s6);
        \draw[->,thick,bend left] (s6) to node[swap] {$\pi_2$} (s5);
        \path (2,-1) node[text width=4cm] {
          \begin{align*}
            \pi_1&=(2\ 3\ 1\ \mathbf{6})(4\ 5)\\[-6pt]
            \pi_2&=(2\ 1\ 3\ 4\ \mathbf{6}\ 5)
          \end{align*}
        };
      \end{scope}
    \end{tikzpicture}
  \end{center}
  \caption{A cycle forms ``spontaneously'' when the vertex $6$
  is inserted into the graph.}\label{fig:spontaneous}
\end{figure}
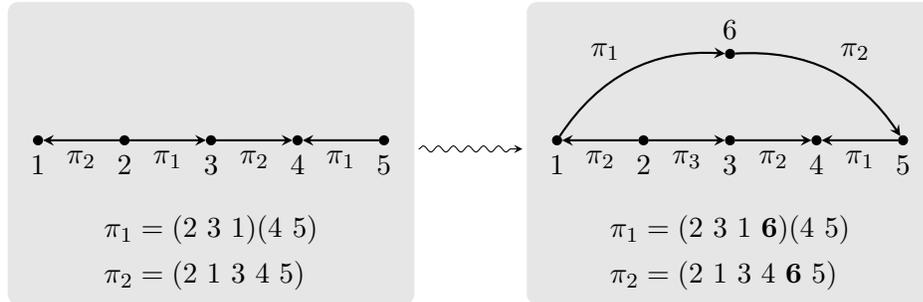

Once a cycle comes into existence in $G(t)$, it remains until a new vertex
is inserted into one of its edges.  Typically, this results in the cycle
growing to a larger cycle, as in Figure~\ref{fig:growth}.
If a new vertex is simultaneously inserted into multiple
edges of the same cycle, the cycle is instead split into 
smaller cycles
as in Figure~\ref{fig:split}.  These new cycles
are spontaneously formed, according to the classification of new cycles
given in the previous paragraph.
Tracking the evolution of these smaller cycles in turn,
we see that as the graph evolves,
a cycle grows into a cluster of overlapping cycles.
However, it will follow from
Proposition~\ref{prop:nooverlaps} that for short cycles, this behavior is not
typical.  Thus in our limiting object,
 cycles will grow only into larger cycles.\label{par:split}
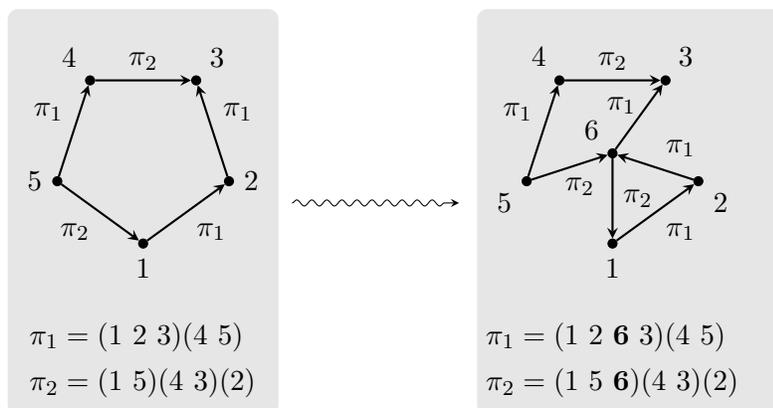
\begin{figure}
  \begin{center}
    \begin{tikzpicture}[scale=1.2,>=stealth,auto=right]
      \begin{scope}
      \fill[black!10,rounded corners] (-1.5,-2.9) rectangle (1.5,1.6);
      \path  (270:1) node[vert,label=270:$1$] (s0){}
             (342:1) node[vert,label=right:$2$] (s1){}
             (54:1) node[vert,label=above right:$3$] (s2){}
             (126:1) node[vert,label=above left:$4$] (s3){}
             (198:1) node[vert,label=left:$5$] (s4){};
      \draw[thick,->] (s0) -- node{$\pi_1$} (s1);
      \draw[thick,->] (s1) -- node{$\pi_1$} (s2);
      \draw[thick,<-] (s2) -- node{$\pi_2$} (s3);
      \draw[thick,<-] (s3) -- node{$\pi_1$} (s4);
      \draw[thick,->] (s4) -- node{$\pi_2$} (s0);
      \path (0,-2) node[text width=4cm] {
        \begin{align*}
          \pi_1&=(1\ 2\ 3)(4\ 5)\\[-6pt]
          \pi_2&=(1\ 5)(4\ 3)(2)
        \end{align*}
      };
      \end{scope}
      \draw[->, decorate, decoration={snake,amplitude=.4mm,
          segment length=2mm,post length=1.5mm}] 
          (1.65, -.55)--+(1.85,0);

      \begin{scope}[xshift=5.2cm]
      \fill[black!10,rounded corners] (-1.5,-2.9) rectangle (1.9,1.6);
      \path  (270:1) node[vert,label=270:$1$] (s0){}
             (342:1) node[vert,label=342:$2$] (s1){}
             (54:1) node[vert,label=54:$3$] (s2){}
             (126:1) node[vert,label=126:$4$] (s3){}
             (198:1) node[vert,label=198:$5$] (s4){}
             (0:0) node[vert,label=135:$6$] (new) {};
      \draw[thick,->] (s0) -- node{$\pi_1$} (s1);
      \draw[thick,->] (s1) -- node{$\pi_1$} (new);
      \draw[thick,->] (new) -- node[swap,yshift=-0.1cm,xshift=0.1cm]{$\pi_1$} (s2);
      \draw[thick,<-] (s2) -- node{$\pi_2$} (s3);
      \draw[thick,<-] (s3) -- node{$\pi_1$} (s4);
      \draw[thick,->] (s4) -- node[xshift=-0.2cm]{$\pi_2$} (new);
      \draw[thick,->] (new) -- node[swap]{$\pi_2$} (s0);
      \path (0,-2) node[text width=4cm] {
        \begin{align*}
          \pi_1&=(1\ 2\ \mathbf{6}\ 3)(4\ 5)\\[-6pt]
          \pi_2&=(1\ 5\ \mathbf{6})(4\ 3)(2)
        \end{align*}
      };
      \end{scope}
    \end{tikzpicture}
  \end{center}
  \caption{The vertex $6$ is inserted into the cycle
  in two different places in the same step, causing
  the cycle to split in two.  Note that
  each new cycle would be classified as spontaneously
  formed.}\label{fig:split}
\end{figure}

\subsection{Heuristics for the limiting process}\label{sec:heuristics}
We give some estimates that will motivate
the definition of the limiting process in Section~\ref{sec:limdef}.
This section is entirely motivational, and
we will not attempt to make anything rigorous.

Suppose that vertex $n$ is inserted into $G(t)$ at some
time $t$.
First, we consider the rate that cycles form spontaneously
with some word $w\in\Ww_k/D_{2k}$.  There are 
$2k/h(w)$ words in the orbit of $w$ under
the action of $D_{2k}$, and out of these, $2(k-c(w))/h(w)$
have nonequal first and last letters.
For each such word $u=u_1\cdots u_k$, we can
give a walk on the graph by starting at vertex $n$
and following the edges indicated by $u$, going from
$n$ to $u_1(n)$ to $u_2(u_1(n))$ and so on.
If this walk happens to be a cycle, the condition
$u_1\neq u_k$ implies that 
it would be spontaneously formed.

In a short interval $\Delta t$ when $G(t)$ has $n-1$ vertices,
the probability that vertex $n$ is inserted is about $n\,\Delta t$.
For any word $u$, 
the walk from vertex $n$ generated by $u$
is a cycle with probability approximately $1/n$, since
after applying the random permutations $u_1,\ldots,u_k$ in turn,
we will be left at an approximately uniform random vertex.
Any new spontaneous cycle formed with word $w$
will be counted by one of these walks,
with $u$ in the orbit of $w$, and it will be counted again
by the walk generated by
$u_k^{-1}\cdots u_1^{-1}$.  
The expected number of spontaneous
cycles formed in a short interval $\Delta t$ is then approximately
\begin{align*}
  \frac{1}{h(w)}(k-c(w))\frac{n\,\Delta t}{n}&=
  \frac{1}{h(w)}(k-c(w))\,\Delta t.
\end{align*}
Thus we will model the spontaneous formation of cycles with word $w$
by a Poisson process with rate $(k-c(w))/h(w)$.

Next, we consider how often a cycle with word $w\in\Ww_k$ grows into a larger
cycle.
Suppose that $G(t)$ has $n-1$ vertices, and that it contains a cycle of
the form
\begin{center}
  \begin{tikzpicture}[xscale=1.4,vert/.style={circle,fill,inner sep=0,
        minimum size=0.15cm,draw},>=stealth]
    \foreach \i in {0,1,2}
      \node[vert, label=below:{$s_{\i}$}] (s\i) at (\i,0) {};
    \node (s3) at (3,0) {$\ldots$};
    \node[vert, label=below:{$s_{k-1}$}] (s4) at (4,0) {};    
    \draw[thick,->] (s0) to node[auto] {$w_1$} (s1);
    \draw[thick,->] (s1) to node[auto] {$w_2$} (s2);
    \draw[thick,->] (s2) to node[auto] {$w_3$} (s3);
    \draw[thick,->] (s3) to node[auto] {$w_{k-1}$} (s4);
    \draw[thick,->,] (s4) to[out=90,in=90] node[auto,swap] {$w_k$} (s0);
  \end{tikzpicture}
\end{center}
When vertex $n$ is inserted into the graph, 
the probability that it is inserted after $s_{i-1}$ in permutation
$w_i$ is $1/n$.  Thus, after a spontaneous cycle with word $w$
has formed, we can model the evolution of its word as a 
continuous-time Markov chain
where each letter is doubled with rate one.

\subsection{Formal definition of the limiting process}\label{sec:limdef}
\newcommand{\N}[2]{N_{#1}
                          \ifthenelse{\equal{#2}{}}{}{({#2})}}
\newcommand{\bN}[2]{\overleftarrow{N}_{#1}
                          \ifthenelse{\equal{#2}{}}{}{({#2})}}
\newcommand{\bM}[2]{\overleftarrow{M}_{#1}
                          \ifthenelse{\equal{#2}{}}{}{({#2})}}
%

Consider the measure $\mu$
on $\Ww'$ given by
\[
\mu(w)= \frac{|w|-c(w)}{h(w)}.
\]
Consider a Poisson point process $\chi$ on 
$\Ww' \times[0,\infty)$ with an intensity measure given by the product measure 
$\mu \otimes \leb$, where $\leb$ refers to the Lebesgue measure. Each atom 
$(w,t)$ of $\chi$ represents a new spontaneous cycle with word $w$ formed at time $t$.

Now, we define
a continuous-time Markov chain on the countable space
$\Ww'$ governed by the following
rates: From state $w\in\Ww_k/D_{2k}$, jump with rate one to each
of the $k$ words 
in $\Ww_{k+1}/D_{2k+2}$ obtained by doubling a letter of $w$.
If a word can be formed in more than one way by doubling a letter in $w$,
then it receives a correspondingly higher rate.  For example, from
$w=\pi_1\pi_1\pi_2$, the chain jumps to $\pi_1\pi_1\pi_1\pi_2$ with rate
two and to $\pi_1\pi_1\pi_2\pi_2$ with rate one.  Let $\wchain{w}$
denote the law of this process started from $w\in\Ww'$.

Suppose we are given a realization of $\chi$. For any 
atom $(w,s)$ of the countably many atoms of $\chi$, 
we start an independent process $(X_{w,s}(t),\, t\ge 0)$ with law 
$\wchain{w}$. Define the stochastic process
\[
\N{w}{t}\defeq  \sum_{\substack{(u,s)\in \chi\\s\leq t}} 1
  \left\{\text{$X_{u,s}(t-s)=w$}  \right\}.
\]
Interpreting these processes as in the previous section, 
$\N{w}{t}$ counts the number of cycles
formed spontaneously at time $s$ that have grown to have word $w$ at time $t$.

The fact that the process exists is obvious since one can define the countably 
many independent Markov chains on a suitable product space. 
The following lemma establishes some of its key properties.

\begin{lem}\label{lem:chainprops}
Recall that $\Ww_L'=\bigcup_{k=1}^L\Ww_k/D_{2k}$.
We have the following conclusions:
\begin{enumerate}[(i)]
\item For any $L \in \NN$, the stochastic process $\{ (\N{w}{t},\,
  w\in \Ww_L'), \; t\ge 0 \}$ is a time-homogeneous Markov 
  process with respect to its natural filtration, with RCLL paths.
  \label{item:markov}

\item Recall that for 
  $w\in\Ww_k/D_{2k}$, the element $w^{(i)}\in\Ww_{k+1}/D_{2k+2}$ is
  the word formed by doubling the $i$th letter of $w$.
  The generator for the Markov process 
  $\{ (\N{w}{t},\,
  w\in \Ww_L'), \; t\ge 0 \}$ acts on $f$ at 
  $x=(x_{w},\,w\in \Ww_L')$ by
\[\begin{split}
\mathcal{L} f(x) =  &\sum_{w\in\Ww_L'} \sum_{i=1}^{|w|}x_{w} 
\left[ f(x-e_{w}+e_{w^{(i)}}) - f(x) \right]\\ &+ \sum_{w\in\Ww'_L}
 \frac{|w|-c(w)}{h(w)} \left[ f(x+e_{w}) - f(x) \right],
\end{split}\]
where $e_{w}$ is the canonical basis vector equal to one at entry $w$ and
equal to zero everywhere else.  For a word $u$ of length greater than
$L$, take $e_u=0$.\label{item:generator}
\item The product measure of $\Poi(1/h(w))$ over all $w\in \Ww_L'$ is 
  the unique invariant measure for this Markov process.\label{item:invariant}
\end{enumerate}
\end{lem}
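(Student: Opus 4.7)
The plan is to establish all three parts from the explicit construction of $(\N{w}{t})$ as a superposition of the independent chains $(X_{u,s})$ driven by the Poisson immigration $\chi$. Each $X_{u,s}$ is a time-homogeneous pure-growth continuous-time Markov chain on $\Ww'$ that, from $v\in\Ww_k/D_{2k}$, jumps to each $v^{(i)}\in\Ww_{k+1}/D_{2k+2}$ at rate $1$. Because $|v^{(i)}|>|v|$, a chain that leaves $\Ww_L'$ never returns, so restricting attention to $\Ww_L'$ preserves the Markov property: the future of $(\N{w}{t})_{w\in\Ww_L'}$ is determined by the current counts (each chain's future depends only on its current state, by the strong Markov property) together with $\chi\cap(\Ww'\times(t,\infty))$, which is independent of the past. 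Reading off the jump rates gives the generator formula in (ii): each of the $x_w$ chains at $w$ contributes, for each letter $i\in\{1,\dots,|w|\}$, an independent transition $x\mapsto x-e_w+e_{w^{(i)}}$ at rate $1$ (with $e_{w^{(i)}}=0$ when $|w|=L$), and immigration into $w$ occurs at rate $(|w|-c(w))/h(w)$. RCLL paths follow because $\Ww_L'$ is finite, so the total immigration rate into $\Ww_L'$ is finite, and at each time the total instantaneous jump rate $\sum_w \bigl(|w|\N{w}{t}+(|w|-c(w))/h(w)\bigr)$ is finite almost surely.

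For (iii), we test invariance of $\pi:=\prod_{w\in\Ww_L'}\Poi(1/h(w))$ against the separating family $f_z(x)=\prod_{w\in\Ww_L'}z_w^{x_w}$. The Poisson size-bias identity $\E_\pi[X_w z^X]=(z_w/h(w))\E_\pi[z^X]$ yields, after collecting terms,
\begin{align*}
\frac{\E_\pi[\mathcal{L}f_z(X)]}{\E_\pi[f_z(X)]}
&=\sum_{v\in\Ww_L'}\frac{|v|-c(v)}{h(v)}(z_v-1)
+\sum_{\substack{w\in\Ww_L'\\|w|<L}}\sum_{i=1}^{|w|}\frac{z_{w^{(i)}}-z_w}{h(w)}\\
&\quad +\sum_{\substack{w\in\Ww_L'\\|w|=L}}\frac{|w|}{h(w)}(1-z_w).
\end{align*}
For any fixed $v\in\Ww_L'$ the coefficient of $z_v$ equals
$\frac{|v|-c(v)}{h(v)}-\frac{|v|}{h(v)}+\sum_{(w,i):\,w^{(i)}=v}\frac{1}{h(w)}$,
which vanishes because Lemma~\ref{lem:doublehalve} gives $\sum_{(w,i):w^{(i)}=v}1/h(w)=c(v)/h(v)$. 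The constant term vanishes similarly upon applying Lemma~\ref{lem:wordcounts} with $q_w=1$. Since $z$ is arbitrary, $\pi$ is invariant.

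Uniqueness follows from irreducibility on the countable state space $\mathbb{N}_0^{\Ww_L'}$: from any state the zero state is reached with positive probability over a bounded time interval by having every currently-present chain grow past length $L$ while no immigration into $\Ww_L'$ occurs, and from the zero state any prescribed target is reached by a finite sequence of immigration and growth events, each of positive probability on a short interval. Since $\Ww_L'$ is finite, $\pi$ is a bona fide probability measure, so existence plus irreducibility give uniqueness by standard Markov chain theory. The main obstacle is the cancellation in the middle paragraph: Lemma~\ref{lem:doublehalve} is precisely what matches the combinatorial gain-from-growth $\sum_{(w,i):w^{(i)}=v}1/h(w)$ to the loss term $c(v)/h(v)$ that the product-Poisson weights impose, which is why the invariant measure has the particularly clean form $\prod_w\Poi(1/h(w))$.
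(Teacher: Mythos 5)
Your proposal is correct and follows essentially the same route as the paper: parts (i) and (ii) read off from the construction, invariance verified by applying the Poisson identity $\E[Xg(X)]=\lambda\E[g(X+1)]$ together with Lemma~\ref{lem:doublehalve}/Lemma~\ref{lem:wordcounts}, and uniqueness from irreducibility of a positive-recurrent chain on a countable state space. The only difference is cosmetic — you test invariance against the generating functions $f_z$ and match coefficients of $z_v$, whereas the paper runs the identical computation for a general bounded $f$ with $q_w=\E f(X+e_w)$ in Lemma~\ref{lem:wordcounts} — so the two arguments are the same in substance.
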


\begin{proof} Conclusion (i) follows from construction, as does conclusion (ii). To prove conclusion (iii), we start by the fundamental identity of the Poisson distribution: if $X\eqd \Poi(\lambda)$, then for any function $f$, we have
\begin{equation}\label{eq:poiiden}
\E X g(X)= \lambda \E g(X+1).
\end{equation}

We need to show that if the coordinates of $X=(X_{w},\,w\in \Ww'_L)$ 
are independent Poisson random variables with $\E X_{w}=1/h(w)$, then
\begin{align}
  \label{eq:poiinvpf}
\E \mathcal{L}f(X)=0.
\end{align}

Since the process is an irreducible Markov chain on countable state space, the existence of one invariant distribution shows that the chain is positive recurrent and that the invariant distribution is unique.

To argue \eqref{eq:poiinvpf} we will repeatedly apply identity \eqref{eq:poiiden} to functions $g$
constructed from $f$ by keeping all but one coordinate fixed.
Thus, for any $w\in\Ww_L'$ and $1\leq i\leq|w|$,
we condition on all $X_u$ with $u\neq w$ and hold
those coordinates of $f$ fixed to obtain,
\begin{align*}
\E X_w f\left( X - e_{w} + e_{w^{(i)}} \right)&= \frac{1}{h(w)} \E 
f\left( X + e_{w^{(i)}} \right)
\end{align*}
taking $e_{w^{(i)}}=0$ when $|w|=L$.
In the same way,
\begin{align*}
\E X_{w} f\left( X \right)= \frac{1}{h(w)} \E f\left( X + e_{w} \right).
\end{align*}
By these two equalities,
\begin{align*}
  \E\sum_{w\in\Ww_L'}\sum_{i=1}^{|w|}&X_w[f(X-e_w+e_{w^{(i)}})-f(X)]\\
    &= \sum_{w\in\Ww_L'}\sum_{i=1}^{|w|}\frac{1}{h(w)}\E\big[ f(X+e_{w^{(i)}})
    -f(X+e_w)\big]\\
    &=\sum_{w\in\Ww_{L-1}'}\sum_{i=1}^{|w|}\frac{1}{h(w)}\E f(X+e_{w^{(i)}})
      + \sum_{w\in\Ww_L/D_{2L}}\frac{|w|}{h(w)}\E f(X)\\
    &\phantom{=}\quad  - \sum_{w\in\Ww'_L}\frac{|w|}{h(w)}\E f(X+e_w).
\end{align*}
Specializing Lemma~\ref{lem:wordcounts} to $q_w=\E f(X+e_w)$,
the first sum is
\begin{align*}
  \sum_{w\in\Ww_{L-1}'}\sum_{i=1}^{|w|}\frac{1}{h(w)}\E f(X+e_{w^{(i)}})
  &=\sum_{w\in\Ww'_L}\frac{c(w)}{h(w)}\E f(X+e_w),
\end{align*}
which gives us
\begin{align*}
  \E\sum_{w\in\Ww_L'}\sum_{i=1}^{|w|}&X_w[f(X-e_w+e_{w^{(i)}})-f(X)]\\
  &= \sum_{w\in\Ww'_L}\frac{c(w)-|w|}{h(w)}\E f(X+e_w)
    +\sum_{w\in\Ww_L/D_{2L}}\frac{|w|}{h(w)}\E f(X).
\end{align*}
All that remains in proving \eqref{eq:poiinvpf} is to show that
\begin{align*}
  \sum_{w\in\Ww_L'}\frac{|w|-c(w)}{h(w)}=\sum_{w\in\Ww_L/D_{2L}}
    \frac{|w|}{h(w)}.
\end{align*}
Specializing Lemma~\ref{lem:wordcounts} to $q_w=1$ shows that
$\sum_{w\in\Ww_L'}c(w)/h(w)=\sum_{w\in\Ww_{L-1}'}|w|/{h(w)}$.
Thus
\begin{align*}
  \sum_{w\in\Ww_L'}\frac{|w|-c(w)}{h(w)}&=
  \sum_{w\in\Ww_L'}\frac{|w|}{h(w)}-
  \sum_{w\in\Ww_{L-1}'}\frac{|w|}{h(w)}\\
  &=  \sum_{w\in\Ww_L/D_{2L}}\frac{|w|}{h(w)},
\end{align*}
establishing \eqref{eq:poiinvpf} and completing the proof.
\end{proof}
From now on, we will consider the process $(\N{w}{t},\,k\in\NN,\,t\geq 0)$
to be running under stationarity, i.e., with marginal distributions
given by conclusion~(\ref{item:invariant}) of the last lemma.
This process is easily constructed as described above, but with
additional point masses 
of weight $1/h(w)$ for each $w\in\Ww'$ at $(w,0)$ added to the intensity
measure of $\chi$, thus giving us the correct distribution at time zero.

\subsection{Time-reversed processes}\label{sec:reversals}
Fix some time $T>0$.  We define the time-reversal 
$\bN{w}{t}\defeq \N{w}{T-t}$ for
$0\leq t\leq T$.
\begin{lem}
  For any fixed $L\in\NN$, the process
  $\{(\bN{w}{t},\,w\in \Ww'_L),\,0\leq t\leq T\}$
  is a time-homogenous Markov process with respect to the natural
  filtration.
  A trivial modification at jump times renders RCLL paths.
  The transition rates of this chain are given
  as follows.
  Let $u\in\Ww_{k-1}/D_{2k-1}$ and $w\in\Ww_k/D_{2k}$, and
  suppose that $u$ can be obtained from $w$
  by halving $b$ different pairs.  Let $x = (x_w,\,w\in\Ww'_L)$.
  \begin{enumerate}[(i)]
    \item The chain jumps from $x$ to
        $x+e_{u}-e_{w}$ with rate $bx_{w}$.\label{item:shrink}
    \item The chain jumps from $x$ to 
        $x-e_{w}$ with rate $(k-c(w))x_{w}$.\label{item:die}
    \item If $w\in\Ww_L/D_{2L}$, then the
      chain jumps from $x$ to $x+e_{w}$ 
      with rate $L/h(w)$.\label{item:dropin}
  \end{enumerate}
\end{lem}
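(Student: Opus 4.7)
The plan is to apply the standard fact that when a continuous-time Markov chain is run under its stationary distribution $\pi$, its time-reversal is again a time-homogeneous Markov chain with transition rates
\[
\bar{q}(x,y) = \frac{\pi(y)}{\pi(x)}\,q(y,x).
\]
Since $(\N{w}{t},\,w\in\Ww'_L)$ is started in stationarity (by the remark at the end of Section~\ref{sec:limdef}), this gives the Markov property of $\bN{\cdot}{\cdot}$ for free, and it reduces the problem to computing rates. RCLL-after-a-trivial-modification is automatic because the forward process is RCLL and has at most countably many jumps in $[0,T]$.

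By \thref{lem:chainprops}(iii), the stationary measure is the product of $\Poi(1/h(w))$ over $w\in\Ww'_L$, so for any admissible $x$ we have the elementary ratios
\[
\frac{\pi(x+e_w)}{\pi(x)} = \frac{1}{h(w)(x_w+1)},\qquad
\frac{\pi(x-e_w)}{\pi(x)} = h(w)\,x_w,\qquad
\frac{\pi(x+e_u-e_w)}{\pi(x)} = \frac{h(w)\,x_w}{h(u)(x_u+1)}.
\]
The three types of reverse transitions (i)--(iii) now correspond to reversing, respectively, a doubling that stays inside $\Ww'_L$, a spontaneous birth, and a doubling out of $\Ww'_L$ (for $w\in\Ww_L/D_{2L}$), so I just compute each one.

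For (iii), the only forward way to go from $x+e_w$ to $x$ with $|w|=L$ is to double any of the $L$ letters of $w$ (which sends $e_{w^{(i)}}$ to $0$), contributing total forward rate $L(x_w+1)$; multiplying by the ratio above cancels the $(x_w+1)$ and yields $L/h(w)$. For (ii), the only forward way to go from $x-e_w$ to $x$ is a spontaneous birth of $w$, with rate $(|w|-c(w))/h(w)$; the ratio $h(w)x_w$ converts this to $(k-c(w))x_w$. For (i), where $u\in\Ww_{k-1}/D_{2(k-1)}$ is obtained from $w\in\Ww_k/D_{2k}$ by halving, the forward transition $x+e_u-e_w \to x$ comes from doubling some letter of $u$ so as to land at $w$: if $a$ letters of $u$ admit such a doubling, this rate is $a(x_u+1)$. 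Multiplying by the ratio, the reverse rate is $(h(w)a/h(u))\,x_w$. Here is the one nontrivial step: by \thref{lem:doublehalve}, $h(w)a/h(u)=b$, so the reverse rate is exactly $b\,x_w$, matching (i).

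The main (and essentially only) conceptual obstacle is seeing why the combinatorial identity of \thref{lem:doublehalve} is precisely what makes the Poisson product distribution invariant in the forward direction match the naive ``halving-with-multiplicity'' rate in the reverse direction; the algebra of the three cases is then routine. One minor bookkeeping remark: since the forward process is non-explosive on $\Ww'_L$ (it is a Markov chain on a countable state space with the product Poisson as a positive-recurrent invariant measure, by \thref{lem:chainprops}(iii)), all the conditional ratios above are well-defined almost surely under $\pi$, so there is no issue with dividing by $\pi(x)$.
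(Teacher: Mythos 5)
Your proposal is correct and follows essentially the same route as the paper: reverse the stationary chain using the rate formula $\bar q(x,y)=\nu(y)q(y,x)/\nu(x)$ with the product-Poisson invariant measure from Lemma~\ref{lem:chainprops}, and invoke Lemma~\ref{lem:doublehalve} to convert the doubling multiplicity $a$ into the halving multiplicity $b$ in case (i). The rate computations in all three cases match the paper's.
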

\begin{proof}
  Any Markov process run backwards under stationarity is Markov.
  If the chain has transition rate $r(x,y)$ from states $x$ to $y$,
  then the transition rate of the backwards chain from $x$ to $y$
  is $r(y,x)\nu(y)/\nu(x)$, where
  $\nu$ is the stationary distribution.
  We will let $\nu$ be the stationary distribution
  from Lemma~\ref{lem:chainprops}\ref{item:invariant} and
  calculate the transition rates of the backwards chain,
  using the rates given in
  Lemma~\ref{lem:chainprops}\ref{item:generator}.
  
  Let $a$ denote the number of letters in $u$ that give $w$ when
  doubled.  The transition rate of the original chain
  from $x+e_u-e_w$ to $x$ is $a(x_u+1)$, so
  the transition rate of the backwards chain
  from $x$ to $x+e_{u}-e_{w}$ is
  \begin{align*}
    a(x_u+1)\frac{\nu(x+e_{k-1,c-1}-e_{k,c})}{\nu(x)}
      &=\frac{ah(w)x_w}{h(u)},
  \end{align*}
  and this is equal to $bx_w$ by Lemma~\ref{lem:doublehalve}.
  A similar calculation shows that the transition rate from $x$ to $x-e_{w}$ is
  \begin{align*}
    \frac{(k-c(w))\nu(x-e_{w})}{h(w)\nu(x)} &=(k-c(w))x_w,
  \end{align*}
  proving (\ref{item:die}).  The transition rate from $x$ to $x+e_{w}$
  for $w\in\Ww_L/D_{2L}$
  is
  \begin{align*}
    \frac{\nu(x+e_w)}{\nu(x)}(x_{w}+1)L = \frac{L}{h(w)},
  \end{align*}
  which completes the proof.
\end{proof}
By definition,
\begin{align*}
  \bN{w}{t} = 
  \sum_{\substack{(u,s)\in \chi\\s\leq T-t}} 1\left\{\text{$X_{u,s}(T-t-s)=w$}
    \right\}.
\end{align*}
We will modify this slightly to define the process
\begin{align*}
  \bM{w}{t} \defeq 
  \sum_{\substack{(u,s)\in \chi\\s\leq T-t}} 1\left\{\text{$X_{u,s}(T-t-s)=w$
      and $|X_{u,s}(T-s)|\leq L$}
    \right\}.
\end{align*}
The idea is that $\bM{w}{t}$ is the same as $\bN{w}{t}$, except
that it does not count cycles at time~$t$ that had more than $L$ vertices
at time zero.
The process $(\bM{w}{t},\,w\in\Ww_L')$
is a Markov chain with the same transition rates as
$(\bN{w}{t},\,w\in\Ww_L')$,
except that it does not jump from $x$ to $x+e_{w}$ for $w\in\Ww_L/D_{2L}$.
These two chains also have the same initial distribution, but
$(\bM{w}{t},\,w\in\Ww_L')$ is not stationary (in fact, it is eventually
absorbed at zero).

\section{Process convergence of the cycle structure}\label{sec:processconvergence}

Recall that $\Cy[s]{k}(t)$ is the number of cycles of length $k$ in the graph
$G(s+t)$, defined on p.~\pageref{page:chaindef}.
For $w\in\Ww'$, let $\Cy[s]{w}(t)$ be the number of cycles in $G(s+t)$
with word $w$. We will prove that 
$\big(\Cy[s]{w}(\cdot),\,w\in\Ww'\big)$ converges to a distributional limit,
from which the convergence of $\big(\Cy[s]{k}(\cdot),\,k\in\NN\big)$ will follow.
The proof depends on knowing the limiting \emph{marginal} distribution of
$\Cy[s]{w}(t)$. The following corollary of \thref{thm:processapprox}
gives the facts we need:
\begin{cor}\label{cor:marginals}
  Let $\{Z_w,\,w\in\Ww'_K\}$ 
  be a family of independent Poisson random variables
  with $\E Z_w=1/h(w)$.  For any fixed integer $K$ and $d\geq 1$,
  \begin{enumerate}[(i)]
    \item as $t\to\infty$,\label{item:margC}
      \begin{align*}
        (C_w(t),\,w\in\Ww'_K)
        \toL (Z_w,\,w\in\Ww'_K);
      \end{align*}
    \item as $t\to\infty$, the probability that there exist
      two
      cycles of length $K$ or less sharing a vertex in $G(t)$
      approaches zero.\label{item:margoverlap}
  \end{enumerate}
\end{cor}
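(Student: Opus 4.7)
The plan is to derive both conclusions from \thref{thm:processapprox}, noting that $G(t) = G_{M_t}$ with $M_t \to \infty$ almost surely, so it suffices to work with the graph $G_n$ from the permutation model and let $n \to \infty$ while $d$ and $K$ remain fixed.

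For (\ref{item:margC}), I first claim that the number of cycles $\alpha \in \Ii_k$ whose word class is $w \in \Ww_k/D_{2k}$ equals $[n]_k/h(w)$. Pairs of the form (ordered $k$-tuple of distinct vertices, element of the $D_{2k}$-orbit of $w$) that give rise to a cycle in $\Ii_k$ with word $w$ number $[n]_k \cdot (2k/h(w))$, and each such cycle admits exactly $2k$ representations (one per choice of starting vertex and direction). Consequently $C_w = \sum_{\alpha \in \Ii_k:\,\text{word}(\alpha) = w} I_\alpha$ is a sum of $[n]_k/h(w)$ coordinates of $\I$. Applying \thref{thm:processapprox} with $r = K$ yields
\[
\dtv\bigl(\I,\,\Z'\bigr) \leq \frac{c(2d-1)^{2K-1}}{n} \longrightarrow 0,
\]
where $\Z' = (Z_\alpha,\,\alpha\in\Ii)$ are independent Poissons with $\E Z_\alpha = 1/[n]_{\abs{\alpha}}$. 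Hence $\sum_{\alpha:\,\text{word}(\alpha) = w} Z_\alpha$ is Poisson with mean $([n]_k/h(w))(1/[n]_k) = 1/h(w)$, and these sums across distinct $w$ are independent because they involve disjoint index sets. The conclusion follows from the data-processing inequality for total variation.

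For (\ref{item:margoverlap}), I will use a union bound on pairs of overlapping short cycles. Let $E$ denote the expected number of ordered pairs $(\alpha, \beta) \in \Ii^2$ with $\alpha \neq \beta$ and $\alpha \cap \beta \neq \emptyset$ as vertex sets. Classify such pairs by $k = \abs{\alpha}$, $j = \abs{\beta}$, the number $p \geq 1$ of shared vertices, and the number $f$ of shared edges. Since the intersection of two simple cycles is a disjoint union of paths, one has $f \leq p - 1$ whenever $p \geq 1$. For fixed $\alpha$, there are $O(n^{j-p}(2d-1)^j)$ choices of $\beta$ realizing a given overlap type, and $\E[I_\alpha I_\beta] \leq 1/[n]_{k+j-f}$. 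Using $\abs{\Ii_k} = O(n^k(2d-1)^k)$ and summing over the finitely many quadruples $(k,j,p,f)$ with $k, j \leq K$, $p \geq 1$, and $f \leq p - 1$ gives
\[
E = O\!\left(\frac{(2d-1)^{2K}}{n}\right) \longrightarrow 0,
\]
so Markov's inequality finishes the proof.

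The main obstacle is the bookkeeping in part (\ref{item:margoverlap}): one must verify that the intersection graph of two simple cycles is a forest, which is precisely what produces the inequality $f \leq p - 1$ needed to beat the polynomial count of overlap configurations. The heavy lifting for part (\ref{item:margC}) is already done by \thref{thm:processapprox}, and the overlap estimate parallels the $\Ii_\alpha^+$ computations from its proof.
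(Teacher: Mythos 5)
Your proof of part (i) is essentially the paper's argument: the paper likewise writes $C_w^{(n)}$ as the sum of $I_\alpha$ over the $[n]_k/h(w)$ cycles with word $w$, applies \thref{thm:processapprox} together with the fact that total variation distance is monotone under mappings, and then transfers to continuous time by conditioning on $M_t$ and using $M_t\to\infty$ (the step you compress into your opening sentence). For part (ii) you take a genuinely different route: the paper simply cites \cite[Corollary~16]{DJPP} for the discrete-time statement and transfers it the same way, whereas you give a self-contained first-moment bound over ordered pairs of distinct overlapping short cycles, using that the intersection of two distinct cycles is a forest (so $f\leq p-1$) to get an expected count of order $(2d-1)^{2K}/n$. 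That computation is sound: $\E[I_\alpha I_\beta]\leq 1/[n]_{k+j-f}$ because $\alpha\cup\beta$ has $k+j-f$ labeled edges and $[n]_a[n]_b\geq [n]_{a+b}$, the count of compatible $\beta$ is $O(n^{j-p}(2d-1)^j)$ with constants depending only on $K$, and Markov's inequality finishes it; what this buys is independence from the external reference, at the cost of redoing bookkeeping that \cite{DJPP} already contains. One small gap to patch: the corollary is stated for all $d\geq 1$, but \thref{thm:processapprox} is stated only for $d\geq 2$, so your argument for part (i) does not literally cover $d=1$. The paper handles that case separately by noting that for $d=1$ there is only one word of each length and the statement reduces to the classical Poisson limit for cycle counts of a uniform random permutation, with part (ii) trivial since the graph is then a disjoint union of cycles; you should add that remark (your union bound for (ii) does remain valid for $d=1$).
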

\begin{proof}
  When $d=1$, there is only one word of each length in $\Ww'_K$,
  and statement~(\ref{item:margC}) reduces to the well-known fact that the cycle
  counts of a random permutation converge to independent Poisson random
  variables (see \cite{AT} for much more on this subject).
  In this case, $G(t)$ is made up of disjoint cycles for all times $t$,
  so that statement~(\ref{item:margoverlap}) is trivially satisfied.

  When $d\geq 2$, let $\Cy[n]{w}$ be the number of cycles with word $w$ in 
  $G_n$. Observe that $\Cy[n]{w} = \sum_\alpha I_\alpha$, with $I_\alpha$ as in
  the statement of \thref{thm:processapprox} and the sum over all cycles in $\Ii$
  with word~$w$. The random variable $Z_w$ is the analogous sum over $Z_\alpha$,
  since the number of cycles in $\Ii$ with word $w$ is $[n]_k/h(w)$.
  By \thref{thm:processapprox},
  \begin{align}
    (\Cy[n]{w},\,w\in\Ww'_K)
      \toL (Z_w,\,w\in\Ww'_K).\label{eq:discretetimeconvergence}
  \end{align}
  
  Now, we just extend this to continuous time.
  The random vector
  $(C_w(t),\,w\in\Ww'_K)$ is a mixture of the random
  vectors $(\Cy[n]{w},\,w\in\Ww'_K)$ over different values of $n$.
  That is,
  \begin{align*}
    \P\left[\bigl(C_w(t),\,w\in\Ww'_K\bigr)\in A\right]
      &=\sum_{n=1}^{\infty}\P[M_t=n]\P\left[
      \bigl(\Cy[n]{w},\,w\in\Ww'_K\bigr)\in A\right]
  \end{align*}
  for any set $A$, recalling that $G(t)=G_{M_t}$.
  Equation~\eqref{eq:discretetimeconvergence} together
  with the fact that $\P[M_t>N]\to 1$ as $t\to\infty$ for
  any $N$ imply that $(C_w(t),\,w\in\Ww'_K)$ converges
  in law to $(Z_w,\,w\in\Ww'_K)$, establishing
  statement~(\ref{item:margC}).
  
  The discrete time version of statement~(\ref{item:margoverlap}) is
  given by \cite[Corollary~16]{DJPP}.
  Statement~(\ref{item:margoverlap})
  follows from it in the same way.
\end{proof}

Now, we turn to the convergence of the processes.
We will often need to transfer the convergence of a process to its limit
to the convergence of a functional of the process.
The following criterion, which we present without proof,
lets us apply the continuous mapping theorem to do so.
\begin{lem}[{\cite[Section~3.11, Exercise~14]{EK}}]\thlabel{lem:EKcont}
  Let $E$ and $F$ be metric spaces, and let $f\colon E\to F$
  be continuous. Then the mapping $x\mapsto f\circ x$ from
  $D_E[0,\infty)\to D_F[0,\infty)$ is continuous.
\end{lem}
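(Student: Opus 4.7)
The plan is to reduce to the standard characterization of convergence in the Skorokhod $J_1$ topology in terms of time changes. Suppose $x_n \to x$ in $D_E[0,\infty)$; I want to show $f\circ x_n \to f\circ x$ in $D_F[0,\infty)$. Fix any $T>0$ that is a continuity point of $x$ in $[0,\infty)$, i.e., $x(T-)=x(T)$; note that any such $T$ is automatically a continuity point of $f\circ x$ by continuity of $f$, so the continuity-point requirement for the limit in $D_F[0,\infty)$ is inherited for free.

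By the definition of Skorokhod convergence, there exist continuous strictly increasing time changes $\lambda_n\colon[0,T]\to[0,T]$ with $\lambda_n(0)=0$ and $\lambda_n(T)=T$ such that
\begin{align*}
\sup_{0\le t\le T}|\lambda_n(t)-t|\to 0\quad\text{and}\quad \sup_{0\le t\le T} d_E\bigl(x_n(\lambda_n(t)),x(t)\bigr)\to 0.
\end{align*}
The proposal is to use the very same $\lambda_n$ as candidate time changes for the sequence $f\circ x_n$, and verify the corresponding second uniform estimate
\begin{align*}
\sup_{0\le t\le T} d_F\bigl(f(x_n(\lambda_n(t))),\,f(x(t))\bigr)\to 0.
\end{align*}

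The crux of the argument is a compactness step. Since $x$ is RCLL on $[0,T]$, the set $K\defeq\overline{\{x(t):t\in[0,T]\}}$ is compact: any sequence $x(t_k)$ has a subsequence along which $t_k\to t^*\in[0,T]$ either from the right (giving limit $x(t^*)$) or from the left (giving limit $x(t^*-)$). Combined with the fact that $x_n\circ\lambda_n$ converges uniformly to $x$ on $[0,T]$, a straightforward $\varepsilon$-net argument (use a finite $\varepsilon/2$-net of $K$ for all large $n$, and use relative compactness of the image of each individual $x_k\circ\lambda_k$ for the finitely many small $n$) shows that
\begin{align*}
S\defeq\{x(t):t\in[0,T]\}\cup\bigcup_{n\ge 1}\{x_n(\lambda_n(t)):t\in[0,T]\}
\end{align*}
is totally bounded, hence its closure $\tilde K$ is compact. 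On $\tilde K$, the continuous function $f$ is uniformly continuous, which immediately promotes the uniform estimate on $d_E$ to the desired uniform estimate on $d_F$.

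The main obstacle, as usual in statements about the Skorokhod topology, is not any single step but the bookkeeping: arranging that the same time changes work and that one can pick a single compact set capturing all of the approximating values. The compactness step is the one that needs genuine care; the implicit standing assumption is that $E$ is complete (e.g.\ Polish), which holds in every application in this paper, and without which one would instead argue in the completion of $E$.
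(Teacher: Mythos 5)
The paper states this lemma without proof, citing it as an exercise in Ethier and Kurtz, so there is no in-text argument to compare against. Your proof is the standard one and is essentially correct: reduce to $D_E[0,T]$ at continuity points $T$ of $x$ (which are dense and are automatically continuity points of $f\circ x$, so they suffice for the sufficiency direction of the restriction criterion), reuse the same time changes $\lambda_n$, and upgrade the uniform $d_E$-estimate to a uniform $d_F$-estimate using compactness of $K\defeq\overline{x([0,T])}$, which you correctly justify from the RCLL property.

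The one blemish is the appeal to completeness of $E$, which the lemma does not assume and which is avoidable in two ways. First, you never actually need a compact set containing the approximating values $x_n(\lambda_n(t))$: it suffices that $f$ be ``uniformly continuous relative to $K$,'' i.e.\ for every $\eps>0$ there is $\delta>0$ such that $y\in K$, $z\in E$, and $d_E(y,z)<\delta$ force $d_F(f(y),f(z))<\eps$. This follows from continuity of $f$ and compactness of $K$ alone by a finite-subcover (Lebesgue-number) argument, and it immediately converts $\sup_t d_E(x_n(\lambda_n(t)),x(t))\to 0$ into $\sup_t d_F(f(x_n(\lambda_n(t))),f(x(t)))\to 0$. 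Second, even your set $S$ is genuinely relatively compact without completeness: a sequence in $S$ either has infinitely many terms inside the union of finitely many of the sets $\overline{x_n(\lambda_n([0,T]))}$ (each compact, since each $x_n$ is RCLL), or it admits a subsequence of the form $x_{n_i}(\lambda_{n_i}(t_i))$ with $n_i\to\infty$, which is within $o(1)$ of $x(t_i)\in K$ and hence subsequentially convergent in $E$. So total boundedness plus completeness is more than you need, and with either repair the argument is complete for arbitrary metric spaces $E$ and $F$, as the lemma asserts.
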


\begin{thm}\thlabel{thm:convergence}
  The process $\big(\Cy[s]{w}(\cdot),\,w\in\Ww'\big)$ converges in law 
  as $s\to\infty$ to
  $(\N{w}{\cdot},\,w\in\Ww')$ in
  the space $D_{\RR^{\infty}}[0,\infty)$.
\end{thm}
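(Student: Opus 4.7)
The plan is to reduce to convergence of the truncated processes $(C_w^{(s)}(t), w \in \Ww'_K)$ in $D_{\RR^{|\Ww'_K|}}[0,\infty)$ for each fixed $K$, from which convergence in $D_{\RR^\infty}[0,\infty)$ follows since the topology on $\RR^\infty$ is generated by finite-dimensional projections and \thref{lem:EKcont} applies to coordinate projection. I would prove the truncated convergence via finite-dimensional convergence plus tightness, working under stationarity in the sense that all marginals agree with the invariant distribution in the limit.

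For one-time marginals, \thref{cor:marginals}(\ref{item:margC}) already gives convergence to the product-Poisson law $\prod_{w \in \Ww'_K} \Poi(1/h(w))$, which is exactly the invariant measure of the limiting Markov chain by \thref{lem:chainprops}(\ref{item:invariant}). For multi-time convergence $t_1 < \cdots < t_m$, I would condition on $G(s+t_1)$ and show that the restricted short-cycle dynamics on $[t_1, t_m]$ agree asymptotically with the generator $\Ll$ of \thref{lem:chainprops}(\ref{item:generator}). By \thref{cor:marginals}(\ref{item:margoverlap}), with probability $1-o(1)$ as $s \to \infty$, all cycles of length $\leq K+1$ in $G(s+t_1)$ are vertex-disjoint, and one can propagate this disjointness forward over any compact time interval.

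The substance of the argument lies in matching the dynamics. When a vertex is inserted into $G(s+t)$ at rate $n+1$ by the Chinese Restaurant Process, each of the $d$ permutations is extended independently by inserting the new element into a uniformly chosen position. For a fixed short cycle with word $w$ of length $k$, the instantaneous rate at which its word becomes $w^{(i)}$ is $1$ for each letter position $i$, because the new vertex lands in the specific edge with probability $1/n$ and the insertion rate is $n+1$; this matches the first sum in the generator $\Ll$. For spontaneous formation, the computation in Section~\ref{sec:heuristics} gives rate $(|w|-c(w))/h(w)$, matching the second sum in $\Ll$. All remaining events are negligible: (a) splits, where the new vertex is simultaneously inserted into two edges of a single short cycle, occur at rate $O(k^2/n)$ per cycle and hence $O(1/n)$ in aggregate over $[t_1, t_m]$; (b) short cycle collisions arising from the new vertex being simultaneously inserted into two previously disjoint short cycles vanish by the same reasoning combined with \thref{cor:marginals}(\ref{item:margoverlap}); (c) the event that a spontaneously formed short cycle attaches to a pre-existing one contributes negligibly. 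I would assemble these estimates into a generator comparison of the form $\E[f(C^{(s)}(t+h)) \mid \Ff_t] = f(C^{(s)}(t)) + h \Ll f(C^{(s)}(t)) + o(h)$ uniformly for $s$ large, and invoke a standard Markov-process convergence theorem (e.g., Ethier--Kurtz Theorem~4.8.10).

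For tightness, I would apply Aldous' criterion coordinatewise: the total rate of jumps affecting $C_w^{(s)}$ is bounded above by $|w| \cdot C_w^{(s)}(t) + (|w|-c(w))/h(w) + o(1)$, and since the state is stochastically bounded by a Poisson with mean $1/h(w)$, the increments over vanishing time intervals are controlled. Coordinatewise tightness transfers to joint tightness in $D_{\RR^{|\Ww'_K|}}[0,\infty)$ because the coordinate jumps are simple (size $\pm 1$ on a finite number of coordinates per event). The main obstacle I anticipate is the uniform-in-$s$ control of the bad events over a compact time interval: the per-insertion bad-event probability is $O(1/n)$, but one must verify that after Poissonizing and summing over the $O(T \cdot M_{s+T})$ insertions in $[s, s+T]$, the total error remains $o(1)$. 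This follows because $M_{s+T}/M_s \to 1$ in probability, so the effective number of insertions is $O(M_s T)$ and each contributes $O(1/M_s)$, leaving an aggregate of $O(T)$ which must be refined by tracking only those insertions that touch one of the $O(1)$ short cycles present; this last refinement is precisely what \thref{cor:marginals}(\ref{item:margoverlap}) provides.
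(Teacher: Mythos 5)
Your forward, generator-comparison route is genuinely different from the paper's proof, which goes backward in time: it fixes a horizon $T$, restricts the time-reversed graph to the subgraph formed by the cycles of length at most $L$ at the terminal time, observes that the cycle counts of that restricted backward evolution form an honest Markov chain with exactly the rates of the reversed limit process of Section~\ref{sec:reversals}, and then concludes by showing that these counts coincide with the reversed $C^{(s)}_w$ (and the reversed limit with its truncation) with probability close to one, finishing with a three-epsilon Prokhorov argument. The reason for that detour is precisely the obstacle your sketch passes over: $(C^{(s)}_w(t),\,w\in\Ww'_K)$ is not Markov, and its conditional jump rates given the graph filtration $\Ff_t$ are not functions of the cycle counts. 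Your central claimed estimate, $\E[f(C^{(s)}(t+h))\mid\Ff_t]=f(C^{(s)}(t))+h\,\mathcal{L}f(C^{(s)}(t))+o(h)$ uniformly for large $s$, is false as stated: on graph configurations where two short cycles share an edge, a single insertion into the shared edge changes two coordinates simultaneously at rate of order one, a transition $\mathcal{L}$ does not have, and on configurations with degenerate candidate paths the spontaneous-formation rate is off by $O(1)$ as well. What is true is that these errors are small in $L^1$ (their expectation at each fixed time vanishes as $s\to\infty$), and converting that into weak convergence requires the martingale-problem formulation: an approximate $\Ff_t$-martingale $f(C^{(s)}(t))-\int_0^t\mathcal{L}f(C^{(s)}(u))\,du+\eps_s(t)$ with $\sup_{t\leq T}\E\abs{\eps_s(t)}\to 0$, tightness, and, crucially, well-posedness of the martingale problem for $\mathcal{L}$, whose jump rates are unbounded in the counts. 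None of these steps is carried out or even flagged; the citation of Ethier--Kurtz is a placeholder for exactly the work the paper's time-reversal and coupling construction was designed to avoid.

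The quantitative accounting in your final paragraph is also wrong in places. The vertex count grows like a Yule process, so $M_{s+T}/M_s\to e^{T}$, not $1$. The correct reason the bad events are negligible is not a refinement of an $O(T)$ aggregate: each harmful event (a split, an insertion hitting two tracked short cycles at once, a spontaneously formed short cycle overlapping an existing one) requires two independent coincidences at a single insertion, so its per-insertion probability is $O(1/n^2)$; summed over the $O(M_s)$ insertions in $[s,s+T]$ this gives $O(1/M_s)\to 0$. Finally, Corollary~\ref{cor:marginals} part~(ii) is a fixed-time marginal statement and cannot by itself control the number of bad events over a whole interval; you need expected short-cycle counts at each time, uniformly bounded via \thref{thm:processapprox}, fed into the rate estimates above. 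With these repairs your strategy could plausibly be completed, but as written the key analytic step and the error bookkeeping both have genuine gaps.
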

\newcommand{\bG}[2]{\overleftarrow{G}_{#1}(#2)}
\newcommand{\bCy}[2][\infty]{\overleftarrow{C}^{(#1)}_{#2}}
\newcommand{\bX}[4][s]{\overleftarrow{M}_{#2\ifthenelse{\equal{#3}{}}{}{,#3}}^{(#1)}(#4)}
  \newcommand{\bGraph}[2]
  {\ifthenelse{\equal{#1}{}}{\overleftarrow{\Gamma}(#2)}
  {\overleftarrow{\Gamma}_{#1}(#2)}}
\begin{proof}
  The main difficulty in
  turning the intuitive ideas of Section~\ref{sec:heuristics}
  into an actual proof is that $\big(\Cy[s]{w}(t),\,w\in\Ww'\big)$
  is not Markov. 
  We now sketch how we evade this problem.
  We will run our chain backwards, defining  $\bG{s}{t}=G(s+T-t)$
  for some fixed $T>0$.  Then,
  we ignore all of $\bG{s}{0}$ except for the subgraph consisting of cycles
  of size $L$ and smaller, which we will call $\bGraph{s}{0}$.
  The graph $\bGraph{s}{t}$ is the evolution of this subgraph as time 
  runs backward, ignoring the rest of  $\bG{s}{t}$.
  Then, we consider the number of cycles with word $w$ in $\bGraph{s}{t}$,
  which we call $\phi_w(\bGraph{s}{t})$.  
Choose $K \ll L$. Then $\phi_w(\bGraph{s}{t})$ is likely to be the same as $\Cy[s]{w}(T-t)$
  for any word $w$ with $|w|\leq K$.
  The remarkable fact that makes $\phi_w(\bGraph{s}{t})$
  possible to analyze is that
  if $\bGraph{s}{0}$
  consists of disjoint cycles, then $\big(\phi_w(\bGraph{s}{t}),\,
  w\in\Ww'_L\big)$
  is a Markov chain governed by the same transition rates
  as $\big(\bM{w}{t},\,w\in\Ww'_L\big)$.
  
  Another important idea of the proof is to ignore the vertex labels
  in $\bG{s}{t}$, so that we do not know in what order the vertices
  will be removed. Thus we can view $\bG{s}{t}$ as a Markov
  chain with the following description:
  Assign each
  vertex an independent $\Exp(1)$ clock.  When the clock of vertex $v$
  goes off, remove it from the graph, and patch together the $\pi_i$-labeled
  edges entering and leaving $v$ for each $1\leq i\leq d$.  
  \step{1}
  {Definitions of $\bGraph{s}{t}$ and $\phi_w$ and analysis of
  $\big(\phi_{w}(\bGraph{s}{t}),\,w\in\Ww_L'\big)$.}

  Fix $T>0$ and define $\bG{s}{t}=G(s+T-t)$.  As mentioned above,
  we will consider
  $\bG{s}{t}$ only up to relabeling of vertices, which makes it
  a process on the countable state space consisting
  of all edge-labeled graphs on finitely many unlabeled vertices.
  With respect to its natural filtration, it is a Markov chain
  in which each vertex is removed with rate one,
  as described above.
  
  To formally define
  $\bGraph{s}{t}$, fix integers $L>K$ and
  let $\bGraph{s}{0}$ be the subgraph of $\bG{s}{0}$ made up of all
  cycles  of length $L$ or less.
  We then evolve $\bGraph{s}{t}$ in parallel with $\bG{s}{t}$.
  When a vertex $v$ is deleted from $\bG{s}{t}$, the corresponding
  vertex $v$ in $\bGraph{s}{t}$ is deleted if it is present.  If $v$
  has a $\pi_i$-labeled edge entering and leaving it in
  $\bGraph{s}{t}$, then these two edges
  are patched together. Other edges
  in $\bGraph{s}{t}$ adjacent to $v$ are deleted.
  This makes $\bGraph{s}{t}$ a subgraph of $\bG{s}{t}$,
   as well as a
  continuous-time Markov chain on the countable
  state space consisting
  of all edge-labeled graphs on finitely many unlabeled vertices.
  The transition probabilities of $\bGraph{s}{t}$ do not depend on $s$.
  
  From Corollary~\ref{cor:marginals}, we can find the limiting
  distribution of $\bGraph{s}{0}$.
  Suppose that $\gamma$ is a graph in the process's state space
  that is not a disjoint union
  of cycles.  By Corollary~\ref{cor:marginals}\ref{item:margoverlap},
  \begin{align*}
    \lim_{s\to\infty}\P[\bGraph{s}{0}=\gamma]=0.
  \end{align*}
  Suppose instead that
     $\gamma$ is made up of disjoint cycles,
  with $z_w$ cycles of word~$w$ for each $w\in\Ww'_L$.  
  By Corollary~\ref{cor:marginals}\ref{item:margC},
  \begin{align}
    \lim_{s\to\infty}\P[\bGraph{s}{0}=\gamma]=\prod_{w\in\Ww_L'}
    \P[Z_w=z_w],\label{eq:Gammadist}
  \end{align}
  where $(Z_w,\,w\in\Ww'_L)$ are independent Poisson random
  variables with $\E Z_w=1/h(w)$.
  Thus $\bGraph{s}{0}$ converges in law as $s\to\infty$ to
  a limiting distribution supported on the graphs
  made up of disjoint unions of cycles.
  For different values of $s$, the chains $\bGraph{s}{t}$ 
  differ only in their
  initial distributions, and the convergence in law
  of $\bGraph{s}{0}$ as $s\to\infty$ induces the process convergence
  of $\{\bGraph{s}{t},\,0\leq t\leq T\}$
  to a Markov
  chain $\{\bGraph{}{t},\,0\leq t\leq T\}$ with the same transition rates 
  whose initial distribution is the limit
  of $\bGraph{s}{0}$.
  
  For any finite 
  edge-labeled graph $G$, let $\phi_{w}(G)$ be the number of
  cycles in $G$ with word~$w$.
  By \thref{lem:EKcont} and the continuous mapping theorem, the process
  $(\phi_{w}(\bGraph{s}{t}),\,
  w\in\Ww_L')$ converges in law to $(\phi_{w}(\bGraph{}{t}),\,w\in\Ww_L')$
  as $s\to\infty$.
  
  We will now demonstrate that this process has the same law
  as $(\bM{w}{t},\,w\in\Ww_L')$.
  The graph $\bGraph{}{t}$ consists of disjoint cycles at time $t=0$,
  and as it evolves, these cycles shrink or are destroyed.  The process
  $(\phi_{w}(\bGraph{}{t}),\,w\in\Ww_L')$ 
  jumps exactly when a vertex in a cycle in $\bGraph{}{t}$ is deleted.
  If the deleted
  vertex lies in a cycle
  between two edges with the same label, the cycle shrinks.
  If the deleted vertex lies in a cycle between two edges with different
  labels, the cycle  is destroyed.
  The only relevant consideration in where the process
  will jump at time $t$ is the number of vertices of these
  two types in $\bGraph{}{t}$, which can be deduced
  from
  $(\phi_{w}(\bGraph{}{t}),\,w\in\Ww_L')$.  
  Thus this process is a Markov chain.
  
  Consider two words $u,w\in\Ww_K'$ such that $w$ can be
  obtained from $u$ by
  doubling a letter.  Suppose that $u$ can be obtained from
  $w$ by halving any of $b$ pairs of letters.  Suppose that
  the chain is at state
  $x=(x_v,\,v\in\Ww_L')$.  There are $bx_w$ vertices that when
  deleted cause the chain to jump from $x$ to $x-e_w+e_u$, each
  of which is removed with rate one.  Thus the chain jumps
  from $x$ to $x-e_w+e_u$ with rate $bx_w$.  Similarly, it
  jumps to $x-e_w$ with rate $(|w|-c(w))x_w$.  These are
  the same rates as the chain $(\bM{w}{t},\,w\in\Ww_L')$
  from Section~\ref{sec:reversals}.  The initial distribution
  given by \eqref{eq:Gammadist} is also the same as that
  of $(\bM{w}{t},\,w\in\Ww_L')$, demonstrating that the two processes
  $\big(\phi_{w}(\bGraph{}{t}),\,w\in\Ww_L'\big)$ 
  and $(\bM{w}{t},\,w\in\Ww_L')$ have the same law.
  \step{2}{Approximation of $\bCy[s]{w}(t)$ by $\phi_{w}(\bGraph{s}{t})$.}
  
  We will compare the two processes
  $\{(\bCy[s]{w}(t),\,w\in\Ww'_K),\,0\leq t\leq T\big\}$
  and $\{(\phi_w(\bGraph{s}{t}),\,w\in\Ww'_K),\,0\leq t\leq T\}$ 
  and
  show that for sufficiently large $L$, they are identical with 
  probability arbitrarily close to one.
  
  Consider some cycle in $\bG{s}{t}$; we can divide its vertices
  into those that lie between two edges of the cycle with different labels,
  and those that lie between two edges with the same label.
  We call this second class the \emph{shrinking vertices} of the cycle,
  because if one is deleted from $\bG{s}{t}$ as it evolves, the cycle
  shrinks.  We define $E_s(L)$ to be the event that for some cycle
  in $\bG{s}{0}$ of size $l>L$, at least $l-K$ of its shrinking vertices
  are deleted by time $T$.
    
  We claim that outside of the event $E_s(L)$, the two processes 
$\{(\bCy[s]{w}(t),\,w\in\Ww'_K),\,0\leq t\leq T\}$
  and $\{(\phi_w(\bGraph{s}{t}),\,w\in\Ww'_K),\,0\leq t\leq T\}$ 
    are identical.  Suppose that these two processes are not identical.
  Then there is some cycle $\alpha$ of size $K$ or less present in
  $\bG{s}{t}$ but not in $\bGraph{s}{t}$ for $0<t\leq T$.
  As explained in Section~\ref{sec:lim}, as a cycle evolves (in forward
  time), it grows into an overlapping cluster of cycles.  
  Thus $\bG{s}{0}$ contains some cluster of overlapping cycles
  that shrinks to $\alpha$ at time $t$.  One of the cycles in this cluster
  has length greater than $L$, or the cluster would be contained
  in $\bGraph{s}{0}$ and $\alpha$ would have been contained in $\bGraph{s}{t}$.
  
  To see that $l-K$ shrinking vertices must be deleted from this
  cycle, consider the evolution of $\alpha$ into the cluster
  of cycles in both forward and reverse time.  If a vertex is inserted into
  a single edge of a cycle in forward time, we see in reverse time the deletion
  of a shrinking vertex.  If a vertex is simultaneously inserted into
  two edges of a cycle, causing the cycle to split,
  we see in reverse time the deletion of a non-shrinking
  vertex of a cycle.  As $\alpha$ grows, a cycle of size greater than $L$ can form
  only by single-insertion of at least $l-K$ vertices into the eventual
  cycle.  In reverse time, this is seen as deletion of $l-K$ shrinking
  vertices.
  This demonstrates
  that $E_s(L)$ holds.
  
  We will now show that for any $\eps>0$, there is an $L$ sufficiently large 
  that
  $\P[E_s(L)]<\eps$ for any $s$.
  Let $w\in\Ww_l/D_{2l}$ with $l>L$, and let 
  $I\subset[l]$
  such that $\abs{I}=l-K$ and
  $w_i=w_{i-1}$ for all $i\in I$,
  considering indices modulo $l$. 
  For any cycle in $\bG{s}{0}$
  with word $l$, the set $I$ corresponds to a set of $l-K$
  shrinking vertices of the cycle.
  
  We define
  $F(w,I)$ to be the event that $\bG{s}{0}$ contains one or more
  cycles with word $w$,
  and that the vertices corresponding to $I$ in one of these
  cycles are all deleted within
  time $T$.
  By a union bound,
  \begin{align}
    \P[E_s(L)]\leq\sum_{w,I}\P[F(w,I)].\label{eq:EsL}
  \end{align}
  
  We proceed by enumerating all pairs of $w$ and $I$.
  For any pair $w, I$, deleting the letters in $w$ at positions given 
  by $I$ results in a word $u\in \Ww_K/D_{2K}$.
  For any given $u=u_1\cdots u_K\in\Ww_K/D_{2K}$, the word $w\in\Ww_l/D_{2l}$ 
  must have the form
  \begin{align*}
     w=
     \underbrace{u_1\cdots u_1}_{\text{$a_1$ times}}
     \underbrace{u_2\cdots u_2}_{\text{$a_2$ times}}\cdots\cdots 
     \underbrace{u_K\cdots
     u_K}_{\text{$a_K$ times}},
  \end{align*}
  with $a_i\geq 1$ and $a_1+\cdots+a_K=l$.
  The number of choices for $a_1,\ldots,a_K$ is $\binom{l-1}{K-1}$,
  the number of compositions of $l$ into $K$ parts, and each
  of these corresponds to a choice of $w$ and $I$.  There are fewer
  than $a(d,K)$
  choices for $u$, giving us a bound of $a(d,K)\binom{l-1}{K-1}$
  choices of pairs $w$ and $I$ for any fixed $l>L$.
  
  Next, we will show that
  for any pair $w$ and $I$ with $|w|=l$,
  \begin{align}
    \P[F(w,I)]\leq (1-e^{-T})^{l-K}.\label{eq:FuI}
  \end{align}
  Condition on $\bG{s}{0}$ having $n$ vertices.
  Consider any of the $[n]_l$ possible sequences of $l$ vertices.
  Choose some representative $w'\in\Ww_l$ of $w$.
  For each of these sequences,
  the probability that
  it forms a cycle with word $w'$ is at most $1/[n]_l$ (recall the original
  definition of our random graphs in terms of random permutations).
  Given that
  the sequence forms a cycle, the probability
  that the vertices of the cycle at positions 
  $I$ are all deleted within time $T$
  is $(1-e^{-T})^{l-K}$.  Hence
  \begin{align*}
    \P\left[F(w,I)\mid \text{$\bG{s}{0}$ has $n$ vertices}\right]
    &\leq [n]_l\frac{1}{[n]_l}(1-e^{-T})^{l-K},\\
    &\leq (1-e^{-T})^{l-K}.
  \end{align*}
  This holds for any $n$,
  establishing \eqref{eq:FuI}.
  
  Applying all of this to \eqref{eq:EsL},
  \begin{align*}
    \P[E_s(L)] &\leq \sum_{l=L+1}^{\infty}a(d,K)\binom{l-1}{K-1}(1-e^{-T})^{l-k}.
  \end{align*}
  This sum converges, which means that for any $\eps>0$,
  we have $\P[E_s(L)]<\eps$ for large enough $L$, independent of
  $s$.
  \step{3}{Approximation of $\bN{w}{t}$ by $\bM{w}{t}$.}
  
  Recall that we defined the processes
  $\{(\bM{w}{t},\,w\in\Ww'_K),\,0\leq t\leq T\}$
  and $\{(\bN{w}{t},\,w\in\Ww'_K),\,0\leq t\leq T\}$
  on the same probability space.  We will show that for sufficiently
  large $L$, the two processes
  are identical with probability arbitrarily close to one.
  
  By their definitions, these two  processes
  are identical unless one of the processes $X_{u,s}(\cdot)$ started
  at each atom of $\chi$ grows from a word of size $K$ or less to
  a word of size $L+1$ before time $T$; we call this event $E(L)$.  
  Let
  \begin{align*}
    Y=\big|\big\{(u,s)\in\chi\colon |u|\leq K,\,s\leq T\big\}\big|,
  \end{align*}
  the number of processes starting from a word of size $K$ or less
  before time $T$.
  
  Suppose that $X(\cdot)$ has law
  $\wchain{w}$ for some word $w\in\Ww_k/D_{2k}$.  
  We can choose $L$ large enough that
  $\P\big[|X(T)|>L\big]<\eps$ for all $k\leq K$.  
  Then
  $\P[E(L)\mid Y]<\eps Y$ by a union bound, and so
  $\P[E(L)]<\eps\E Y$.  Since $\E Y<\infty$, we can
  make $\P[E(L)]$ arbitrarily small by choosing sufficiently large $L$.
  \step{4}{Weak convergence of $\{(\bCy[s]{w}(t),\,w\in\Ww'_K),\,0\leq t\leq T
    \}$
   to $\{(\bN{w}{t},\,w\in\Ww'_K),\,0\leq t\leq T\}$.}
   
  If two processes are identical with probability
  $1-\eps$, then the total variation distance between their laws is at most
  $\eps$.  Thus, by steps~2 and~3, we can choose $L$ large enough that
  the laws of the processes
  $\{(\bCy[s]{w}(t),\,w\in\Ww'_K),\,0\leq t\leq T\}$ and
  $\{(\phi_w(\bGraph{s}{t},\,w\in\Ww'_K),\,
  0\leq t\leq T)\}$ are arbitrarily close in total variation distance,
  uniformly in $s$, and so that the laws of
  $\{(\bM{w}{t},\,w\in\Ww'_K),\,0\leq t\leq T\}$ and
     $\{(\bN{w}{t},\,w\in\Ww'_K),\,0\leq t\leq T\}\}$
     are arbitrarily close in total variation distance.
  Since total variation distance dominates the Prokhorov metric (or any other
  metric for the topology of weak convergence), we can
  choose $L$ such that these two pairs are each within $\eps/3$ in
  the Prokhorov metric.
  Since $\{(\phi_w(\bGraph{s}{t}),\,w\in\Ww'_K),\,0\leq t\leq T\}$
  converges in law to $\{(\bM{w}{t},\,w\in\Ww'_K),\,0\leq t\leq T\}$
  as $s\to\infty$, there is an $s_0$ such that for all $s\geq s_0$, the laws
  of these processes are within $\eps/3$ in the Prokhorov metric.
  We have thus shown that for every $\eps>0$,
  the laws of $\{(\bCy[s]{w}(t),\,w\in\Ww_K'),\,0\leq t\leq T\}$
  and
     $\{(\bN{w}{t},\,w\in\Ww'_K),\,0\leq t\leq T\}$
     are within $\eps$
  for sufficiently large $s$, which proves that the first process
  converges in law to the second in the space $D_{\RR^{\abs{\Ww'_K}}}[0,T]$
  as $s\to\infty$.
  \step{5}{Weak convergence of $\{(\Cy[s]{w}(t),\,w\in\Ww'),\,t\geq 0\}$
  to $\{(\N{w}{t},\,w\in\Ww'),\,t\geq 0\}$.}
  
    It follows
  immediately from the previous step that 
  the (not time-reversed) process
  $\{(\Cy[s]{w}(t),\,w\in\Ww'_K),\,0\leq t\leq T\}$
  converges in law to 
  $\{(\N{w}{t},\,w\in\Ww'_K),\,0\leq t\leq T\}$ for any $T>0$.
  By Theorem~16.17 in \cite{Bil}, 
  $\{(\Cy[s]{w}(t),\,w\in\Ww'_K),\,t\geq 0\}$
  converges in law to
  $\{(\N{w}{t},\,w\in\Ww'_K),\,t\geq 0\}$.
  By \cite[Section~3.11, Exercise~23]{EK}, this also also proves that
  $\{(\Cy[s]{w}(t),\,w\in\Ww'),\,t\geq 0\}$
  converges in law to
  $\{(\N{w}{t},\,w\in\Ww'),\,t\geq 0\}$.  
\end{proof}

\begin{proof}[Proof of Theorem~\ref{mainthm:cycles}]
We will express the graph cycle counts as functionals
of $\big(\Cy[s]{w}(t),\,w\in\Ww'\big)$.  The number of $k$-cycles in $G(s+t)$ is given by
$\Cy[s]{k}(t)=\sum_{w\in\Ww_k/D_{2k}}\Cy[s]{w}(t)$.
Let
\[
\N{k}{t}=\sum_{w\in\Ww_k/D_{2k}}\N{w}{t}.
 \]
 By \thref{lem:EKcont} and
the continuous mapping theorem, $\{(\Cy[s]{k}(t),\,k\in\NN),\,t\geq 0\}$
converges in law to $\{(\N{k}{t},\,k\in\NN),\,t\geq 0\}$ as $s\to\infty$.

It is not hard to see that this limit is Markov and admits the following 
representation: Cycles of size $k$ appear spontaneously with rate
$\sum_{w\in\Ww_k/D_{2k}}\mu(w)$. The size of each
cycle then grows as a pure birth process with generator
$L f(i) = i\left( f(i+1) - f(i)\right)$.
The only thing we need to verify is that
\begin{align}
  \label{eq:ratecount}
\sum_{w\in\Ww_k/D_{2k}}\mu(w)=\sum_{w\in\Ww_k/D_{2k}}\frac{k-c(w)}{h(w)}=\frac{a(d,k)-a(d,k-1)}{2}.
\end{align}
This follows from Lemma~\ref{lem:wordcounts} in the following way. From that lemma we get
\[
\sum_{w \in \Ww_k/D_{2k}} \frac{c(w)}{h(w)}= (k-1) \sum_{w\in \Ww_{k-1}/D_{2(k-1)}}\frac{1}{h(w)}. 
\]
Thus
\[
\sum_{w\in\Ww_k/D_{2k}}\mu(w)=\sum_{w \in \Ww_k/D_{2k}}\frac{k}{h(w)} -  \sum_{w\in \Ww_{k-1}/D_{2(k-1)}}\frac{k-1}{h(w)}.
\]
The two terms on the right side of the above equation are simply half the total number of cyclically reduced words possible, of size $k$ and $k-1$ respectively. The total number of cyclically reduced words of size $k$ on an alphabet of size $d$ is by definition $a(d,k)$, showing \eqref{eq:ratecount} and completing the proof. 
\end{proof}

So far, we have considered $d$ as a constant. We now view
it as a parameter of the graph and allow it to vary.  
Recall that $(\pi_d^{(n)},\,n\geq 1)$ are towers of random
permutations independent for each $d$, and that
$G(n,2d)$ is defined from $\pi_1^{(n)},\ldots,\pi_d^{(n)}$.
For each $d$, we follow the construction used to define
$G(t)$ and construct $G(t,2d)$, a continuous-time version
of $(G(n,2d),\,n\in\NN)$.  Let $\Ww'(d)$ be the set of equivalence
classes of cyclically reduced words as before, with the parameter $d$
made explicit.  Define $\Cy[s]{d,k}(t)$
as the number of $k$-cycles in $G(s+t, 2d)$ and consider
the convergence of the two-dimensional field $\{(\Cy[s]{d,k}(t),\,
d,k\in\NN),\,t\geq 0\}$ as $s\to\infty$.

Again, we will consider this process as a functional of another one.
Define $\Ww'(\infty)=\bigcup_{d=1}^{\infty}\Ww'(d)$,
noting that $\Ww'(1)\subset\Ww'(2)\subset\cdots$.
For any $w\in\Ww'(d)$, the number of cycles in $G(s+t,2d')$
with word $w$ is the same for all $d'\geq d$.
We define $\Cy[s]{w}(t)$ by this, so that
\begin{align*}
  \Cy[s]{d,k}(t)=\sum_{\substack{w\in\Ww'(d)\\|w|=k}}\Cy[s]{w}(t).
\end{align*}
Then we will
prove convergence of $\{(\Cy[s]{w}(t),\,w\in\Ww'(\infty)),\,t\geq 0\}$
as $s\to\infty$.

To define a limit for this process,
we extend $\mu$ to a measure on all of $\Ww'(\infty)$ and define
the Poisson point process $\chi$ on $\Ww'(\infty)\times[0,\infty)$.
The rest of the construction is identical to the one in
Section~\ref{sec:limdef}, giving
us random variables $\big(\N{w}{t},\,w\in\Ww'(\infty)\big)$.

\begin{thm}\thlabel{thm:jointdconvergence}
  The process $\big(\Cy[s]{w}(\cdot),\,w\in\Ww'(\infty)\big)$ converges in law 
  as $s\to\infty$ to
  $\big(\N{w}{\cdot},\,w\in\Ww'(\infty)\big)$.
\end{thm}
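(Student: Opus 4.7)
The plan is to reduce this to the fixed-$d$ case already established in \thref{thm:convergence}. The key observation is that $\Ww'(\infty)=\bigcup_{d=1}^\infty \Ww'(d)$ is a countable, increasing union, and each element $w\in\Ww'(\infty)$ lies in $\Ww'(d)$ for some minimal $d(w)$ (the largest index of a generator appearing in $w$). Since the word formed by walking around a cycle in $G(s+t,2d')$ lies in $\Ww'(d)$ if and only if every edge of the cycle is labeled by one of $\pi_1,\ldots,\pi_d$, we have, for every $d'\geq d(w)$, that $\Cy[s]{w}(\cdot)$ depends only on $\pi_1^{(\cdot)},\ldots,\pi_d^{(\cdot)}$. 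In particular, restricting the field $(\Cy[s]{w}(\cdot),\,w\in\Ww'(\infty))$ to indices $w\in\Ww'(d)$ yields exactly the process studied in \thref{thm:convergence} for the $2d$-regular model.

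Similarly, the limit process $(\N{w}{\cdot},\,w\in\Ww'(\infty))$ restricts compatibly: the generator of $\wchain{w}$ only ever doubles letters already present in $w$, so a chain started at $w\in\Ww'(d)$ remains in $\Ww'(d)$ forever. Consequently, the restriction of $\chi$ to $\Ww'(d)\times[0,\infty)$ and the independent Markov chains attached to its atoms produce exactly the fixed-$d$ limit from \thref{thm:convergence}.

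For convergence in $D_{\RR^\infty}[0,\infty)$ with $\infty$ indexed by $\Ww'(\infty)$, by the same criterion used at the end of the proof of \thref{thm:convergence}, namely \cite[Section 3.11, Exercise 23]{EK}, it suffices to prove weak convergence of every finite-dimensional projection. So fix a finite set $F\subset\Ww'(\infty)$ and let $d=\max_{w\in F}d(w)$, so $F\subset\Ww'(d)$. By the two paragraphs above, the $F$-indexed marginal of $(\Cy[s]{w}(\cdot),\,w\in\Ww'(\infty))$ is a continuous (in fact coordinate-projection) image of the $\Ww'(d)$-indexed process, whose convergence in law to the $\Ww'(d)$-indexed limit is the content of \thref{thm:convergence}. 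Applying the continuous mapping theorem together with \thref{lem:EKcont} to the projection $D_{\RR^{|\Ww'(d)|}}[0,\infty)\to D_{\RR^{|F|}}[0,\infty)$ yields convergence of the $F$-marginals to those of $(\N{w}{\cdot},\,w\in\Ww'(\infty))$, and assembling these across all finite $F$ completes the proof.

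I do not expect any genuine obstacle, since the result is essentially a bookkeeping extension of \thref{thm:convergence}: the real work was done in that theorem, and the structural compatibility of the graph models and the limiting processes under $d'\geq d$ makes the inductive limit step automatic. The one point worth double-checking carefully is the consistency of the limit process under restriction, which holds because $\mu$ restricted to $\Ww'(d)$ agrees with the corresponding measure in Section~\ref{sec:limdef} for parameter $d$, and because the growth dynamics $\wchain{w}$ never introduce new letters.
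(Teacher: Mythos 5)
Your argument is correct and is essentially the paper's own proof: invoke \thref{thm:convergence} for each fixed $d$ (using the compatibility of both the cycle counts and the limiting object under restriction to $\Ww'(d)$, which the paper sets up just before the theorem), and then upgrade finite-index convergence to convergence in $D_{\RR^{\infty}}[0,\infty)$ exactly as at the end of the proof of \thref{thm:convergence}. The extra detail you supply about restriction consistency is a fair elaboration of what the paper leaves implicit, but it is not a different route.
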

\begin{proof}
  For every $d$, we have shown in \thref{thm:convergence} that
  $\big(\Cy[s]{w}(\cdot),\,w\in\Ww'(d)\big)$ converges in law 
  as $s\to\infty$ to
  $(\N{w}{\cdot},\,w\in\Ww'(d))$.
  The rest of the proof then just amounts to the statement that weak convergence
  in $D_{\RR^k}[0,\infty)$ for each $k$ amounts to convergence in $D_{\RR^{\infty}}[0,\infty)$,
  just as in the very end of the proof of \thref{thm:convergence}.
\end{proof}

\begin{thm}\thlabel{mainthm:intertwining}
There is a joint process convergence of $(C_{i,k}^{(s)}(t), \; k \in \NN,\; i \in [d],\; t \ge 0)$ to a limiting process $(\limitN_{i,k}(t),\; k \in \NN,\; i\in [d], \; t\ge 0)$. This limit is a Markov process whose marginal law for every fixed $d$ is described in Theorem~\ref{mainthm:cycles}.
Moreover, for any $d\in \NN$, the process $(\limitN_{d+1, k}(\cdot) - \limitN_{d,k}(\cdot), \; k \in \NN)$ is independent of the process $(\limitN_{i,k}(\cdot),\; k \in \NN,\; i\in [d])$ and evolves as a Markov process. 
Its generator (defined on functions dependent on finitely many coordinates) is given by
\[
Lf(x)= \sum_{k=1}^\infty k x_k\left[ f\left( x + e_{k+1} - e_k  \right) - f(x)\right] + \sum_{k=1}^\infty \nu(d,k) \left[ f(x + e_k) - f(x) \right], 
\] 
where $x$ is a nonnegative sequence, $(e_k, k \in \NN)$ are the canonical orthonormal basis of $\ell^2$, and 
\[
\nu(d,k)=\frac{1}{2}\left[ a(d+1, k) - a(d+1,k-1) - a(d,k) + a(d, k-1)  \right].
\]
\end{thm}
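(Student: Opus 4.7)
The plan is to deduce this from the joint convergence result \thref{thm:jointdconvergence} by a continuous-mapping argument, and then to read off the Markov property, the independence, and the generator directly from the Poisson-point-process construction of the limit. Writing
\[
C^{(s)}_{i,k}(t) \;=\; \sum_{w \in \Ww'(i),\; |w|=k} C^{(s)}_w(t),
\]
the $(i,k)$-indexed process is a countable coordinatewise sum of the word-indexed process. Applying \thref{lem:EKcont} together with the continuous mapping theorem yields weak convergence in $D_{\RR^\infty}[0,\infty)$ of $(C^{(s)}_{i,k}(\cdot))_{k,i\in\NN}$ to $(\limitN_{i,k}(\cdot))_{k,i\in\NN}$, where $\limitN_{i,k}(t) := \sum_{w\in\Ww'(i),\,|w|=k} \N{w}{t}$. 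Projecting to $i\in[d]$ gives the asserted joint convergence, and the marginal law for fixed $d$ matches \thref{mainthm:cycles} by the same identification used in its proof.

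The structural observation on which everything else rests is \emph{alphabet preservation}: the pure-birth chain $\wchain{u}$ on words only doubles letters already present, so no new letter is ever introduced. Hence for every $d$ both $\Ww'(d)$ and $\Ww'(d+1)\setminus\Ww'(d)$ are invariant under the word dynamics. I expect this to be the only genuinely new point; the rest of the argument is bookkeeping. Combined with Poisson thinning, invariance yields the independence claim: the restrictions of the driving Poisson process $\chi$ to $\Ww'(d)\times[0,\infty)$ and to $(\Ww'(d+1)\setminus\Ww'(d))\times[0,\infty)$ are independent PPPs, and the independent Yule-type chains started at their atoms remain forever in their respective invariant sets. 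Since $(\limitN_{i,k}(\cdot),\,i\in[d],\,k\in\NN)$ is a measurable function of the first PPP together with its chains, while
\[
\limitN_{d+1,k}(\cdot) - \limitN_{d,k}(\cdot) \;=\; \sum_{\substack{w\in\Ww'(d+1)\setminus\Ww'(d)\\|w|=k}} \N{w}{\cdot}
\]
is a measurable function of the second, the independence is immediate.

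The Markov property of the increment process then follows from the same representation: it is driven by an independent PPP on $(\Ww'(d+1)\setminus\Ww'(d))\times[0,\infty)$ together with independent Yule chains, and, just as in the paragraph following \thref{mainthm:cycles}, the growth dynamics depend only on the length of each cycle, so that summing over words of a common length loses no Markovian information. The same observation gives the Markov property of the full process $(\limitN_{i,k}(\cdot),\,i\in[d],\,k\in\NN)$, since it is the cumulative sum over $i$ of these independent Markov increment processes. To identify the generator, I will note that a length-$k$ cycle grows to length $k+1$ at rate $k$ (one rate-one clock per position available to double), producing the term $\sum_k k x_k[f(x+e_{k+1}-e_k)-f(x)]$. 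The spontaneous creation rate of length-$k$ cycles in the increment equals the total $\mu$-mass of $\{w\in\Ww'(d+1)\setminus\Ww'(d):|w|=k\}$, which by \eqref{eq:ratecount} applied at alphabet sizes $d+1$ and $d$ equals $\tfrac{1}{2}[a(d+1,k)-a(d+1,k-1)] - \tfrac{1}{2}[a(d,k)-a(d,k-1)] = \nu(d,k)$, yielding the term $\sum_k \nu(d,k)[f(x+e_k)-f(x)]$ and completing the identification of the generator.
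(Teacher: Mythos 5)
Your proposal is correct and follows essentially the same route as the paper: identify the limit via \thref{thm:jointdconvergence} and the continuous mapping theorem, observe that $\limitN_{d+1,k}-\limitN_{d,k}$ is the sum of $\N{w}{\cdot}$ over $w\in\Ww'(d+1)\setminus\Ww'(d)$ with $|w|=k$, deduce independence from the disjointness of the word sets, and compute the rates from \eqref{eq:ratecount}. Your explicit "alphabet preservation" remark merely spells out what the paper leaves implicit in the phrase "the set of words involved are disjoint," so nothing further is needed.
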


\begin{proof}
  Let
  \begin{align*}
    \N{d,k}{t}= \sum_{\substack{w\in\Ww'(d)\\|w|=k}}\N{w}{t}.
  \end{align*}
  By \thref{lem:EKcont}, the continuous mapping theorem, and
  \thref{thm:jointdconvergence}, $(\N{d,k}{\cdot},\,d,k\in\NN)$
  is the limit of $(\Cy[s]{d,k}(\cdot),\,d,k\in\NN)$ as $s\to\infty$.
  
 Let us now describe what the limiting process is. It is obvious that $(\N{d,k}{\cdot},\; k\in \NN, \; d \in \NN)$ is jointly Markov. For every fixed $d$, the law of the corresponding marginal is given by Theorem~\ref{mainthm:cycles}. To understand the relationship across $d$, notice that cycles of size $k$ in $G(t,2(d+1))$ consist of cycles of size $k$ in $G(t,2d)$ and the extra cycles that contain an edge labeled by $\pi_{d+1}$ or $\pi_{d+1}^{-1}$. Thus
 \begin{align}
 N_{d+1, k}(t) - N_{d, k}(t)= \sum_{\substack{w\in \Ww'(d+1)\backslash \Ww'(d)\\ \abs{w}=k}} N_w(t)
 \label{eq:d+1-d}
 \end{align}
 This process is independent of 
 $(N_{i, \cdot}$, $i\in [d])$, since the set of words involved are disjoint. Moreover, the rates for this process are clearly the following: cycles of size $k$ grow at rate $k$ and new cycles of size $k$ appear at rate $[a(d+1, k) - a(d+1, k-1) - a(d,k) + a(d, k-1)]/2$. This completes the proof of the result. 
\end{proof}

\section{Process limit for linear eigenvalue statistics}
\label{sec:processlimitforlinear}
\subsection{The limiting cycle structure}
As in Section~\ref{sec:eigenfluc}, we must transfer our results from cycles
to cyclically non-backtracking walks.
  Call a cyclically non-backtracking walk \emph{bad} if it is anything other
  than a repeated walk around a cycle.
\begin{prop}\thlabel{prop:nooverlaps}
  Fix an integer $K$.  There is a random time $T$, almost surely finite,
  such that there are no bad cyclically non-backtracking walks of
  length $K$ or less in $G(t)$ for all $t\geq T$.
\end{prop}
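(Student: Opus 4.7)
The plan is to characterize bad CNB walks by their underlying subgraphs and then exploit the dynamics to show that such subgraphs become extinct after a random finite time. A closed CNB walk whose underlying subgraph has cycle rank at most one is either impossible (trees admit no closed CNB walks) or a repeated traversal of a single simple cycle (not bad by definition). Therefore a bad CNB walk of length at most $K$ is supported on a connected subgraph of cycle rank at least two and at most $K$ edges; call such subgraphs \emph{bad subgraphs}. Those bad subgraphs minimal under inclusion have cycle rank exactly two and no pendant edges, so up to edge-labeled isomorphism there are finitely many such \emph{minimal bad templates} $H_1, \dots, H_M$, each of at most $K$ edges. Every bad subgraph in $G(t)$ contains a subdivision of some $H_i$.

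First I would show that the total number of minimal-template instances ever created throughout the evolution of $G(\cdot)$ is almost surely finite. By a routine first-moment argument (similar to the bound \eqref{eq:pbound} used in the proof of \thref{thm:processapprox}), $\E\,\#\{\text{copies of }H_i\text{ in }G_n\}=O(n^{v(H_i)-e(H_i)})=O(1/n)$, since minimality forces $e(H_i)=v(H_i)+1$. A \emph{new} copy of $H_i$ appearing at the insertion of vertex $n+1$ must contain $n+1$ (since all other edges were already present in $G_n$), so by vertex-exchangeability the expected number of new copies created at step $n+1$ is $v(H_i)\cdot\E\,\#\{\text{copies in }G_{n+1}\}/(n+1)=O(1/n^2)$. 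Summing over $n$ and over $i=1,\ldots,M$, the expected total number of creations is finite, so only finitely many creations occur almost surely.

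Next I would argue that each created instance of $H_i$ persists as a bad subgraph for only a finite time. Each edge of $G(t)$ is subdivided by a new vertex at rate $1$ (insertion rate $n+1$ times the chance $1/(n+1)$ that the edge in question is the chosen one), independently of $n$. The edge count of a given instance is thus a pure-birth chain with jump rate equal to the current count, and it reaches any threshold $K+1$ after a sum $\sum_{j=e(H_i)}^{K}\Exp(j)$ of independent exponentials, finite almost surely. Once the instance has more than $K$ edges it cannot support a length-$\leq K$ CNB walk; and by minimality of $H_i$, the grown instance contains no proper cycle-rank-$\geq 2$ subgraph either, since any such subgraph, upon un-subdividing, would yield a proper cycle-rank-$\geq 2$ subgraph of $H_i$, contradicting minimality. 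Hence no bad subgraph lurks inside the grown instance. Taking $T$ to be the maximum, over the almost surely finitely many creation events, of the subsequent destruction time gives the desired almost surely finite random time.

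The main obstacle is the minimality argument in the previous paragraph: one must verify that subdividing a minimal bad template cannot inadvertently introduce a smaller bad subgraph inside it. This rests on the fact that subdivision preserves cycle rank and induces a bijection between the cycle spaces of the template and its subdivision. A secondary essential point is reducing the per-insertion creation bound to $O(1/n^2)$ rather than the naive $O(1/n)$; this improvement comes from the observation that any newly formed template instance must touch the freshly inserted vertex, saving a factor of $n$ by exchangeability.
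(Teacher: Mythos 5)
Your proposal is correct, and it reaches the conclusion by a route that is organized differently from the paper's, even though both arguments run on the same two quantitative engines: a bad structure has more edges than vertices (cycle rank at least two), so its expected count in $G_n$ is $O(1/n)$, and any \emph{new} bad structure must contain the freshly inserted vertex, which buys the extra factor $1/n$ that makes the per-step expectations summable. The paper works at the level of walks rather than subgraphs: using the category machinery of \cite{LP}, it bounds the expected number of bad walks of length at most $r+1$ \emph{rooted at the new vertex} $n$ by $O(1/n^2)$, applies Borel--Cantelli, and then closes with a contradiction argument in forward time: if a bad walk of length at most $r$ persisted past the Borel--Cantelli threshold, a vertex would almost surely eventually be inserted into one of its edges, and at that moment the graph would contain a bad walk of length at most $r+1$ starting at the new vertex --- which is why the paper runs the first-moment bound at length $r+1$ rather than $r$. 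You instead decompose bad walks via their minimal supports (the finitely many rank-two, pendant-free templates with at most $K$ edges), bound the expected number of template instances \emph{ever created} over the whole evolution, and track each instance forward until subdivision pushes it past $K$ edges; the rate-one-per-edge subdivision (the reason for the paper's time change) replaces the paper's ``the walk is eventually hit'' step. Your version buys a slightly more structural statement (a.s.\ only finitely many dangerous subgraphs are ever born, each with an a.s.\ finite dangerous lifetime), at the cost of the template bookkeeping; the paper's version avoids classifying supports but needs the rooted-walk/category count and the length-$(r+1)$ trick. Two small points to tighten, neither a gap: a single insertion can subdivide several edges of one instance (one per permutation label), so the edge count is not exactly the pure-birth chain you describe --- but simultaneous subdivisions only shorten the destruction time, which is all you need; and in the minimality step you should say ``proper \emph{connected} cycle-rank-$\geq 2$ subgraph'' (a dumbbell subdivision does contain a disconnected rank-two subgraph), which suffices since walk supports are connected. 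Finally, it is worth stating explicitly the observation that closes your accounting: any minimal bad subgraph present at time $t$ is \emph{itself} one of the counted creation events (it became complete when its last edge appeared, hence contains that insertion's vertex, and your first-moment bound counts all copies through the new vertex, whether spontaneous or produced by subdividing an earlier instance), and being intact it is prior to its own destruction time, contradicting $t\geq T$.
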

\begin{proof}
  We will work with the discrete-time version of our process $(G_n,\,n\in\NN)$.
  We first define some machinery
  introduced in \cite{LP}.
  Consider some cyclically non-backtracking walk of length $k$ 
  on the edge-labeled complete graph $K_n$ of the form
  \begin{align*}
  \begin{tikzpicture}[baseline, auto]
    \begin{scope}[anchor=base west]
      \path (0,0) node (s0) {$s_0$}
            (s0.base east)+(1.1,0) node (s1) {$s_1$}
            (s1.base east)+(1.1,0) node (s2) {$s_2$}
            (s2.base east)+(1.1,0) node (s3) {$\cdots$}
            (s3.base east)+(1.1,0) node (s4) {$s_k=s_0.$};
    \end{scope}      
    \foreach\i [remember=\i as \lasti (initially 0)] in {1, ..., 3}
      \draw[->] (s\lasti.mid east)-- node {$w_\i$} (s\i.mid west);
    \draw[->] (s3.mid east)--node {$w_k$} (s4.mid west);
  \end{tikzpicture}
  \end{align*}
    Here, $s_i\in[n]$ and $w=w_1\cdots w_k$ is the word
    of the walk (that is, each $w_i$
    is $\pi_j$ or $\pi_j^{-1}$ for some $j$, indicating which
    permutation provided the edge for the walk).  
    We say that $G_n$ contains the walk if the random permutations
    $\pi_1,\ldots,\pi_d$ satisfy $w_i(s_{i-1})=s_i$.  In other words,
    $G_n$ contains a walk if considering both as edge-labeled directed
    graphs, the walk is a subgraph of $G_n$.
    
    If $(s'_i,\,0\leq i\leq k)$ is another walk with the same word, we say
    that the two walks are of the same \emph{category} if $s_i=s_j\iff
    s'_i=s_j'$.  In other words, two walks are of the same category
    if they are identical up to relabeling vertices.
    The probability that $G_n$ contains a walk depends only on its category.
    If a walk contains $e$ distinct edges, then
    $G_n$ contains the walk with probability at most $1/[n]_e$.
    
    Let $X_k^{(n)}$ be the number of bad walks of length $k$
    in $G_n$ that
    start at vertex $n$.
    We will first prove that with probability one, $X_k^{(n)}>0$
    for only finitely many $n$.
    Call a category bad if the walks in the category are bad.  
    Let $\Tt_{k,d}$ be the number
    of bad categories of walks of length $k$.  For any particular
    bad category whose walks contain $v$ distinct vertices, there
    are $[n-1]_{v-1}$ walks of that category whose first vertex is
    $n$.  Any bad walk contains
    more edges than vertices, so 
    \begin{align*}
      \E X_k^{(n)} \leq
        \frac{\Tt_{k,d}[n-1]_{v-1}}{[n]_{v+1}}\leq
     \frac{\Tt_{k,d}}{n(n-k)}.
     \end{align*}
     Since $X_k^{(n)}$ takes values in the nonnegative integers,
     $\P[X_k^{(n)}>0]\leq \E X_k^{(n)}$.
       By the Borel-Cantelli lemma, $X_k^{(n)}>0$ for only
       finitely many values of $n$.
       
     Thus, for any fixed $r+1$, there exists a random time $N$
     such that there are no bad walks on $G_n$ of length $r+1$ or less
     starting with vertex $n$,
     for $n\geq N$.  We claim that for $n\geq N$, there are no bad walks
     at all
     on $G_n$ with length $r$ or less.
     Suppose that $G_m$ contains some bad walk of length $k\leq r$, for some
     $m\geq N$.  As the graph evolves, it is easy to compute that with
     probability one, a new vertex is eventually inserted into an edge
     of this walk.
     But at the time $n>m\geq N$ when this occurs, $G_n$ will contain
     a bad walk of length $r+1$ or less starting with vertex $n$, a 
     contradiction.  Thus we have proven that $G_n$ eventually contains
     no bad walks of length $r$ or less.  The equivalent statement
     for the continuous-time version of the graph process follows easily
     from this.
\end{proof}

\newcommand{\nGamma}{\widehat{\Gamma}}
Define
\begin{align*}
  \nGamma_0(x) & = 1, \\
  \nGamma_{2k}(x) &= 2 T_{2k}(x) + \frac{2d-2}{(2d-1)^k}
  &\text{for $k \geq 1$,}\\
  \nGamma_{2k+1}(x) &= 2 T_{2k+1}(x)&\text{for $k \geq 0$.}
\end{align*}
Note that $\nGamma_i(x)$ is the same as $\Gamma_i(x)$ 
from Section~\ref{sec:flucresults}, except that
$x$ and $d$ are replaced by $2x$ and $2d$.
\begin{definition}\thlabel{def:tr}
  Let $G$ be a $2d$-regular graph on $n$ vertices.
  Let $f(x)$ be a polynomial expressed in the basis $\{\nGamma_i(x),\,i\geq 0\}$
  as
  \begin{align*}
    f(x) = \sum_{j=0}^k a_j\nGamma_j(x).
  \end{align*}
  We define $\tr f(G)$ as
  \begin{align*}
    \sum_{i=1}^n f(\lambda_i) - na_0,
  \end{align*}
  where $\lambda_1\geq\cdots\geq\lambda_n$ are the eigenvalues
  of the adjacency matrix of $G$ divided by $2\sqrt{2d-1}$.
\end{definition}
\begin{rmk}\thlabel{rmk:tr}
  The polynomial $f(x)-a_0$ is orthogonal to $1$ with respect to the Kesten--McKay law \eqref{eq:kmlaw},
  since $\nGamma_1(x),\nGamma_2(x),\ldots$ are orthogonal to $1$ with respect to this measure. 
  (To prove this, observe that each of these polynomials can
  be written in terms of the orthogonal polynomials of \cite[Example~5.3]{Sodin}.
  This is done in the proof of \cite[Proposition~32]{DJPP}.)
  This orthogonalization keeps $\tr f(G_n)$ of constant order when
  $n\to\infty$. One can calulate $a_0$ by integrating $f$ against the Kesten-McKay law:
  \begin{align*}
    a_0 = \int_{-2}^2 f(x)\frac{2d(2d-1)\sqrt{4-x^2}}{2\pi \bigl(4d^2 - (2d-1)x^2\bigr)}\,dx.
  \end{align*}
  The most important set of functions for us will be the Chebyshev
  polynomials. For $T_k(x)$ with $k\geq 1$,
  \begin{align*}
    a_0 &= \begin{cases}
      0 &\text{if $k$ is odd,}\\
      -\frac{d-1}{(2d-1)^{k/2}} & \text{if $k$ is even.}
    \end{cases}
  \end{align*}
\end{rmk}

       \newcommand{\BadW}[2][\infty]{B_{#2}^{(#1)}}
\begin{proof}[Proof of Theorem~\ref{mainthm:eigenvalues}]
  Let $\CNBW[s]{k}(t)$ denote the number of cyclically non-backtracking
  walks of length $k$ in $G(s+t)$.  We decompose these into
  those that are repeated walks around cycles of length $j$ for some
  $j$ dividing $k$, and the remaining bad walks, which
  we denote $\BadW[s]{k}(t)$, giving us
  \begin{align*}
    \CNBW[s]{k}(t)=\sum_{j\mid k}2j\Cy[s]{j}(t)+\BadW[s]{k}(t).
  \end{align*}
  
  Proposition~\ref{prop:nooverlaps} implies that
  \begin{align*}
    \lim_{s\to\infty}\P\big[\text{$\BadW[s]{k}(t)=0$ for all 
      $k\leq K$, $t\geq 0$}\big]=1.
  \end{align*}
  This together with \thref{lem:EKcont} and Theorem~\ref{mainthm:cycles} 
  shows that
  as $s$ tends to infinity,
  \begin{align}\label{eq:wrongbasis}
    \big(\CNBW[s]{k}(\cdot),\,1\leq k\leq K\big)\toL
    \bigg(\sum_{j\divides k}2j\N{j}{\cdot},\,1\leq k\leq K\bigg).
  \end{align}

  Now, we modify the polynomials $\nGamma_k$
  to form a new basis $\{f_k,\,k\in\NN\}$ with the right properties,
  which amounts to expressing each $\N{k}{t}$ as a linear combination
  of terms $\sum_{j\divides l}2j\N{j}{t}$.  We do this
  with the M\"obius inversion formula.  Define the polynomial
  \begin{align}
    f_k(x) = \frac{1}{2k}\sum_{j\divides k}\mu
    \left(\frac{k}{j}\right)
      (2d-1)^{j/2}\nGamma_j(x),\label{eq:fbasis}
  \end{align}
  where $\mu$ is the M\"obius function, given by
  \begin{align*}
    \mu(n) = \begin{cases}
       (-1)^a&\text{if $n$ is the product of $a$ distinct primes,}\\
       0&\text{otherwise.}
    \end{cases}
  \end{align*}
  From \thref{prop:cnbweigenvalues}, 
  \eqref{eq:wrongbasis}, and the continuous mapping
  theorem,
  \begin{align*}
    \bigl( \tr f_k(G(s+\cdot)), \, k \in [K]\bigr) \toL 
    \bigl(N_k(\cdot),\,k\in[K]\bigr)
  \end{align*}
  as desired.
  
  For an arbitrary polynomial~$f$, let $\hat{f}$ denote $f-a_0$, the orthogonalized
  version of $f$ from \thref{def:tr}. The polynomial $\hat{f}$ is a linear combination
  of $f_1,f_2,\ldots$, and so the process $\tr f(G(s+\cdot))$ converges to a linear
  combination of the coordinate processes of $(N_k(\cdot),\,k\in\NN)$.
\end{proof}
\subsection{Some properties of the limiting object}
  To prove the process convergence in \thref{mainthm:chebycov,thm:diagonallimit}, we need to know more
  about the limiting cycle process $(N_k(\cdot),\,k\in\NN)$.
  Though the limiting object is not defined in terms of graphs,
  we will nonetheless refer to $N_k(t)$ as the number of $k$-cycles at time~$t$
  in the limiting object. Similarly, if one of the Yule processes counted to define
  the limiting object increases from $j$ to $k$, we will refer to this as a cycle growing
  from size~$j$ to~$k$.

  We start our study of the limiting object
  by decomposing $N_k(t)$ into independent summands in terms
  of the process at time~$s$.
  We first give a definition related to this decomposition.
  \begin{definition}
    Let the random variable $\alpha_{s,t}(j,k)$ be
    the portion of $j$-cycles at time~$s$ that grow to be $k$-cycles
    at time~$t$ in the limiting object. When $s$ and $t$ are clear from context, we will just
    write this as $\alpha(j,k)$.
  \end{definition}
  \begin{lem}\thlabel{lem:Ealphaval}
    For $j\leq k$ and $s\leq t$,
    \begin{align}
      \E \alpha_{s,t}(j,k) = \binom{k-1}{k-j}e^{j(s-t)}\bigl(1-e^{s-t}\bigr)^{k-j}.\label{eq:Ealphaval}
    \end{align}
  \end{lem}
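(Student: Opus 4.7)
The plan is to identify $\E\alpha_{s,t}(j,k)$ with the transition probability of a Yule process and then apply the classical negative-binomial formula. Recall from the construction of the limiting object that, conditionally on the cycle configuration at time $s$, each $j$-cycle present at $s$ evolves independently as a pure birth process $(X(u),\,u\geq 0)$ with $X(0)=j$ and generator $Lf(i)=i(f(i+1)-f(i))$, i.e.\ a Yule process. Hence, conditionally on the number of $j$-cycles at time $s$, the fraction of them at state $k$ at time $t$ has conditional mean $p_{j,k}(t-s)$, where $p_{j,k}(\tau)\defeq \P[X(\tau)=k\mid X(0)=j]$. Taking expectations and using the tower property (with any convention in the measure-zero case that no $j$-cycles are present) yields $\E\alpha_{s,t}(j,k)=p_{j,k}(t-s)$.

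It remains to compute $p_{j,k}(\tau)$. I would use the standard superposition trick: a Yule process starting at $j$ is distributed as the sum of $j$ independent Yule processes each started at $1$. The marginal of a Yule process started at $1$ is geometric with success parameter $e^{-\tau}$, namely $p_{1,m}(\tau)=e^{-\tau}(1-e^{-\tau})^{m-1}$ for $m\geq 1$; this is quickly verified by solving the Kolmogorov forward equation
\begin{equation*}
  \partial_\tau p_{1,m}(\tau) = (m-1)\,p_{1,m-1}(\tau) - m\,p_{1,m}(\tau),\qquad p_{1,m}(0)=\1\{m=1\}.
\end{equation*}
The sum of $j$ i.i.d.\ geometric random variables with success parameter $e^{-\tau}$ has a negative-binomial distribution, giving
\begin{equation*}
  p_{j,k}(\tau) = \binom{k-1}{j-1}e^{-j\tau}(1-e^{-\tau})^{k-j},\qquad k\geq j.
\end{equation*}
Setting $\tau=t-s$ and using $\binom{k-1}{j-1}=\binom{k-1}{k-j}$ yields \eqref{eq:Ealphaval}.

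There is no real obstacle here: the lemma is essentially a restatement of the classical Yule transition probability, combined with the explicit conditional-independence structure of cycle growth built into the limiting object. The only subtlety is interpreting $\alpha_{s,t}(j,k)$ on the event that there are no $j$-cycles at time $s$, which does not affect the expectation as long as one adopts the natural convention (e.g.\ setting $\alpha=p_{j,k}(t-s)$ in that case, or equivalently defining $\alpha$ through the per-cycle indicator and averaging). All other steps are routine computations that do not require any structural properties of the random regular graph beyond what is already encoded in the limiting process.
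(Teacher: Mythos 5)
Your proof is correct and follows essentially the same route as the paper: identify $\E\alpha_{s,t}(j,k)$ with the Yule transition probability, use the superposition of $j$ independent Yule processes started from $1$, and conclude via the negative-binomial formula. The only (minor) difference is that you verify the geometric marginal $p_{1,m}(\tau)=e^{-\tau}(1-e^{-\tau})^{m-1}$ by checking the Kolmogorov forward equation, whereas the paper derives it from the holding times via an order-statistics argument; both are routine and valid.
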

  \begin{proof}
    The quantity $\E \alpha_{s,t}(j,k)$ is the probability that a Yule process
    started from $j$ is at $k$ at time~$t-s$.
    It is known that this is given
    by \eqref{eq:Ealphaval} (see \cite[Exercise~2.11]{Lig}, for example),
    but we will give a proof of it anyhow.
    
    We start with the case that $j=1$, and we assume $s=0$.
    Let $X_t$ be a Yule process from $1$. We would like to show that
    \begin{align}
      \P[X_t=k] = e^{-t}\bigl(1-e^{-t}\bigr)^{k-1},\label{eq:X_tgeo}
    \end{align}
    or equivalently that $X_t-1\sim\Geo\bigl(e^{-t}\bigr)$.
    Let $S_1,S_2,\ldots$ be the holding times of the Yule process. By definition,
    they are independent, with $S_i\sim\Exp(i)$.
    Then
    \begin{align*}
      \P[X_t> k] = \P[S_1+\cdots+S_{k}\leq t].
    \end{align*}
    Now, let $\tau_1,\ldots,\tau_k$ be i.i.d.\ with distribution $\Exp(1)$,
    and consider a counting process with these $k$ points as its jump times.
    Then the first holding time is $\Exp(k)$, the next $\Exp(k-1)$, and so on.
    Thus
    \begin{align*}
      \P[S_1+\cdots+S_{k}\leq t] = \P[\tau_1,\ldots,\tau_k\leq t] = \bigl(1-e^{-t}\bigr)^k,
    \end{align*}
    which shows that $X_t-1\sim\Geo\bigl(e^{-t}\bigr)$, confirming \eqref{eq:X_tgeo}.
    
    To extend this to $j>1$, let $Y_t$ be the sum of $j$ independent Yule processes starting
    from $1$.
    This makes $Y_t$ a Yule process starting from $j$. The random variable $Y_t-j$ is a sum of independent
    $\Geo(e^{-t})$ random variables and thus is negative binomial, the distribution of the number of
    failures before $j$ successes occur in independent Bernoulli trials with a success rate of $e^{-t}$.
    Consulting \cite[eq.~VI.8.1]{Feller} for a formula for this distribution,
    \begin{align*}
      \P[Y_t-j = k-j] &= \binom{k-1}{j-1}e^{-jt}(1-e^{-t}\bigr)^{k-j},
    \end{align*}
    which matches \eqref{eq:Ealphaval} after the substitution of $t-s$ for $t$.
  \end{proof}
  
  We now give our decomposition of $N_k(t)$:
  \begin{lem}\thlabel{lem:decomp}
    Let $j\leq k$ and $s\leq t$.
    The random variable $N_k(t)$ can be decomposed
    into independent, Poisson-distributed summands as
    \begin{align}
      N_k(t) = \sum_{j=1}^k \alpha_{s,t}(j,k)N_{j}(s) + Z.\label{eq:decompN}
    \end{align}
  \end{lem}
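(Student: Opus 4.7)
The plan is to exploit the Poisson point process structure of $\chi$ together with the independence of the Yule processes $X_{j,y}(\cdot)$ attached to distinct atoms. First, I would split the atoms contributing to $N_k(t)$ according to arrival time:
\[
N_k(t) = \sum_{\substack{(j,y)\in\chi\\ y\leq s}} \mathbf{1}\{X_{j,y}(t-y)=k\} \;+\; \sum_{\substack{(j,y)\in\chi\\ s<y\leq t}} \mathbf{1}\{X_{j,y}(t-y)=k\},
\]
identifying the second sum with $Z$. Since $Z$ is a deterministic function of the restriction of $\chi$ to $\NN\times(s,t]$ and the Yule processes started at those atoms, it is a count over an (independently marked) Poisson point process, hence Poisson, and is independent of every event generated by atoms arriving at or before time~$s$.

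Next, I would stratify the first sum by the state of each Yule process at time~$s$:
\[
\sum_{\substack{(j,y)\in\chi\\ y\leq s}} \mathbf{1}\{X_{j,y}(t-y)=k\} = \sum_{j'=1}^{k} \sum_{\substack{(j,y)\in\chi\\ y\leq s}} \mathbf{1}\{X_{j,y}(s-y)=j'\}\,\mathbf{1}\{X_{j,y}(t-y)=k\}.
\]
The inner sum for each $j'$ is precisely $\alpha_{s,t}(j',k)N_{j'}(s)$, counting the $j'$-cycles present at time~$s$ that have grown to size~$k$ by time~$t$. By the Markov property of the Yule process, the conditional probability that an atom in state~$j'$ at time~$s$ lies in state~$k$ at time~$t$ is $\E\alpha_{s,t}(j',k)$, as computed in \thref{lem:Ealphaval}.

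To obtain the joint Poisson distribution and independence of the summands, I would invoke the marking theorem: mark each atom $(j,y)\in\chi$ with $y\leq s$ by the pair $\bigl(X_{j,y}(s-y),\,X_{j,y}(t-y)\bigr)\in\NN\times\NN$. Because the Yule processes on different atoms are independent by construction and each mark depends only on its own atom's Yule process, the marks are independent across atoms, so the marked process is Poisson on $\NN\times[0,s]\times\NN\times\NN$. Restricting to disjoint mark values produces jointly independent Poisson counts; taking the marks $(j',k)$ for $j'=1,\ldots,k$ recovers the summands $\alpha_{s,t}(j',k)N_{j'}(s)$ as independent Poisson random variables. Their joint independence from $Z$ follows from the disjointness of $\NN\times[0,s]$ and $\NN\times(s,t]$ as subsets of the ground space of $\chi$. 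The main obstacle is largely bookkeeping: one must verify that the marking hypotheses really apply here, but this reduces to two facts already baked into the construction, namely that $\chi$ is Poisson and that the Yule processes attached to distinct atoms are independent.
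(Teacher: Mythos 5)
Your proposal is correct and takes essentially the same route as the paper: the same split of $N_k(t)$ into cycles already present at time~$s$ (stratified by their size $j'$ at time~$s$ and counted via $\alpha_{s,t}(j',k)N_{j'}(s)$) and cycles born spontaneously in $(s,t]$, with independence and Poissonianity coming from the Poisson structure of $\chi$ together with the independence of the attached Yule processes. The only cosmetic difference is that you extract the joint independent-Poisson statement from the marking/restriction theorem for Poisson point processes, whereas the paper asserts the independence directly from the construction and invokes Raikov's theorem for Poissonianity of each summand --- while itself remarking that the thinning argument (essentially what your marking step amounts to) is the more elementary way.
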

  \begin{proof}
    All $k$-cycles at time~$t$ are either $j$-cycles at time~$s$
    that grow to size~$k$, or they are spontaneously formed.
    The random variable $\alpha_{s,t}(j,k)N_j(s)$ is the number of $j$-cycles that grow to
    size~$k$, and we define $Z$ to be the number of cycles that form spontaneously at times in $(s,t]$
    and have size~$k$ at time~$t$. We then have \eqref{eq:decompN}, and we just need to 
    to confirm that the summands are independent and Poisson.
    Cycles at time~$s$ grow independently of each other and of the spontaneously formed cycles,
    which confirms the independence.
    By Raikov's theorem on decompositions of the Poisson distribution into independent
    sums \cite[19.2A]{Loeve}, each summand is Poisson, completing the proof.
    
    This last step is needlessly slick: The random variable $\alpha_{s,t}(j,k)N_j(s)$ is a thinned
    version of a Poisson random variable and hence Poisson itself. A similar argument applies to $Z$.
  \end{proof}

Next, we compute the covariance structure of our limiting object.
\begin{prop}\thlabel{cor:cyclecov}
For any $s \leq t$ and $j,k\in \NN$,
\[
\cov(N_k(t), N_j(s)) = 
\begin{cases}
\frac{a(d,j)}{2j}\binom{k-1}{k-j}e^{j(s-t)}\bigl(1-e^{s-t}\bigr)^{k-j} & \text{if $j\leq k$,}\\
0 &\text{otherwise.}
\end{cases}
\] 
\end{prop}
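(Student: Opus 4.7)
The plan is to apply \thref{lem:decomp}, which decomposes $N_k(t)$ into independent Poisson pieces indexed by the state at time~$s$, and then to take the covariance of each piece against $N_j(s)$.

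For the case $j \leq k$, Lemma~\ref{lem:decomp} gives
\[
N_k(t) = \sum_{i=1}^k \alpha_{s,t}(i,k) N_i(s) + Z,
\]
with the summands on the right mutually independent and $Z$ independent of the time-$s$ cycle counts. Under the stationary law, $(N_1(s), N_2(s), \ldots)$ are independent Poissons: summing the independent Poisson word counts $N_w(s)$ with $\abs{w}=i$ from Lemma~\ref{lem:chainprops}(iii) and using $\sum_{w\in \Ww_i/D_{2i}}1/h(w)=a(d,i)/(2i)$ (which follows from orbit counting, since each orbit has $2i/h(w)$ elements and the orbits partition $\Ww_i$), one sees that $N_i(s)\sim\Poi(a(d,i)/(2i))$ and the $N_i(s)$ are independent across $i$. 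Consequently the summand $\alpha_{s,t}(i,k)N_i(s)$ is independent of $N_j(s)$ whenever $i\neq j$, so only the $i=j$ term contributes:
\[
\cov(N_k(t),N_j(s)) = \cov\bigl(\alpha_{s,t}(j,k)N_j(s),\,N_j(s)\bigr).
\]
Conditional on $N_j(s)$, the variable $\alpha_{s,t}(j,k)N_j(s)$ is $\Bin(N_j(s),p)$ with $p=\E\alpha_{s,t}(j,k)=\binom{k-1}{k-j}e^{j(s-t)}(1-e^{s-t})^{k-j}$ by Lemma~\ref{lem:Ealphaval}, because each of the $N_j(s)$ many $j$-cycles evolves into a $k$-cycle at time~$t$ independently with this probability. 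The elementary identity $\cov(\Bin(N,p),N)=p\var(N)$ together with the Poisson variance $\var N_j(s)=a(d,j)/(2j)$ would then yield the claimed formula.

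For the case $j>k$, a $j$-cycle at time~$s$ can only grow, so it cannot contribute to $N_k(t)$. Formally, every term in the decomposition above is a function of $\{N_i(s):i\le k\}$ together with the growth and spontaneous-birth randomness in $(s,t]$, all independent of $N_j(s)$ when $j>k$; hence the covariance vanishes.

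The only mildly delicate point is justifying the Binomial structure along with the mutual independence of the summands in Lemma~\ref{lem:decomp}, but both are immediate from the construction of the limiting process: distinct cycles at time~$s$ evolve through independent Yule chains, and spontaneous arrivals after time~$s$ come from an independent Poisson point process. Everything else is bookkeeping.
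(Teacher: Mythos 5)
Your proof is correct and follows essentially the same route as the paper: decompose $N_k(t)$ via \thref{lem:decomp}, note that only the $i=j$ summand correlates with $N_j(s)$, and plug in $\E\alpha_{s,t}(j,k)$ from \thref{lem:Ealphaval}. Your explicit use of the conditional-binomial identity $\cov(\Bin(N,p),N)=p\var(N)$ is just a slightly more detailed way of carrying out the "take expectations" step the paper leaves implicit.
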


\begin{proof}
   Suppose $j>k$. As $\{N_i(s),\, i\geq 1\}$ are independent,
   the decomposition~\eqref{eq:decompN} shows that $N_k(t)$ is independent of $N_j(s)$.
   Intuitively, cycles of size greater than $k$ at time~$s$ do not affect
   the cycles of size~$k$ at time~$t$.

When $j\leq k$, the result follows immediately from \thref{lem:Ealphaval,lem:decomp} by decomposing
$N_k(t)$ and taking expectations.
\end{proof}

  \subsection{The process convergence}

  \begin{lem}\thlabel{lem:prm}
    Let $\xi$ be a Poisson random measure on $[0,\infty)$
    with arbitrary $\sigma$-finite intensity measure.
    Let $T_1,T_2,\ldots$ denote the atoms of $\xi$.
    Let $(\tau_i,\,i\geq 1)$ be arbitrary nonnegative
    i.i.d.\ random variables.
    Form a new point process $\zeta$ with atoms
    $T_i+\tau_i$.
    Then $\zeta$ is also a Poisson random measure on $[0,\infty)$.
  \end{lem}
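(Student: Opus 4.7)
The plan is to realize $\zeta$ as the image of a two-dimensional Poisson random measure under the addition map, and then invoke the standard marking and mapping theorems for Poisson point processes. Let $\mu$ denote the intensity measure of $\xi$ and $\nu$ the common law of the $\tau_i$ on $[0,\infty)$.

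First, I would form the marked point process $\eta$ on $[0,\infty)\times[0,\infty)$ whose atoms are $\{(T_i,\tau_i) : i\geq 1\}$. Because the marks $\tau_i$ are i.i.d.\ and independent of $\xi$, Kingman's marking theorem gives that $\eta$ is a Poisson random measure on $[0,\infty)^2$ with intensity $\mu\otimes\nu$; this is $\sigma$-finite since $\nu$ is a probability measure and $\mu$ is $\sigma$-finite.

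Next, let $\Phi\colon[0,\infty)^2\to[0,\infty)$ be the addition map $\Phi(t,s)=t+s$. The pushforward measure $\zeta\defeq\eta\circ\Phi^{-1}$ is precisely the point measure with atoms $\{T_i+\tau_i\}$. By the mapping (or ``displacement'') theorem for Poisson random measures, provided the pushforward intensity is $\sigma$-finite, $\zeta$ is itself a Poisson random measure with intensity $(\mu\otimes\nu)\circ\Phi^{-1}$.

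The one detail to verify is $\sigma$-finiteness of the pushforward, and this is where $\tau_i\geq 0$ enters. Since each mark is nonnegative, $\Phi^{-1}([0,T])\subset [0,T]\times[0,T]$, so
\begin{align*}
  \bigl((\mu\otimes\nu)\circ\Phi^{-1}\bigr)([0,T])\leq \mu([0,T])\nu([0,T])\leq \mu([0,T]).
\end{align*}
Choosing an exhaustion $[0,T_n]\uparrow[0,\infty)$ on which $\mu$ is finite (which exists since $\mu$ is $\sigma$-finite on $[0,\infty)$) transfers $\sigma$-finiteness to the pushforward, and in fact shows that $\zeta$ is locally finite whenever $\mu$ is. There is no serious obstacle here; the content is entirely in the two named theorems, and the role of the hypothesis $\tau_i\geq 0$ is only to ensure atoms cannot migrate in from infinity, keeping the new intensity well-behaved.
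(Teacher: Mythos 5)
Your proof is essentially the paper's: you mark each atom $T_i$ with its independent $\tau_i$ to obtain a Poisson random measure on $[0,\infty)^2$ with intensity $\mu\otimes\nu$, then push it forward under the deterministic map $(t,s)\mapsto t+s$, which is exactly the two-step argument the paper carries out by citing the marking and transformation results for Poisson random measures. One small caveat: your claim that $\sigma$-finiteness of $\mu$ yields an exhaustion by intervals $[0,T_n]$ of finite $\mu$-measure is not true in general ($\sigma$-finiteness only gives a countable cover by finite-measure sets, not intervals), but this side verification is not needed for the mapping step, and in the paper's application the intensity is locally finite anyway.
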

  \begin{proof}
    Let $\mu$ be the intensity measure of $\xi$, and let $P$ be the distribution
    of $\tau_i$.
    We have made $\xi$ into a marked point process, giving
    each atom $T_i$ an independent mark $\tau_i$.
    This is equivalent
    to defining $\{(T_i,\tau_i),\,i\geq 1\}$ to be the atoms
    of a Poisson random measure on $[0,\infty)^2$ with intensity
    measure $\mu\otimes P$ \cite[Corollary~VI.3.5]{Cinlar}. The point process~$\zeta$
    is a deterministic transformation of this one by the map
    $(x,y)\mapsto x+y$, and is hence also a Poisson random measure \cite[Remark~VI.2.4b]{Cinlar}.
  \end{proof}

  The following technical lemma will be used in both \thref{mainthm:chebycov,thm:diagonallimit}.
\begin{lem}\thlabel{lem:tightness}
  Fix $k$ and $T$, and consider $\bigl\{(2d-1)^{-k/2}\bigl(2kN_k(\cdot)-a(d,k)\bigr),\,d\geq 1\bigr\}$,
  a collection of processes in $D[0,T]$ indexed by $d$. This collection is tight.
\end{lem}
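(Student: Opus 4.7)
My plan is to decompose $2kN_k(\cdot)$ via Doob--Meyer into a purely discontinuous martingale plus a predictable finite-variation part, then apply Rebolledo's tightness criterion to the martingale and Kolmogorov's continuity criterion to the compensator. Throughout set $X^d(t) := (2d-1)^{-k/2}\bigl(2kN_k(t)-a(d,k)\bigr)$ and $c := 2k/(2d-1)^{k/2}$. Marginal tightness is automatic: by \thref{lem:chainprops}(\ref{item:invariant}) the process runs in stationarity with $N_w(t)\sim\Poi(1/h(w))$ independent across $w$, and summing over $\Ww_k/D_{2k}$ (whose generic orbit has size $2k/h(w)$) gives $N_k(t)\sim\Poi(a(d,k)/2k)$, so $\var X^d(t) = 2ka(d,k)/(2d-1)^k = O_k(1)$ uniformly in $t$ and $d$.

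Reading off \thref{lem:chainprops}(\ref{item:generator}), $N_k$ jumps by $+1$ at rate $\lambda^+(t) := \tfrac{1}{2}(a(d,k)-a(d,k-1)) + (k-1)N_{k-1}(t)$ (spontaneous creation plus growth from $k-1$) and by $-1$ at rate $\lambda^-(t) := kN_k(t)$ (growth to $k+1$). Accordingly,
\[
  X^d(t) - X^d(0) = \tilde A(t) + \tilde M(t), \qquad
  \tilde A(t) := c\!\int_0^t\!(\lambda^+-\lambda^-)(s)\,ds,
\]
where $\tilde M$ is a local martingale with all jumps of size $c$ and predictable quadratic variation $\langle\tilde M\rangle_t = c^2\int_0^t(\lambda^++\lambda^-)(s)\,ds$. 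Plugging in the stationary means $\E N_j = a(d,j)/2j$ yields $\E[\lambda^++\lambda^-] = a(d,k)$ and $\E[\lambda^+-\lambda^-] = 0$.

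The maximum jump of $\tilde M$ is the deterministic constant $c\to 0$, while $\E\langle\tilde M\rangle_T = c^2Ta(d,k) = O_k(T)$ uniformly in $d$; Rebolledo's theorem therefore gives tightness of $\tilde M$ in $D[0,T]$. For the compensator, the centered integrand reads $(k-1)(N_{k-1}-\E N_{k-1}) - k(N_k-\E N_k)$, and the product-Poisson invariant measure makes $N_{k-1}(s)$ and $N_k(s)$ independent at each fixed $s$, so
\[
  c^2 \var\bigl(\lambda^+-\lambda^-\bigr) = c^2\bigl((k-1)^2\var N_{k-1} + k^2\var N_k\bigr) = O_k\!\bigl(a(d,k)/(2d-1)^k\bigr) = O_k(1).
\]
Cauchy--Schwarz with stationarity of the integrand then yields $\E|\tilde A(t)-\tilde A(s)|^2 \leq (t-s)\int_s^t c^2\E[(\lambda^+-\lambda^-)^2]\,du = O_k((t-s)^2)$, and Kolmogorov's criterion with $\gamma=2$, $1+\alpha=2$ gives tightness of $\tilde A$ in $C[0,T]$. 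The sum $X^d = X^d(0)+\tilde A+\tilde M$ is therefore tight in $D[0,T]$.

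The main subtlety is the $(t-s)^2$ bound on $\tilde A$: this relies simultaneously on the vanishing of the stationary mean of $\lambda^+-\lambda^-$ and on the stationary independence of $N_{k-1}$ and $N_k$ coming from the product form of the invariant measure. Without these two cancellations one would only obtain an $O(t-s)$ estimate, which would not feed into Kolmogorov's criterion.
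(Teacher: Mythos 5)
Your proof is correct, but it takes a genuinely different route from the paper's. The paper never touches martingale machinery: it splits $N_k(t)-N_k(0)=Y_k(t)-Z_k(t)$ into the counting processes of up-jumps and down-jumps, proves by induction (using \thref{lem:prm}, the displacement lemma for Poisson random measures, together with stationarity) that $Y_k$ and $Z_k$ are each homogeneous Poisson processes of rate $a(d,k)/2$, and then writes $X_d$ as a centered initial value plus two centered, scaled Poisson processes; each piece converges weakly to a Gaussian/Brownian limit in $C[0,T]$, and tightness of $X_d$ follows because addition is continuous at continuous paths. You instead use the Doob--Meyer decomposition of the jump process with intensities $\lambda^\pm$ read off from the generator, a martingale CLT/Rebolledo-type criterion for the compensated part, and a Kolmogorov estimate for the drift, exploiting the two cancellations you correctly identify ($\E[\lambda^+-\lambda^-]=0$ and independence of $N_{k-1}(s)$ and $N_k(s)$ under the product-Poisson stationary law). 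What the paper's route buys is a structural fact (exact Poissonianity of the up/down counts) that makes the limit of each piece explicit with no stochastic-calculus overhead; what your route buys is genericity -- it needs only the stationary means and variances and the jump intensities, and would survive perturbations of the model under which $Y_k,Z_k$ are no longer exactly Poisson.

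Two steps you state too quickly, though both are easily repaired from estimates you already have. First, tightness of $\tilde M$ via Rebolledo does not follow from the bound $\E\langle\tilde M\rangle_T=O_k(T)$ alone; one needs C-tightness (or convergence in probability) of the bracket process $\langle\tilde M\rangle_\cdot=c^2\int_0^\cdot(\lambda^++\lambda^-)\,ds$. This follows by the same computation you do for $\lambda^+-\lambda^-$: by Cauchy--Schwarz and stationarity, $\var\bigl(c^2\int_s^t(\lambda^++\lambda^-)\,du\bigr)\leq c^4(t-s)^2\var(\lambda^++\lambda^-)=O_k\bigl((t-s)^2 a(d,k)/(2d-1)^{2k}\bigr)\to 0$, so $\langle\tilde M\rangle_t\to 4k^2t$ in probability and the martingale CLT applies. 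Second, addition is not continuous on $D[0,T]\times D[0,T]$, so ``the sum is therefore tight'' needs the observation that $X^d(0)$ and $\tilde A$ are C-tight (constant in $t$, respectively continuous with the Kolmogorov bound), and that a tight sequence plus a C-tight sequence is tight; this is the same point the paper handles by noting that $(x,y,z)\mapsto x+y-z$ is continuous at $C^3[0,T]$.
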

\begin{proof}
  Fix $d$, and  define $Y_k(t)$ as the process that starts at $0$ and increases at each point of
  increase of $N_k(t)$; define $Z_k(t)$ as the process that starts at $0$ and increases
  at each point of decrease of $N_k(t)$. Thus, we have $N_k(t)-N_k(0)=Y_k(t)-Z_k(t)$.
  As $N_k(t)$ almost surely jumps
  only by $1$ and $-1$, both $Y_k(t)$ and $Z_k(t)$ are counting processes.
  Observe that $Y_k(t)$
  counts $k$-cycles formed spontaneously or by growth in the time interval $(0,t]$,
  and $Z_k(t)$ counts $k$-cycles that jump to size~$k+1$ in the time interval $(0,t]$.
  \begin{claim} 
    The processes $Y_k(t)$ and $Z_k(t)$ are (non-independent) Poisson processes
    with rate $a(d,k)/2$.
  \end{claim}
  \begin{proof}
  We argue by induction on $k$. As our base case,
  the process $Y_1(t)$ jumps
  when $1$-cycles form spontaneously, which happen according to a Poisson process
  of rate~$a(d,1)/2$. Now, assume that $Y_k(t)$ is a Poisson process of rate~$a(d,k)/2$.
  First, we argue that $Z_k(t)$ is as well.
  Let $\xi$ be the Poisson point process whose atoms are the points of increase of $Y_k(t)$,
  with an extra $N_k(0)$ atoms at $0$. Each atom $T_i$ of $\xi$ is the time that a $k$-cycle
  forms (or $0$ if it was present from the start). Let $\tau_i$ be the amount of time after $T_i$
  that the corresponding $k$-cycle jumps to $k+1$. Then $(\tau_i,\,i\geq 1)$ are i.i.d., 
  and $T_i+\tau_i$ are the jump times of $Z_k(t)$. By \thref{lem:prm},
  $Z_k(t)$ is a (possibly inhomogeneous) Poisson process.
  By the stationarity of the limiting object, $\E N_k(t)=\E N_k(0)$, and hence
  $\E Z_k(t)=\E Y_k(t)=a(d,k)/2$, showing that $Z_k(t)$ is a homogeneous Poisson process
  with rate $a(d,k)/2$.
  
  To complete the induction, we must show that $Y_{k+1}(t)$ is a Poisson process
  of rate $a(d,k+1)/2$. To see this, observe that $Z_k(t)$ counts all $(k+1)$-cycles
  that form by growth in the time interval $(0,t]$. As $Y_{k+1}(t)$ counts all $(k+1)$-cycles
  that form by growth or spontaneously in that interval, it is the sum of $Z_k(t)$
  and an independent Poisson process of rate $\bigl(a(d,k+1)-a(d,k)\bigr)/2$. Thus
  it is a Poisson process of rate $a(d,k+1)/2$.
  \end{proof}
  Now, fix $k$ and let
   $ X_d(t) = (2d-1)^{-k/2}\bigl(2kN_k(t) - a(d,k)\bigr)$.
   We need to show that $\{X_d,\,d\geq 1\}$ is tight.
  As $N_k(t)-N_k(0) = Y_k(t)-Z_k(t)$, we have
  \begin{align*}
    \frac{1}{2k}X_d(t)  &= A_d+B_d(t) - C_d(t)
  \end{align*}
  where
  \begin{align*}
    A_d &= (2d-1)^{-k/2}\biggl(N_k(0) - \frac{a(d,k)}{2k}\biggr),\\
    B_d(t) &= (2d-1)^{-k/2}\biggl(Y_k(t)-\frac{a(d,k)t}{2}\biggr),\\
    C_d(t) &= (2d-1)^{-k/2}\biggl(Z_k(t)-\frac{a(d,k)t}{2}\biggr).
  \end{align*}
  with $Y_k(t)$ and $Z_k(t)$ implicitly depending on $d$.
  
  As $d\to\infty$, the random variable $A_d$ converges in law to Gaussian, and $B_d(t)$ and $C_d(t)$
  converge in law to Brownian motion.
  Viewing $A_d$, $B_d(t)$, and $C_d(t)$ as elements of $D[0,T]$, each thus converges
  weakly to a limit in $C[0,T]$. As tightness in a product space is equivalent to tightness
  of the marginals, the sequence $(A_d,B_d,C_d)$ is tight in $D^3[0,t]$, with all weak limit
  points lying in $C^3[0,t]$.
  
  Given a subsequence of $\{X_d(\cdot)$\}, choose a further subsequence $\{X_{d_i}(\cdot)\}$
  such that $(A_{d_i},B_{d_i},C_{d_i})$ converges. The map 
  \begin{align*}
    (x(t),y(t),z(t))\mapsto x(t) + y(t) - z(t)
  \end{align*}
  is not in general continuous from $D^3[0,T]\to D[0,T]$, but it is continuous
  at $C^3[0,T]$. (This holds because Skorokhod convergence to a continuous function implies
  uniform convergence.) By the continuous mapping theorem, $A_{d_i}+B_{d_i}-C_{d_i}$
  has a weak limit. Thus we have shown that every subsequence of $\{X_d(\cdot)\}$ has a subsequence
  with a weak limit.
\end{proof}

\begin{proof}[Proof of Theorem~\ref{mainthm:chebycov}]
  By \thref{prop:cnbweigenvalues} and \eqref{eq:wrongbasis},
  \begin{align}
2 \tr  T_k\left( G(\infty + t) \right)  = (2d-1)^{-k/2} \sum_{j\mid k} 2j \limitN_j(t).\label{eq:cheblimit}
  \end{align}
  
Now, we will prove finite-dimensional convergence to the stated Ornstein-Uhlenbeck process.
Fix $K\in\NN$ and a sequence of times $t_1<\cdots<t_n$.
We first show that the random vector
\begin{align}
  \Bigl( (2d-1)^{-k/2}\bigl(N_k(t_i)-\E N_k(t_i)\bigr),\;k\in[K],\,i\in[n]\Bigr) \label{eq:mvg}
\end{align}
converges to a multivariate Gaussian, using a slight extension of the decomposition
from \thref{lem:decomp}.
Let $\Ss$ be the set of sequences $s_1,\ldots,s_n$ with $s_i\in\{\delta\}\cup\NN$
that satisfy a certain set of conditions.
Each sequence will represent the history of a growing cycle, with $s_i$ the size of
the cycle at time $t_i$. The symbol $\delta$ will mean ``not yet born.''
Thus, a sequence is in $\Ss$ if it consists of zero or more $\delta$s followed
by a nondecreasing sequence of positive integers. We do not include the sequence of all $\delta$s
in $\Ss$.

Let $S=(s_1,\ldots,s_n)\in\Ss$ and suppose that $s_i$ is the first non-$\delta$ in the sequence.
When $i=1$, define $X_S$ as the number of cycles that have size~$s_j$ at time $t_j$
for all $1\leq j\leq n$. If $i>1$, 
define $X_S$ as the number of cycles that form spontaneously between times $t_{i-1}$
and $t_i$ and have size $s_{j}$ at time~$t_{j}$ for $j\geq i$.

We claim that $\{X_S,\,S\in\Ss\}$ is a collection of independent Poisson random variables.
The number of cycles of each size at time~$t_1$
and the number of cycles of each size at time~$t_i$ that formed after time~$t_{i-1}$
for all $2\leq i\leq n$
are independent Poissons. Each of these random variables is then thinned to form
$\{X_S,\,S\in\Ss\}$, which thus consists of independent Poissons as well.

Now, we will write \eqref{eq:mvg} in terms of this Poisson field.
First, let $\varphi(S)$ denote the first non-$\delta$ character in $S$,
and consider the normalized field
\begin{align}
  \Bigl\{(2d-1)^{-\varphi(S)/2}(X_S-\E X_S),\,S\in\Ss\Bigr\}.\label{eq:scaledpfield}
\end{align}
Fix some $S=(s_1,\ldots,s_n)\in\Ss$ with $s_i=\varphi(S)$ the first non-$\delta$ character. 
The expected number of cycles that form spontaneously between
times $t_{i-1}$ and $t_{i}$ with size~$\varphi(S)$ at time $t_i$ is $O\bigl((2d-1)^{\varphi(S)}\bigr)$
(here, we are interpreting all elements of the big-O expression as constants except for $d$).
The portion of these that grow according to $S$ is in expectation a fixed fraction of these,
with no dependence on $d$.
Thus $\E X_S = O\bigl((2d-1)^{\varphi(S)}\bigr)$.
By the Gaussian approximation to Poisson, the field~\eqref{eq:scaledpfield} converges
as $d\to\infty$ to independent Gaussians.

For each $k\in[K]$ and $i\in[n]$, we have
\begin{align*}
  (2d-1)^{-k/2}\bigl(N_k(t_i) -\E N_k(t_i)\bigr) = \sum_S (2d-1)^{-k/2}(X_S-\E X_S),
\end{align*}
where the sum ranges over all $S=(s_1,\ldots,s_n)\in\Ss$ with $s_i=k$.
Every term of the sum
with $\varphi(S)<k$ vanishes in probability, and the terms with $\varphi(S)=k$ are elements
of the field \eqref{eq:scaledpfield}. By the Gaussian convergence of \eqref{eq:scaledpfield}, the random vector \eqref{eq:mvg}
converges to Gaussian as $d\to\infty$. 

Now, consider a finite-dimensional slice of the process
\begin{align}
  \left( \tr T_{k} \left(G(\infty + t) \right) - \E \tr T_{k} \left(G(\infty + t) \right),\; t\ge 0, \; k\in \NN \right),\label{eq:process}
\end{align}
choosing finitely many choices of $k$ and $t$ and forming a random vector.
Each component has the form given by \eqref{eq:cheblimit} for some $k$ and $t$.
The scaling causes all the terms of the sum there with $j<k$ to vanish in probability.
Subtracting off these terms, we have a random vector whose components are a subset of those
of \eqref{eq:mvg}. Thus the finite-dimensional distributions of \eqref{eq:process}
converge to Gaussian as $d\to\infty$.

Next, we compute the covariances.
For a fixed $d$,  from \eqref{eq:cheblimit} we have
\begin{align}\label{eq:covt}
\cov\left( \tr T_i\left( G(\infty + t) \right), \tr T_j\left( G(\infty + s) \right)  \right)= \frac{1}{4}\left( 2d-1 \right)^{-(i+j)/2}\sum_{k\mid i, \; l \mid j} 4 lk \cov\left( \limitN_k(t), \limitN_l(s) \right)
\end{align}
for $s\leq t$.
We now fix any $i, j, t, s$ and take $d$ to infinity, using the following expression
from \thref{cor:cyclecov}:
\[
\cov(N_k(t), N_l(s)) = 
\begin{cases}
\frac{a(d,l)}{2l}\binom{k-1}{k-l} p^{l} (1-p)^{k-l}, \qquad p=e^{s-t}, \quad \text{if $k \ge l$}.\\
0, \quad \text{otherwise}.  
\end{cases}
\] 
 Any term $a(d,r)$ is asymptotically the same as $(2d-1)^r$. Thus the highest order term in $d$ on the right side of \eqref{eq:covt} is $(2d-1)^{\min(i, j)}$. Unless $i=j$, this term is negligible compared to $(2d-1)^{(i+j)/2}$. This shows that the limiting covariance is zero unless $i=j$. 
On the other hand, when $i=j$, every term on the right side of \eqref{eq:covt} vanishes, except when $k=i= l=j$. Hence,
\[
\lim_{d\rightarrow \infty}\cov\left( \tr T_i\left( G(\infty + t) \right), \tr T_i\left( G(\infty + s) \right)  \right)=\frac{1}{4}2i p^{i}= \frac{i}{2}e^{i(s-t)}.
\]
Thus we have shown convergence of the finite-dimensional distributions
to the limiting process.

To show the process convergence, we appeal to \thref{lem:tightness}.
This lemma shows that all but the highest term of the sum in \eqref{eq:cheblimit}
vanishes in probability, and the remainder is a tight sequence in $d$.
This immediately gives the convergence of
  \[
\left( \tr T_{k} \left(G(\infty + t) \right) - \E \tr T_{k} \left(G(\infty + t) \right),\; t\ge 0, \; k\in \NN \right)
\]
to the limiting process not in $D_{\RR^{\infty}}[0,\infty)$, but in $D^{\infty}[0,\infty)$.
As the limit lies in $C^{\infty}[0,\infty)$, an argument as in the end of
\thref{lem:tightness} shows that the convergence holds in $D_{\RR^{\infty}}[0,\infty)$ as well.
\end{proof}

\subsection{Diagonal convergence}

We now consider eigenvalue statistics where $d$ increases with the size of the graph.
One approach would be to give a quantitative version of
\thref{thm:convergence} that would hold even as $d$ grew, possibly with some
conditions on its growth. We have opted for something much simpler, choosing $d$ to grow
however slowly is necessary to make the convergence still hold.
The point here is more to explain what \thref{mainthm:chebycov} has to do with the GFF
than to study the graph process with $d$ growing.


\begin{proof}[Proof of \thref{thm:diagonallimit}]
  \newcommand{\bp}[2][\infty]{\Theta_{#2}^{(#1)}}
  Fix $K\in\NN$ and $T>0$, and
  let
  \begin{align*}
    \bp[s]{d}(t)=
      \Bigl(\tr T_k(G(s+t,2d)) - \E\tr T_k(G(\infty+t,2d)),\,1\leq k\leq K\bigr).
  \end{align*}
  Considering this as a random element of $D_{\RR^K}[0,T]$,
  \thref{mainthm:cycles} shows that with $d$ held fixed, $\bp[s]{d}(\cdot)$ converges weakly to a limit
  $\bp{d}(\cdot)$ described by \eqref{eq:cheblimit}.
  \thref{mainthm:chebycov} then shows that $\bp{d}(\cdot)$ converges weakly
  to a collection of independent Ornstein-Uhlenbeck processes as $d\to\infty$.
  To take a diagonal limit, we simply take $d$ to grow slowly enough
  that we can almost consider it as fixed.
  The argument will be highly technical but with little more than
  formal content.
  
  Let $\rho$ be a metric for the topology of weak convergence for probability
  measures on $D_{\RR^K}[0,T]$, and use $\rho(X,Y)$ as a shorthand for
  the distance in this metric between the laws of $X$ and $Y$.
  Recall the processes $\Cy[s]{d,k}(t)$ and $N_{d,k}(t)$ from \thref{mainthm:intertwining}.
  Also recall that $\BadW[s]{k}(t)$ is the number of bad cyclically
  non-backtracking walks of length~$k$ in $G(s+t,2d)$, and introduce
  the notation $\BadW[s]{d,k}(t)$ to indicate the dependence on $d$.
  For each $d$, choose $s_d$ large enough that for all $s\geq s_d$,
  \begin{align}
    \rho\Bigl(\bp[s]{d+1},\,\bp{d+1}\Bigr) &< \frac1d,\label{eq:sd1}\\
    \rho\Bigl( \bigl(\Cy[s]{d,k},\,\Cy[s]{d+1,k}\bigr)_{k=1}^K,\,
       \bigl(N_{d,k},\,N_{d+1,k}\bigr)_{k=1}^K\Bigr) &<\frac1d,\label{eq:sd2}\\
       \P\Bigl[\text{$\BadW[s]{d+1,k}(t)>0$ for any
           $k\leq K$, $0\leq t\leq T$}\Bigr] &<\frac1d,\label{eq:sd3}\\
    \intertext{and for all $1\leq k\leq K$,}
    \Bigl\lvert \E\tr T_k\bigl(G(\floor{e^{s/2}},2d(s+t))\bigr)
      -\E\tr T_k\bigl(G(\infty+t,2d(s+t))\bigr)\Bigr\rvert &<\frac1d.
    \label{eq:sd4}
  \end{align}
  It is possible to find $s_d$ satisfying \eqref{eq:sd1}--\eqref{eq:sd3}
  by \thref{mainthm:cycles,mainthm:intertwining}
  and \thref{prop:nooverlaps}, respectively.
  For \eqref{eq:sd4}, we clarify that $G(\floor{e^{s/2}},2d(s+t))$
  refers to the discrete-time graph defined in Section~\ref{sec:growingrrg}.
  For any fixed~$d$, one can check by a combinatorial calculation that
  $\E\tr T_k\bigl(G(n,2d)\bigr)$ converges as $n\to\infty$
  to $\E\tr T_k\bigl(G(\infty+t,2d)\bigr)$, which establishes
  that one can choose $s_d$ to satisfy \eqref{eq:sd4}.
  We can take $s_d$ and $s_{d+1}-s_d$ to be increasing sequences in $d$ by choosing larger
  values for $s_d$ if necessary. Define $d(s)$ to be the right-continuous function with $d(s)=1$
  that jumps from $i-1$ to $i$ at $s_{i}$.
  
  Our first goal is to show that $\bp[s]{d(s+t)}(t)$ converges to the limiting
  Ornstein-Uhlenbeck processes as $d\to\infty$. From \eqref{eq:sd1}
  and \thref{mainthm:chebycov}, we know that
  $\bp[s]{d(s)}(t)$ converges to this limit.
  Thus it suffices to show that the distance between
  $\bigl(\bp[s]{d(s+t)}(t),\,0\leq t\leq T\bigr)$ and 
  $\bigl(\bp[s]{d(s)}(t),\,0\leq t\leq T)$ in $D_{\RR^K}[0,T]$ vanishes
  in probability as $s\to\infty$.

  Consider the $k$th component of
  \begin{align}
    \bigl(\bp[s]{d(s+t)}(t),\,0\leq t\leq T\bigr) - 
    \bigl(\bp[s]{d(s)}(t),\,0\leq t\leq T)\label{eq:vanishingprocess}
  \end{align}
  at time~$t$, which by \thref{prop:cnbweigenvalues} is equal to
  \begin{align}
    \begin{split}
    \frac12 \bigl(2d(s+t)&-1\bigr)^{-k/2}\bigl(\CNBW[s]{d(s+t),k}(t)-
      \E \CNBW[s]{d(s+t),k}(t)\bigr)\\
      - &\frac12 \bigl(2d(s)-1\bigr)^{-k/2}\bigl(\CNBW[s]{d(s),k}(t)-
      \E \CNBW[s]{d(s),k}(t)\bigr),
    \end{split}
    \label{eq:whatvanishes}
  \end{align}
  with $\CNBW[s]{d,k}(t)$ denoting the number of cyclically non-backtracking walks
  in $G(s+t,2d)$. We will show that this vanishes in probability as $s\to\infty$.
  For sufficiently large $s$ and $0\leq t\leq T$, we have either $d(s+t)=d(s)$ or
  $d(s+t)=d(s)+1$. In the first case, \eqref{eq:whatvanishes} is $0$, so it suffices to show
  that
  \begin{align*}
    \frac12 \bigl(2d(s)&+1\bigr)^{-k/2}\bigl(\CNBW[s]{d(s)+1,k}(t)-
      \E \CNBW[s]{d(s)+1,k}(t)\bigr)\\
      &- \frac12 \bigl(2d(s)-1\bigr)^{-k/2}\bigl(\CNBW[s]{d(s),k}(t)-
      \E \CNBW[s]{d(s),k}(t)\bigr)
  \end{align*}
  vanishes in probability.
  By \eqref{eq:sd3}, the difference between this expression and
  \begin{align*}
    \frac12 \bigl(2d(s)&+1\bigr)^{-k/2}
    \sum_{j\divides k}\Bigl(2j\Cy[s]{d(s)+1,j}(t) - 2j\E \Cy[s]{d(s)+1,j}(t)\Bigr)
    \\
       &-\frac12 \bigl(2d(s)-1\bigr)^{-k/2}
      \sum_{j\divides k}\Bigl(2j\Cy[s]{d(s),j}(t) - 2j\E \Cy[s]{d(s),j}(t)\Bigr)
  \end{align*}
  converges to $0$ in probability as $s\to\infty$.
  The scaling makes all terms of the sums besides $j=k$ vanish in probability.
  Thus it sufficies to show that
  \begin{align*}
    k\bigl(2d(s)&+1\bigr)^{-k/2}
    \Bigl(\Cy[s]{d(s)+1,k}(t) - \E \Cy[s]{d(s)+1,k}(t)\Bigr)
    \\
       & -k\bigl(2d(s)-1\bigr)^{-k/2}
      \Bigl(\Cy[s]{d(s),k}(t) - \E \Cy[s]{d(s),k}(t)\Bigr)
  \end{align*}
  vanishes in probability. By \eqref{eq:sd2}, it suffices to show this for
  \begin{align*}
    \bigl(2d(s)+1\bigr)^{-k/2}
    \Bigl(N_{d(s)+1,k}(t) - \E N_{d(s)+1,k}(t)\Bigr)
        -\bigl(2d(s)-1\bigr)^{-k/2}
      \Bigl(N_{d(s),k}(t) - \E N_{d(s),k}(t)\Bigr).
  \end{align*}
  By observing that the second moment of $\bigl((2d(s)+1)^{-k/2}-(2d(s)-1)^{-k/2}\bigr)
    \bigl(N_{d(s)+1,k}(t) - \E N_{d(s)+1,k}(t)\bigr)$ vanishes, it sufficies to show this for
  \begin{align}
    \bigl(2d(s)-1\bigr)^{-k/2}
    \Bigl(N_{d(s)+1,k}(t) - N_{d(s),k}(t) -\E\bigl[ N_{d(s)+1,k}(t)
     -N_{d(s),k}(t)\bigr]\Bigr).\label{eq:laststep}
  \end{align}
  By \eqref{eq:d+1-d}, the random variable $N_{d(s)+1,k}(t) - N_{d(s),k}(t)$
  is distributed as $\Poi\bigl((a(d+1)-a(d))/2k\bigr)$, and the second moment
  of \eqref{eq:laststep} vanishes. Thus we have shown that for any $k$ and $t$,
  the expression \eqref{eq:whatvanishes} converges to $0$ in probability.
  From \eqref{eq:laststep}, we also see that each component of \eqref{eq:vanishingprocess}
  is tight. It follows from this that supremum norm of each component of
  \eqref{eq:vanishingprocess} on $[0,T]$ converges to $0$ in probability.
  This then shows that $\bp[s]{d(s+t)}(t)$ converges to the same weak limit as
  $\bp[s]{d(s)}(t)$.
  
  The next step is showing that 
  \begin{align*}
    \Bigl(\tr T_k\bigl(G(s+t,2d(s+t))\bigr) - 
    \E\bigl[\tr T_k\bigl(G(s+t,2d(s+t))\bigr)\,\big\vert\,N(t)\bigr],\;1\leq k\leq K\Bigr)
  \end{align*}
  converges to the same weak limit in $D_{\RR^K}[0,T]$ as $\bp[s]{d(s+t)}(t)$.
  The difference between the $k$th component of these two processes is 
  \begin{align*}
    \E\bigl[\tr T_k\bigl(G(s+t,2d(s+t))\bigr)\,\big\vert\,N(t)\bigr]
      - \E\tr T_k(G(\infty+t,2d)),
  \end{align*}
  and we would like to show that this vanishes in probability in the supremum norm
  as $s\to\infty$. By \eqref{eq:sd4}, it suffices to show that as $t\to\infty$,
  \begin{align}
    \P[N(t)<e^{t/2}]\to 0.\label{eq:pvanishing}
  \end{align}
  By definition of our continuous-time process, $N(t)+1$ is a Yule process starting
  from $2$. It is well known that $(N(t)+1)e^{-t}\to Z$ a.s., where $Z\sim\Exp(1)$,
  which establishes \eqref{eq:pvanishing}. (To prove this, show
  that $(N(t)+1)e^{-t}\toL Z$
   by a direct calculation, and then observe that if $Y_t$ is a Yule process,
  then $Y_te^{-t}$ is a positive martingale and hence converges a.s.)
  
  The weak convergence of the process
  \begin{align*}
    \Bigl(\tr T_k\bigl(G(s+t,2d(s+t))\bigr) - 
    \E\bigl[\tr T_k\bigl(G(s+t,2d(s+t))\bigr)\,\big\vert\,N(t)\bigr],\;k\in\NN,\,t\geq 0\Bigr)
  \end{align*}
  in $D_{\RR^K}[0,T]$ for arbitrary $K$ and $T$ gives the desired convergence
  in $D_{\RR^{\infty}}[0,\infty)$ by the same argument as at the end
  of the proof of \thref{mainthm:cycles}.
  \end{proof}

\section{Convergence to the Gaussian free field}
\label{sec:GFFconvergence}

The Gaussian free field is a generalization of Brownian motion
where the indexing set has dimension greater than one.
Physicists have long been interested in the GFF because of its importance
in quantum field theory. Mathematicians have come to the GFF more recently,
as it it became clear that it was the limit of a variety
of discrete random surfaces and height functions 
\cite{NS,GOS,Kenyon1,RV,Kenyon2,BorFer,JLS,Bor1,Kuan,Duits,Petrov}
and was closely related to Schramm-Loewner evolution 
\cite{Dubedat,SS1,SS2,MS1,MS2,MS3,MS4}.

At its most basic level, the GFF on the upper half-plane 
with zero Dirichlet boundary conditions
can be thought of as a centered Gaussian field $(h(z),\,z\in\HH)$ with covariances given by
\begin{align*}
  \E \bigl[ h(z) h(w) \bigr] = -\frac{1}{2\pi}\log\biggl\lvert\frac{z-w}{z-\overline{w}}\biggr\rvert.
\end{align*}
The problem with this definition is that no such random function $h$ exists.
If it did exist, then the collection of random variables 
$\int_{\HH} f(z) h(z)\,dz$ indexed by smooth
compactly supported functions $f$ would also be a Gaussian field. 
This field does truly exist, and we will use it to define
the GFF.

We start by giving a bare-bones treatment of the GFF that gives only the very few
properties we need. After this, we give a more languorous account based on
\cite{Sheffield}, \cite{HMP}, and \cite{Dubedat}.

\subsection{Bare-bones background on the Gaussian free field}
\label{sec:GFFbackground1}
Let $h$ denote the GFF on $\HH$ (with zero Dirichlet boundary conditions, the only kind
we will consider).
The only property we use in this thesis is that
if $f(z)$ is a smooth function defined on a smooth path~$\gamma$ satisfying
\eqref{eq:finitepath}, one can define a collection of random variables denoted
$\int_\gamma f(z)h(z)\,dz$ that form a centered Gaussian field.
(Again, $h$ is not really a function, and we are not really integrating against it.
The notation is from \cite{Bor1}, \cite{BG}, and other papers. In Section~\ref{sec:GFFbackground2}, we explain
the real definitions.)
The covariances are given by the following proposition:
\begin{prop}[{\cite[Lemma~4.6]{BG}}]\thlabel{prop:BG2}
  Let $f_1,f_2$ be smooth functions defined on the image of a smooth curve~$\gamma$ such that
  \begin{align}
    \int_\gamma\int_\gamma f_i(z)\biggl(-\frac{1}{2\pi}\log\Bigl\lvert\frac{z-w}{z-\overline{w}}
      \Bigr\rvert\biggr)f_i(w)\,dz\,dw < \infty\label{eq:finitepath}
  \end{align}
  for $i=1,2$.
  Then
  \begin{align*}
    \E\biggl[\biggl(\int_{\gamma}f_1(z)h(z)\,dz\biggr)\biggl(\int_{\gamma}f_2(z)h(z)\,dz\biggr)\biggr]
      &=\int_{\gamma}\int_{\gamma}f_1(z)\biggl(-\frac{1}{2\pi}\log\Bigl\lvert\frac{z-w}{z-\overline{w}}
      \Bigr\rvert\biggr)f_2(w)\,dz\,dw.
  \end{align*}
\end{prop}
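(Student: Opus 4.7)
The plan is to realize the GFF as an isonormal Gaussian process on a Cameron--Martin-type Hilbert space $H_G$ whose inner product is given by the Green-kernel energy, to show that the signed curve measure $f_i\,d\gamma$ lies in $H_G$ under the hypothesis \eqref{eq:finitepath}, and then to read the covariance formula off the defining $L^2(\mathbf{P})$-isometry by polarization.

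First, I would set up the functional-analytic framework. Write $G(z,w) = -\frac{1}{2\pi}\log\bigl\lvert(z-w)/(z-\overline{w})\bigr\rvert$ for the Dirichlet Green function on $\HH$, and on smooth compactly supported test functions on $\HH$ define the energy
\[
\|\phi\|_G^2 \defeq \int_\HH\int_\HH \phi(z)\, G(z,w)\, \phi(w)\,dz\,dw.
\]
Let $H_G$ be the Hilbert completion. The GFF $h$ on $\HH$ with zero Dirichlet boundary conditions is characterized as the centered Gaussian process indexed by $H_G$ for which $\phi\mapsto(h,\phi)$ is an $L^2(\P)$-linear isometry; in particular for smooth compactly supported $\phi_1,\phi_2$ one has $\E[(h,\phi_1)(h,\phi_2)] = \int\int \phi_1(z) G(z,w) \phi_2(w)\,dz\,dw$ by polarization of the variance.

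Next, I would extend this pairing from smooth test functions to the signed measure $\mu_i$ obtained by integrating test functions against $f_i$ along $\gamma$. The hypothesis \eqref{eq:finitepath} says exactly that $\mu_i$ has finite $G$-energy. Concretely, pick a standard mollifier $\rho_\eps$ and set $\phi^{(i)}_\eps \defeq \rho_\eps * \mu_i$, which is smooth and compactly supported in $\HH$ for small $\eps$ since $\gamma$ is a compact smooth curve. Because $G$ has only a logarithmic singularity on the diagonal, \eqref{eq:finitepath} together with dominated convergence gives
\[
\|\phi^{(i)}_\eps\|_G^2 \;\longrightarrow\; \int_\gamma\int_\gamma f_i(z)\, G(z,w)\, f_i(w)\,dz\,dw,
\]
and Cauchy-ness of $\{\phi^{(i)}_\eps\}$ in $\|\cdot\|_G$, so $\mu_i$ defines an element of $H_G$ with this energy norm. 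The random variables $(h,\phi^{(i)}_\eps)$ are then Cauchy in $L^2(\P)$ by the isometry, and we define $\int_\gamma f_i(z) h(z)\,dz \defeq (h,\mu_i)$ as their common limit, independent of $\rho_\eps$.

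Finally, polarization of $\|\cdot\|_G^2$ yields $\langle\mu_1,\mu_2\rangle_G = \int_\gamma\int_\gamma f_1(z)\,G(z,w)\,f_2(w)\,dz\,dw$, and the $L^2(\P)$-isometry of $\phi\mapsto(h,\phi)$ translates this into $\E[(h,\mu_1)(h,\mu_2)]$, which is exactly the claimed covariance. The main obstacle is the mollification step, i.e., verifying both that $\|\phi^{(i)}_\eps - \phi^{(i)}_{\eps'}\|_G\to 0$ and that the diagonal and cross mollified energies converge to the claimed curve double-integrals; this reduces to a standard singular-integral argument exploiting the log-integrability of $G$ against arclength measure on a smooth curve, together with the finite-energy hypothesis \eqref{eq:finitepath}. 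Once $\mu_1,\mu_2$ are identified as bona fide elements of the Cameron--Martin space of $h$, the covariance formula is essentially built into the definition of the GFF.
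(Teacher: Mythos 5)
Your argument is sound, but it is a genuinely different route from the one the thesis takes. The thesis defines the GFF as a Gaussian Hilbert space indexed by the Dirichlet space $H(D)$, extends it to $H(D)^*$ via $\langle h,\mu\rangle:=\langle h,-\Delta^{-1}\mu\rangle_\nabla$, so that the covariance identity \eqref{eq:measuregreen} in terms of the Green's function is immediate, and then reduces the proposition to showing that the curve measure $f_i\,d\mathcal{H}^1$ defines a bounded functional on $H(D)$; that membership is proved with the Sobolev trace theorem (boundedness of the restriction operator $H^1\to L^2$ of arclength measure on a Lipschitz curve), plus a conformal-map trick to handle the unbounded domain $\HH$ — the finiteness hypothesis \eqref{eq:finitepath} is then a consequence rather than an input. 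You instead work directly in the Green-energy ($H^{-1}$-type) picture: you take \eqref{eq:finitepath} as the membership criterion, mollify the curve measure, prove energy-norm Cauchyness and convergence of the diagonal and cross energies, and conclude by polarization and the $L^2(\P)$ isometry. What your route buys is that it uses exactly the hypothesis as stated and would tolerate rougher curves and densities; what it costs is that the entire content is concentrated in the mollification step you only sketch — you need the uniform logarithmic domination $\rho_\eps\ast\rho_{\eps'}\ast G\le C(1+\log_+ \lvert z-w\rvert^{-1})$ near the diagonal to run dominated convergence for both $\|\phi^{(i)}_\eps\|_G^2$ and the cross terms, and you should note that absolute convergence of the mixed integral for $f_1,f_2$ follows by Cauchy--Schwarz in the energy space. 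One further caveat: in the application later in the chapter the curves $\Omega(\cdot,t)$ meet $\partial\HH$ at their endpoints, so your mollified measures need not be supported in $\HH$; this is fixable (push the curve slightly into $\HH$, or use that the Green's function extends by zero to the boundary), but it is a point your "compactly supported in $\HH$" assertion glosses over, whereas the trace-theorem argument handles curves in $\overline{\HH}$ directly.
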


\subsection{More background on the Gaussian free field}
\label{sec:GFFbackground2}
We will build up the GFF from scratch, mostly following \cite{Sheffield} with a sprinkling
of \cite{HMP} and \cite{Dubedat}.
Our goal will be to present it in as simply as possible and explain how it meshes
with the more concrete information from the previous section.
To make this account friendlier without bogging it down too much, we present background material
on partial differential equations and Sobolev spaces in italics.
For a proper introduction, see \cite{Evans}, \cite{Hunter}, and \cite{Brezis}.

\newenvironment{pde}{\par\noindent\begin{adjustwidth}{2.5em}{}\textit\bgroup\small\ignorespaces}{\egroup\end{adjustwidth}}
\newcommand{\grad}{\nabla}
\newcommand{\Hloc}{H_{\text{loc}}}
\newcommand{\laplace}{\Delta}
\renewcommand{\ip}[1]{\langle #1 \rangle}
\newcommand{\ipd}[1]{\langle #1 \rangle_\grad}
\newcommand{\norm}[1]{\lVert #1 \rVert}
\newcommand{\normd}[1]{\lVert #1 \rVert_\grad}
{\renewcommand{\H}{H}
\subsubsection{Definition and construction of the Gaussian free field}
Let $D\subset\RR^d$ be a domain (that is, a connected open set).
We define $H_s(D)$ as the space of all smooth, compactly supported, real-valued functions
on $D$, and we endow this space with the \emph{Dirichlet inner product}, given by
$\ipd{f,g}=\int_D\grad f(x)\cdot\grad g(x)\,dx$.
When $d=2$, this inner product is conformally invariant, meaning
that $\ipd{f\circ\varphi,g\circ\varphi}=\ipd{f,g}$ for any conformal map $\varphi$.
We denote the Hilbert space closure of $H_s(D)$ by $\H(D)$.
When $D$ is bounded, $H(D)$ is the subspace $H^1_0(D)$
of the Sobolev space $H^1(D)=W^{1,2}(D)$.
\begin{pde}
The Sobolev space $H^1(D)$ is a Hilbert space consisting of all functions
in $L^2(D)$ whose (weak or distributional) first-order derivatives are also in $L^2$.
When $D$ is bounded, 
the Dirichlet inner product on $H_s(D)$ gives a norm equivalent to the standard one in $H^1(D)$
by the Poincar\'e inequality \cite[Section~5.6.1, Theorem~3]{Evans}.
The Hilbert space completion of $H_s(D)$ is then the closure of $C_c^{\infty}(D)$
in $H^1(D)$, with an inner product equivalent to the usual Sobolev one.
This closure is denoted as $H^{1}_0(D)$, and it consists of the elements
of $H^1(D)$ that are zero on the boundary in the sense of traces \cite[Section~5.5]{Evans}.

\noindent%
When $D$ is unbounded, the situation is slightly messier, but we need to address it so that
we can talk about the GFF on regions like the upper half-plane.
To take advantage of the conformal invariance of the Dirichlet inner product,
we will assume that $D$ is an unbounded domain in $\RR^2$ that admits a conformal map
$\varphi$ onto a bounded domain $D'$.
The space $\Hloc^1(D)$ consists of all functions on $D$ whose restrictions belong to $H^1(U)$
for all open sets $U$
with compact closure in $D$. A sequence converges in $\Hloc^1(D)$ if its restrictions
converge in $H^1(U)$ for all such $U$, which makes this a Fr\'echet space.
We will show that $H(D)\subset\Hloc^1(D)$.

\noindent%
Suppose that $f_n$ forms a Cauchy sequence in $H_s(D)$.
Then $f_n\circ\varphi^{-1}$ is a Cauchy sequence in $H_s(D')$, and it converges
to a limit $g\in H^1_0(D')$. Let $f=g\circ\varphi$.
By the local invariance of Sobolev spaces under smooth
coordinate changes, $f\in\Hloc^1(D)$ and $f_n\to f$ in that space
\cite[Theorem~6.24, Corollary~6.25]{FolPDE}. By conformal invariance,
$f_n\to f$ in the Dirichlet inner product. Thus $H(D)\subset\Hloc^1(D)$.
In particular, elements of $H(D)$ are locally $L^2$-integrable.
\end{pde}
Note that by integration by parts,
the Dirichlet inner product on $H_s(D)$ can be expressed in terms of the usual
inner product in $L^2$ by
\begin{align}
  \ipd{f,g} = \ip{f,-\laplace g}. \label{eq:ips}
\end{align}

Suppose we have a probability space $(\Omega, \Ff, P)$.
A closed subspace of $L^2(\Omega, \Ff, P)$ consisting
of centered Gaussian random variables is called
a \emph{Gaussian Hilbert space}.
We will assume throughout that $\Ff$ is the $\sigma$-algebra
generated by these random variables. 
A trivial example of a Gaussian Hilbert space is the one-dimensional space
$\{t\xi,\, t\in\RR\}$, where $\xi$ is a centered Gaussian.
A non-trivial one is the closed linear span of the collection of random
variables $\{B_t,\,t\geq 0\}$, where $B_t$ is a standard Brownian motion.
The definition and both examples can be found in much more detail in
\cite{JanGH}.

We are now ready to define the GFF, though it will take some work afterwards
to make sense of it. In the following definition, $h$ has no meaning on its own.
For each $f\in \H(D)$, the notation $\ipd{h,f}$ indicates a random variable,
with no assumptions at all on the map $f\mapsto\ip{h,f}$.
\begin{definition}\thlabel{def:GFF}
  The Gaussian free field on a domain $D$ (with zero Dirichlet boundary conditions)
  is the Gaussian Hilbert space of random variables 
  $\bigl\{\ipd{h, f},\,f\in \H(D)\bigr\}$ with covariances given by
  \begin{align}
    \E\bigl[ \ipd{h,f}\ipd{h,g}\bigr] = \ipd{f,g}.\label{eq:ippreserving}
  \end{align}
\end{definition}
The notation $\ipd{h,f}$ suggests that the map $f\mapsto\ipd{h,f}$ should be linear,
and this definition implies that it is: By applying \eqref{eq:ippreserving}, we can show
 that the variance of $\ipd{h,af+bg}-\bigl(a\ipd{h,f}+b\ipd{h,g}\bigr)$ is zero.

By the monotone class lemma, the law of $\bigl\{\ipd{h, f},\,f\in \H(D)\bigr\}$ is determined by
the finite-dimensional distributions; see \cite[Example~A.3]{JanGH}. 
This is where we use the assumption that the $\sigma$-algebra associated with a Gaussian Hilbert
space is the smallest one that makes $\ipd{h,f}$ measurable for all $f\in \H(D)$.
Thus the definition
determines at most one family $\{\ipd{h,f},\,f\in \H(D)\}$ in law.
It is not clear, however, that there even exists such a Gaussian Hilbert space at all.
We resolve this by constructing one:
\begin{prop}\thlabel{prop:GFFexists}
  There exists a Gaussian Hilbert space satisfying \thref{def:GFF}.
\end{prop}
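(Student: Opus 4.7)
The plan is to construct the required Gaussian family as the \emph{isonormal Gaussian process} on the Hilbert space $H(D)$. The construction is classical, and I would present it in the following order.

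First I would verify that $H(D)$ is separable. Since $H_s(D)$ is dense in $H(D)$ by definition, it suffices to exhibit a countable $\ipd{\cdot,\cdot}$-dense subset of $H_s(D)$; any countable set that is dense in $C_c^{\infty}(D)$ in the $C^1$ topology on every compact subset of $D$ will do, and such sets are easy to produce (e.g.\ polynomials with rational coefficients multiplied by smooth cutoffs supported in an exhaustion of $D$ by compact sets, or translated and dilated rational-coefficient mollifications). Given separability, pick an orthonormal basis $\{e_n\}_{n\geq 1}$ of $H(D)$ with respect to the Dirichlet inner product.

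Next I would take an auxiliary probability space $(\Omega,\Ff,P)$ large enough to carry a sequence $\{\xi_n\}_{n\geq 1}$ of i.i.d.\ standard Gaussian random variables (for concreteness, $\Omega=\RR^{\NN}$ with the product of standard Gaussian measures). For each $f\in H(D)$, set
\begin{align*}
  \ipd{h,f} \defeq \sum_{n=1}^{\infty}\ipd{f,e_n}\xi_n,
\end{align*}
where convergence is understood in $L^2(\Omega)$. The partial sums form a Cauchy sequence in $L^2$ because
\begin{align*}
  \E\Biggl[\Biggl(\sum_{n=N}^{M}\ipd{f,e_n}\xi_n\Biggr)^{\!2}\,\Biggr]
   = \sum_{n=N}^{M}\ipd{f,e_n}^2,
\end{align*}
and the right-hand side is the tail of the convergent Parseval sum $\normd{f}^2=\sum_n\ipd{f,e_n}^2$. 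Each partial sum is a centered Gaussian, and $L^2$ limits of centered Gaussians are centered Gaussian, so $\ipd{h,f}$ is a centered Gaussian for every $f\in H(D)$.

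Finally I would verify the covariance identity and check that the resulting family is a Gaussian Hilbert space. By Parseval,
\begin{align*}
  \E\bigl[\ipd{h,f}\ipd{h,g}\bigr]
   = \sum_{n=1}^{\infty}\ipd{f,e_n}\ipd{g,e_n}
   = \ipd{f,g},
\end{align*}
which is \eqref{eq:ippreserving}. The set $\{\ipd{h,f}\colon f\in H(D)\}$ is a linear subspace of $L^2(\Omega)$ (the map $f\mapsto\ipd{h,f}$ is linear by construction of the series), it consists of centered Gaussians, and it is closed in $L^2$: if $\ipd{h,f_k}\to X$ in $L^2$ then $\{f_k\}$ is Cauchy in $H(D)$ by the isometry \eqref{eq:ippreserving}, so $f_k\to f$ in $H(D)$ and $X=\ipd{h,f}$. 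Hence this family is a Gaussian Hilbert space, and \thref{def:GFF} is satisfied. There is no real obstacle: the only thing worth double-checking is that separability of $H(D)$ holds even for unbounded $D$ of the type considered, which follows from the discussion preceding the definition since $H(D)$ embeds in $\Hloc^1(D)$ via a conformal map to a bounded domain and $H^1_0$ of a bounded domain is separable.
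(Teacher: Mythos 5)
Your construction is the same one the paper uses: expand $f$ in an orthonormal basis of $H(D)$ and pair the coefficients with i.i.d.\ standard Gaussians, with convergence in $L^2$ and the covariance identity following from Parseval. The extra checks you include (separability of $H(D)$ and closedness of the image in $L^2(\Omega)$) are correct and only make explicit what the paper leaves implicit, so the proposal is correct and essentially identical in approach.
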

\begin{proof}
  Let $\{f_i,\,i\in\NN\}$ be an ordered orthonormal basis for $\H(D)$ (this space
   is separable and hence has a countable orthonormal
  basis). Let $\{\alpha_i,\,i\in\NN\}$ be independent standard Gaussians.
  For any $f\in \H(D)$ with expansion $f=\sum\beta_i f_i$, we define
  \begin{align}
    \ipd{h,f} = \lim_{k\to\infty}\sum_{i=1}^{k} \beta_i\alpha_i.\label{eq:hfdef}
  \end{align}
  The sum is a martingale bounded in $L^2$ by Parseval's equality and hence converges~a.s.\ 
  and in $L^2$.
  Note that it was necessary to fix an order for the sum, as the sequence need not be absolutely
  summable.
  Thus we have constructed a Gaussian field $\bigl\{\ipd{h,f},\,f\in \H(D)\bigr\}$.
  If $f=\sum\beta_if_i$ and $g=\sum\gamma_if_i$, then it follows from the $L^2$ convergence
  of \eqref{eq:hfdef} that
  \begin{align*}
    \lim_{n\to\infty}\E\biggl[\biggl(\sum_{i=1}^n \beta_i \alpha_i\biggr)
  \biggl(\sum_{i=1}^n \gamma_i \alpha_i\biggr)\biggr]=
    \E\bigl[ \ipd{h,f}\ipd{h,g}\bigr]
  \end{align*}
  Thus
  \begin{align*}
    \E\bigl[ \ipd{h,f}\ipd{h,g}\bigr]
     &= \sum_{i=1}^{\infty}\beta_i\gamma_i = \ipd{f,g}
  \end{align*}
  as desired.
\end{proof}

\subsubsection{An example}
We have defined and constructed the GFF without developing much of an intuition for it.
We show now that the GFF on $D=(0,\infty)$ is Brownian motion.
More precisely, let $B_t$ be a standard Brownian motion and define
$\ip{h, f}=\int_0^{\infty} f(t)B_t\,dt$
for $f\in H_s(D)$.
Then define $\ipd{h,f}=-\ip{h, f''}$ in analogy with \eqref{eq:ips}.
We confirm that this (or rather, its extension to all $f\in\H(D)$)
is the GFF according to \thref{def:GFF}.
For $f,g\in H_s(D)$,
\begin{align*}
  \E\bigl[\ipd{h,f}\ipd{h,g}\bigr] &=
    \E \int_0^{\infty}\int_0^{\infty} B_t f''(t)B_u g''(u)\,du\,dt\\
    &= \int_0^{\infty}\int_0^{\infty}f''(t)g''(u)\min(u,t)\,du\,dt\\
    &= \int_0^{\infty}\biggl( f''(t)\int_0^tug''(u)\,du
      + tf''(t)\int_t^{\infty}g''(u)\,du\biggr)dt\\
    &= \int_0^{\infty} \Bigl(f''(t)\bigl(tg'(t)-g(t)\bigr) - f''(t)tg'(t)\Bigr)dt\\
    &= -\int_0^{\infty}f''(t)g(t) = \ipd{f,g}.
\end{align*}

\subsubsection{Green's functions and an alternate form of the GFF}

The GFF can be written in an alternate form inspired by \eqref{eq:ips}.
Let $H(D)^*$ denote the dual space of $H(D)$, considered as a space of distributions,
and denote the action of $f\in H(D)^*$ on $g\in H(D)$ by $\ip{f,g}$.
\begin{pde}
  When $D$ is bounded and hence $H(D)=H^1_0(D)$, the space $H(D)^*$
  has a well-known characterization.
  Though Hilbert spaces are self-dual, we can instead view the dual space of
  $H^1_0(D)$ as a space of distributions. Viewed in this way, the dual space
  is denoted $H^{-1}(D)$. It consists of all sums of $L^2$-functions (viewed as distributions)
  and first-order distributional derivatives of $L^2$-functions \cite[Proposition~9.20]{Brezis}.
  When $f\in H^{-1}(D)\cap L^2(D)$, the distributional action of $f$ coincides with the $L^2$
  inner product; that is, for $\phi\in H^1_0(D)$,
  we have
    $\ip{f,\phi}=\int_D f\phi$.
\end{pde}
\begin{definition}[The GFF indexed by $H(D)^*$]
  Let $f\in H(D)^*$. By the self-duality of Hilbert spaces,
  there exists $u\in H(D)$ such that $\ip{f,\phi}=\ipd{u,\phi}$
  for all $\phi\in H(D)$.
  We define $\ip{h, f}=\ipd{h,u}$.
\end{definition}
The significance of this definition is as follows.
Suppose $f\in C^{\infty}_c(D)$, and we view it as an element of $H(D)^*$.
Then the function $u\in H(D)$ associated with it solves the partial differential
equation $-\laplace u = f$,
and we have
\begin{align*}
  \ip{h,-\laplace u} = \ipd{h,u},
\end{align*}
as in \eqref{eq:ips}.

This version of the GFF also lends some insight on why the GFF in dimensions two and higher
cannot be represented as a random function. Dirac~$\delta$-measures are elements
of $H^{-1}(D)$ when $d=1$ but not when $d\geq 2$. Thus it makes sense to evaluate
$h$ at a single point~$x$ by $\ip{h,\delta_x}$ only in the one-dimensional case.

\begin{rmk}
  The GFF can also be constructed as a random element of $H^{-\eps}(D)$ for any
  $\eps>0$;
  see \cite[p.~7]{HMP} and \cite[Proposition~2.7, Remark~2.8]{Sheffield} for more details.
  The basic idea is to take
  $\{f_i\}$ and $\{\alpha_i\}$ as in \thref{prop:GFFexists}
  and define
  \begin{align*}
    h = \sum_{i=1}^{\infty} \alpha_i f_i,
  \end{align*}
  which converges a.s.\ in $H^{-\eps}(D)$.
  This defines $\ip{h,f}$ for $f\in C_c^{\infty}(D)$ and
   coincides with our definition of $\ip{h,f}$.
\end{rmk}

The covariances of the Gaussian field $\{\ip{h,f},\,f\in H(D)^*\}$ have a nice
expression in terms of the \emph{Green's function} for the Laplacian operator
on $D$.

\begin{pde}
  The Green's function $G(x,y)$ for the operator $-\laplace$ 
  on a region~$D$ with Dirichlet boundary conditions
  is a solution to
  $-\laplace G(x,\cdot)=\delta_x$ (in the distributional sense)
  that satisfies $G(x,y)=0$ if $x\in\partial D$
  or $y\in\partial D$. The Green's function in general exists and is unique when $D$
  is bounded with $C^1$ boundary. The Green's function for the upper half-plane
  also exists and can be given explicitly:
  \begin{align*}
    G(x,y) &= -\frac{1}{2\pi}\log\biggl\lvert\frac{x - y}{x-\overline{y}}\biggr\rvert,
  \end{align*}
  thinking of $x$ and $y$ as complex.
  If $f\in H_s(D)$, then $u(x) = \int_D G(x,y)f(y)\,dy$
  is in $H(D)$ and
  satisfies $-\laplace u = f$. The equivalent statement holds
  for $u(x) = \int_D G(x,y)\mu(dy)$   if $\mu\in H(D)^*$ 
  is a locally finite measure with compact support in $D$.
  See \cite[Chapter~2]{FolPDE} for a reference on Green's functions and related ideas.
\end{pde}
Let $G$ be the Green's function for $-\laplace$ on $D$ with Dirichlet boundary conditions,
and 
let $\laplace^{-1}f(x)\defeq -\int_d G(x,y)f(y)\,dy$.
For $f,g\in H_s(D)$,
\begin{align}
  \E\bigl[\ip{h,f}\ip{h,g}\bigr]
    &= \E\bigl[\ipd{h,-\laplace^{-1}f}\ipd{h,-\laplace^{-1}g}\bigr]\nonumber\\
    &= \ipd{-\laplace^{-1}f,-\laplace^{-1}g}\nonumber\\
    &=\ip{f, -\laplace^{-1}g} = \int_D\int_D f(x)G(x,y)g(y)\,dy\,dx.
\end{align}
Similarly, if $\mu,\nu\in H(D)^*$ are locally finite, compactly supported measures, then
\begin{align}
  \E\bigl[\ip{h,\mu}\ip{h,\nu}\bigr] &= \int_D\int_D G(x,y)\mu(dx)\nu(dy).\label{eq:measuregreen}
\end{align}


\subsubsection{Traces}
In this section, we explain how to define $\ip{h,\mu}$ when $\mu$ is a measure
supported on a curve~$\gamma$ in $\overline{D}$, which along with \eqref{eq:measuregreen}
explains \thref{prop:BG2}.
Suppose that $\gamma$ is a simple closed curve in $\overline{D}$, and suppose it forms the boundary
of an open set $E$ and is locally a graph of a Lipschitz function.
Suppose that $\mu$ is supported on
$\gamma$ and bounded with respect to the natural measure there.
Precisely, let $\Hh$ denote $1$-dimensional Hausdorff measure
and suppose that $\mu=\rho\,d\Hh$ for a bounded function $\rho$.
Our goal is to define $\ip{h,\mu}$ by showing that $\mu\in H(D)^*$.
\begin{lem}
  If $D\subset\RR^2$ is bounded, or it is unbounded and its complement contains an open set,
  then the functional  $f\mapsto \int f\,d\mu$ for $f\in H_s(D)$ extends to an element of $H(D)^*$.
\end{lem}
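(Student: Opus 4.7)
My plan is to show that the functional $\Lambda\colon f\mapsto\int f\,d\mu$ is bounded on $H_s(D)$ with respect to the Dirichlet norm $\normd{\cdot}$; by density of $H_s(D)$ in $H(D)$, $\Lambda$ then extends uniquely to an element of $H(D)^*$.

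The key ingredient is the classical trace theorem for Sobolev spaces on bounded Lipschitz domains. Since $\gamma$ is a simple closed curve that is locally the graph of a Lipschitz function, the bounded open set $E$ whose boundary it forms is a bounded Lipschitz domain; since $\gamma\subset\overline{D}$ separates the plane, $E\subset D$. The trace theorem (see e.g.\ \cite[Section~5.5]{Evans}) then supplies a bounded linear map $T\colon H^1(E)\to L^2(\gamma,\Hh)$ with $\norm{Tf}_{L^2(\gamma,\Hh)}\le C_1\norm{f}_{H^1(E)}$. For $f\in H_s(D)$, applying this together with the boundedness of $\rho$ and compactness of $\gamma$ gives
\[
\bigl|\Lambda(f)\bigr|
=\Bigl|\int_\gamma (Tf)\,\rho\,d\Hh\Bigr|
\le \norm{\rho}_\infty\,\Hh(\gamma)^{1/2}\,\norm{Tf}_{L^2(\gamma,\Hh)}
\le C_2\,\norm{f}_{H^1(E)}.
\]

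It then remains to bound $\norm{f}_{H^1(E)}$ by $\normd{f}$. The gradient part is immediate from $E\subset D$: $\norm{\nabla f}_{L^2(E)}\le\norm{\nabla f}_{L^2(D)}=\normd{f}$. For the $L^2$ part I would invoke a Poincar\'e-type inequality. When $D$ is bounded this is direct: the standard Poincar\'e inequality on $H^1_0(D)$ yields $\norm{f}_{L^2(D)}\le C\normd{f}$, hence $\norm{f}_{L^2(E)}\le C\normd{f}$. When $D$ is unbounded with nonempty open complement, extend $f\in H_s(D)$ by zero to $\tilde f\in H^1(\RR^2)$, choose a large open ball $B$ containing both $\overline{E}$ and some nonempty open ball $B_0\subset D^c$, and apply the Friedrichs--Poincar\'e inequality on $B$ for functions vanishing on the positive-measure set $B_0$: $\norm{\tilde f}_{L^2(B)}\le C\norm{\nabla\tilde f}_{L^2(B)}\le C\normd{f}$, so in particular $\norm{f}_{L^2(E)}\le C\normd{f}$.

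Combining the estimates yields $|\Lambda(f)|\le C\normd{f}$ for every $f\in H_s(D)$, proving the lemma. The main technical obstacle is the possibility that $\gamma$ touches $\partial D$: in that case any $f\in H_s(D)$ has support strictly inside $D$, so its trace along $\gamma$ automatically vanishes on $\gamma\cap\partial D$, consistent with the zero Dirichlet boundary condition, and the trace estimate above still controls the full integral against $\mu$; the extension from $H_s(D)$ to $H(D)$ is then handled by the density argument above.
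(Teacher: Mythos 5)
Your proof is correct, and for bounded $D$ it is essentially the paper's argument: restrict to the bounded Lipschitz domain $E$, apply the Sobolev trace theorem to get $\lvert\int f\,d\mu\rvert\leq \lVert\rho\rVert_\infty\,\Hh(\gamma)^{1/2}\lVert Tf\rVert_{L^2(d\Hh)}\leq C\lVert f\rVert_{H^1_0(D)}$, and use Poincar\'e to pass from the $H^1_0$-norm to the Dirichlet norm. Where you genuinely diverge is the unbounded case. The paper stays two-dimensional and exploits conformal invariance of the Dirichlet inner product: choosing $z_0$ with a neighborhood disjoint from $D$, the inversion $\varphi(z)=1/(z-z_0)$ maps $D$ onto a bounded domain $D'$ and pushes $\mu$ forward to a measure with bounded density on the Lipschitz curve $\varphi(\gamma)$, so the bounded case applied to $D'$ yields $\lvert\int_D f\,d\mu\rvert=\lvert\int_{D'}f\circ\varphi\,d\mu'\rvert\leq C\lVert f\circ\varphi\rVert_{\nabla}=C\lVert f\rVert_{\nabla}$. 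You instead extend $f$ by zero, fix a ball $B_0\subset D^c$ (this is exactly where the hypothesis on the complement enters, as in the paper) and a large ball $B\supset\overline{E}\cup B_0$, and invoke the Poincar\'e--Friedrichs inequality for $H^1(B)$-functions vanishing on a set of positive measure to control $\lVert f\rVert_{L^2(E)}$ by $\lVert\nabla f\rVert_{L^2}$. That inequality is standard (Rellich compactness argument), so your route is sound; it avoids conformal maps entirely and would work verbatim in higher dimensions, whereas the paper's inversion trick is specific to $\RR^2$ but requires no Poincar\'e-type input beyond the bounded case and matches how the paper already built $H(D)\subset H^1_{\mathrm{loc}}(D)$ via conformal maps. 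One cosmetic slip: the deduction ``$\gamma\subset\overline{D}$ separates the plane, hence $E\subset D$'' is not valid in general (take $D$ an annulus and $\gamma$ a circle around the hole), but it is also unnecessary --- working throughout with the zero extension $\tilde f$ gives $\lVert\nabla\tilde f\rVert_{L^2(E)}\leq\lVert f\rVert_{\nabla}$ and a well-defined trace on $\gamma$ whether or not $E\subset D$, so the estimate survives unchanged.
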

\begin{proof}
First, suppose that
$D$ is bounded.
It suffices to show that $f\mapsto\int f\,d\mu$ is a bounded linear functional with respect to
the Sobolev norm, since this is equivalent to the one given by the
Dirichlet inner product.
The restriction map $H^1_0(D)\to H^1(E)$ is obviously linear and bounded.
By the Sobolev trace theorem \cite[Theorem~4.3.1]{EG},
there is a bounded trace operator $T\colon H^1(E)\to L^2(d\Hh)$
such that $Tf=f\vert_{\gamma}$ when $f$ is continuous.
Thus for $f\in H_s(D)$, we have
\begin{align*}
  \biggl\lvert\int f\,d\mu\biggr\rvert &\leq \int\abs{Tf}\norm{\rho}_\infty\,d\Hh
   \leq \norm{\rho}_\infty\norm{Tf}_{L^2(d\Hh)}\Hh(\gamma)^{1/2}\leq C\norm{f}_{H^1_0(D)}.
\end{align*}
Thus $f\mapsto \int f\,d\mu$ is bounded and admits a unique extension to all
$f\in H(D)$.

Now, suppose that $D\subset\RR^2$ is unbounded. We will identify $\RR^2$ with $\CC$. 
Suppose that there is a neighborhood
of $z_0\in\CC$ disjoint from $D$. Consider the conformal map $\varphi(z)=1/(z-z_0)$, and let 
$D'=\varphi(D)$, a bounded set. The pushforward measure $\mu'=\mu\circ\varphi$
is supported on $\varphi(\gamma)$, and it has a bounded density with respect to
$\Hh$. By the previous paragraph, for some $C$ and
any $f\in H_s(D)$ we have
\begin{align*}
  \biggl\lvert\int_D f\,d\mu\biggr\rvert = \biggl\lvert\int_{D'}f\circ\varphi\,d\mu'\biggr\rvert
  \leq C\bigl\lVert f\circ\varphi\bigr\rVert_{\grad} = C\norm{f}_\grad
\end{align*}
by the conformal invariance of $\norm{\cdot}_\grad$.
Thus $f\mapsto \int f\,d\mu$ extends to a bounded linear functional on $H(D)$.
\end{proof}

Identifying $\mu$ with its associated element of $H(D)^*$, we have justified
the existence of $\ip{h,\mu}$.
This is the random variable denoted by
$\int_\gamma \rho(z)h(z)\,dz$ in \thref{prop:BG2}.
Together with \eqref{eq:measuregreen}, this explains \thref{prop:BG2}.
}
\subsection{Convergence of fluctuation process to the Gaussian free field}

Recall that $F_t(x)$ counts the eigenvalues of $G(t, 2d(t))$ that
are less than or equal to $2\sqrt{2d(t)-1}x$ and that
\begin{align*}
  \F_t(x) = F_t(x) - \E[F_t(x)\mid N(t)].
\end{align*}
Our goal is to show that $\F_{s+t}(x)$, considered as a function is $x$ and $t$, converges in some sense
to the Gaussian free field. First, we show that integrals against $\F_t(x)$ can be expressed in terms
of traces.
As usual, $T_k(x)$ and $U_k(x)$ denote the Chebyshev polynomials of order $k$ on $[-1,1]$
of the first and second kind, respectively.
\begin{lem}\thlabel{lem:heighttrace}
  \begin{align*}
    \int_{-\infty}^{\infty} U_{k-1}(x)\F_t(x)\,dx
      &= -\frac{1}{k}\Bigl( \tr T_k\bigl(G(t,2d(t))\bigr)
        - \E\bigl[\tr T_k\bigl(G(t,2d(t))\bigr)\,\big\vert\,N(t)\bigr]\Bigr).
  \end{align*}
\end{lem}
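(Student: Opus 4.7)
The plan is to use the Chebyshev identity $T_k'(x) = k\, U_{k-1}(x)$ combined with integration by parts, and then identify the resulting Stieltjes integral with the centered trace. First I would note that $\F_t(x)$ is compactly supported: since the adjacency matrix of a $2d(t)$-regular graph has all eigenvalues in $[-2d(t),2d(t)]$, the scaled eigenvalues $\lambda_i$ all lie in $[-d(t)/\sqrt{2d(t)-1},\,d(t)/\sqrt{2d(t)-1}]$. For $x$ larger than this bound, $F_t(x) = N(t)$ deterministically, and $\E[F_t(x)\mid N(t)] = N(t)$ as well, so $\F_t(x) = 0$. Similarly $\F_t(x) = 0$ for $x$ sufficiently negative. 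Moreover $\F_t$ is a difference of non-decreasing functions, hence of bounded variation.

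Next, writing $U_{k-1}(x) = \tfrac{1}{k}T_k'(x)$ and integrating by parts (valid since $\F_t$ has bounded variation and compact support, and $T_k$ is smooth),
\begin{align*}
  \int_{-\infty}^{\infty} U_{k-1}(x)\F_t(x)\,dx
    &= \frac{1}{k}\int_{-\infty}^{\infty} T_k'(x)\F_t(x)\,dx
    = -\frac{1}{k}\int_{-\infty}^{\infty} T_k(x)\,d\F_t(x),
\end{align*}
the boundary terms vanishing because $\F_t$ is zero outside a compact interval.

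Now I would identify the right-hand side. Writing $F_t(x) = \sum_{i=1}^{N(t)}\1\{\lambda_i\leq x\}$, the measure $dF_t$ is the empirical point measure $\sum_i\delta_{\lambda_i}$, so $\int T_k(x)\,dF_t(x) = \sum_i T_k(\lambda_i)$. Taking conditional expectations given $N(t)$,
\begin{align*}
  \int_{-\infty}^{\infty} T_k(x)\,d\F_t(x)
    &= \sum_{i=1}^{N(t)} T_k(\lambda_i) - \E\Bigl[\sum_{i=1}^{N(t)} T_k(\lambda_i)\,\Big|\,N(t)\Bigr].
\end{align*}
Finally, by \thref{def:tr}, $\tr T_k(G(t,2d(t))) = \sum_i T_k(\lambda_i) - N(t)a_0$, where $a_0$ is the (deterministic) Kesten–McKay constant term of $T_k$. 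Since $N(t)a_0$ is $N(t)$-measurable, it cancels inside the centering, leaving the right-hand side equal to $\tr T_k(G(t,2d(t))) - \E[\tr T_k(G(t,2d(t)))\mid N(t)]$. Combining this with the integration by parts yields the claim.

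There is no real obstacle here: the only technical point is the justification of integration by parts for $\F_t$, which follows immediately from its bounded variation and compact support. The main content is simply the antiderivative relationship between $U_{k-1}$ and $T_k$ together with the observation that the $a_0$ constant in the definition of $\tr$ automatically drops out under the conditional centering.
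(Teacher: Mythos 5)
Your proposal is correct and follows essentially the same route as the paper: integrate by parts using $T_k'(x)=kU_{k-1}(x)$, identify $\int T_k\,d\F_t$ with the centered sum $\sum_i T_k(\lambda_i)-\E[\sum_i T_k(\lambda_i)\mid N(t)]$, and observe that the $N(t)a_0$ term in the definition of $\tr$ cancels under the conditional centering. Your explicit justification of the boundary behavior (compact support and bounded variation of $\F_t$) is a slightly more careful version of the paper's remark that $\F_t(x)\to 0$ as $x\to\pm\infty$.
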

\begin{proof}
   As $x\to\pm\infty$, we have $\F_t(x)\to 0$ almost surely.
   Integrating by parts and using the relation $T'_k(x) = kU_{k-1}(x)$, 
  \begin{align*}
    \int_{-\infty}^{\infty} U_{k-1}(x)\F_t(x)\,dx
      &=-\frac{1}{k}\int_{-\infty}^{\infty}T_k(x)\,d\F_t(x)\\
      &= -\frac{1}{k}\sum_{i=1}^{N(t)} T_k(\lambda_i) + \frac{1}{k}\E\Biggl[
        \sum_{i=1}^{N(t)} T_k(\lambda_i) \;\bigg\vert\; N(t)\Biggr],
  \end{align*}
  where $\lambda_1\geq\cdots\geq\lambda_{N(t)}$ are the eigenvalues
  of $G(t)$ divided by $2\sqrt{2d(t)-1}$.
  This is equal to
  \begin{align*}
    \int_{-\infty}^{\infty} U_{k-1}(x)\F_t(x)\,dx
      &= -\frac{1}{k}\Bigl( \tr T_k\bigl(G(t,2d(t))\bigr)
        - \E\bigl[\tr T_k\bigl(G(t,2d(t))\bigr)\,\big\vert\,N(t)\bigr]\Bigr).
  \end{align*}
  Note that when $k$ is even, the $na_0$ term introduced by the trace (see
  \thref{def:tr}) is cancelled by the same term in the expectation.
\end{proof}
Combining this lemma with \thref{thm:diagonallimit}, 
integrals of the form $\int p(x)\F_{s+t}(x)\,dx$ converge jointly as $s\to\infty$
to a Gaussian field indexed by $t$ and by polynomials $p(x)$.
We now express this field in terms of the GFF.

\begin{proof}[Proof of \thref{thm:GFFconvergence}]
\thref{thm:diagonallimit} and \thref{lem:heighttrace} prove that
the integrals 
\begin{align*}
  \int_{-\infty}^{\infty} p_i(x)H_s(x,t_i)\,dx,\qquad i=1,\ldots,n
\end{align*}
converge jointly
to a centered multivariate normal distribution, which is also the distribution
of the integrals against the GFF. We just need to check that the covariances match up.
It suffices to confirm this on a polynomial basis.
By \thref{thm:diagonallimit},
\begin{align}
  \lim_{s\to\infty}
  \E\biggl[\biggl( \int_{-\infty}^{\infty} U_{j-1}(x)H_{s}(x,t_0)\,dx\biggr)
   \Biggl( \int_{-\infty}^{\infty} U_{k-1}(x)H_{s}(x,t_1)\,dx\biggr)\Biggr]
  = \delta_{jk}
    \frac{\pi}{4k}e^{k(t_0-t_1)}\label{eq:covval}
\end{align}
for $t_0\leq t_1$.
By \thref{prop:BG2}, the covariance of 
\begin{align*}
  \int_{-1}^1U_{j-1}(x)h(\Omega(x,t_0))\,dx \text{\quad and\quad}
\int_{-1}^1U_{k-1}(x)h(\Omega(x,t_1))\,dx
\end{align*}
is
\begin{align*}
  I \defeq  -\frac{1}{2\pi}\int_{-1}^1\int_{-1}^1 U_{j-1}(x)\log
    \biggl\lvert \frac{\Omega(x,t_0)-\Omega(y,t_1)}{\Omega(x,t_0)-\overline{\Omega}(y,t_1)}
    \biggr\rvert U_{k-1}(y)\,dx\,dy.
\end{align*}
Substituting $x=\cos u$ and $y=\cos v$, we have
\begin{align}\label{eq:Itobound}
  I = -\frac{1}{2\pi}\int_0^{\pi}\int_0^{\pi} U_{j-1}(\cos u)\sin u
      \log  \biggl\lvert \frac{e^{t_0+iu}-e^{t_1+iv}}{e^{t_0+iu}-e^{t_1-iv}}
    \biggr\rvert U_{k-1}(\cos v)\sin v\,du\,dv.
\end{align}
Assume that $t_0<t_1$.
For any constant $w\in\CC$ with $\abs{w}=t_1$, we can define
  functions $\log(z-w)$ and $\log(z-\overline{w})$ that are analytic on $\abs{z}<t_1$.
  For each $v$, we choose two such logarithm functions with $w=e^{t_1+iv}$ to get
  \begin{align*}
    \log\biggl\lvert \frac{e^{t_0+iu}-e^{t_1+iv}}{e^{t_0+iu}-e^{t_1-iv}} \biggr\rvert
     &= \frac12\Bigl( \log\bigl(e^{t_0+iu}-e^{t_1+iv}\bigr)
       +\log\bigl(e^{t_0-iu}-e^{t_1-iv}\bigr)\\
       &\qquad\qquad-\log\bigl(e^{t_0+iu}-e^{t_1-iv}\bigr)
       -\log\bigl(e^{t_0-iu}-e^{t_1+iv}\bigr)
       \Bigr).
  \end{align*}
  Using the relation $U_{n-1}(\cos x) = \sin(nx)/\sin x$, we then have
  \begin{align*}
    I &= -\frac{1}{4\pi}\int_0^\pi\int_0^{2\pi}\sin(ju)\sin(kv) \Bigl(
      \log\bigl(e^{t_0+iu}-e^{t_1+iv}\bigr) - \log\bigl(e^{t_0+iu}-e^{t_1-iv}\bigr)\Bigr)du\,dv,
  \end{align*}
  and by integrating by parts in $u$,
  \begin{align*}
    I
      &= -\frac{1}{4j\pi}\int_0^\pi\int_0^{2\pi}\cos(ju)\sin(kv)
      \biggl( \frac{ie^{t_0+iu}}{e^{t_0+iu}-e^{t_1+iv}} - \frac{ie^{t_0+iu}}{e^{t_0+iu}-e^{t_1-iv}}\biggr)du\,dv\\
     &= -\frac{1}{4j\pi}\int_0^{2\pi}\int_0^{2\pi}\cos(ju)\sin(kv)
      \frac{ie^{t_0+iu}}{e^{t_0+iu}-e^{t_1+iv}}\,du\,dv.
  \end{align*}
  We then integrate by parts in $v$ to get
  \begin{align*}
    I &= \frac{1}{4jk\pi}\int_0^{2\pi}\int_0^{2\pi}\cos(ju)\cos(kv)
      \frac{e^{t_0+t_1+i(u+v)}}{\bigl(e^{t_0+iu}-e^{t_1+iv}\bigr)^2}\,du\,dv.
  \end{align*}
  Let $\gamma$ denote a counterclockwise path around the unit disc.
  \begin{align*}
    I &= \frac{1}{4jk\pi}\int_0^{2\pi}\cos(kv)\int_\gamma
      \frac{z^{j}+z^{-j}}{2iz}\frac{e^{t_0+t_1+iv}z}{\bigl(e^{t_0}z - e^{t_1+iv}\bigr)^2}
        dz\,dv.
  \end{align*}
  The integrand of the path integral has a single pole in the unit disc at $0$, and the residue
  there is $je^{j(t_0-t_1-iv)}/2i$. This gives
  \begin{align*}
    I &= \frac{1}{4k}\int_0^{2\pi}\cos(kv)
       e^{j(t_0-t_1-iv)}\,
    dv\\
    &= \frac{1}{4k}\int_\gamma\frac{w^{k}+w^{-k)}}{2iw}
       e^{j(t_0-t_1)}w^{-j}\,
    dw \\
    &= \frac{e^{j(t_0-t_1)}}{8ik}
      \int_\gamma \bigl(w^{k-j-1}+w^{-k-j-3}\bigr)
       dw.
  \end{align*}
  By computing residues, this is $\pi e^{j(t_0-t_1)}/4k$ if $j=k$ and $0$ otherwise,
  agreeing with \eqref{eq:covval} for all $t_0<t_1$. 
  To extend this to $t_0=t_1$ by a limiting argument, we apply the dominated convergence
  theorem to the integral in \eqref{eq:Itobound}.
  One can show that
  \begin{align*}
    \Biggl\lvert\log  \biggl\lvert \frac{e^{t_0+iu}-e^{t_1+iv}}{e^{t_0+iu}-e^{t_1-iv}}
    \biggr\rvert\Biggr\rvert
    &\leq \log\biggl\lvert\frac{e^{iu} - e^{-iv}}{e^{iu}-e^{iv}}\biggr\rvert
  \end{align*}
  for all $t_0\leq t_1$.  The right-hand side of this equation
  is integrable over $0\leq u,v\leq \pi$. The other factors of the integrand in
  \eqref{eq:Itobound} are bounded there. Thus by the dominated convergence theorem
  we can compute $I$ when $t_0=t_1$ by letting $t_0\to t_1$ from below.
\end{proof}

%
%
\bibliographystyle{alpha}
\bibliography{thesis}

\newcommand{\etalchar}[1]{$^{#1}$}
\begin{thebibliography}{KSVW01}

\bibitem[AGG89]{AGG}
Richard Arratia, Larry Goldstein, and Louis Gordon.
\newblock Two moments suffice for {P}oisson approximations: the {C}hen-{S}tein
  method.
\newblock {\em Ann. Probab.}, 17(1):9--25, 1989.

\bibitem[AGK13]{AGK}
Richard Arratia, Larry Goldstein, and Fred Kochman.
\newblock Size bias for one and all.
\newblock Preprint. Available at arXiv:1308.2729, 2013.

\bibitem[ANvM12]{ANvM}
Mark Adler, Eric Nordenstam, and Pierre van Moerbeke.
\newblock The {D}yson {B}rownian minor process.
\newblock Preprint. Available at arXiv:1006.2956, 2012.

\bibitem[AT92]{AT}
Richard Arratia and Simon Tavar{\'e}.
\newblock The cycle structure of random permutations.
\newblock {\em Ann. Probab.}, 20(3):1567--1591, 1992.

\bibitem[Bar01]{Bar}
Yu. Baryshnikov.
\newblock G{UE}s and queues.
\newblock {\em Probab. Theory Related Fields}, 119(2):256--274, 2001.

\bibitem[BF14]{BorFer}
Alexei Borodin and Patrik~L. Ferrari.
\newblock Anisotropic growth of random surfaces in {$2+1$} dimensions.
\newblock {\em Comm. Math. Phys.}, 325(2):603--684, 2014.

\bibitem[BFSU99]{BFSU}
Andrei~Z. Broder, Alan~M. Frieze, Stephen Suen, and Eli Upfal.
\newblock Optimal construction of edge-disjoint paths in random graphs.
\newblock {\em SIAM J. Comput.}, 28(2):541--573 (electronic), 1999.

\bibitem[BG13]{BG}
Alexei Borodin and Vadim Gorin.
\newblock General beta {J}acobi corners process and the {G}aussian {F}ree
  {F}ield.
\newblock Preprint. Available at arXiv:1305.3627, 2013.

\bibitem[BHJ92]{BHJ}
A.~D. Barbour, Lars Holst, and Svante Janson.
\newblock {\em Poisson approximation}, volume~2 of {\em Oxford Studies in
  Probability}.
\newblock The Clarendon Press Oxford University Press, New York, 1992.
\newblock Oxford Science Publications.

\bibitem[Bil99]{Bil}
Patrick Billingsley.
\newblock {\em Convergence of probability measures}.
\newblock Wiley Series in Probability and Statistics: Probability and
  Statistics. John Wiley \& Sons Inc., New York, second edition, 1999.
\newblock A Wiley-Interscience Publication.

\bibitem[Bol80]{Bol80}
B{\'e}la Bollob{\'a}s.
\newblock A probabilistic proof of an asymptotic formula for the number of
  labelled regular graphs.
\newblock {\em European J. Combin.}, 1(4):311--316, 1980.

\bibitem[Bor10a]{Bor1}
Alexei Borodin.
\newblock {CLT} for spectra of submatrices of {W}igner random matrices.
\newblock Preprint. Available at arXiv:1010.0898, 2010.

\bibitem[Bor10b]{Bor2}
Alexei Borodin.
\newblock {CLT} for spectra of submatrices of {W}igner random matrices {II}.
  {S}tochastic evolution.
\newblock Preprint. Available at arXiv:1011.3544, 2010.

\bibitem[Bre11]{Brezis}
Haim Brezis.
\newblock {\em Functional analysis, {S}obolev spaces and partial differential
  equations}.
\newblock Universitext. Springer, New York, 2011.

\bibitem[BS87]{BrSh}
Andrei Broder and Eli Shamir.
\newblock On the second eigenvalue of random regular graphs.
\newblock In {\em 28th Annual Symposium on Foundations of Computer Science (Los
  Angeles, 1987)}, pages 286--294. IEEE Comput. Soc. Press, Washington, D.C.,
  1987.

\bibitem[BSK09]{BSK}
Sonny Ben-Shimon and Michael Krivelevich.
\newblock Random regular graphs of non-constant degree: concentration of the
  chromatic number.
\newblock {\em Discrete Math.}, 309(12):4149--4161, 2009.

\bibitem[CDM05]{CDM}
Sourav Chatterjee, Persi Diaconis, and Elizabeth Meckes.
\newblock Exchangeable pairs and {P}oisson approximation.
\newblock {\em Probab. Surv.}, 2:64--106 (electronic), 2005.

\bibitem[Cha07]{Cha}
Sourav Chatterjee.
\newblock Stein's method for concentration inequalities.
\newblock {\em Probab. Theory Related Fields}, 138(1-2):305--321, 2007.

\bibitem[Che75]{Chen1}
Louis H.~Y. Chen.
\newblock Poisson approximation for dependent trials.
\newblock {\em Ann. Probability}, 3(3):534--545, 1975.

\bibitem[Chu97]{Chung}
Fan R.~K. Chung.
\newblock {\em Spectral graph theory}, volume~92 of {\em CBMS Regional
  Conference Series in Mathematics}.
\newblock Published for the Conference Board of the Mathematical Sciences,
  Washington, DC, 1997.

\bibitem[{\c{C}}{\i}n11]{Cinlar}
Erhan {\c{C}}{\i}nlar.
\newblock {\em Probability and stochastics}, volume 261 of {\em Graduate Texts
  in Mathematics}.
\newblock Springer, New York, 2011.

\bibitem[DJPP13]{DJPP}
Ioana Dumitriu, Tobias Johnson, Soumik Pal, and Elliot Paquette.
\newblock Functional limit theorems for random regular graphs.
\newblock {\em Probab. Theory Related Fields}, 156(3--4):921--975, 2013.

\bibitem[DS98]{DS}
Persi Diaconis and Bernd Sturmfels.
\newblock Algebraic algorithms for sampling from conditional distributions.
\newblock {\em Ann. Statist.}, 26(1):363--397, 1998.

\bibitem[Dub09]{Dubedat}
Julien Dub{\'e}dat.
\newblock S{LE} and the free field: partition functions and couplings.
\newblock {\em J. Amer. Math. Soc.}, 22(4):995--1054, 2009.

\bibitem[Dui13]{Duits}
Maurice Duits.
\newblock Gaussian free field in an interlacing particle system with two jump
  rates.
\newblock {\em Comm. Pure Appl. Math.}, 66(4):600--643, 2013.

\bibitem[EG92]{EG}
Lawrence~C. Evans and Ronald~F. Gariepy.
\newblock {\em Measure theory and fine properties of functions}.
\newblock Studies in Advanced Mathematics. CRC Press, Boca Raton, FL, 1992.

\bibitem[EK86]{EK}
Stewart~N. Ethier and Thomas~G. Kurtz.
\newblock {\em Markov processes}.
\newblock Wiley Series in Probability and Mathematical Statistics: Probability
  and Mathematical Statistics. John Wiley \& Sons Inc., New York, 1986.
\newblock Characterization and convergence.

\bibitem[EPR{\etalchar{+}}10]{ESY3}
L{\'a}szl{\'o} Erd{\H{o}}s, Sandrine P{\'e}ch{\'e}, Jos{\'e}~A. Ram{\'{\i}}rez,
  Benjamin Schlein, and Horng-Tzer Yau.
\newblock Bulk universality for {W}igner matrices.
\newblock {\em Comm. Pure Appl. Math.}, 63(7):895--925, 2010.

\bibitem[ERS{\etalchar{+}}10]{ESY5}
L{\'a}szl{\'o} Erd{\H{o}}s, Jos{\'e} Ram{\'{\i}}rez, Benjamin Schlein, Terence
  Tao, Van Vu, and Horng-Tzer Yau.
\newblock Bulk universality for {W}igner {H}ermitian matrices with
  subexponential decay.
\newblock {\em Math. Res. Lett.}, 17(4):667--674, 2010.

\bibitem[ERSY10]{ESY4}
L{\'a}szl{\'o} Erd{\H{o}}s, Jos{\'e}~A. Ram{\'{\i}}rez, Benjamin Schlein, and
  Horng-Tzer Yau.
\newblock Universality of sine-kernel for {W}igner matrices with a small
  {G}aussian perturbation.
\newblock {\em Electron. J. Probab.}, 15:no. 18, 526--603, 2010.

\bibitem[ESY09a]{ESY2}
L{\'a}szl{\'o} Erd{\H{o}}s, Benjamin Schlein, and Horng-Tzer Yau.
\newblock Local semicircle law and complete delocalization for {W}igner random
  matrices.
\newblock {\em Comm. Math. Phys.}, 287(2):641--655, 2009.

\bibitem[ESY09b]{ESY1}
L{\'a}szl{\'o} Erd{\H{o}}s, Benjamin Schlein, and Horng-Tzer Yau.
\newblock Semicircle law on short scales and delocalization of eigenvectors for
  {W}igner random matrices.
\newblock {\em Ann. Probab.}, 37(3):815--852, 2009.

\bibitem[Eva10]{Evans}
Lawrence~C. Evans.
\newblock {\em Partial differential equations}, volume~19 of {\em Graduate
  Studies in Mathematics}.
\newblock American Mathematical Society, Providence, RI, second edition, 2010.

\bibitem[Fel68]{Feller}
William Feller.
\newblock {\em An introduction to probability theory and its applications.
  {V}ol. {I}}.
\newblock Third edition. John Wiley \& Sons, Inc., New York-London-Sydney,
  1968.

\bibitem[Fer14]{FerSurvey}
Patrik~L. Ferrari.
\newblock Why random matrices share universal processes with interacting
  particle systems?
\newblock Lecture notes. Available at arXiv:1312.1126, 2014.

\bibitem[Fol95]{FolPDE}
Gerald~B. Folland.
\newblock {\em Introduction to partial differential equations}.
\newblock Princeton University Press, Princeton, NJ, second edition, 1995.

\bibitem[Fri91]{Fri91}
Joel Friedman.
\newblock On the second eigenvalue and random walks in random $d$-regular
  graphs.
\newblock {\em Combinatorica}, 11:331--362, 1991.

\bibitem[Fri08]{Fri}
Joel Friedman.
\newblock A proof of {A}lon's second eigenvalue conjecture and related
  problems.
\newblock {\em Mem. Amer. Math. Soc.}, 195(910):viii+100, 2008.

\bibitem[GJKW02]{GJKW}
Catherine Greenhill, Svante Janson, Jeong~Han Kim, and Nicholas~C. Wormald.
\newblock Permutation pseudographs and contiguity.
\newblock {\em Combin. Probab. Comput.}, 11(3):273--298, 2002.

\bibitem[GM90]{GM}
C.~D. Godsil and B.~D. McKay.
\newblock Asymptotic enumeration of {L}atin rectangles.
\newblock {\em J. Combin. Theory Ser. B}, 48(1):19--44, 1990.

\bibitem[GM08]{GrM}
Catherine Greenhill and Brendan~D. McKay.
\newblock Asymptotic enumeration of sparse nonnegative integer matrices with
  specified row and column sums.
\newblock {\em Adv. in Appl. Math.}, 41(4):459--481, 2008.

\bibitem[GMW06]{GMW}
Catherine Greenhill, Brendan~D. McKay, and Xiaoji Wang.
\newblock Asymptotic enumeration of sparse 0-1 matrices with irregular row and
  column sums.
\newblock {\em J. Combin. Theory Ser. A}, 113(2):291--324, 2006.

\bibitem[GOS01]{GOS}
Giambattista Giacomin, Stefano Olla, and Herbert Spohn.
\newblock Equilibrium fluctuations for {$\nabla\phi$} interface model.
\newblock {\em Ann. Probab.}, 29(3):1138--1172, 2001.

\bibitem[GS14]{GS}
Vadim Gorin and Mykhaylo Shkolnikov.
\newblock Multilevel {D}yson {B}rownian motions via {J}ack polynomials.
\newblock Preprint. Available at arXiv:1401.5595, 2014.

\bibitem[HMP10]{HMP}
Xiaoyu Hu, Jason Miller, and Yuval Peres.
\newblock Thick points of the {G}aussian free field.
\newblock {\em Ann. Probab.}, 38(2):896--926, 2010.

\bibitem[Hun]{Hunter}
John~K. Hunter.
\newblock Notes on {P}artial {D}ifferential {E}quations.
\newblock Available at
  \url{https://www.math.ucdavis.edu/~hunter/pdes/pdes.html}.

\bibitem[Jan97]{JanGH}
Svante Janson.
\newblock {\em Gaussian {H}ilbert spaces}, volume 129 of {\em Cambridge Tracts
  in Mathematics}.
\newblock Cambridge University Press, Cambridge, 1997.

\bibitem[JLS14]{JLS}
David Jerison, Lionel Levine, and Scott Sheffield.
\newblock Internal {DLA} and the {G}aussian free field.
\newblock {\em Duke Math. J.}, 163(2):267--308, 2014.

\bibitem[JN06]{JN}
Kurt Johansson and Eric Nordenstam.
\newblock Eigenvalues of {GUE} minors.
\newblock {\em Electron. J. Probab.}, 11:no. 50, 1342--1371, 2006.

\bibitem[Joh12]{Joh}
Tobias Johnson.
\newblock Exchangeable pairs, switchings, and random regular graphs.
\newblock Preprint. Available at arXiv:1112.0704, 2012.

\bibitem[JP12]{JP}
Tobias Johnson and Soumik Pal.
\newblock Cycles and eigenvalues of sequentially growing random regular graphs.
\newblock To appear in \emph{Annals of Probability}. Available at
  arXiv:1203.1113, 2012.

\bibitem[JP13]{JohPaq}
Tobias Johnson and Elliot Paquette.
\newblock Quantitative small subgraph conditioning.
\newblock Preprint. Available at arXiv:1307.4858, 2013.

\bibitem[Ken01]{Kenyon1}
Richard Kenyon.
\newblock Dominos and the {G}aussian free field.
\newblock {\em Ann. Probab.}, 29(3):1128--1137, 2001.

\bibitem[Ken08]{Kenyon2}
Richard Kenyon.
\newblock Height fluctuations in the honeycomb dimer model.
\newblock {\em Comm. Math. Phys.}, 281(3):675--709, 2008.

\bibitem[KSV07]{KSV}
Jeong~Han Kim, Benny Sudakov, and Van Vu.
\newblock Small subgraphs of random regular graphs.
\newblock {\em Discrete Math.}, 307(15):1961--1967, 2007.

\bibitem[KSVW01]{KSVW}
Michael Krivelevich, Benny Sudakov, Van~H. Vu, and Nicholas~C. Wormald.
\newblock Random regular graphs of high degree.
\newblock {\em Random Structures Algorithms}, 18(4):346--363, 2001.

\bibitem[Kua11]{Kuan}
Jeffrey Kuan.
\newblock The {G}aussian free field in interlacing particle systems.
\newblock To appear in \emph{Electronic Journal of Probability}. Available at
  arXiv:1109.4444, 2011.

\bibitem[Lig10]{Lig}
Thomas~M. Liggett.
\newblock {\em Continuous time {M}arkov processes}, volume 113 of {\em Graduate
  Studies in Mathematics}.
\newblock American Mathematical Society, Providence, RI, 2010.
\newblock An introduction.

\bibitem[Lo{\`e}60]{Loeve}
Michel Lo{\`e}ve.
\newblock {\em Probability theory}.
\newblock 2nd ed. The University Series in Higher Mathematics. D. Van Nostrand
  Co., Inc., Princeton, N. J.-Toronto-New York-London, 1960.

\bibitem[LP10]{LP}
N.~Linial and D.~Puder.
\newblock Word maps and spectra of random graph lifts.
\newblock {\em Random Structures Algorithms}, 37(1):100--135, 2010.

\bibitem[McK81]{McK}
Brendan~D. McKay.
\newblock The expected eigenvalue distribution of a large regular graph.
\newblock {\em Linear Algebra Appl.}, 40:203--216, 1981.

\bibitem[McK84]{Mc84}
Brendan~D. McKay.
\newblock Asymptotics for {$0$}-{$1$} matrices with prescribed line sums.
\newblock In {\em Enumeration and design ({W}aterloo, {O}nt., 1982)}, pages
  225--238. Academic Press, Toronto, ON, 1984.

\bibitem[MS12a]{MS1}
Jason Miller and Scott Sheffield.
\newblock Imaginary geometry {I}: interacting {SLE}s.
\newblock Preprint. Available at arXiv:1201.1496, 2012.

\bibitem[MS12b]{MS2}
Jason Miller and Scott Sheffield.
\newblock Imaginary geometry {II}: reversibility of
  $\mathrm{SLE}_\kappa(\rho_1;\rho_2)$ for $\kappa\in(0,4)$.
\newblock Preprint. Available at arXiv:1201.1497, 2012.

\bibitem[MS12c]{MS3}
Jason Miller and Scott Sheffield.
\newblock Imaginary geometry {III}: reversibility of $\mathrm{SLE}_\kappa$ for
  $\kappa\in(4,8)$.
\newblock Preprint. Available at arXiv:1201.1498, 2012.

\bibitem[MS13]{MS4}
Jason Miller and Scott Sheffield.
\newblock Imaginary geometry {IV}: interior rays, whole-plane reversibility,
  and space-filling trees.
\newblock Preprint. Available at arXiv:1302.4738, 2013.

\bibitem[MW03]{MW}
Brendan~D. McKay and Xiaoji Wang.
\newblock Asymptotic enumeration of 0-1 matrices with equal row sums and equal
  column sums.
\newblock {\em Linear Algebra Appl.}, 373:273--287, 2003.
\newblock Special issue on the Combinatorial Matrix Theory Conference (Pohang,
  2002).

\bibitem[MWW04]{MWW}
Brendan~D. McKay, Nicholas~C. Wormald, and Beata Wysocka.
\newblock Short cycles in random regular graphs.
\newblock {\em Electron. J. Combin.}, 11(1):Research Paper 66, 12 pp.
  (electronic), 2004.

\bibitem[NS97]{NS}
Ali Naddaf and Thomas Spencer.
\newblock On homogenization and scaling limit of some gradient perturbations of
  a massless free field.
\newblock {\em Comm. Math. Phys.}, 183(1):55--84, 1997.

\bibitem[Pet12]{Petrov}
Leonid Petrov.
\newblock Asymptotics of uniformly random lozenge tilings of polygons.
  {G}aussian free field.
\newblock To appear in \emph{Annals of Probability}. Available at
  arXiv:1206.5123, 2012.

\bibitem[Pit06]{Pit}
Jim Pitman.
\newblock {\em Combinatorial stochastic processes}, volume 1875 of {\em Lecture
  Notes in Mathematics}.
\newblock Springer-Verlag, Berlin, 2006.
\newblock Lectures from the 32nd Summer School on Probability Theory held in
  Saint-Flour, July 7--24, 2002, With a foreword by Jean Picard.

\bibitem[Ros11]{Ross}
Nathan Ross.
\newblock Fundamentals of {S}tein's method.
\newblock {\em Probab. Surv.}, 8:210--293, 2011.

\bibitem[RV07]{RV}
Brian Rider and B{\'a}lint Vir{\'a}g.
\newblock The noise in the circular law and the {G}aussian free field.
\newblock {\em Int. Math. Res. Not. IMRN}, (2):Art. ID rnm006, 33, 2007.

\bibitem[She07]{Sheffield}
Scott Sheffield.
\newblock Gaussian free fields for mathematicians.
\newblock {\em Probab. Theory Related Fields}, 139(3-4):521--541, 2007.

\bibitem[Sod07]{Sodin}
Sasha Sodin.
\newblock Random matrices, nonbacktracking walks, and orthogonal polynomials.
\newblock {\em J. Math. Phys.}, 48(12):123503, 21, 2007.

\bibitem[Spi12]{Spielman}
Daniel Spielman.
\newblock Spectral graph theory.
\newblock In {\em Combinatorial scientific computing}, Chapman \& Hall/CRC
  Comput. Sci. Ser., pages 495--524. CRC Press, Boca Raton, FL, 2012.

\bibitem[SS09]{SS1}
Oded Schramm and Scott Sheffield.
\newblock Contour lines of the two-dimensional discrete {G}aussian free field.
\newblock {\em Acta Math.}, 202(1):21--137, 2009.

\bibitem[SS13]{SS2}
Oded Schramm and Scott Sheffield.
\newblock A contour line of the continuum {G}aussian free field.
\newblock {\em Probab. Theory Related Fields}, 157(1-2):47--80, 2013.

\bibitem[Ste72]{St1}
Charles Stein.
\newblock A bound for the error in the normal approximation to the distribution
  of a sum of dependent random variables.
\newblock In {\em Proceedings of the {S}ixth {B}erkeley {S}ymposium on
  {M}athematical {S}tatistics and {P}robability ({U}niv. {C}alifornia,
  {B}erkeley, {C}alif., 1970/1971), {V}ol. {II}: {P}robability theory}, pages
  583--602, Berkeley, Calif., 1972. Univ. California Press.

\bibitem[Ste78]{Stein}
Charles~M. Stein.
\newblock Asymptotic evaluation of the number of {L}atin rectangles.
\newblock {\em J. Combin. Theory Ser. A}, 25(1):38--49, 1978.

\bibitem[Ste92]{St92}
Charles Stein.
\newblock A way of using auxiliary randomization.
\newblock In {\em Probability theory ({S}ingapore, 1989)}, pages 159--180. de
  Gruyter, Berlin, 1992.

\bibitem[TV10]{TV2}
Terence Tao and Van Vu.
\newblock Random matrices: universality of local eigenvalue statistics up to
  the edge.
\newblock {\em Comm. Math. Phys.}, 298(2):549--572, 2010.

\bibitem[TV11]{TV1}
Terence Tao and Van Vu.
\newblock Random matrices: universality of local eigenvalue statistics.
\newblock {\em Acta Math.}, 206(1):127--204, 2011.

\bibitem[War07]{Warren}
Jon Warren.
\newblock Dyson's {B}rownian motions, intertwining and interlacing.
\newblock {\em Electron. J. Probab.}, 12:no. 19, 573--590, 2007.

\bibitem[Wor81]{Wor81}
Nicholas~C. Wormald.
\newblock The asymptotic distribution of short cycles in random regular graphs.
\newblock {\em J. Combin. Theory Ser. B}, 31(2):168--182, 1981.

\bibitem[Wor99a]{Wor99}
N.~C. Wormald.
\newblock Models of random regular graphs.
\newblock In {\em Surveys in combinatorics, 1999 ({C}anterbury)}, volume 267 of
  {\em London Math. Soc. Lecture Note Ser.}, pages 239--298. Cambridge Univ.
  Press, Cambridge, 1999.

\bibitem[Wor99b]{W}
Nicholas~C. Wormald.
\newblock Models of random regular graphs.
\newblock In {\em Surveys in combinatorics, 1999 ({C}anterbury)}, volume 267 of
  {\em London Math. Soc. Lecture Note Ser.}, pages 239--298. Cambridge Univ.
  Press, Cambridge, 1999.

\end{thebibliography}
%
%
\appendix
\raggedbottom\sloppy
 

\end{document}